\newtheorem{thm}{Theorem}[section]
\newtheorem{lem}[thm]{Lemma}
\newtheorem{rmk}{Remark}[section]
\newtheorem{example}{Example}[section]
\newtheorem{proposition}{Proposition}[section]
\newtheorem{definition}{Definition}[section]
\newproof{pf}{Proof}
\numberwithin{equation}{section}
\numberwithin{figure}{section}
\numberwithin{table}{section}
\newcommand\diag{\mathrm{diag}}
\newcommand\dd{\mathrm{d}}
\newcommand\bF{\bm{F}}
\newcommand\bv{\bm{v}}
\newcommand\bx{\bm{x}}
\newcommand\bV{\bm{V}}
\newcommand\bU{\bm{U}}
\newcommand\bT{\bm{T}}
\newcommand{\Ucurv}{\bm{\mathcal{U}}}
\newcommand{\Fcurv}{\bm{\mathcal{F}}}
\newcommand{\Bcurv}{\mathcal{B}}
\newcommand{\qcurv}{\mathfrak{q}}
\newcommand{\UCurvFlux}{\bm{\mathring{{U}}}}
\newcommand{\FCurvFlux}{\bm{\mathring{{F}}}}
\newcommand{\twop}{{\scriptsize2p\mbox{\text{th}}}}
\newcommand{\wth}{{\scriptsize w\mbox{\text{th}}}}
\newcommand{\WENO}{{\scriptsize\mbox{\text{WENO}}}}
\newcommand\vx{v_1}
\newcommand\vy{v_2}
\newcommand\vz{v_3}
\newcommand\ux{u^x}
\newcommand\uy{u^y}
\newcommand\uz{u^z}
\newcommand\Bx{B_1}
\newcommand\By{B_2}
\newcommand\Bz{B_3}
\newcommand\pt{p_\text{tot}}
\newcommand\bb{\abs{b}^2}
\newcommand\BB{\abs{\bm{B}}^2}
\newcommand\vB{\bv\cdot\bm{B}}
\newcommand\pd[2]{\dfrac{\partial {#1}}{\partial {#2}}}
\newcommand\abs[1]{\lvert #1 \rvert}
\newcommand\jump[1]{\llbracket #1 \rrbracket}
\newcommand\mean[1]{\{\!\!\{ #1 \}\!\!\}}
\newcommand\meanln[1]{\{\!\!\{ #1 \}\!\!\}^{\text{ln}}}
\newcommand\jumpangle[1]{\langle\!\langle #1 \rangle\!\rangle}
\begin{document}

\begin{frontmatter}

  \title{High-order accurate entropy stable adaptive moving mesh finite difference schemes for special relativistic (magneto)hydrodynamics}

  \author{Junming Duan}
  \ead{duanjm@pku.edu.cn}
  \author{Huazhong Tang\corref{cor1}}
  \ead{hztang@math.pku.edu.cn}
  \address{Center for Applied Physics and Technology, HEDPS and LMAM,
  School of Mathematical Sciences, Peking University, Beijing 100871, P.R. China}
  \cortext[cor1]{Corresponding author. Fax:~+86-10-62751801.}

  \begin{abstract}
    This paper develops high-order accurate entropy stable (ES) adaptive moving mesh finite difference schemes for the two- and three-dimensional special relativistic hydrodynamic (RHD) and magnetohydrodynamic (RMHD) equations,
    which is the high-order accurate extension of [J.M. Duan and H.Z. Tang, Entropy stable adaptive moving mesh schemes for 2D and 3D special relativistic hydrodynamics, J. Comput. Phys., 426(2021), 109949].
    The key point is the derivation of the higher-order accurate entropy conservative (EC) and ES finite difference schemes in the curvilinear coordinates
    by carefully dealing with
the discretization of the temporal and spatial metrics and the Jacobian of the coordinate
transformation and constructing the high-order EC and ES fluxes with the discrete metrics. The spatial derivatives in the source
terms of the symmetrizable RMHD equations and the geometric conservation laws are discretized by
using the linear combinations of the corresponding second-order case to obtain high-order accuracy.
Based on the proposed high-order accurate EC schemes and the high-order accurate dissipation
terms built on the WENO reconstruction, the high-order accurate ES schemes are obtained for
the RHD and RMHD equations in the curvilinear coordinates.
The mesh iteration redistribution or adaptive moving mesh strategy is built on the minimization of the
mesh adaption functional.
Several
numerical tests are conducted to validate the shock-capturing ability and high efficiency of our
high-order accurate ES adaptive moving mesh methods on the parallel computer system with the
MPI communication. The numerical results show that the high-order accurate ES adaptive moving
mesh schemes outperform both their counterparts on the uniform mesh and the second-order ES
adaptive moving mesh schemes.
  \end{abstract}

  \begin{keyword}
    High-order accuracy\sep entropy stable scheme\sep adaptive moving mesh
    \sep relativistic hydrodynamics (RHD)\sep relativistic magnetohydrodynamics (RMHD)
  \end{keyword}

\end{frontmatter}

\section{Introduction}
This paper is concerned with the high-order accurate numerical schemes for the
special relativistic hydrodynamic (RHD) and magnetohydrodynamic (RMHD) equations,
which consider the relativistic description for the dynamics of the fluid (gas) at nearly the speed of light when the astrophysical phenomena are investigated from stellar to galactic scales, e.g. the core collapse supernovae, the coalescing neutron stars, the active galactic nuclei, the formation of black holes,
the gamma-ray bursts, and the superluminal jets etc.
In the covariant form, the four-dimensional space-time RMHD equations can be written as
follows \cite{Anile1987On}
\begin{equation}\label{eq:RMHD}
  \partial_{\alpha}(\rho u^\alpha)=0,~
  \partial_{\alpha} (\mathrm{T}^{\alpha\beta})=0,~
  \partial_{\alpha} (\Psi^{\alpha\beta})=0,
\end{equation}
where the Einstein summation convention has been used, $\rho$ and $u^\alpha$ denote the rest-mass density and the
four-velocity vector, respectively,
$\partial_{\alpha}$ denotes the covariant derivative operator with respect to
the four-dimensional space-time coordinates $(t,x^1,x^2,x^3)$,
the Greek indices $\alpha,\beta$ run from $0$ to $3$.
In \eqref{eq:RMHD}, the tensor $\Psi^{\alpha\beta}$ can be expressed by $u^\alpha$ and four-dimensional magnetic field $b^\alpha$ as
\begin{equation}
  \Psi^{\alpha\beta}=u^\alpha b^\beta-u^\beta b^\alpha,
\end{equation}
and  the energy-momentum tensor $\mathrm{T}^{\alpha\beta}$  can be decomposed into
the fluid part $\mathrm{T}^{\alpha\beta}_{f}$ and the electromagnetic part
$\mathrm{T}^{\alpha\beta}_{m}$, defined by
\begin{align}
  \mathrm{T}^{\alpha\beta}_{f}&=\rho hu^\alpha u^\beta +pg^{\alpha\beta}, \\
  \mathrm{T}^{\alpha\beta}_{m}&=\bb(u^\alpha u^\beta+g^{\alpha\beta}/2)-b^\alpha b^\beta, \label{eq:RMHD2}
\end{align}
where $p$ and $h=1+e+p/\rho$ are respectively the pressure and specific enthalpy,
with $e$ the specific internal energy.
Throughout this paper, the metric tensor $g^{\alpha\beta}$ is taken as the
Minkowski tensor, i.e. $g^{\alpha\beta}=\pm\diag\{-1,1,1,1\}$, and units in which the speed of light is equal to one will be used.
The relations between the four-vectors $u^\alpha$ and $b^\alpha$ and the spatial components
of the velocity $\bv=(\vx,\vy,\vz)$ and the laboratory magnetic field
$\bm{B}=(\Bx,\By,\Bz)$ are
\begin{align}
  &u^\alpha=W(1,\bv),\\
  &b^\alpha=W\left(\vB,\frac{\bm{B}}{W^2}+\bv(\vB)\right),
\end{align}
where $W=1/\sqrt{1-\abs{\bv}^2}$ is the Lorentz factor. It is easy to
verify the following relations
\begin{align*}
  u^\alpha u_\alpha=-1,\quad u^\alpha b_\alpha=0,\quad \bb\equiv b^\alpha b_\alpha=\frac{\BB}{W^2}+(\vB)^2.
\end{align*}
To close the system \eqref{eq:RMHD}-\eqref{eq:RMHD2}, this paper
considers the equation of state (EOS) for the perfect gas
\begin{equation}\label{eq:EOS}
  p=(\Gamma-1)\rho e,
\end{equation}
with the adiabatic index $\Gamma\in(1,2]$.
The RHD case can be obtained by setting $\bm{B}\equiv \bm{0}$.

Numerical simulation is a powerful way to help us better understand the physical mechanisms in the RHDs and RMHDs.
For the computational purpose, the system \eqref{eq:RMHD}-\eqref{eq:EOS} is rewritten in a
lab frame as follows
\begin{equation}\label{eq:RMHDdiv1}
  \pd{\bU}{t}+\sum_{k=1}^d \pd{\bF_k(\bU)}{x_k}=0,
\end{equation}
with the divergence-free constraint on the magnetic field
\begin{equation}\label{eq:divB}
  \sum_{k=1}^d \pd{B_k}{x_k}=0,
\end{equation}
where $\bU$ and $\bF_k$ are respectively
the conservative variable vector and the flux vector in the $x_k$-direction, and defined by
\begin{equation}\label{eq:RMHDdiv2}
  \begin{aligned}
    &\bU=(D,\bm{m},E,\bm{B})^\mathrm{T},\\
    &\bF_k=(Dv_k,\bm{m}v_k-B_k(\bm{B}/W^2+(\vB)\bv)+\pt\bm{e}_k,m_k,v_k\bm{B}-B_k\bv)^\mathrm{T},
  \end{aligned}
\end{equation}
with the mass density $D=\rho W$, the momentum density
$\bm{m}=(\rho hW^2+\BB)\bv-(\bv\cdot\bm{B})\bm{B}$,
and the energy density $E=DhW-\pt+\BB$.
 Here $\bm{e}_k$ denotes the $k$-th row of the
$d\times d$ unit matrix, and
 $\pt$ denotes the total pressure containing the gas pressure $p$ and
the magnetic pressure $p_m=\frac12\bb$.
Due to no explicit expression for the primitive variables
$(\rho,\bv,p,\bm{B})^\mathrm{T}$ and the flux $\bF_k$ in terms of $\bU$,
a nonlinear algebraic equation, see e.g. \cite{Koldoba2002An}, has to be solved in order to recover the values of the primitive variables and the flux from the given $\bU$.
It is obvious that the nonlinearity of \eqref{eq:RMHDdiv1}-\eqref{eq:RMHDdiv2} becomes much
stronger than the non-relativistic case due to the relativistic effect,
thus its analytical treatment is very challenging.
The first numerical work may date back to the artificial viscosity method
for the RHD equations in the Lagrangian coordinates \cite{May1966Hydrodynamic,May1967Stellar}
and the Eulerian coordinates \cite{Wilson1972Numerical}.
Since the early 1990s, the modern shock-capturing methods were extended to the
RHD and  RMHD equations, such as
the Roe-type scheme \cite{Anton2010Relativistic,Eulderink1994General},
the Harten-Lax-van Leer (HLL) method \cite{DelZanna2002An,DelZanna2003An,Schneider1993New},
the Harten-Lax-van Leer-Contact (HLLC) method \cite{Ling2019Physical,Mignone2005An,Mignone2006An},
the Harten-Lax-van Leer-Discontinuities (HLLD) method \cite{Mignone2009A},
the essentially non-oscillatory (ENO) and  weighted ENO (WENO) methods
\cite{Dolezal1995Relativistic,DelZanna2002An,DelZanna2003An},
the piecewise parabolic methods \cite{Marti1996Extension,Mignone2005The},
the Runge-Kutta discontinuous Galerkin (DG) methods with WENO limiter
\cite{Zhao2013Discontinuous,Zhao2017Runge},
the direct Eulerian generalized Riemann problem schemes
\cite{Yang2011A1D,Yang2012A2D,Wu2014A,Wu2016A},
the gas kinetics schemes \cite{Chen2017Anderson, Chen2021Second},
the two-stage fourth-order time discretization \cite{Yuan2020Two},
the adaptive moving mesh methods \cite{He2012RHD,He2012RMHD}, and so on.
Recently, the properties of the admissible state set and the
physical-constraints-preserving (both the rest-mass density and the kinetic
pressure of the numerical solutions are positive and the magnitude of the fluid velocity is less than the speed of light) numerical schemes were well studied for
the RHD and RMHD equations, see \cite{Ling2019Physical, Ling2020A,Wu2015Finite,Wu2016Physical,Wu2017Admissible,Wu2018Admissible,Wu2017Design}.
The readers are also referred to the early review articles
\cite{Font2008Numerical,Marti2003Numerical,Marti2015Grid} for more references.

For the RHD and RMHD equations, the entropy condition is an important property which should be respected according to the second law of thermodynamics.
On the other hand, it is well known that the weak solution of the
quasi-linear hyperbolic conservation laws nay not be unique so that the
entropy condition is needed to single out the unique physical relevant solution among
all the weak solutions.
Thus it is of great significance to seek the entropy stable (ES) schemes (satisfying some discrete or semi-discrete entropy conditions) for the quasi-linear system of hyperbolic conservation laws.
For the scalar conservation laws, the fully-discrete conservative monotone schemes
were nonlinearly stable and satisfied the entropy conditions,
thus they could converge to the entropy solution \cite{Harten1976,Crandall1980Monotone}.
A class of the so-called E-schemes satisfying the  semi-discrete entropy conditions for any convex
entropy was studied in \cite{Osher1984Riemann,Osher1988On}, but they were restricted to the first-order accuracy.
Generally, it is difficult to show that the high-order schemes of
the scalar conservation laws and the schemes for the system of hyperbolic conservation laws
satisfy the entropy inequality for any convex entropy function.
In \cite{Bouchut1996A}, a second-order accurate scheme is shown to satisfy all the entropy conditions, which evolves not only the cell averages but also the solution values at half nodes.
Many researchers are trying to study the high-order accurate
ES schemes, which satisfy the entropy inequality for a given entropy pair.
The two-point entropy conservative (EC) flux and corresponding second-order EC schemes (satisfying the   semi-discrete entropy
identity) were proposed in \cite{Tadmor1987The,Tadmor2003Entropy},
and their higher-order extension was studied in \cite{Lefloch2002Fully}.
It is known that the EC schemes may become oscillatory near the
shock waves so that some additional dissipation terms have to be added
to obtain the ES schemes.
Combining the EC flux with the ``sign'' property of the ENO reconstruction, the
arbitrary high-order ES schemes were constructed by using high-order dissipation terms \cite{Fjordholm2012Arbitrarily}.
The ES schemes were then extended to the finite difference schemes based on summation-by-parts (SBP) operators \cite{Fisher2013High}.
Some ES schemes were also studied in the DG framework, such as the space-time DG formulation \cite{Hughes1986A,Hiltebrand2014Entropy},
the DG spectral element methods \cite{Gassner2013A, Carpenter2014Entropy},
and the nodal DG schemes on the simplex meshes \cite{Chen2017Entropy}.
More ES DG methods can be found in the review articles \cite{Chen2020Review,Gassner2021A}.

Recently, the high-order accurate ES finite difference schemes for the RHD equations were firstly studied in \cite{Duan2020RHD}, in which the dissipation terms built on  the fifth-order WENO reconstruction and the switch function in \cite{Biswas2018Low} was of the fifth-order accuracy and the ``sign" property simultaneously.
Later, the TeCNO scheme  \cite{Fjordholm2012Arbitrarily} was extended to the RHD equations \cite{Bhoriya2020Entropy}, where the dissipation terms were based on the ENO reconstruction.
For the ideal RMHDs,  the high-order accurate ES finite difference schemes  were proposed in \cite{Wu2020Entropy} and the ES DG schemes  were studied in \cite{Duan2020RMHD} by using the symmetrizable RMHD equations and the suitable discretization of the source terms.

In view of the fact that the solutions of the RHD equations often exhibit localized structures,
e.g. containing sharp transitions or discontinuities in relatively localized regions, the second-order accurate ES adaptive moving mesh schemes for the RHD equations are proposed in \cite{Duan2021RHDMM} to improve the efficiency and quality of numerical simulation.
This paper is devoted to extend such ES adaptive moving mesh  schemes
 as the high-order (greater than second-order) accurate  schemes for the RHD and RMHD equations.
The key point is the derivation of the higher-order accurate EC and ES finite difference schemes in the curvilinear coordinates.
For such purpose, one should carefully deal with the discretization of the temporal and spatial metrics and the Jacobian introduced by the coordinate transformation and construct
the high-order EC and ES fluxes with the discrete metrics.
We prove that the suitable linear combinations of the two-point EC flux in the curvilinear coordinates give the high-order EC fluxes, which can be regarded as a refinement of the arbitrarily high-order accurate EC fluxes in the Cartesian coordinates in \cite{Lefloch2002Fully}.
The spatial derivatives in the source terms of the symmetrizable RMHD equations and the geometric conservation laws are discretized by using the linear combinations of the corresponding second-order case to obtain high-order accuracy.
%
%
Based on the proposed high-order accurate EC schemes and the high-order accurate dissipation terms built on the WENO reconstruction, the high-order accurate ES schemes are obtained for the
RHD and RMHD equations in the curvilinear coordinates.
  Several two- and three-dimensional  numerical tests are conducted to validate the shock-capturing ability and high efficiency of our high-order accurate ES adaptive moving mesh methods on the parallel computer system with the MPI communication.
The  numerical results show that the high-order accurate ES adaptive moving mesh schemes outperform both their counterparts on the uniform mesh and the second-order ES adaptive moving mesh schemes \cite{Duan2021RHDMM}.

The paper is organized as follows.
Section \ref{section:EntropyCondition} gives the symmetrizable RMHD equations  in the curvilinear coordinates and  corresponding entropy conditions. It involves the special case of the RHD equations, i.e. \eqref{eq:RMHDdiv1}-\eqref{eq:RMHDdiv2} with $\bm{B}\equiv \bm{0}$.
Section \ref{section:ECScheme} presents the high-order accurate EC finite difference schemes in the curvilinear coordinates,
while Section \ref{section:ESScheme} gives the high-order accurate ES finite difference schemes by adding suitable dissipation terms based on the WENO reconstruction.
The adaptive moving mesh strategy is introduced in Section \ref{section:MM}.
Several numerical tests are conducted in Section \ref{section:NumTests} to validate the high-order accuracy, the shock-capturing ability and the efficiency of the proposed schemes.
Section \ref{section:Conclusion} concludes the work with further remarks.

\section{Entropy conditions for  symmetrizable RMHD equations}\label{section:EntropyCondition}
This section introduces
some basic notations and  the entropy conditions for the symmetrizable RMHD equations.
\begin{definition}\label{def:entropy_pair}\rm
  A strictly convex scalar function $\eta(\bU)$ is called an \emph{entropy function} of the
  system \eqref{eq:RMHDdiv1} if there exists associated entropy
  fluxes $q_k(\bU)$ such that
  \begin{equation}\label{eq:EntropyConsistentCond}
    q_k'(\bU)=\bV^\mathrm{T}\bF_k'(\bU),\ \
    k=1,2,\cdots,d,
  \end{equation}
  where $\bV=\eta'(\bU)^\mathrm{T}$ is called the \emph{entropy variables}, and
  $(\eta,q_k)$ is an {\em entropy pair}.
\end{definition}
For the smooth solutions of \eqref{eq:RMHDdiv1}-\eqref{eq:RMHDdiv2},
multiplying \eqref{eq:RMHDdiv1} by $\bV^\mathrm{T}$ gives the entropy identity
\begin{equation*}
  \pd{\eta(\bU)}{t}+\sum_{k=1}^d\pd{q_k(\bU)}{x_k} = 0.
\end{equation*}
However, if the solutions contain discontinuities, then the above identity does not hold and the weak solutions should be considered.
\begin{definition}\rm
  A weak solution $\bU$ of \eqref{eq:RMHDdiv1}-\eqref{eq:RMHDdiv2} is called an {\em entropy solution} if for
  all entropy functions $\eta$, the inequality
  \begin{equation}\label{eq:entropyineq}
    \pd{\eta(\bU)}{t}+\sum_{k=1}^d\pd{q_k(\bU)}{x_k} \leqslant 0,
  \end{equation}
  holds in the sense of distributions.
\end{definition}

For the system \eqref{eq:RMHDdiv1}-\eqref{eq:RMHDdiv2} with zero magnetic field ($\bm{B}\equiv\bm{0}$), the entropy pair can be defined by the thermodynamic entropy \cite{Duan2020RHD, Rezzolla2013Relativistic} as follows
\begin{equation}\label{eq:EntropyPair}
  \eta(\bU)=-\dfrac{\rho Ws}{\Gamma-1}, \quad q_k(\bU)=\eta v_k,
\end{equation}
where $s=\ln(p/\rho^\Gamma)$ is the thermodynamic entropy,  $\eta$ is a convex function of $\bU$ and $(\eta,q_k)$ satisfies the consistent condition \eqref{eq:EntropyConsistentCond}.
However, when $\bm{B}\not\equiv\bm{0}$,
the function pair in \eqref{eq:EntropyPair} does not satisfy  \eqref{eq:EntropyConsistentCond}, and it can be verified that in general the system \eqref{eq:RMHDdiv1}-\eqref{eq:RMHDdiv2} cannot be symmetrized \cite{Duan2020RMHD, Wu2020Entropy}.
Motivated by the symmetrization of the non-relativistic magnetohydrodynamics \cite{Godunov1972, Powell1994},
some source terms can be added to get a symmetrizable RMHD system as follows \cite{Wu2020Entropy}
\begin{equation}\label{eq:RMHDSymm}
  \pd{\bU}{t}+\sum_{k=1}^d\pd{\bF_k}{x_k}=-\Phi'(\bV)^\mathrm{T}\sum_{k=1}^d \pd{B_k}{x_k},\ \bV:=\eta'(\bU)^\mathrm{T},
\end{equation}
where $\Phi(\bV)$ is a homogeneous function of degree one, i.e. $\Phi=\Phi'(\bV)\bV$, with
\begin{equation}\label{eq:Phi}
  \Phi=\dfrac{\rho W(\vB)}{p},\quad
  \Phi'(\bV)=\left(0,\bm{B}/W^2+\bv(\vB),~\vB,~\bv\right),
\end{equation}
that is to say, the function pair in \eqref{eq:EntropyPair}
can symmetrize the modified RMHD system \eqref{eq:RMHDSymm}
so that it is an entropy pair of \eqref{eq:RMHDSymm}.
The entropy variable $\bV$ can be explicit expressed as
\begin{equation*}
  \bV=\eta'(\bU)^\mathrm{T}=\left(\dfrac{\Gamma-s}{\Gamma-1}+\frac{\rho}{p},
  \dfrac{\rho W\bv^\mathrm{T}}{p}, -\dfrac{\rho W}{p}, \dfrac{\rho (\bm{B}+W^2\bv(\vB))}{pW}\right)^\mathrm{T}.
\end{equation*}
For the smooth solutions, taking the dot product of $\bV$ with \eqref{eq:RMHDSymm} yields the entropy identity
\begin{equation*}
  \pd{\eta(\bU)}{t}+\sum_{k=1}^d\pd{q_k(\bU)}{x_k} = 0,
\end{equation*}
while for the discontinuous solutions,  it is replaced with the entropy inequality
\begin{equation*}
  \pd{\eta(\bU)}{t}+\sum\limits_{k=1}^d\pd{q_k(\bU)}{x_k} \leqslant 0,
\end{equation*}
which holds in the sense of distributions.
One can further define the \emph{entropy potential} $\phi$ and \emph{entropy  flux potential} $\psi_k$ from the given  $(\eta(\bU), q_k(\bU))$ and $\Phi(\bV)$ by
\begin{subequations}\label{eq:EntropyPotential}
  \begin{align}
    \phi:&=\bV^\mathrm{T}\bU-\eta(\bU)=\rho W+\dfrac{\rho W\bb}{2p}, \\
    \psi_k:&=\bV^\mathrm{T} \bF_k(\bU)+\Phi(\bV) B_k-q_k(\bU)=\rho v_kW+\dfrac{\rho v_kW\bb}{2p},
  \end{align}
\end{subequations}
which are important in obtaining the sufficient condition for the two-point EC flux.

Similar to \cite{Duan2021RHDMM}, let us derive the curvilinear coordinate form of the symmetrizable RMHD equations \eqref{eq:RMHDSymm}  and  corresponding entropy conditions.
Let $\Omega_p$ be the physical domain with coordinates $\bm{x}=(x_1,\cdots,x_d)$, in which
\eqref{eq:RMHDSymm} is specified,
and $\Omega_c$ be the computational domain with coordinates $\bm{\xi}=(\xi_1,\cdots,\xi_d)$ that
is artificially chosen for the sake of the
mesh redistribution or movement.
Our adaptive moving meshes for $\Omega_p$ can be generated as the images of a reference mesh in $\Omega_c$ by a time dependent,
differentiable, one-to-one coordinate mapping {$\bx = \bx(\tau, \bm{\xi})$},
which can be written as
\begin{align}\label{eq:transf}
  t=\tau,\ \ \bx=\bx(\tau, \bm{\xi}),\ \
  \bm{\xi}=(\xi_1,\cdots,\xi_d)\in\Omega_c.
\end{align}
Under this transformation, the system \eqref{eq:RMHDSymm} in the coordinates {$(\tau, \bm{\xi})$} reads
\begin{equation}\label{eq:RMHDCurv}
  \pd{\Ucurv}{\tau}+\sum_{k=1}^d\pd{\Fcurv_k}{\xi_k}=-\Phi'(\bV)^\mathrm{T}\sum_{k=1}^d\pd{\Bcurv_k}{\xi_k},
\end{equation}
with
\begin{equation*}
  \Ucurv=J\bU,~
  \Fcurv_k=\left(J\pd{\xi_k}{t}\bU\right)+\sum_{j=1}^d{\left(J\pd{\xi_k}{x_j}\bF_j\right)},~
  \Bcurv_k=\sum_{j=1}^d\left(J\pd{\xi_k}{x_j}B_j\right),
\end{equation*}
where  $J$ denotes the determinant of the Jacobian matrix and
its 3D version is explicitly given by
\begin{equation*}
  J=\det\left(\pd{(t,\bx)}{(\tau,\bm{\xi})}\right)=
  \begin{vmatrix}
    1 & 0 & 0 & 0\\
    \pd{x_1}{\tau} & \pd{x_1}{\xi_1} & \pd{x_1}{\xi_2} & \pd{x_1}{\xi_3} \\
    \pd{x_2}{\tau} & \pd{x_2}{\xi_1} & \pd{x_2}{\xi_2} & \pd{x_2}{\xi_3} \\
    \pd{x_3}{\tau} & \pd{x_3}{\xi_1} & \pd{x_3}{\xi_2} & \pd{x_3}{\xi_3}
  \end{vmatrix}.
\end{equation*}
The metrics should satisfy the following geometric conservation laws (GCLs) consisting of
the volume conservation law (VCL) and the surface conservation laws (SCLs)
\begin{subequations}\label{eq:GCL}
  \begin{align}
    \label{eq:VCL}
    &\text{VCL:}\quad \pd{J}{\tau}+\sum_{k=1}^d\dfrac{\partial}{\partial\xi_k}{\left(J\pd{\xi_k}{t}\right)}=0,\\
    \label{eq:SCL}
    &\text{SCLs:}\quad \sum_{k=1}^d\dfrac{\partial}{\partial\xi_k}{\left(J\pd{\xi_k}{x_j}\right)}=0,~ j=1,\cdots,d.
  \end{align}
\end{subequations}
The former indicates that the volumetric increment of a moving cell must be equal to the sum of the changes along the surfaces that enclose the volume,
while the latter indicates that the cell volumes must be closed by its surfaces \cite{Zhang1993Discrete}.
Those GCLs imply that free-stream solution is preserved by \eqref{eq:RMHDCurv},
in other words, a physical constant state is an exact solution of \eqref{eq:RMHDCurv}.
Finally, by using the GCLs \eqref{eq:GCL}, see \cite{Duan2021RHDMM},  the entropy identity for \eqref{eq:RMHDCurv} in the coordinates $(\tau,\bm{\xi})$ is
\begin{align}\label{eq:RMHDEntropyIdCurv}
  \pd{\left(J\eta\right)}{\tau}+\sum_{k=1}^d\pd{\mathfrak{q}_k}{\xi_k}=0,
\end{align}
with
\begin{equation*}
  \mathfrak{q}_k=\left(J\pd{\xi_k}{t}\eta\right)
  +\sum_{j=1}^d\left(J\pd{\xi_k}{x_j}q_j\right),
\end{equation*}
but when the solutions are not smooth, it is replaced with
the entropy inequality
\begin{align}\label{eq:RMHDEntropyInqCurv}
  \pd{\left(J\eta\right)}{\tau}+\sum_{k=1}^d\pd{\mathfrak{q}_k}{\xi_k}\leqslant0,
\end{align}
which holds in the sense of distribution.

\begin{rmk}\rm
  The continuous GCLs \eqref{eq:GCL} are crucial in deriving the governing equations  \eqref{eq:RMHDCurv},
  the entropy identity \eqref{eq:RMHDEntropyIdCurv} and the entropy inequality \eqref{eq:RMHDEntropyInqCurv}.
Corresponding discrete GCLs will be  important in proving the EC property of our schemes, see Section \ref{section:ECScheme}.
\end{rmk}

\begin{rmk}\rm
  For the RHD case, the entropy variable $\bV$, the entropy potential $\phi$, and the entropy flux potential  $\psi_k$ can be obtained by setting $\bm{B}\equiv\bm{0}$.
\end{rmk}

\section{High-order accurate EC schemes}\label{section:ECScheme}
This section presents the 3D high-order accurate EC finite difference schemes for the RMHD system \eqref{eq:RMHDCurv},
 which gives  corresponding schemes for the RHD equations by setting $\bm{B}\equiv\bm{0}$.
The 1D and 2D schemes in the curvilinear coordinates are given in \ref{app:1DEC} and \ref{app:2DEC}, respectively.
For simplicity, only 3D adaptive moving mesh schemes for the RMHD equations in curvilinear coordinates \eqref{eq:RMHDCurv} on structured meshes are presented hereafter.

\subsection{Two-point EC flux}
To develop the high-order accurate EC schemes, one of the main ingredient is the so-called \emph{two-point EC flux}.

\begin{definition}\rm
  For the RMHD system \eqref{eq:RMHDCurv},
  a numerical flux
  $\widetilde{\Fcurv_k}\Big(\bU_l, \bU_r, \big(J\pd{\xi_k}{\zeta}\big)_l,
  \big(J\pd{\xi_k}{\zeta}\big)_r \Big)$ is called two-point EC flux, ${\zeta}=t,x_1,x_2,x_3$,
  if it is consistent with $\Fcurv_k$
  and satisfies
  \begin{align}\label{eq:ECConditionCurv}
    \left(\bV(\bU_r)-\bV(\bU_l)\right)^\mathrm{T}&\widetilde{\Fcurv_k}=
    ~\dfrac12\left(\left(J\pd{\xi_k}{t}\right)_l+\left(J\pd{\xi_k}{t}\right)_r\right)\left(\phi(\bU_r)-\phi(\bU_l)\right) \nonumber\\
    &+\sum_{j=1}^3\dfrac12\left(\left(J\pd{\xi_k}{x_j}\right)_l+\left(J\pd{\xi_k}{x_j}\right)_r\right)\left(\psi_{j}(\bU_r)-\psi_{j}(\bU_l)\right) \nonumber\\
    &-\sum_{j=1}^3\dfrac14\left(\left(J\pd{\xi_k}{x_j}\right)_l+\left(J\pd{\xi_k}{x_j}\right)_r\right)\left(B_{j}(\bU_l)+B_{j}(\bU_r)\right)\left(\Phi(\bU_r)-\Phi(\bU_l)\right),
  \end{align}
  where $\Phi$ and $\phi,\psi_j$ are defined in \eqref{eq:Phi} and \eqref{eq:EntropyPotential}, respectively.
\end{definition}

\begin{rmk}\rm
  If $\bm{B}\equiv\bm{0}$, then \eqref{eq:ECConditionCurv} reduces to the RHD case  \cite{Duan2021RHDMM},
  while, if $(t,\bm{x})\equiv (\tau,\bm{\xi})$, then \eqref{eq:ECConditionCurv} reduces to  the Cartesian coordinate case \cite{Wu2020Entropy}.
\end{rmk}

What follows is to find such a two-point EC flux satisfying \eqref{eq:ECConditionCurv}.
Similar to \cite{Duan2021RHDMM}, the EC flux can be chosen as follows
\begin{align}\label{eq:ECFluxCurv}
  \widetilde{\Fcurv_k}\Big(\bU_l,\bU_r,
  \big(J\pd{\xi_k}{\zeta}\big)_l,
  \big(J\pd{\xi_k}{\zeta}\big)_r\Big)
  =&~\UCurvFlux(l;r) + \sum_{j=1}^3\FCurvFlux_j(l;r),
\end{align}
where   ${\zeta}=t,x_1,x_2,x_3$, and
\begin{equation}\label{eq:UFcircle}
  \UCurvFlux(l;r)=\dfrac12\left(\left(J\pd{\xi_k}{t}\right)_l + \left(J\pd{\xi_k}{t}\right)_r\right)\widetilde{\bU},~
  \FCurvFlux_j(l;r)=\dfrac12\left(\left(J\pd{\xi_k}{x_j}\right)_l + \left(J\pd{\xi_k}{x_j}\right)_r\right)\widetilde{\bF}_j,
\end{equation}
with
$\widetilde{\bU}$ and $\widetilde{\bF}_j$ satisfying
\begin{align*}
  &\left(\bV_r-\bV_l\right)^\mathrm{T}\widetilde{\bU}=\phi_r - \phi_l,\\
  &\left(\bV_r-\bV_l\right)^\mathrm{T}\widetilde{\bF}_j=
  \left[(\psi_j)_r - (\psi_j)_l\right]-\dfrac12\left[(B_j)_l+(B_j)_r\right]\left(\Phi_r-\Phi_l\right).
\end{align*}
For  the RMHD system \eqref{eq:RMHDCurv},
$\widetilde{\bF}_j^{\scriptsize\text{RMHD}}$  in \cite{Duan2020RMHD} is used, while
$\widetilde{\bU}^{\scriptsize\text{RMHD}}$
can be given by  following the  derivation of
$\widetilde{\bF}_j^{\scriptsize\text{RMHD}}$ and reads
\begin{align*}
  &\widetilde{\bU}^{\scriptsize\text{RMHD}}=
  \begin{pmatrix}
    \meanln{\rho}\mean{W}\\
    \mean{\ux}\widetilde{U}_5^{\scriptsize\text{RMHD}}/\mean{W}+R_2/\mean{\beta}\\
    \mean{\uy}\widetilde{U}_5^{\scriptsize\text{RMHD}}/\mean{W}+R_3/\mean{\beta}\\
    \mean{\uz}\widetilde{U}_5^{\scriptsize\text{RMHD}}/\mean{W}+R_4/\mean{\beta}\\
    \mathcal{D}^{-1}\left[\mean{\ux}R_2+\mean{\uy}R_3+\mean{\uz}R_4-\mean{\beta}R_1\right]\\
    \left(\mean{W}^2\mean{b^x}-\mean{Wb^0}\mean{u^x}\right)/\mean{W}\\
    \left(\mean{W}^2\mean{b^y}-\mean{Wb^0}\mean{u^y}\right)/\mean{W}\\
    \left(\mean{W}^2\mean{b^z}-\mean{Wb^0}\mean{u^z}\right)/\mean{W}\\
  \end{pmatrix},
\end{align*}
where
\begin{align*}
  \mathcal{D}&={\mean{\beta}(\mean{W}^2-\sum_{k=1}^3\mean{u^k}^2)}/{\mean{W}},~\beta=\rho/p,\\
  R_1&=-\alpha_0\widetilde{U}_1^{\scriptsize\text{RMHD}}-\frac12\mean{W(b^0)^2}
  +\sum_{k=1}^2\left[\frac12\mean{W}\mean{(b^k)^2}
  -\mean{b^k}\widetilde{U}_{k+5}^{\scriptsize\text{RMHD}}\right],\\
  R_2&=\left(\alpha_1\mean{u^x}
  -\mean{\beta}\mean{Wb^0}\mean{b^x}\right)/\mean{W},\\
  R_3&=\left(\alpha_1\mean{u^y}
  -\mean{\beta}\mean{Wb^0}\mean{b^y}\right)/\mean{W},\\
  R_4&=\left(\alpha_1\mean{u^z}
  -\mean{\beta}\mean{Wb^0}\mean{b^z}\right)/\mean{W},\\
  \alpha_0&=1+1/(\Gamma-1)/\meanln{\beta},\\
  \alpha_1&=\mean{\rho}+\frac12\mean{\beta}\sum_{k=1}^3\mean{(b^k)^2}+\frac12\mean{\beta}\mean{W(b^0)^2}/\mean{W},
\end{align*}
here $\meanln{a}=\jump{a}/\jump{\ln{a}}$ is the logarithmic mean, see \cite{Ismail2009Affordable},
and $\widetilde{\bU}_n^{\rm RMHD}$ denotes the $n$th component of $\widetilde{\bU}^{\rm RMHD}$.
For the RHD equations, a two-point EC flux in curvilinear coordinates can be found in \cite{Duan2021RHDMM}.

\subsection{Discretization of  RMHD system and VCL}
Assume that the 3D computational domain $\Omega_c$
is chosen as a cuboid for convenience, e.g.
$[a_1,b_1]\times[a_2,b_2]\times[a_3,b_3]$,
and divided into a fixed orthogonal mesh
$\{(\xi_{1,i_1},\xi_{2,i_2},\xi_{3,i_3})$:\\
  $a_k=\xi_{k,0}<\xi_{k,1}<\cdots<\xi_{k,i_k}
<\cdots<\xi_{k,N_k-1}=b_k$, $k=1,2,3\}$
with the constant step-size $\Delta \xi_k=\xi_{k,i_k+1}-\xi_{k,i_k}$.
For the sake of brevity, the index $\bm{i}=(i_1,i_2,i_3)$ is used to denote the point
$(\xi_{1,i_1},\xi_{2,i_2},\xi_{3,i_3})$,
and the subscript $\{\bm{i},k,n\}$ denotes the index $\bm{i}$ increases $n$ in the $i_k$-direction,
e.g., $\{\bm{i},3,\frac12\}$ denotes $(i_1, i_2, i_3+\frac12)$.

Based on the above notations, consider the following $2p$th-order ($p\geq1$) semi-discrete conservative finite difference schemes for the RMHD system \eqref{eq:RMHDCurv} and the VCL \eqref{eq:GCL}
\begin{align}
  \label{eq:RMHDSemiU_O2p}
  &\dfrac{\dd}{\dd t}\bm{\mathcal{U}}_{\bm{i}}=
  -\sum_{k=1}^3\dfrac{1}{\Delta \xi_k}\left((\widetilde{\Fcurv_k})_{\bm{i},k,+\frac12}^{\twop}-(\widetilde{\Fcurv_k})_{\bm{i},k,-\frac12}^{\twop}\right)
  -\Phi'(\bV_{\bm{i}})^\mathrm{T}\sum_{k=1}^3\dfrac{1}{\Delta \xi_k}\left((\widetilde{\Bcurv_k})_{\bm{i},k,+\frac12}^{\twop}-(\widetilde{\Bcurv_k})_{\bm{i},k,-\frac12}^{\twop}\right),
  \\
  \label{eq:RMHDSemiJ_O2p}
  &\dfrac{\dd}{\dd t}J_{\bm{i}}=
  -\sum_{k=1}^3\dfrac{1}{\Delta \xi_k}\Bigg(\left(\widetilde{J\pd{\xi_k}{t}}\right)_{\bm{i},k,+\frac12}^{\twop}-\left(\widetilde{J\pd{\xi_k}{t}}\right)_{\bm{i},k,-\frac12}^{\twop}\Bigg),
\end{align}
where $J_{\bm{i}}(t)$ and $\Ucurv_{\bm{i}}(t)$ approximate the point values of
$J\left(t,\bm{\xi}\right)$ and $\Ucurv(t,\bm{\xi})$ at $\bm{i}$, respectively,
and $(\widetilde{\Fcurv_k})_{\bm{i},k,\pm\frac12}^{\twop}$,
$(\widetilde{\Bcurv_k})_{\bm{i},k,\pm\frac12}^{\twop}$,
$\left(\widetilde{J\pd{\xi_k}{t}}\right)_{\bm{i},k,\pm\frac12}^{\twop}$
are the numerical fluxes used in the approximations of the flux derivative, source terms, and spatial derivatives in the VCL, respectively.

The high-order ($p>1$) accurate EC schemes \eqref{eq:RMHDSemiU_O2p} for the system \eqref{eq:RMHDCurv} are mainly built on the following parts.
\begin{enumerate}
  \item  For the given entropy pair, the two-point EC flux $\widetilde{\Fcurv_k}$  is first derived from \eqref{eq:ECConditionCurv}, and then the high-order EC flux $(\widetilde{\Fcurv_k})_{\bm{i},k,\pm\frac12}^{\twop}$
is gotten by some linear combination of the two-point EC flux $\widetilde{\Fcurv_k}$ in \eqref{eq:ECFluxCurv},
such that the approximation of the flux derivative $\pd{\Fcurv_k}{\xi_k}$ is $2p$th-order accurate. It is considered as
an extension of the high-order accurate EC schemes in the Cartesian coordinates \cite{Lefloch2002Fully} to the curvilinear coordinates.

  \item Compute $(\widetilde{\Bcurv_k})_{\bm{i},k,\pm\frac12}^{\twop}$ and
  $\Big(\widetilde{J\pd{\xi_k}{t}}\Big)_{\bm{i},k,\pm\frac12}^{\twop}$ by  the same linear combinations of corresponding 2nd-order case  as that of the $2p$th-order EC flux, so that the approximations of the spatial derivatives in source terms and the VCL are also $2p$th-order accurate. The discretization of the latter degenerates to the $2p$th-order accurate central difference.

  \item The metrics $\Big(\widetilde{J\pd{\xi_k}{x_j}}\Big)_{\bm{i}}$ used in the above two parts are discretized by the $2p$th-order central difference based on the conservative metrics method (CMM) \cite{Thomas1979} such that the SCLs hold in the discrete level.
  \item The schemes \eqref{eq:RMHDSemiU_O2p}-\eqref{eq:RMHDSemiJ_O2p} can be proved to be $2p$th-order accurate and EC by combing the above three parts, which mimics the derivation of the continuous entropy identity \eqref{eq:RMHDEntropyIdCurv} in the curvilinear coordinates.
\end{enumerate}
The first two parts are given in Proposition \ref{prop:ECScheme_HOacc},
the third is addressed in Section \ref{subsec:GCLs},
and the last one is summarized in Theorem \ref{thm:ECScheme_HOEC}.

\begin{proposition}\rm\label{prop:ECScheme_HOacc}
  If the $2p$th-order fluxes $(\widetilde{\Fcurv_k})_{\bm{i},k,\pm\frac12}^{\twop}$, $(\widetilde{\Bcurv_k})_{\bm{i},k,\pm\frac12}^{\twop}$ and
  $\Big(\widetilde{J\pd{\xi_k}{t}}\Big)_{\bm{i},k,\pm\frac12}^{\twop}$ are chosen as follows
  \begin{align}
    \label{eq:ECFlux_O2p}
    &(\widetilde{\Fcurv_k})_{\bm{i},k,+\frac12}^{\twop}=
    \sum_{n=1}^p\alpha_{p,n}\sum_{s=0}^{n-1}
    \widetilde{\Fcurv_k}\left(\bU_{\bm{i},k,-s},\bU_{\bm{i},k,-s+n},
    \left(J\pd{\xi_k}{\zeta}\right)_{\bm{i},k,-s},
    \left(J\pd{\xi_k}{\zeta}\right)_{\bm{i},k,-s+n}\right), \\
    \label{eq:BFlux_O2p}
    &(\widetilde{\Bcurv_k})_{\bm{i},k,+\frac12}^{\twop}=
    \sum_{n=1}^p\alpha_{p,n}\sum_{s=0}^{n-1}
    \widetilde{\Bcurv_k}\left(B_{\bm{i},k,-s},B_{\bm{i},k,-s+n},\left(J\pd{\xi_k}{x_j}\right)_{\bm{i},k,-s},\left(J\pd{\xi_k}{x_j}\right)_{\bm{i},k,-s+n}\right), \\
    \label{eq:MetricsFlux_O2p}
    &\left(\widetilde{J\pd{\xi_k}{\zeta}}\right)_{\bm{i},k,+\frac12}^{\twop}=
    \sum_{n=1}^p\alpha_{p,n}\sum_{s=0}^{n-1}
    \left(\widetilde{J\pd{\xi_k}{\zeta}}\right)
    \left(\left(J\pd{\xi_k}{\zeta}\right)_{\bm{i},k,-s},
    \left(J\pd{\xi_k}{\zeta}\right)_{\bm{i},k,-s+n}\right),
  \end{align}
  where $\zeta=t,x_1,x_2,x_3$, $\widetilde{\Bcurv_k}$ and $\left(\widetilde{J\pd{\xi_k}{\zeta}}\right)$ are corresponding 2nd-order case as follows
  \begin{align}
    \label{eq:BFlux_O2}
    &\widetilde{\Bcurv_k}\left((B_j)_l, (B_j)_r,\left(J\pd{\xi_k}{x_j}\right)_{l},\left(J\pd{\xi_k}{x_j}\right)_{r}\right)=\sum_{j=1}^3
    \dfrac14\Bigg(\left(J\pd{\xi_k}{x_j}\right)_{l} + \left(J\pd{\xi_k}{x_j}\right)_{r}\Bigg)\left((B_j)_{l} + (B_j)_{r}\right), \\
    \label{eq:MetricsFlux_O2}
    &\left(\widetilde{J\pd{\xi_k}{\zeta}}\right)
    \left(\left(J\pd{\xi_k}{\zeta}\right)_{l},
    \left(J\pd{\xi_k}{\zeta}\right)_{r}\right)=
    \dfrac12\Bigg(\left(J\pd{\xi_k}{\zeta}\right)_{l} + \left(J\pd{\xi_k}{\zeta}\right)_{r}\Bigg),
  \end{align}
  and the coefficients in the linear combinations satisfy the constraints \cite{Lefloch2002Fully}
  \begin{equation}\label{eq:HOCoeff}
    \sum_{n=1}^p n\alpha_{p,n} = 1,~
    \sum_{n=1}^p n^{2s-1}\alpha_{p,n} = 0,~s=2,\cdots,p,
  \end{equation}
 then the  approximation of the flux derivative $\pd{\Fcurv_k}{\xi_k}$ is $2p$th-order accurate, i.e.
  \begin{equation}
    \dfrac{1}{\Delta\xi_k}\left((\widetilde{\Fcurv_k})_{\bm{i},k,+\frac12}^{\twop}
    -(\widetilde{\Fcurv_k})_{\bm{i},k,-\frac12}^{\twop}\right)
    =\pd{\Fcurv_k}{\xi_k}\Bigg|_{\bm{i}}+\mathcal{O}\left(\Delta\xi_k^{2p}\right),~k=1,2,3.
  \end{equation}
  Similarly, the approximations of the source terms and the spatial derivatives in the VCL are also $2p$th-order accurate.
\end{proposition}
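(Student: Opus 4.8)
The plan is to reduce the whole statement to the one-dimensional accuracy analysis of a single consistent, symmetric two-point flux, exactly as in \cite{Lefloch2002Fully}, but carried out for the metric-augmented flux $\widetilde{\Fcurv_k}$. First I would record the two structural properties that the construction \eqref{eq:ECFluxCurv}--\eqref{eq:UFcircle} enjoys: \emph{consistency}, $\widetilde{\Fcurv_k}(\bU,\bU,m,m)=\Fcurv_k$ when both state slots and both metric slots coincide (since $\widetilde{\bU}\to\bU$, $\widetilde{\bF}_j\to\bF_j$ and each arithmetic mean $\frac12(m_l+m_r)\to m$), and \emph{symmetry} under the simultaneous swap $(\bU_l,m_l)\leftrightarrow(\bU_r,m_r)$ (the arithmetic means are symmetric, and $\widetilde{\bU},\widetilde{\bF}_j$ are symmetric EC fluxes). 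These are the only features of the flux the argument uses, so the metric dependence is harmless: the effective smooth flux being approximated is $\Fcurv_k(\xi_k)=\big(J\pd{\xi_k}{t}\big)\bU+\sum_j\big(J\pd{\xi_k}{x_j}\big)\bF_j$, a product of smooth metrics with the smooth state.

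Second I would establish the telescoping identity that is the heart of the construction. Writing the inner sum in \eqref{eq:ECFlux_O2p} as $(\widetilde{\Fcurv_k})^{(n)}_{\bm{i},k,+\frac12}:=\sum_{s=0}^{n-1}\widetilde{\Fcurv_k}(\bU_{\bm{i},k,-s},\bU_{\bm{i},k,-s+n},\cdots)$, a direct cancellation of the overlapping pairs gives
\begin{equation*}
  (\widetilde{\Fcurv_k})^{(n)}_{\bm{i},k,+\frac12}-(\widetilde{\Fcurv_k})^{(n)}_{\bm{i},k,-\frac12}
  =\widetilde{\Fcurv_k}\big(\bU_{\bm{i}},\bU_{\bm{i},k,+n},\cdots\big)-\widetilde{\Fcurv_k}\big(\bU_{\bm{i},k,-n},\bU_{\bm{i}},\cdots\big),
\end{equation*}
so that only the two extreme ``long-range'' fluxes across the blocks $[\xi_{k,i_k},\xi_{k,i_k+n}]$ and $[\xi_{k,i_k-n},\xi_{k,i_k}]$ survive.

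Third I would Taylor expand these two surviving fluxes about their respective block midpoints $\xi_{k,i_k}\pm\tfrac{n}{2}\Delta\xi_k$. Here symmetry is decisive: expanding the symmetric consistent flux across a block of half-width $a=\tfrac{n}{2}\Delta\xi_k$ produces only even powers of $a$, with leading term $\Fcurv_k$ at the midpoint. Differencing the two midpoint expansions is then a centred difference of $\Fcurv_k$ (and of its even-order correction coefficients), contributing only odd powers of the separation, whence
\begin{equation*}
  \frac{1}{\Delta\xi_k}\Big((\widetilde{\Fcurv_k})^{(n)}_{\bm{i},k,+\frac12}-(\widetilde{\Fcurv_k})^{(n)}_{\bm{i},k,-\frac12}\Big)
  =n\,\pd{\Fcurv_k}{\xi_k}\Big|_{\bm{i}}+\sum_{m\geq1}\beta_m(\bm{i})\,n^{2m+1}\,\Delta\xi_k^{2m},
\end{equation*}
in which every power of $n$ is odd and the coefficients $\beta_m$ are independent of $n$. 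Summing against $\alpha_{p,n}$ and invoking \eqref{eq:HOCoeff}---namely $\sum_n n\alpha_{p,n}=1$ to normalise the leading term, and $\sum_n n^{2s-1}\alpha_{p,n}=0$ for $s=2,\dots,p$ (i.e. the powers $n^3,n^5,\dots,n^{2p-1}$) to annihilate the orders $\Delta\xi_k^2,\dots,\Delta\xi_k^{2p-2}$---leaves exactly $\pd{\Fcurv_k}{\xi_k}|_{\bm{i}}+\mathcal{O}(\Delta\xi_k^{2p})$.

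Finally, the source-term flux $(\widetilde{\Bcurv_k})^{\twop}$ and the VCL flux $\big(\widetilde{J\pd{\xi_k}{t}}\big)^{\twop}$ are treated identically: both \eqref{eq:BFlux_O2} and \eqref{eq:MetricsFlux_O2} are arithmetic-mean, hence consistent and symmetric, two-point fluxes (the latter with the state slot absent, so it degenerates to the $2p$th-order central difference of $J\pd{\xi_k}{t}$), and the same telescoping-plus-midpoint argument applies verbatim. I expect the only delicate point to be the bookkeeping in the midpoint expansion that guarantees the appearance of odd powers of $n$ alone; once the symmetry of the metric-augmented flux is used to rule out the even-power contributions, the matching of the surviving odd powers to the constraints \eqref{eq:HOCoeff} is purely algebraic and mirrors the Cartesian proof of \cite{Lefloch2002Fully}.
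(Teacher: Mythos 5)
Your proof is correct, and it reaches the conclusion by a genuinely different organization of the key Taylor-expansion step. After the telescoping (which the paper uses only implicitly, jumping straight to the two surviving long-range fluxes), the paper keeps the first slot frozen at $\bm{i}$ and expands $\FCurvFlux_j(\bm{i};\bm{i},k,\pm n)$ in its \emph{second} slot about $\xi_{k,i_k}$; the leading coefficient is then supplied by Lemma \ref{prop:Der}, namely $2\,\partial_{\zeta_r}\widetilde{\bF}(\bU(\zeta_l),\bU(\zeta_r))\big|_{\zeta_r=\zeta_l}=\partial_{\zeta}\bF(\bU(\zeta))\big|_{\zeta=\zeta_l}$ together with its product-rule variant \eqref{eq:FcurvFluxDer} that handles the metric-times-flux structure, and the even powers of $n$ cancel in the $\pm n$ difference before \eqref{eq:HOCoeff} kills the odd powers $n^{3},\dots,n^{2p-1}$. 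You instead expand each long-range flux about its block midpoint and use the swap symmetry to conclude that, along a smooth solution, the flux is an \emph{even} function of the half-width $a=\tfrac{n}{2}\Delta\xi_k$; consistency then forces the leading term $\Fcurv_k$ at the midpoint, and the odd-powers-of-$n$ structure comes from centred differencing of the midpoint values and of the even-order coefficient functions. This makes the coincidence-derivative lemma unnecessary, so your route is slightly more self-contained, whereas the paper's lemma also yields \eqref{eq:FcurvFluxDer}, which it reuses elsewhere; both arguments rest on exactly the same hypotheses (consistency, symmetry in the simultaneous swap of state and metric slots, smoothness of the two-point fluxes including the logarithmic means, and \eqref{eq:HOCoeff}). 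The one piece of bookkeeping you flag yourself, that the even-order midpoint coefficients are smooth, $n$-independent functions of the midpoint so their centred differences contribute only odd powers of $n\Delta\xi_k$, does hold under the same smoothness assumption the paper makes in Lemma \ref{prop:Der}, so there is no gap; your explicit telescoping identity and the explicit use of symmetry to reorder the arguments of $\widetilde{\Fcurv_k}(\bU_{\bm{i},k,-n},\bU_{\bm{i}},\cdots)$ are in fact steps the paper's displayed computation passes over silently.
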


To prove such proposition, let us first consider the following Lemma.

\begin{lem}\rm\label{prop:Der}
  If the smooth two-parameter scalar function $\widetilde{f}(u(\zeta_l), u(\zeta_r))$ and  vector-value function $\widetilde{\bF}(\bU(\zeta_l), \bU(\zeta_r))$
satisfy
  \begin{align*}
\mbox{Consistency}&\qquad    \widetilde{f}(u, u) = f(u),~\widetilde{\bF}(\bU, \bU) = \bF(\bU),
\\
\mbox{Symmetry}&\qquad
    \widetilde{f}(u(\zeta_l), u(\zeta_r)) = \widetilde{f}(u(\zeta_r), u(\zeta_l)),~
    \widetilde{\bF}(\bU(\zeta_l), \bU(\zeta_r)) = \widetilde{\bF}(\bU(\zeta_r), \bU(\zeta_l)),
  \end{align*}
  then the following identities hold
  \begin{subequations}
    \begin{align*}
      2\dfrac{\partial}{\partial\zeta_r}\widetilde{f}(u(\zeta_l), u(\zeta_r)) \Big|_{\zeta_r=\zeta_l}
      &=\dfrac{\partial}{\partial\zeta}f(u(\zeta)) \Big|_{\zeta=\zeta_l}, \\
      2\dfrac{\partial}{\partial\zeta_r}\widetilde{\bF}(\bU(\zeta_l), \bU(\zeta_r)) \Big|_{\zeta_r=\zeta_l}
      &=\dfrac{\partial}{\partial\zeta}\bF(\bU(\zeta)) \Big|_{\zeta=\zeta_l}, \\
      2\dfrac{\partial}{\partial\zeta_r}\left[\widetilde{f}(u(\zeta_l), u(\zeta_r))\widetilde{\bF}(\bU(\zeta_l), \bU(\zeta_r))\right] \Big|_{\zeta_r=\zeta_l}
      &=\dfrac{\partial}{\partial\zeta}\left[f(u(\zeta))\bF(\bU(\zeta))\right] \Big|_{\zeta=\zeta_l}.
    \end{align*}
  \end{subequations}
\end{lem}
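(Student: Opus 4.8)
The plan is to reduce all three identities to a single elementary observation: along the diagonal $\zeta_r=\zeta_l$, the first-slot and second-slot partial derivatives of a consistent, symmetric two-point function each equal \emph{one half} of the ordinary derivative of the consistent value. First I would fix notation, writing $\widetilde{f}_1$ and $\widetilde{f}_2$ for the partial derivatives of $\widetilde{f}(a,b)$ with respect to its first and second arguments. Differentiating the symmetry relation $\widetilde{f}(a,b)=\widetilde{f}(b,a)$ with respect to $a$ gives $\widetilde{f}_1(a,b)=\widetilde{f}_2(b,a)$, which on the diagonal $a=b=u$ yields $\widetilde{f}_1(u,u)=\widetilde{f}_2(u,u)$. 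Differentiating the consistency relation $\widetilde{f}(u,u)=f(u)$ with respect to $u$ by the chain rule gives $\widetilde{f}_1(u,u)+\widetilde{f}_2(u,u)=f'(u)$. Combining these forces $\widetilde{f}_1(u,u)=\widetilde{f}_2(u,u)=\tfrac12 f'(u)$, and the same argument applied componentwise gives the analogous statement for $\widetilde{\bF}$.

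The first identity then follows by the chain rule. Since $\zeta_l$ is held fixed, $\pd{}{\zeta_r}\widetilde{f}(u(\zeta_l),u(\zeta_r))=\widetilde{f}_2\big(u(\zeta_l),u(\zeta_r)\big)\,u'(\zeta_r)$; evaluating at $\zeta_r=\zeta_l$ and inserting the diagonal value $\widetilde{f}_2(u(\zeta_l),u(\zeta_l))=\tfrac12 f'(u(\zeta_l))$ produces $\tfrac12 f'(u(\zeta_l))\,u'(\zeta_l)=\tfrac12\pd{}{\zeta}f(u(\zeta))\big|_{\zeta=\zeta_l}$, so multiplying by two gives the claim. The second identity is obtained identically, working componentwise on $\widetilde{\bF}$, since both symmetry and consistency hold in each component.

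For the third identity I would apply the product rule in $\zeta_r$ to $\widetilde{f}\,\widetilde{\bF}$, obtaining $(\partial_{\zeta_r}\widetilde{f})\,\widetilde{\bF}+\widetilde{f}\,(\partial_{\zeta_r}\widetilde{\bF})$. On the diagonal the first two identities supply $\partial_{\zeta_r}\widetilde{f}\to\tfrac12\pd{}{\zeta}f$ and $\partial_{\zeta_r}\widetilde{\bF}\to\tfrac12\pd{}{\zeta}\bF$, while consistency supplies $\widetilde{f}(u,u)=f$ and $\widetilde{\bF}(\bU,\bU)=\bF$; collecting terms gives $\tfrac12\big(\pd{}{\zeta}f\cdot\bF+f\,\pd{}{\zeta}\bF\big)=\tfrac12\pd{}{\zeta}(f\bF)$, and a final factor of two closes the argument. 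I do not expect a genuine obstacle here, as the entire content is the diagonal one-half derivative property; the only point demanding care is the bookkeeping of the two-variable chain and product rules, making sure that differentiation acts solely on the $\zeta_r$-dependence with $\zeta_l$ frozen, and that the restriction $\zeta_r=\zeta_l$ is imposed \emph{after} differentiating rather than before.
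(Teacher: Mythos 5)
Your proposal is correct and follows essentially the same route as the paper: symmetry forces the two slot derivatives to agree on the diagonal, consistency plus the chain rule makes their sum the full derivative (so each slot derivative is one half of it), and the third identity then follows from the product rule together with the first two. The paper phrases the key step at the level of the Jacobian $\partial\widetilde{\bF}/\partial\bU_r$ before composing with $\bU(\zeta)$, but this is only a notational variant of your diagonal one-half derivative property.
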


\begin{proof}
  The first identity is a special case of the second, which comes from \cite{Chan2018On}.
  Utilizing the symmetry and the consistency of $\widetilde{\bF}$ gives
  \begin{align*}
    2\pd{\widetilde{\bF}(\bU_l,\bU_r)}{\bU_r} \Big|_{\bU_r=\bU_l}
    &=\left(\pd{\widetilde{\bF}(\bU_l,\bU_r)}{\bU_l} + \pd{\widetilde{\bF}(\bU_l,\bU_r)}{\bU_r} \right) \Big|_{\bU_r=\bU_l} \\
    &=\pd{\widetilde{\bF}(\bU,\bU)}{\bU} \Big|_{\bU=\bU_l} = \pd{\bF(\bU)}{\bU} \Big|_{\bU=\bU_l}.
  \end{align*}
  Letting $\bU_l=\bU(\zeta_l),\bU_r=\bU(\zeta_r)$ and using the chain rule gives
  \begin{equation*}
    2\dfrac{\partial}{\partial\zeta_r}\widetilde{\bF}(\bU(\zeta_l), \bU(\zeta_r)) \Big|_{\zeta_r=\zeta_l}= \dfrac{\partial}{\partial\zeta}\bF(\bU(\zeta)) \Big|_{\zeta=\zeta_l}.
  \end{equation*}
  The third identity can be obtained as follows
  \begin{align*}
    &2\dfrac{\partial}{\partial\zeta_r}\left[\widetilde{f}(u(\zeta_l), u(\zeta_r))\widetilde{\bF}(\bU(\zeta_l), \bU(\zeta_r))\right] \Big|_{\zeta_r=\zeta_l} \\
    =&\ 2\dfrac{\partial}{\partial\zeta_r}\widetilde{f}(u(\zeta_l), u(\zeta_r)) \Big|_{\zeta_r=\zeta_l} \bF(\bU(\zeta_l))
    +2f(u(\zeta_l))\dfrac{\partial}{\partial\zeta_r}\widetilde{\bF}(\bU(\zeta_l), \bU(\zeta_r)) \Big|_{\zeta_r=\zeta_l} \\
    =&\ \dfrac{\partial}{\partial\zeta}f(u(\zeta)) \Big|_{\zeta=\zeta_l} \bF(\bU(\zeta_l))
    +f(u(\zeta_l))\dfrac{\partial}{\partial\zeta}\bF(\bU(\zeta)) \Big|_{\zeta=\zeta_l} \\
    =&\ \dfrac{\partial}{\partial\zeta}\left[f(u(\zeta))\bF(\bU(\zeta))\right] \Big|_{\zeta=\zeta_l},
  \end{align*}
  where the first equality uses the product rule.
\end{proof}

Based on the above Lemma, it is ready to prove Proposition \ref{prop:ECScheme_HOacc}.

\begin{proof}
  It suffices to consider the $i_k$-direction and to assume
  the other two independent variables to be fixed and omitted in the following expressions by using ``$\cdots$".
  If taking $\zeta_l=\hat{\xi}_{k},\zeta_r=\tilde{\xi}_{k}$, and
  $$\widetilde{f}=\dfrac12\left(\left(J\pd{\xi_k}{x_j}\right)(\cdots,\hat{\xi}_k,\cdots)+ \left(J\pd{\xi_k}{x_j}\right)(\cdots,\tilde{\xi}_k,\cdots)\right),$$
  $$\widetilde{\bF}=\widetilde{\bF}_j(\bU(\cdots,\hat{\xi}_k,\cdots),\bU(\cdots,\tilde{\xi}_k,\cdots)),$$
  in Proposition \ref{prop:Der}, then one has
  \begin{align}\label{eq:FcurvFluxDer}
    &\dfrac{\partial}{\partial\tilde{\xi}_k}\left[\dfrac12\left(\left(J\pd{\xi_k}{x_j}\right)(\cdots,\hat{\xi}_k,\cdots)+ \left(J\pd{\xi_k}{x_j}\right)(\cdots,\tilde{\xi}_k,\cdots)\right)
    \widetilde{\bF}_j(\bU(\cdots,\hat{\xi}_k,\cdots),\bU(\cdots,\tilde{\xi}_k,\cdots))\right] \Big|_{\tilde{\xi}_k=\hat{\xi}_k} \nonumber\\
    =\ & \dfrac12\dfrac{\partial}{\partial\xi_k}\left[\left(J\pd{\xi_k}{x_j}\right)(\cdots,\xi_{k},\cdots)\bF_j(\bU(\dots,\xi_{k},\dots))\right] \Big|_{\xi_k=\hat{\xi}_k}.
  \end{align}
 If  utilizing \eqref{eq:FcurvFluxDer}, then one can expand $\FCurvFlux_j(\bm{i}; \bm{i},k,\pm n)$ defined in \eqref{eq:UFcircle} at $\xi_{k,i_k}$ by using Taylor series as follows
  \begin{align*}
    \FCurvFlux_j(\bm{i}; \bm{i},k,\pm n)	=&\left[\left(J\pd{\xi_k}{x_j}\right)\bF_j\right]_{\bm{i}} \pm \dfrac{n\Delta \xi_k}{2}\dfrac{\partial}{\partial\xi_k}\left[\left(J\pd{\xi_k}{x_j}\right)\bF_j\right]_{\bm{i}} \\
    &+ \sum_{s=2}^{2p}\dfrac{(\pm n\Delta \xi_k)^s}{s!}{\partial_{\xi_k}^s}\FCurvFlux_j(\bm{i}; \bm{i}) + \mathcal{O}\left(\Delta \xi_k^{2p+1}\right),
  \end{align*}
  so that their difference becomes
  \begin{align*}
    \FCurvFlux_j(\bm{i}; \bm{i},k,+n)-\FCurvFlux_j(\bm{i}; \bm{i},k,-n)
    =&\  n\Delta\xi_k\dfrac{\partial}{\partial\xi_k}\left[\left(J\pd{\xi_k}{x_j}\right)\bF_j\right]_{\bm{i}}\\
    &+ 2\sum_{s=2}^{p} \dfrac{(n\Delta\xi_k)^{2s-1}}{(2s-1)!}{\partial_{\xi_k}^{2s-1}}\FCurvFlux_j(\bm{i}; \bm{i})
    + \mathcal{O}\left(\Delta \xi_k^{2p+1}\right).
  \end{align*}
  Similarly, it can be verified that
  \begin{align*}
    \UCurvFlux(\bm{i}; \bm{i},k,+n)-\UCurvFlux(\bm{i}; \bm{i},k,-n)
    =&\  n\Delta\xi_k\dfrac{\partial}{\partial\xi_k}\left[\left(J\pd{\xi_k}{t}\right)\bU\right]_{\bm{i}}\\
    &+ 2\sum_{s=2}^{p} \dfrac{(n\Delta\xi_k)^{2s-1}}{(2s-1)!}{\partial_{\xi_k}^{2s-1}}\UCurvFlux(\bm{i}; \bm{i})
    + \mathcal{O}\left(\Delta \xi_k^{2p+1}\right).
  \end{align*}
Based on those,  one gets
  \begin{align*}
    &\dfrac{1}{\Delta\xi_k}\left((\widetilde{\Fcurv_k})_{\bm{i},k,+\frac12}^{\twop}
    -(\widetilde{\Fcurv_k})_{\bm{i},k,-\frac12}^{\twop}\right)\\
    =&\sum_{n=1}^{p}\alpha_{p,n}\left(\UCurvFlux(\bm{i};\bm{i},k,+n) + \sum_{j=1}^3\FCurvFlux_j(\bm{i};\bm{i},k,+n)
    - \UCurvFlux(\bm{i};\bm{i},k,-n) - \sum_{j=1}^3\FCurvFlux_j(\bm{i};\bm{i},k,-n)\right) \\
    =&\left(\sum_{n=1}^{p}n\alpha_{p,n}\right)\pd{\Fcurv_k}{\xi_k}\Big|_{\bm{i}}
    + \dfrac{2\Delta\xi_k^{2s-2}}{(2s-1)!}\sum_{s=2}^{p}\left(\sum_{n=1}^{p}n^{2s-1}\alpha_{p,n}\right) \dfrac{\partial^{2s-1}{\left( \UCurvFlux({\bm{i}};{\bm{i}})+\sum_{j=1}^3\FCurvFlux_j({\bm{i}};{\bm{i}}) \right)}}{\partial\xi_k^{2s-1}}
    + \mathcal{O}\left(\Delta \xi_k^{2p}\right)\\
    =&\pd{\Fcurv_k}{\xi_k}\Bigg|_{\bm{i}}+\mathcal{O}\left(\Delta\xi_k^{2p}\right),~k=1,2,3,
  \end{align*}
  where the last equality uses the constraints \eqref{eq:HOCoeff}.
  Similarly it can be proved that the approximations of the source terms and the spatial derivatives in the VCL are also $2p$th-order accurate.
\end{proof}

\subsection{Discrete GCLs}\label{subsec:GCLs}
This section introduces some appropriate discretizations of the spatial metrics $\left(J\pd{\xi_k}{x_j}\right)_{\bm{i}}$
and the temporal metrics $\left(J\pd{\xi_k}{t}\right)_{\bm{i}}$
in order to get the discrete SCLs
\begin{equation}\label{eq:DiscSCL_O2p}
  \sum_{k=1}^3\dfrac{1}{\Delta \xi_k}\left(\left(\widetilde{J\pd{\xi_k}{x_j}}\right)_{\bm{i},k,+\frac12}^{\twop}
  -\left(\widetilde{J\pd{\xi_k}{x_j}}\right)_{\bm{i},k,-\frac12}^{\twop}\right)=0,~j=1,2,3.
\end{equation}
and  the discrete VCL.

For the smooth transformation \eqref{eq:transf},
the following identities hold
\begin{equation*}
  \begin{aligned}
    J\pd{\xi_1}{x_1}=\pd{x_2}{\xi_2}\pd{x_3}{\xi_3}-\pd{x_2}{\xi_3}\pd{x_3}{\xi_2},~
    J\pd{\xi_1}{x_2}=\pd{x_3}{\xi_2}\pd{x_1}{\xi_3}-\pd{x_3}{\xi_3}\pd{x_1}{\xi_2},~
    J\pd{\xi_1}{x_3}=\pd{x_1}{\xi_2}\pd{x_2}{\xi_3}-\pd{x_1}{\xi_3}\pd{x_2}{\xi_2},\\
    J\pd{\xi_2}{x_1}=\pd{x_2}{\xi_3}\pd{x_3}{\xi_1}-\pd{x_2}{\xi_1}\pd{x_3}{\xi_3},~
    J\pd{\xi_2}{x_2}=\pd{x_3}{\xi_3}\pd{x_1}{\xi_1}-\pd{x_3}{\xi_1}\pd{x_1}{\xi_3},~
    J\pd{\xi_2}{x_3}=\pd{x_1}{\xi_3}\pd{x_2}{\xi_1}-\pd{x_1}{\xi_1}\pd{x_2}{\xi_3},\\
    J\pd{\xi_3}{x_1}=\pd{x_2}{\xi_1}\pd{x_3}{\xi_2}-\pd{x_2}{\xi_2}\pd{x_3}{\xi_1},~
    J\pd{\xi_3}{x_2}=\pd{x_3}{\xi_1}\pd{x_1}{\xi_2}-\pd{x_3}{\xi_2}\pd{x_1}{\xi_1},~
    J\pd{\xi_3}{x_3}=\pd{x_1}{\xi_1}\pd{x_2}{\xi_2}-\pd{x_1}{\xi_2}
    \pd{x_2}{\xi_1},
  \end{aligned}
\end{equation*}
which can be reformulated into the divergence form
\begin{equation}\label{eq:CMM_SCL}
  \begin{aligned}
    &J\pd{\xi_1}{x_1}=\dfrac{\partial}{\partial\xi_3}\left(\pd{x_2}{\xi_2}x_3\right)-\dfrac{\partial}{\partial\xi_2}\left(\pd{x_2}{\xi_3}x_3\right),~
    J\pd{\xi_1}{x_2}=\dfrac{\partial}{\partial\xi_3}\left(\pd{x_3}{\xi_2}x_1\right)-\dfrac{\partial}{\partial\xi_2}\left(\pd{x_3}{\xi_3}x_1\right),\\
    &J\pd{\xi_1}{x_3}=\dfrac{\partial}{\partial\xi_3}\left(\pd{x_1}{\xi_2}x_2\right)-\dfrac{\partial}{\partial\xi_2}\left(\pd{x_1}{\xi_3}x_2\right),\\
    &J\pd{\xi_2}{x_1}=\dfrac{\partial}{\partial\xi_1}\left(\pd{x_2}{\xi_3}x_3\right)-\dfrac{\partial}{\partial\xi_3}\left(\pd{x_2}{\xi_1}x_3\right),~
    J\pd{\xi_2}{x_2}=\dfrac{\partial}{\partial\xi_1}\left(\pd{x_3}{\xi_3}x_1\right)-\dfrac{\partial}{\partial\xi_3}\left(\pd{x_3}{\xi_1}x_1\right),\\
    &J\pd{\xi_2}{x_3}=\dfrac{\partial}{\partial\xi_1}\left(\pd{x_1}{\xi_3}x_2\right)-\dfrac{\partial}{\partial\xi_3}\left(\pd{x_1}{\xi_1}x_2\right),\\
    &J\pd{\xi_3}{x_1}=\dfrac{\partial}{\partial\xi_2}\left(\pd{x_2}{\xi_1}x_3\right)-\dfrac{\partial}{\partial\xi_1}\left(\pd{x_2}{\xi_2}x_3\right),~
    J\pd{\xi_3}{x_2}=\dfrac{\partial}{\partial\xi_2}\left(\pd{x_3}{\xi_1}x_1\right)-\dfrac{\partial}{\partial\xi_1}\left(\pd{x_3}{\xi_2}x_1\right),\\
    &J\pd{\xi_3}{x_3}=\dfrac{\partial}{\partial\xi_2}\left(\pd{x_1}{\xi_1}x_2\right)
    -\dfrac{\partial}{\partial\xi_1}\left(\pd{x_1}{\xi_2}x_2\right).
  \end{aligned}
\end{equation}
Those are useful to compute the discrete metrics and to obtain the discrete SCLs by the CMM \cite{Thomas1979}.
Using the same discretizations for the first-order
spatial derivatives in \eqref{eq:CMM_SCL} gives
\begin{equation}\label{eq:SCLCoeff}
  \begin{aligned}
    &\left(J\pd{\xi_1}{x_1}\right)_{\bm{i}}=\dfrac{1}{\Delta\xi_2\Delta\xi_3}
    \left(\delta_3\left[\delta_2\left[x_2\right]x_3\right]-\delta_2\left[\delta_3\left[x_2\right]x_3\right]\right),~
    \left(J\pd{\xi_1}{x_2}\right)_{\bm{i}}=\dfrac{1}{\Delta\xi_2\Delta\xi_3}
    \left(\delta_3\left[\delta_2\left[x_3\right]{x_1}\right]-\delta_2\left[\delta_3\left[x_3\right]{x_1}\right]\right),\\
    &\left(J\pd{\xi_1}{x_3}\right)_{\bm{i}}=\dfrac{1}{\Delta\xi_2\Delta\xi_3}
    \left(\delta_3\left[\delta_2\left[x_1\right]{x_2}\right]-\delta_2\left[\delta_3\left[x_1\right]{x_2}\right]\right),\\
    &\left(J\pd{\xi_2}{x_1}\right)_{\bm{i}}=\dfrac{1}{\Delta\xi_3\Delta\xi_1}
    \left(\delta_1\left[\delta_3\left[x_2\right]{x_3}\right]-\delta_3\left[\delta_1\left[x_2\right]{x_3}\right]\right),~
    \left(J\pd{\xi_2}{x_2}\right)_{\bm{i}}=\dfrac{1}{\Delta\xi_3\Delta\xi_1}
    \left(\delta_1\left[\delta_3\left[x_3\right]{x_1}\right]-\delta_3\left[\delta_1\left[x_3\right]{x_1}\right]\right),\\
    &\left(J\pd{\xi_2}{x_3}\right)_{\bm{i}}=\dfrac{1}{\Delta\xi_3\Delta\xi_1}
    \left(\delta_1\left[\delta_3\left[x_1\right]{x_2}\right]-\delta_3\left[\delta_1\left[x_1\right]{x_2}\right]\right),\\
    &\left(J\pd{\xi_3}{x_1}\right)_{\bm{i}}=\dfrac{1}{\Delta\xi_1\Delta\xi_2}
    \left(\delta_2\left[\delta_1\left[x_2\right]{x_3}\right]-\delta_1\left[\delta_2\left[x_2\right]{x_3}\right]\right),~
    \left(J\pd{\xi_3}{x_2}\right)_{\bm{i}}=\dfrac{1}{\Delta\xi_1\Delta\xi_2}
    \left(\delta_2\left[\delta_1\left[x_3\right]{x_1}\right]-\delta_1\left[\delta_2\left[x_3\right]{x_1}\right]\right),\\
    &\left(J\pd{\xi_3}{x_3}\right)_{\bm{i}}=\dfrac{1}{\Delta\xi_1\Delta\xi_2}
    \left(\delta_2\left[\delta_1\left[x_1\right]{x_2}\right]-\delta_1\left[\delta_2\left[x_1\right]{x_2}\right]\right),
  \end{aligned}
\end{equation}
where
\begin{align*}
  \delta_k[a_{\bm{i}}]=\dfrac12\sum_{n=1}^p\alpha_{p,n}\left(a_{\bm{i},k,+n} - a_{\bm{i},k,-n}\right)
\end{align*}
is the $2p$th-order central difference operator in the $i_k$-direction.
Combing the above discretizations with the fluxes \eqref{eq:MetricsFlux_O2p}, one can verify that the discrete SCLs \eqref{eq:DiscSCL_O2p} are satisfied. For example, for $j=1$, one has
\begin{align*}
  &\sum_{k=1}^3\dfrac{1}{\Delta \xi_k}\left(\left(\widetilde{J\pd{\xi_k}{x_1}}\right)_{\bm{i},k,+\frac12}^{\twop}-\left(\widetilde{J\pd{\xi_k}{x_1}}\right)_{\bm{i},k,-\frac12}^{\twop}\right)
  =\sum_{k=1}^3\dfrac{1}{\Delta \xi_k}\delta_k
  \left[\left(J\pd{\xi_k}{x_1}\right)\right]\\
  =&\ \dfrac{1}{\Delta\xi_1}\delta_1\left[\left(J\pd{\xi_1}{x_1}\right)\right]
  +\dfrac{1}{\Delta\xi_2}\delta_2\left[\left(J\pd{\xi_2}{x_1}\right)\right]
  +\dfrac{1}{\Delta\xi_3}\delta_3\left[\left(J\pd{\xi_3}{x_1}\right)\right]\\
  =&\ \dfrac{1}{\Delta\xi_1\Delta\xi_2\Delta\xi_3}
  \Big(\delta_1\delta_3\left[\delta_2\left[x_2\right]{x_3}\right]-\delta_1\delta_2\left[\delta_3\left[x_2\right]{x_3}\right]
    +\delta_2\delta_1\left[\delta_3\left[x_2\right]{x_3}\right]\\
    &-\delta_2\delta_3\left[\delta_1\left[x_2\right]{x_3}\right]
  +\delta_3\delta_2\left[\delta_1\left[x_2\right]{x_3}\right]-\delta_3\delta_1\left[\delta_2\left[x_2\right]{x_3}\right]\Big)=0,
\end{align*}
since $\delta_j$ and $\delta_k$ are commutative, i.e. $\delta_j\delta_k=\delta_k\delta_j$.

The temporal metrics $\left(J\partial_t{\xi_k}\right)$ satisfy
\begin{equation*}
  J\pd{\xi_k}{t}=-\sum_{j=1}^3\pd{x_j}{t}\left(J\pd{\xi_k}{x_j}\right),~k=1,2,3,
\end{equation*}
so that one has the following approximation
\begin{equation}\label{eq:VCLCoeff}
  \left(J\pd{\xi_k}{t}\right)_{\bm{i}}=-\sum_{j=1}^3(\dot{x}_j)_{\bm{i}}\left(J\pd{\xi_k}{x_j}\right)_{\bm{i}},
\end{equation}
where $(\dot{x}_j)_{\bm{i}},~j=1,2,3$ are the grid velocities at $\bm{i}$,
which will be provided by some given expressions or solving the mesh equations in Section \ref{section:MM}.
Since the quantities $\left(J\pd{\xi_k}{x_j}\right)_{\bm{i}}$ have been obtained in \eqref{eq:SCLCoeff},
the implementation of \eqref{eq:VCLCoeff} is simple and cheap.
Combining \eqref{eq:VCLCoeff} with \eqref{eq:RMHDSemiJ_O2p} and \eqref{eq:MetricsFlux_O2p} gives the semi-discrete VCL.
Moreover, it can be verified the following  free-stream preserving property.

\begin{proposition}\label{prop:GCL}\rm
  If the semi-discrete schemes \eqref{eq:RMHDSemiU_O2p}-\eqref{eq:RMHDSemiJ_O2p} are
  integrated in time with the
  explicit SSP RK scheme from $t=t^n$ to
  $t^{n+1}=t^n+\Delta t^n$, with the time step size $\Delta t^n$, then
  the resulting fully-discrete schemes preserve the free-stream states.
\end{proposition}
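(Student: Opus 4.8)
The plan is to prove first the semi-discrete statement, namely that a constant physical state is a steady state of \eqref{eq:RMHDSemiU_O2p}, and then to lift it to the fully-discrete scheme through the convex-combination structure of the explicit SSP RK method. Suppose $\bU_{\bm{i}}\equiv\bU_0$ (hence $\bV_{\bm{i}}\equiv\bV_0$ and $B_{j,\bm{i}}\equiv (B_0)_j$) is a free-stream state, so that $\Ucurv_{\bm{i}}=J_{\bm{i}}\bU_0$. Since $\Ucurv_{\bm{i}}=J_{\bm{i}}\bU_{\bm{i}}$, it suffices to prove that the right-hand side of \eqref{eq:RMHDSemiU_O2p} equals $\bU_0\,\tfrac{\dd}{\dd t}J_{\bm{i}}$, because then $J_{\bm{i}}\tfrac{\dd}{\dd t}\bU_{\bm{i}}=0$ and the state remains constant.

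First I would treat the flux term. By the consistency of the two-point EC flux together with the form \eqref{eq:ECFluxCurv}-\eqref{eq:UFcircle}, evaluating $\widetilde{\Fcurv_k}$ at two identical states $\bU_0$ gives $\widetilde{\bU}=\bU_0$ and $\widetilde{\bF}_j=\bF_j(\bU_0)$, so it splits into a temporal part carrying $\bU_0$ and a spatial part carrying $\bF_j(\bU_0)$, each multiplied by an arithmetic mean of the corresponding metrics. The key observation is that the high-order combination \eqref{eq:ECFlux_O2p} acts on the temporal part through exactly the same coefficients $\alpha_{p,n}$ and the same second-order building block \eqref{eq:MetricsFlux_O2} as the VCL flux \eqref{eq:MetricsFlux_O2p}; hence the temporal part of $(\widetilde{\Fcurv_k})_{\bm{i},k,\pm\frac12}^{\twop}$ coincides exactly with $\bU_0\big(\widetilde{J\pd{\xi_k}{t}}\big)_{\bm{i},k,\pm\frac12}^{\twop}$. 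Summing over $k$ and invoking the semi-discrete VCL \eqref{eq:RMHDSemiJ_O2p}, this temporal contribution to the flux divergence equals $-\bU_0\,\tfrac{\dd}{\dd t}J_{\bm{i}}$, so that together with the leading minus sign in \eqref{eq:RMHDSemiU_O2p} it produces $+\bU_0\,\tfrac{\dd}{\dd t}J_{\bm{i}}$. The spatial part, being $\sum_{j}\bF_j(\bU_0)\big(\widetilde{J\pd{\xi_k}{x_j}}\big)_{\bm{i},k,\pm\frac12}^{\twop}$, vanishes after the $k$-summation by the discrete SCLs \eqref{eq:DiscSCL_O2p}.

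Next I would dispose of the source term. With $B_j\equiv (B_0)_j$ constant, the second-order block \eqref{eq:BFlux_O2} collapses to $\sum_{j}(B_0)_j\big(\widetilde{J\pd{\xi_k}{x_j}}\big)$, so its high-order combination \eqref{eq:BFlux_O2p} equals $\sum_{j}(B_0)_j\big(\widetilde{J\pd{\xi_k}{x_j}}\big)_{\bm{i},k,\pm\frac12}^{\twop}$, whose $k$-differenced sum again vanishes by \eqref{eq:DiscSCL_O2p}. Thus the whole source term drops out, and collecting the two parts yields $\tfrac{\dd}{\dd t}\Ucurv_{\bm{i}}=\bU_0\,\tfrac{\dd}{\dd t}J_{\bm{i}}$, i.e. $\tfrac{\dd}{\dd t}\bU_{\bm{i}}=0$, the semi-discrete free-stream preservation.

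Finally, to reach the fully-discrete claim I would use that any explicit SSP RK stage is a convex combination of forward-Euler updates. Arguing by induction over the stages, if every previous stage value satisfies $\Ucurv^{(l)}=\bU_0 J^{(l)}$, then its right-hand side equals $\bU_0$ times the VCL right-hand side evaluated at $J^{(l)}$; since $\Ucurv$ and $J$ are advanced by the \emph{same} RK scheme, each new stage inherits $\Ucurv^{(i)}=\bU_0 J^{(i)}$, and consequently $\Ucurv^{n+1}=\bU_0 J^{n+1}$, giving $\bU^{n+1}\equiv\bU_0$. I expect the main obstacle to be the exact algebraic matching in the second paragraph, that the temporal-metric part of the high-order EC flux coincides with $\bU_0$ times the high-order VCL flux; this is precisely where the design choice of using identical linear-combination coefficients and the same arithmetic-mean second-order block for the flux, the VCL, and the source terms is essential, so that the discrete mimicry of $\Ucurv=J\bU$ survives the high-order reconstruction.
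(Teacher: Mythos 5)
Your proposal is correct and takes essentially the same route as the paper: both reduce the SSP RK scheme to forward Euler steps via its convex-combination structure, collapse the two-point EC flux at identical states by consistency so that the temporal part of \eqref{eq:ECFlux_O2p} coincides with $\bU_0$ times the VCL flux \eqref{eq:MetricsFlux_O2p} (same coefficients $\alpha_{p,n}$, same second-order building block), and kill the spatial parts with the discrete SCLs \eqref{eq:DiscSCL_O2p}. If anything, your argument is slightly more complete than the paper's, since you explicitly verify that the magnetic source term built on \eqref{eq:BFlux_O2p} vanishes for constant $\bm{B}$ by the discrete SCLs, a step the paper's displayed computation silently drops.
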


\begin{proof}
  The forward Euler time discretization is considered here, since the explicit SSP RK schemes
  are a convex combination of the forward Euler time discretizations.
  Assuming that $\bU_{\bm{i}}^n=\bU_0$ is a physical constant state,
  rewrite the update of the metric Jacobian $J_{\bm{i}}$ and the solution $\bU_{\bm{i}}$ as follows
  \begin{align*}
    J_{\bm{i}}^{n+1}=&~J_{\bm{i}}^{n}-\sum_{k=1}^3\dfrac{\Delta t^n}{\Delta \xi_k}\left(\left(\widetilde{J\pd{\xi_k}{t}}\right)_{\bm{i},k,+\frac12}^{\twop}
    -\left(\widetilde{J\pd{\xi_k}{t}}\right)_{\bm{i},k,-\frac12}^{\twop}\right)
    =J_{\bm{i}}^{n}-\sum_{k=1}^3\dfrac{\Delta t^n}{\Delta \xi_k}\delta_k\left[\left({J\pd{\xi_k}{t}}\right)\right],\\
    (J\bU)_{\bm{i}}^{n+1}=&~(J\bU)_{\bm{i}}^{n}-\sum_{k=1}^3\dfrac{\Delta t^n}{\Delta \xi_k}\left((\widetilde{\Fcurv_k})_{\bm{i},k,+\frac12}^{\twop} - (\widetilde{\Fcurv_k})_{\bm{i},k,-\frac12}^{\twop}\right)\\
    =&~J_{\bm{i}}^n\bU_0-\sum_{k=1}^3\dfrac{\Delta t^n}{\Delta \xi_k}\sum_{n=1}^p\alpha_{p,n}\Bigg[\\
      &+\dfrac12\left(\left(J\pd{\xi_k}{t}\right)_{\bm{i}} + \left(J\pd{\xi_k}{t}\right)_{\bm{i},k,+n}\right)\bU_0
      +\sum\limits_{j=1}^3 \dfrac12\left(\left(J\pd{\xi_k}{x_j}\right)_{\bm{i}} + \left(J\pd{\xi_k}{x_j}\right)_{\bm{i},k,+n}\right)\bF_j(\bU_0) \\
      &-\dfrac12\left(\left(J\pd{\xi_k}{t}\right)_{\bm{i},k,-n} + \left(J\pd{\xi_k}{t}\right)_{\bm{i}}\right)\bU_0
    -\sum\limits_{j=1}^3 \dfrac12\left(\left(J\pd{\xi_k}{x_j}\right)_{\bm{i},k,-n} + \left(J\pd{\xi_k}{x_j}\right)_{\bm{i}}\right)\bF_j(\bU_0)\Bigg]\\
    =&\left(J_{\bm{i}}^{n}-\sum_{k=1}^3\dfrac{\Delta t^n}{\Delta \xi_k}\delta_k\left[\left(J\pd{\xi_k}{t}\right)\right]\right)\bU_0
    -\sum_{j=1}^3\left(\sum_{k=1}^3\dfrac{\Delta t^n}{\Delta \xi_k}\delta_k\left[\left(J\pd{\xi_k}{x_j}\right)\right]\right)\bF_j(\bU_0)\\
    =&~J_{\bm{i}}^{n+1}\bU_0,
  \end{align*}
  where the discrete GCLs have been used in the last equality. Thus $\bU_{\bm{i}}^{n+1}=(J\bU)_{\bm{i}}^{n+1}/J_{\bm{i}}^{n+1}=\bU_0$.
  The proof is completed.
\end{proof}

\subsection{Proof of high-order accuracy and EC property}
This section is devoted to present the high-order accurate EC schemes based on the previous results.
\begin{thm}\label{thm:ECScheme_HOEC}\rm
  The semi-discrete schemes \eqref{eq:RMHDSemiU_O2p}-\eqref{eq:RMHDSemiJ_O2p}
  with the fluxes \eqref{eq:ECFlux_O2p}-\eqref{eq:MetricsFlux_O2p}
  are $2p$th-order accurate and EC in the sense that
  \begin{equation}\label{eq:NumEntropyID_O2p}
    \dfrac{\dd}{\dd t}J_{\bm{i}}\eta(\bU_{\bm{i}}(t))
    +\sum_{k=1}^3\dfrac{1}{\Delta \xi_k}\left((\widetilde{\qcurv_k})_{\bm{i},k,+\frac12}^{\twop}
    -(\widetilde{\qcurv_k})_{\bm{i},k,-\frac12}^{\twop}\right)=0,
  \end{equation}
  with the consistent numerical entropy fluxes
  \begin{equation}\label{eq:NumEntropyFlux_O2p}
    (\widetilde{\qcurv_k})_{\bm{i},k,+\frac12}^{\twop}=
    \sum_{n=1}^p\alpha_{p,n}\sum_{s=0}^{n-1}
    \widetilde{\qcurv_k}\left(\bU_{\bm{i},k,-s}, \bU_{\bm{i},k,-s+n}, \left(J\pd{\xi_k}{\zeta}\right)_{\bm{i},k,-s}, \left(J\pd{\xi_k}{\zeta}\right)_{\bm{i},k,-s+n} \right),
  \end{equation}
  where
  \begin{align}\label{eq:NumEntropyFluxCurv}
    &\widetilde{\qcurv_k}\left(\bU_l, \bU_r, \left(J\pd{\xi_k}{\zeta}\right)_l, \left(J\pd{\xi_k}{\zeta}\right)_r \right)\nonumber\\
    =&\ \dfrac12\left(\bV(\bU_l)+\bV(\bU_r)\right)^\mathrm{T}\widetilde{\Fcurv_k}\left(\bU_l, \bU_r, \left(J\pd{\xi_k}{\zeta}\right)_l, \left(J\pd{\xi_k}{\zeta}\right)_r \right)\nonumber\\
    &-\dfrac14\left(\left(J\pd{\xi_k}{t}\right)_l+\left(J\pd{\xi_k}{t}\right)_r\right)\left(\phi(\bU_l)+\phi(\bU_r)\right)\nonumber\\
    &-\sum_{j=1}^3\dfrac14\left(\left(J\pd{\xi_k}{x_j}\right)_l+\left(J\pd{\xi_k}{x_j}\right)_r\right)\left(\psi_j(\bU_l)+\psi_j(\bU_r)\right)\nonumber\\
    &+\sum_{j=1}^3\dfrac18\Bigg(\left(J\pd{\xi_k}{x_j}\right)_l+\left(J\pd{\xi_k}{x_j}\right)_r\Bigg)\Bigg((B_j)_l+(B_j)_r\Bigg)\left(\Phi(\bU_l)+\Phi(\bU_r)\right).
  \end{align}
\end{thm}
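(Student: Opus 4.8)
The $2p$th-order accuracy is the easy half: it follows at once from Proposition~\ref{prop:ECScheme_HOacc} and its analogues for the source and VCL fluxes, so I would record it in a line and concentrate on the discrete entropy identity~\eqref{eq:NumEntropyID_O2p}. The opening move trades the time derivative of the discrete entropy for a contraction with the entropy variables: with $\Ucurv_{\bm{i}}=J_{\bm{i}}\bU_{\bm{i}}$, $\bV_{\bm{i}}=\eta'(\bU_{\bm{i}})^{\mathrm T}$, and the entropy potential $\phi=\bV^{\mathrm T}\bU-\eta$, the product and chain rules give
\[
\frac{\dd}{\dd t}\big(J_{\bm{i}}\eta(\bU_{\bm{i}})\big)=\bV_{\bm{i}}^{\mathrm T}\frac{\dd\Ucurv_{\bm{i}}}{\dd t}-\phi(\bU_{\bm{i}})\frac{\dd J_{\bm{i}}}{\dd t}.
\]
Inserting the schemes~\eqref{eq:RMHDSemiU_O2p}--\eqref{eq:RMHDSemiJ_O2p} and using the homogeneity $\Phi=\Phi'(\bV)\bV$, so that $\bV_{\bm{i}}^{\mathrm T}\Phi'(\bV_{\bm{i}})^{\mathrm T}=\Phi(\bV_{\bm{i}})$, turns the goal into showing that, after dividing by $\Delta\xi_k$ and summing over $k$, the quantity $\bV_{\bm{i}}^{\mathrm T}[\,\text{flux diff}\,]_k+\Phi_{\bm{i}}[\,\text{source diff}\,]_k-\phi_{\bm{i}}[\,\text{VCL diff}\,]_k$ reproduces $(\widetilde{\qcurv_k})^{\twop}_{+\frac12}-(\widetilde{\qcurv_k})^{\twop}_{-\frac12}$.

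Next I would record two one-interface relations implied by the two-point EC condition~\eqref{eq:ECConditionCurv} and the definition~\eqref{eq:NumEntropyFluxCurv} of $\widetilde{\qcurv_k}$. Writing $\widetilde{\Fcurv_k}(l,r)$ for the two-point flux with its metric arguments suppressed and abbreviating $a^{\zeta}_{l}=\big(J\pd{\xi_k}{\zeta}\big)_{l}$, the boundary data collect into
\[
\Lambda_k(l;r)=\tfrac12(a^{t}_{l}+a^{t}_{r})\phi_l+\sum_{j=1}^3\tfrac12(a^{x_j}_{l}+a^{x_j}_{r})\psi_{j,l}-\sum_{j=1}^3\tfrac14(a^{x_j}_{l}+a^{x_j}_{r})(B_{j,l}+B_{j,r})\Phi_l,
\]
and a direct rearrangement---substitute~\eqref{eq:ECConditionCurv} into $\widetilde{\qcurv_k}=\tfrac12(\bV_l+\bV_r)^{\mathrm T}\widetilde{\Fcurv_k}-(\text{potential terms})$ and use the symmetry of the averaged metrics and of $B_{j,l}+B_{j,r}$---gives $\bV_l^{\mathrm T}\widetilde{\Fcurv_k}(l,r)=\widetilde{\qcurv_k}(l,r)+\Lambda_k(l;r)$ and $\bV_r^{\mathrm T}\widetilde{\Fcurv_k}(l,r)=\widetilde{\qcurv_k}(l,r)+\Lambda_k(r;l)$.

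The heart of the argument is a telescoping identity for the linear-combination fluxes~\eqref{eq:ECFlux_O2p}--\eqref{eq:MetricsFlux_O2p}. For fixed $n$ the inner sum runs over all two-point fluxes on pairs $n$ apart straddling the interface, and in the difference between the $+\tfrac12$ and $-\tfrac12$ interfaces every interior pair cancels, leaving only the two extreme pairs:
\[
(\widetilde{\Fcurv_k})^{\twop}_{\bm{i},k,+\frac12}-(\widetilde{\Fcurv_k})^{\twop}_{\bm{i},k,-\frac12}=\sum_{n=1}^p\alpha_{p,n}\big(\widetilde{\Fcurv_k}(\bm{i},\bm{i}+n)-\widetilde{\Fcurv_k}(\bm{i}-n,\bm{i})\big),
\]
with the shifts in the $i_k$-direction, and identically for $\widetilde{\qcurv_k}$, $\widetilde{\Bcurv_k}$ and the metric fluxes. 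The decisive point is that $\bm{i}$ is an endpoint of each surviving pair---left in $(\bm{i},\bm{i}+n)$ and right in $(\bm{i}-n,\bm{i})$---so the two one-interface relations apply directly, and matching the identically telescoped $(\widetilde{\qcurv_k})^{\twop}$ yields $\bV_{\bm{i}}^{\mathrm T}[\,\text{flux diff}\,]_k=(\widetilde{\qcurv_k})^{\twop}_{+\frac12}-(\widetilde{\qcurv_k})^{\twop}_{-\frac12}+\sum_n\alpha_{p,n}\big(\Lambda_k(\bm{i};\bm{i}+n)-\Lambda_k(\bm{i};\bm{i}-n)\big)$.

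Finally I would expand the leftover boundary sum using the $2p$th-order central difference $\delta_k$ from Section~\ref{subsec:GCLs}: it splits into $\phi_{\bm{i}}\,\delta_k[(J\pd{\xi_k}{t})]$, the term $\sum_j\psi_{j,\bm{i}}\,\delta_k[(J\pd{\xi_k}{x_j})]$, and $-\Phi_{\bm{i}}$ times the telescoped source flux difference. The $\phi_{\bm{i}}$-part is exactly the telescoped VCL flux difference and cancels the $\phi_{\bm{i}}[\,\text{VCL diff}\,]_k$ term pointwise in $k$; the $\Phi_{\bm{i}}$-part cancels the source contribution pointwise in $k$; both rely on the three flux families~\eqref{eq:ECFlux_O2p}--\eqref{eq:MetricsFlux_O2p} sharing the coefficients $\alpha_{p,n}$. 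The genuinely remaining piece $\sum_j\psi_{j,\bm{i}}\,\delta_k[(J\pd{\xi_k}{x_j})]$ does not cancel for a single $k$; only after dividing by $\Delta\xi_k$ and summing over $k$ does it become $\sum_j\psi_{j,\bm{i}}\sum_k\tfrac1{\Delta\xi_k}\delta_k[(J\pd{\xi_k}{x_j})]$, which vanishes by the discrete SCLs~\eqref{eq:DiscSCL_O2p}. This gives~\eqref{eq:NumEntropyID_O2p}; consistency of $\widetilde{\qcurv_k}$, and hence of $(\widetilde{\qcurv_k})^{\twop}$ through $\sum_n n\alpha_{p,n}=1$, I would check separately by setting $l=r$ and using $\phi=\bV^{\mathrm T}\bU-\eta$ and $\psi_j=\bV^{\mathrm T}\bF_j+\Phi B_j-q_j$ to collapse $\widetilde{\qcurv_k}(\bU,\bU)$ to $\qcurv_k$. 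I expect the telescoping-plus-bookkeeping to be the main obstacle: verifying that only the extreme pairs survive with $\bm{i}$ as their endpoint, and that the residual $\phi$- and $\Phi$-boundary terms coincide term-by-term with the VCL and source differences while the $\psi$-term is deliberately held back for the SCL cancellation.
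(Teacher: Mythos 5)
Your proposal is correct and follows essentially the same route as the paper's proof: both reduce the $2p$th-order flux differences by telescoping to the extreme two-point pairs $(\bm{i},\bm{i}\pm n)$, apply the EC condition \eqref{eq:ECConditionCurv} there (your one-interface relations $\bV_l^{\mathrm T}\widetilde{\Fcurv_k}=\widetilde{\qcurv_k}+\Lambda_k(l;r)$ are just a repackaging of the paper's splitting of $\bV_{\bm{i}},\phi_{\bm{i}},(\psi_j)_{\bm{i}},\Phi_{\bm{i}}$ into means and half-jumps in $I_1$--$I_4$), and cancel the leftover $\phi$-, $\Phi$-, and $\psi$-terms against the VCL, source, and discrete-SCL differences via the shared coefficients $\alpha_{p,n}$. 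The only cosmetic difference is ordering: the paper inserts the $\psi_j$ SCL term at the outset, while you hold it back and invoke \eqref{eq:DiscSCL_O2p} at the end after summing over $k$.
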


\begin{proof}
  From Proposition \ref{prop:ECScheme_HOacc} and the discretizations of the metrics $\left(J\pd{\xi_k}{\zeta}\right)_{\bm{i}}$ in \eqref{eq:SCLCoeff} and \eqref{eq:VCLCoeff}, ${\zeta}=t,x_1,x_2,x_3$, it is obvious that the semi-discrete schemes \eqref{eq:RMHDSemiU_O2p}-\eqref{eq:RMHDSemiJ_O2p} are $2p$th-order accurate in space.

  Taking the dot product of \eqref{eq:RMHDSemiU_O2p} with $\bV_{\bm{i}}$ and using the chain rule and the semi-discrete VCL \eqref{eq:RMHDSemiJ_O2p} gives
  \begin{align*}
    \dfrac{\dd}{\dd t}(J_{\bm{i}}\eta_{\bm{i}})=&
    -\sum_{k=1}^3\dfrac{1}{\Delta \xi_k} \Bigg\{\bV_{\bm{i}}^\mathrm{T}\left((\widetilde{\Fcurv_k})_{\bm{i},k,+\frac12}^{\twop}
      -(\widetilde{\Fcurv_k})_{\bm{i},k,-\frac12}^{\twop}\right)
      -\phi_{\bm{i}}\Bigg(\left(\widetilde{J\pd{\xi_k}{t}}\right)_{\bm{i},k,+\frac12}^{\twop}
      -\left(\widetilde{J\pd{\xi_k}{t}}\right)_{\bm{i},k,-\frac12}^{\twop}\Bigg) \\
      &+\Phi_{\bm{i}}\left((\widetilde{\Bcurv_k})_{\bm{i},k,+\frac12}^{\twop}
    -(\widetilde{\Bcurv_k})_{\bm{i},k,-\frac12}^{\twop}\right) \Bigg\}.
  \end{align*}
  Further utilizing the discrete SCLs \eqref{eq:DiscSCL_O2p}  can get
  \begin{align}\label{eq:DiscEntropyID_O2p_step1}
    {\dfrac{\dd}{\dd t}(J_{\bm{i}}\eta_{\bm{i}})}=&
    -\sum_{k=1}^3\dfrac{1}{\Delta \xi_k} \Bigg\{\bV_{\bm{i}}^\mathrm{T}\left((\widetilde{\Fcurv_k})_{\bm{i},k,+\frac12}^{\twop}
      -(\widetilde{\Fcurv_k})_{\bm{i},k,-\frac12}^{\twop}\right)
      -\phi_{\bm{i}}\left(\left(\widetilde{J\pd{\xi_k}{t}}\right)_{\bm{i},k,+\frac12}^{\twop}
      -\left(\widetilde{J\pd{\xi_k}{t}}\right)_{\bm{i},k,-\frac12}^{\twop}\right) \nonumber\\
      &-\sum_{j=1}^3(\psi_j)_{\bm{i}}\Bigg(\left(\widetilde{J\pd{\xi_k}{x_j}}\right)_{\bm{i},k,+\frac12}^{\twop}
      -\left(\widetilde{J\pd{\xi_k}{x_j}}\right)_{\bm{i},k,-\frac12}^{\twop}\Bigg)
      +\Phi_{\bm{i}}\left((\widetilde{\Bcurv_k})_{\bm{i},k,+\frac12}^{\twop}
    -(\widetilde{\Bcurv_k})_{\bm{i},k,-\frac12}^{\twop}\right) \Bigg\} \nonumber\\
    &=-\sum_{k=1}^3\sum_{n=1}^p\dfrac{\alpha_{p,n}}{\Delta \xi_k}\left(I_1-I_2-I_3+I_4\right),
  \end{align}
  where
  \begin{align*}
    I_1=&\ \bV_{\bm{i}}^\mathrm{T}
    \left[\widetilde{\Fcurv_k}\left(\bU_{\bm{i}}, \bU_{\bm{i},k,+n},
      \left(J\pd{\xi_k}{\zeta}\right)_{\bm{i}}, \left(J\pd{\xi_k}{\zeta}\right)_{\bm{i},k,+n}\right)
      -\widetilde{\Fcurv_k}\left(\bU_{\bm{i}}, \bU_{\bm{i},k,-n},
    \left(J\pd{\xi_k}{\zeta}\right)_{\bm{i}}, \left(J\pd{\xi_k}{\zeta}\right)_{\bm{i},k,-n}\right)\right],\\
    I_2=&\ \phi_{\bm{i}}
    \left[\dfrac12\left(\left(J\pd{\xi_k}{t}\right)_{\bm{i}}+\left(J\pd{\xi_k}{t}\right)_{\bm{i},k,+n}\right)
    -\dfrac12\left(\left(J\pd{\xi_k}{t}\right)_{\bm{i}}+\left(J\pd{\xi_k}{t}\right)_{\bm{i},k,-n}\right)\right],\\
    I_3=&\ \sum_{j=1}^3(\psi_j)_{\bm{i}}
    \left[\dfrac12\left(\left(J\pd{\xi_k}{x_j}\right)_{\bm{i}}+\left(J\pd{\xi_k}{x_j}\right)_{\bm{i},k,+n}\right)
    -\dfrac12\left(\left(J\pd{\xi_k}{x_j}\right)_{\bm{i}}+\left(J\pd{\xi_k}{x_j}\right)_{\bm{i},k,-n}\right)\right],\\
    I_4=&\ \sum_{j=1}^3\Phi_{\bm{i}}\Bigg[\dfrac14\Bigg(\left(J\pd{\xi_k}{x_j}\right)_{\bm{i}}+\left(J\pd{\xi_k}{x_j}\right)_{\bm{i},k,+n}\Bigg)\Bigg((B_j)_{\bm{i}}+(B_j)_{\bm{i},k,+n}\Bigg) \\
    &-\dfrac14\Bigg(\left(J\pd{\xi_k}{x_j}\right)_{\bm{i}}+\left(J\pd{\xi_k}{x_j}\right)_{\bm{i},k,-n}\Bigg)\Bigg((B_j)_{\bm{i}}+(B_j)_{\bm{i},k,-n}\Bigg)\Bigg].
  \end{align*}
 If splitting $\bV_{\bm{i}}$ as $\dfrac12\left(\bV_{\bm{i}}+\bV_{\bm{i},k,+n}\right)-\dfrac12\left(\bV_{\bm{i},k,+n}-\bV_{\bm{i}}\right)$
  or $\dfrac12\left(\bV_{\bm{i},k,-n}+\bV_{\bm{i}}\right)+\dfrac12\left(\bV_{\bm{i}}-\bV_{\bm{i},k,-n}\right)$, then $I_1$ goes to
  \begin{align}\label{eq:I1_split_HO}
    I_1=&+\dfrac12\left(\bV_{\bm{i}}+\bV_{\bm{i},k,+n}\right)^\mathrm{T}
    \widetilde{\Fcurv_k}\left(\bU_{\bm{i}}, \bU_{\bm{i},k,+n},
    \left(J\pd{\xi_k}{\zeta}\right)_{\bm{i}}, \left(J\pd{\xi_k}{\zeta}\right)_{\bm{i},k,+n}\right)\nonumber\\
    &-\dfrac12\left(\bV_{\bm{i},k,+n}-\bV_{\bm{i}}\right)^\mathrm{T}
    \widetilde{\Fcurv_k}\left(\bU_{\bm{i}}, \bU_{\bm{i},k,+n},
    \left(J\pd{\xi_k}{\zeta}\right)_{\bm{i}}, \left(J\pd{\xi_k}{\zeta}\right)_{\bm{i},k,+n}\right)\nonumber\\
    &-\dfrac12\left(\bV_{\bm{i}}+\bV_{\bm{i},k,-n}\right)^\mathrm{T}\widetilde{\Fcurv_k}\left(\bU_{\bm{i}}, \bU_{\bm{i},k,-n},
    \left(J\pd{\xi_k}{\zeta}\right)_{\bm{i}}, \left(J\pd{\xi_k}{\zeta}\right)_{\bm{i},k,-n}\right)\nonumber\\
    &-\dfrac12\left(\bV_{\bm{i}}-\bV_{\bm{i},k,-n}\right)^\mathrm{T}\widetilde{\Fcurv_k}\left(\bU_{\bm{i}}, \bU_{\bm{i},k,-n},
    \left(J\pd{\xi_k}{\zeta}\right)_{\bm{i}}, \left(J\pd{\xi_k}{\zeta}\right)_{\bm{i},k,-n}\right).
  \end{align}
  Similarly, treating $\phi_{\bm{i}},(\psi_j)_{\bm{i}}$ and $\Phi_{\bm{i}}$ gives
  \begin{align}
    \label{eq:I2_split_HO}
    I_2=&+\dfrac14\left(\phi_{\bm{i}}+\phi_{\bm{i},k,+n}\right)
    \left(\left(J\pd{\xi_k}{t}\right)_{\bm{i}}+\left(J\pd{\xi_k}{t}\right)_{\bm{i},k,+n}\right)
    -\dfrac14\left(\phi_{\bm{i},k,+n}-\phi_{\bm{i}}\right)
    \left(\left(J\pd{\xi_k}{t}\right)_{\bm{i}}+\left(J\pd{\xi_k}{t}\right)_{\bm{i},k,+n}\right)\nonumber\\
    &-\dfrac14\left(\phi_{\bm{i}}+\phi_{\bm{i},k,-n}\right)
    \left(\left(J\pd{\xi_k}{t}\right)_{\bm{i}}+\left(J\pd{\xi_k}{t}\right)_{\bm{i},k,-n}\right)
    -\dfrac14\left(\phi_{\bm{i}}-\phi_{\bm{i},k,-n}\right)
    \left(\left(J\pd{\xi_k}{t}\right)_{\bm{i}}+\left(J\pd{\xi_k}{t}\right)_{\bm{i},k,-n}\right),\\
    \label{eq:I3_split_HO}
    I_3=&+\sum_{j=1}^3\Bigg[\dfrac14\left((\psi_{j})_{\bm{i}}+(\psi_{j})_{\bm{i},k,+n}\right)
      \left(\left(J\pd{\xi_k}{x_j}\right)_{\bm{i}}+\left(J\pd{\xi_k}{x_j}\right)_{\bm{i},k,+n}\right)\nonumber\\
      &-\dfrac14\left((\psi_{j})_{\bm{i},k,+n}-(\psi_{j})_{\bm{i}}\right)
      \left(\left(J\pd{\xi_k}{x_j}\right)_{\bm{i}}+\left(J\pd{\xi_k}{x_j}\right)_{\bm{i},k,+n}\right)\nonumber\\
      &-\dfrac14\left((\psi_{j})_{\bm{i}}+(\psi_{j})_{\bm{i},k,-n}\right)\left(\left(J\pd{\xi_k}{x_j}\right)_{\bm{i}}+\left(J\pd{\xi_k}{x_j}\right)_{\bm{i},k,-n}\right)\nonumber\\
    &-\dfrac14\left((\psi_{j})_{\bm{i}}-(\psi_{j})_{\bm{i},k,-n}\right)\left(\left(J\pd{\xi_k}{x_j}\right)_{\bm{i}}+\left(J\pd{\xi_k}{x_j}\right)_{\bm{i},k,-n}\right)\Bigg],\\
    \label{eq:I4_split_HO}
    I_4=&+\sum_{j=1}^3\Bigg[\dfrac18\left(\Phi_{\bm{i}}+\Phi_{\bm{i},k,+n}\right)
      \left(\left(J\pd{\xi_k}{x_j}\right)_{\bm{i}}+\left(J\pd{\xi_k}{x_j}\right)_{\bm{i},k,+n}\right)
      \Bigg((B_j)_{\bm{i}}+(B_j)_{\bm{i},k,+n}\Bigg)\nonumber\\
      &-\dfrac18\left(\Phi_{\bm{i},k,+n}-\Phi_{\bm{i}}\right)
      \left(\left(J\pd{\xi_k}{x_j}\right)_{\bm{i}}+\left(J\pd{\xi_k}{x_j}\right)_{\bm{i},k,+n}\right)
      \Bigg((B_j)_{\bm{i}}+(B_j)_{\bm{i},k,+n}\Bigg)\nonumber\\
      &-\dfrac18\left(\Phi_{\bm{i}}+\Phi_{\bm{i},k,-n}\right)\left(\left(J\pd{\xi_k}{x_j}\right)_{\bm{i}}+\left(J\pd{\xi_k}{x_j}\right)_{\bm{i},k,-n}\right)
      \Bigg((B_j)_{\bm{i}}+(B_j)_{\bm{i},k,-n}\Bigg)\nonumber\\
      &-\dfrac18\left(\Phi_{\bm{i}}-\Phi_{\bm{i},k,-n}\right)\left(\left(J\pd{\xi_k}{x_j}\right)_{\bm{i}}+\left(J\pd{\xi_k}{x_j}\right)_{\bm{i},k,-n}\right)
    \Bigg((B_j)_{\bm{i}}+(B_j)_{\bm{i},k,-n}\Bigg)\Bigg].
  \end{align}
  Substituting the sufficient condition \eqref{eq:ECConditionCurv} into \eqref{eq:I1_split_HO} yields
  \begin{align}\label{eq:I1_EC_HO}
    I_1=&+\dfrac12\left(\bV_{\bm{i}}+\bV_{\bm{i},k,+n}\right)^\mathrm{T}
    \widetilde{\Fcurv_k}\left(\bU_{\bm{i}}, \bU_{\bm{i},k,+n},
    \left(J\pd{\xi_k}{\zeta}\right)_{\bm{i}}, \left(J\pd{\xi_k}{\zeta}\right)_{\bm{i},k,+n}\right)\nonumber\\
    &-\dfrac12\left(\bV_{\bm{i}}+\bV_{\bm{i},k,-n}\right)^\mathrm{T}\widetilde{\Fcurv_k}\left(\bU_{\bm{i}}, \bU_{\bm{i},k,-n},
    \left(J\pd{\xi_k}{\zeta}\right)_{\bm{i}}, \left(J\pd{\xi_k}{\zeta}\right)_{\bm{i},k,-n}\right)\nonumber\\
    &-\dfrac14\Bigg[\left(\phi_{\bm{i},k,+n}-\phi_{\bm{i}}\right)
      \Bigg(\left(J\pd{\xi_k}{t}\right)_{\bm{i}}+\left(J\pd{\xi_k}{t}\right)_{\bm{i},k,+n}\Bigg)\nonumber\\
      &+\sum_{j=1}^3\left((\psi_j)_{\bm{i},k,+n}-(\psi_j)_{\bm{i}}\right)
      \Bigg(\left(J\pd{\xi_k}{x_j}\right)_{\bm{i}}+\left(J\pd{\xi_k}{x_j}\right)_{\bm{i},k,+n}\Bigg)
    \Bigg]\nonumber\\
    &+\dfrac18\left(\Phi_{\bm{i},k,+n}-\Phi_{\bm{i}}\right)
    \Bigg(\left(J\pd{\xi_k}{x_j}\right)_{\bm{i}}+\left(J\pd{\xi_k}{x_j}
    \right)_{\bm{i},k,+n}\Bigg)\Bigg((B_j)_{\bm{i}}+(B_j)_{\bm{i},k,+n}\Bigg)\nonumber\\
    &-\dfrac14\Bigg[\left(\phi_{\bm{i}}-\phi_{\bm{i},k,-n}\right)
      \Bigg(\left(J\pd{\xi_k}{t}\right)_{\bm{i}}+\left(J\pd{\xi_k}{t}
      \right)_{\bm{i},k,-n}\Bigg)\nonumber\\
      &+\sum_{j=1}^3\left((\psi_j)_{\bm{i}}-(\psi_j)_{\bm{i},k,-n}\right)
      \Bigg(\left(J\pd{\xi_k}{x_j}\right)_{\bm{i}}
      +\left(J\pd{\xi_k}{x_j}\right)_{\bm{i},k,-n}\Bigg)
    \Bigg]\nonumber\\
    &+\dfrac18\left(\Phi_{\bm{i}}-\Phi_{\bm{i},k,-n}\right)
    \Bigg(\left(J\pd{\xi_k}{x_j}\right)_{\bm{i}}+\left(J\pd{\xi_k}{x_j}\right)
    _{\bm{i},k,-n}\Bigg)\Bigg((B_j)_{\bm{i}}+(B_j)_{\bm{i},k,-n}\Bigg).
  \end{align}
  Combining \eqref{eq:I2_split_HO}-\eqref{eq:I1_EC_HO} with \eqref{eq:NumEntropyFluxCurv} gives
  \begin{align*}
    I_1-I_2-I_3+I_4=&+\dfrac12\left(\bV_{\bm{i}}+\bV_{\bm{i},k,+n}\right)^\mathrm{T}
    \widetilde{\Fcurv_k}\left(\bU_{\bm{i}}, \bU_{\bm{i},k,+n},
    \left(J\pd{\xi_k}{\zeta}\right)_{\bm{i}}, \left(J\pd{\xi_k}{\zeta}\right)_{\bm{i},k,+n}\right)\nonumber\\
    &-\dfrac12\left(\bV_{\bm{i},k,-n}+\bV_{\bm{i}}\right)^\mathrm{T}\widetilde{\Fcurv_k}\left(\bU_{\bm{i}}, \bU_{\bm{i},k,-n},
    \left(J\pd{\xi_k}{\zeta}\right)_{\bm{i}}, \left(J\pd{\xi_k}{\zeta}\right)_{\bm{i},k,-n}\right)\nonumber\\
    &-\dfrac14\left(\phi_{\bm{i}}+\phi_{\bm{i},k,+n}\right)
    \Bigg(\left(J\pd{\xi_k}{t}\right)_{\bm{i}}+\left(J\pd{\xi_k}{t}\right)_{\bm{i},k,+n}\Bigg)\nonumber\\
    &+\dfrac14\left(\phi_{\bm{i}}+\phi_{\bm{i},k,-n}\right)
    \Bigg(\left(J\pd{\xi_k}{t}\right)_{\bm{i}}+\left(J\pd{\xi_k}{t}\right)_{\bm{i},k,-n}\Bigg)\nonumber\\
    &-\sum_{j=1}^3\Bigg[\dfrac14\left((\psi_{j})_{\bm{i}}+(\psi_{j})_{\bm{i},k,+n}\right)
      \Bigg(\left(J\pd{\xi_k}{x_j}\right)_{\bm{i}}+\left(J\pd{\xi_k}{x_j}\right)_{\bm{i},k,+n}\Bigg)\nonumber\\
    &-\dfrac14\left((\psi_{j})_{\bm{i}}+(\psi_{j})_{\bm{i},k,-n}\right)\Bigg(\left(J\pd{\xi_k}{x_j}\right)_{\bm{i}}+\left(J\pd{\xi_k}{x_j}\right)_{\bm{i},k,-n}\Bigg)\Bigg]\nonumber\\
    &+\sum_{j=1}^3\Bigg[\dfrac18\left(\Phi_{\bm{i}}+\Phi_{\bm{i},k,+n}\right)
      \Bigg(\left(J\pd{\xi_k}{x_j}\right)_{\bm{i}}+\left(J\pd{\xi_k}{x_j}\right)_{\bm{i},k,+n}\Bigg)
      \Bigg((B_j)_{\bm{i}}+(B_j)_{\bm{i},k,+n}\Bigg)\nonumber\\
      &-\dfrac18\left(\Phi_{\bm{i}}+\Phi_{\bm{i},k,-n}\right)
      \Bigg(\left(J\pd{\xi_k}{x_j}\right)_{\bm{i}}+\left(J\pd{\xi_k}{x_j}\right)_{\bm{i},k,-n}\Bigg)
    \Bigg((B_j)_{\bm{i}}+(B_j)_{\bm{i},k,-n}\Bigg)\Bigg]\nonumber\\
    =&~\widetilde{\qcurv_k}\left(\bU_{\bm{i}}, \bU_{\bm{i},k,+n}, \left(J\pd{\xi_k}{\zeta}\right)_{\bm{i}}, \left(J\pd{\xi_k}{\zeta}\right)_{\bm{i},k,+n} \right)
    -\widetilde{\qcurv_k}\left(\bU_{\bm{i}}, \bU_{\bm{i},k,-n}, \left(J\pd{\xi_k}{\zeta}\right)_{\bm{i}}, \left(J\pd{\xi_k}{\zeta}\right)_{\bm{i},k,-n} \right),
  \end{align*}
  thus \eqref{eq:DiscEntropyID_O2p_step1} becomes the numerical entropy identity \eqref{eq:NumEntropyID_O2p}.
  Moreover, it is easy to check the consistency of the numerical entropy flux $(\widetilde{\qcurv_k})_{\bm{i},k,\pm\frac12}^{\twop}$ with
  $\qcurv_k$. The proof is completed.
\end{proof}

\section{High-order accurate ES schemes}\label{section:ESScheme}
It is known that for the quasi-linear hyperbolic conservation laws,
the entropy identity is available only if the solution is smooth.
For the discontinuous solutions, one should consider the entropy inequality.
Meanwhile, the EC schemes may produce serious nonphysical oscillations near the discontinuities.
Those motivate us to construct the high-order accurate ES schemes (satisfying the entropy inequality for the given entropy pair).
It can be achieved by adding suitable high-order dissipation to the EC flux \eqref{eq:ECFlux_O2p} to obtain the  $w$th-order ($w=2p-1\geq 3$) accurate ES flux
\begin{align}\label{eq:ESFlux_HO}
  &(\widehat{\Fcurv_k})_{\bm{i},k,+\frac12}^{\wth}=(\widetilde{\Fcurv_k})_{\bm{i},k,+\frac12}^{\twop}-
  \dfrac12 \bm{D}_{\bm{i},k,+\frac12}\bm{Y}_{\bm{i},k,+\frac12}
  \jumpangle{\widetilde{\bV}}_{\bm{i},k,+\frac12}^{\WENO},
\end{align}
where the matrix $\bm{D}_{\bm{i},k,+\frac12}$ is obtained by evaluating $\bm{D}:=\widehat{\lambda}\bT^{-1}\bm{R}(\bT\bU)$ at $\bm{i},k,+\frac12$,
 and $\bT$ is the ``rotational'' matrix,
 which is defined by $\bT=\diag\{1, \bT_0, 1\}$
and $\bT=\diag\{1, \bT_0, 1, \bT_0\}$ in the RHD and RMHD case, respectively, with
\begin{align*}
	&\bT_0 =
	\begin{bmatrix}
		\cos\varphi\cos\theta  & \cos\varphi\sin\theta  & \sin\varphi \\
	    -\sin\theta            & \cos\theta             & 0           \\
		-\sin\varphi\cos\theta & -\sin\varphi\sin\theta & \cos\varphi \\
	\end{bmatrix},\\
  &\theta = \arctan\left(\left(J\pd{\xi_k}{x_2}\right)\Big/\left(J\pd{\xi_k}{x_1}\right)\right),\\
  &\varphi = \arctan\left(\left(J\pd{\xi_k}{x_3}\right)\Bigg/
  \sqrt{\left(J\pd{\xi_k}{x_1}\right)^2+\left(J\pd{\xi_k}{x_2}\right)^2}
  \right).
\end{align*}
Here $\widehat{\lambda}$ is taken as the spectral radius
\begin{equation*}
  \widehat{\lambda}:=\max_m\left\{ \left| J\pd{\xi_k}{t}+L_k\lambda_m(\bT\bU) \right|\right\},
\end{equation*}
with $L_k=\sqrt{\sum\limits_{j=1}^3\left(J\pd{\xi_k}{x_j}\right)^2}$,
and $\bm{R}$ is a set of scaled eigenvectors such that
\begin{equation*}
  \pd{\bU}{\bV}=\bm{R}\bm{R}^\mathrm{T},\
  \pd{\bF_1}{\bU}=\bm{R}\bm{\Lambda}\bm{R}^{-1},\
  \bm{\Lambda}=\mbox{diag}\{\lambda_1,\ldots,\lambda_m\},
\end{equation*}
where $\lambda_1,\cdots,\lambda_m$ are the eigenvalues and $m$ is the equation number (e.g. $m=5$ and 8 for the RHD and RMHD cases respectively when $d=3$). The detailed computation of the eigenvalues and eigenvectors has been given in \cite{Duan2021RHDMM, Duan2020RMHD}.
To obtain high-order accuracy, the high-order WENO reconstruction is performed in the scaled entropy variables.
More specifically,  the $w$th-order ($w=2p-1$) WENO reconstruction \cite{Jiang1996Efficient} is performed on $\{\widetilde{\bV}=\bm{R}_{\bm{i},k,+\frac12}^\mathrm{T}(\bT\bU)\bT_{\bm{i},k,+\frac12}\bV\}$ in the $i_k$-direction to obtain
the left and right limit values denoted by
$\widetilde{\bV}_{\bm{i},k,+\frac12}^{\scriptsize\text{WENO},-}$ and $\widetilde{\bV}_{\bm{i},k,+\frac12}^{\scriptsize\text{WENO},+}$, and then define
\begin{equation*}
  \jumpangle{\widetilde{\bV}}_{\bm{i},k,+\frac12}^{\WENO}=\widetilde{\bV}_{\bm{i},k,+\frac12}^{\scriptsize\text{WENO},+}
  -\widetilde{\bV}_{\bm{i},k,+\frac12}^{\scriptsize\text{WENO},-}.
\end{equation*}
In \eqref{eq:ESFlux_HO}, the diagonal matrix  $\bm{Y}_{\bm{i},k,+\frac12}$  is used to enforce the ``sign'' property,  see \cite{Biswas2018Low},
with the diagonal component given by
\begin{equation*}
	(\bm{Y}_{\bm{i},k,+\frac12})_{l,l}=
	\begin{cases}
		1, & \text{sign}(\jumpangle{\widetilde{\bV}_l}_{\bm{i},k,+\frac12})=\text{sign}(\jump{\widetilde{\bV}_l}_{\bm{i},k,+\frac12}), \\
		0, &\text{otherwise},
	\end{cases}
\end{equation*}
where $\jump{a}_{\bm{i},k,+\frac12}=a_{\bm{i},k,+1}-a_{\bm{i}}$.

\begin{thm}\rm
  By replacing the $2p$th-order EC flux with $w$th-order ES flux \eqref{eq:ESFlux_HO}, the following schemes
  \begin{align}
    \label{eq:RMHDSemiU_O2p_ES}
    &\dfrac{\dd}{\dd t}\bm{\mathcal{U}}_{\bm{i}}=
    -\sum_{k=1}^3\dfrac{1}{\Delta \xi_k}\left((\widehat{\Fcurv_k})_{\bm{i},k,+\frac12}^{\wth}-(\widehat{\Fcurv_k})_{\bm{i},k,-\frac12}^{\wth}\right)
    -\Phi'(\bV_{\bm{i}})^\mathrm{T}\sum_{k=1}^3\dfrac{1}{\Delta \xi_k}\left((\widetilde{\Bcurv_k})_{\bm{i},k,+\frac12}^{\twop}-(\widetilde{\Bcurv_k})_{\bm{i},k,-\frac12}^{\twop}\right),
    \\
    \label{eq:RMHDSemiJ_O2p_ES}
    &\dfrac{\dd}{\dd t}J_{\bm{i}}=
    -\sum_{k=1}^3\dfrac{1}{\Delta \xi_k}\Bigg(\left(\widetilde{J\pd{\xi_k}{t}}\right)_{\bm{i},k,+\frac12}^{\twop}-\left(\widetilde{J\pd{\xi_k}{t}}\right)_{\bm{i},k,-\frac12}^{\twop}\Bigg),
  \end{align}
  are ES. Specially, they satisfy the entropy inequality
  \begin{equation*}
    \dfrac{\dd}{\dd t}J_{\bm{i}}\eta(\bU_{\bm{i}}(t))
    +\sum_{k=1}^3\dfrac{1}{\Delta \xi_k}\left((\widehat{\qcurv_k})_{\bm{i},k,+\frac12}^{\wth}
    -(\widehat{\qcurv_k})_{\bm{i},k,-\frac12}^{\wth}\right)\leqslant0,
  \end{equation*}
  with the consistent numerical entropy fluxes
  \begin{equation}\label{eq:NumEntropyFlux_O2p_ES}
    (\widehat{\qcurv_k})_{\bm{i},k,+\frac12}^{\wth}=
    (\widetilde{\qcurv_k})_{\bm{i},k,+\frac12}^{\twop}
    - \dfrac12\widehat{\lambda}_{\bm{i},k,+\frac12}\mean{\widetilde{\bV}}^\mathrm{T}_{\bm{i},k,+\frac12}
    \bm{Y}_{\bm{i},k,+\frac12}\jumpangle{\widetilde{\bV}}_{\bm{i},k,+\frac12}^{\WENO},
  \end{equation}
  where $\mean{a}_{\bm{i},k,+\frac12}=\frac12(a_{\bm{i},k,+1}+a_{\bm{i}})$.
\end{thm}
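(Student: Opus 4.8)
The plan is to reduce the claim to the entropy conservative (EC) identity of Theorem \ref{thm:ECScheme_HOEC} and then show that the added dissipation contributes a non-positive amount to the local entropy production. First I would write the ES flux \eqref{eq:ESFlux_HO} as an EC flux minus a dissipation vector,
\begin{equation*}
(\widehat{\Fcurv_k})_{\bm{i},k,+\frac12}^{\wth}=(\widetilde{\Fcurv_k})_{\bm{i},k,+\frac12}^{\twop}-\bm{d}_{\bm{i},k,+\frac12},\qquad
\bm{d}_{\bm{i},k,+\frac12}:=\tfrac12\bm{D}_{\bm{i},k,+\frac12}\bm{Y}_{\bm{i},k,+\frac12}\jumpangle{\widetilde{\bV}}_{\bm{i},k,+\frac12}^{\WENO},
\end{equation*}
so that the only change from the EC scheme \eqref{eq:RMHDSemiU_O2p} to \eqref{eq:RMHDSemiU_O2p_ES} is the extra term $+\sum_k\frac1{\Delta\xi_k}(\bm{d}_{\bm{i},k,+\frac12}-\bm{d}_{\bm{i},k,-\frac12})$ on the right-hand side. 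Since the source and VCL discretizations in \eqref{eq:RMHDSemiU_O2p_ES}--\eqref{eq:RMHDSemiJ_O2p_ES} are untouched, dotting with $\bV_{\bm{i}}$ and repeating verbatim the manipulations of the proof of Theorem \ref{thm:ECScheme_HOEC} (chain rule, semi-discrete VCL, discrete SCLs \eqref{eq:DiscSCL_O2p}, and the sufficient condition \eqref{eq:ECConditionCurv}) would give
\begin{equation*}
\frac{\dd}{\dd t}\big(J_{\bm{i}}\eta(\bU_{\bm{i}})\big)
=-\sum_{k=1}^3\frac1{\Delta\xi_k}\Big((\widetilde{\qcurv_k})_{\bm{i},k,+\frac12}^{\twop}-(\widetilde{\qcurv_k})_{\bm{i},k,-\frac12}^{\twop}\Big)
+\sum_{k=1}^3\frac1{\Delta\xi_k}\bV_{\bm{i}}^\mathrm{T}\big(\bm{d}_{\bm{i},k,+\frac12}-\bm{d}_{\bm{i},k,-\frac12}\big).
\end{equation*}

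Next I would reorganise the dissipation contribution into a telescoping flux difference plus a local remainder. Splitting the nodal value at each interface as $\bV_{\bm{i}}=\mean{\bV}_{\bm{i},k,+\frac12}-\frac12\jump{\bV}_{\bm{i},k,+\frac12}$ and $\bV_{\bm{i}}=\mean{\bV}_{\bm{i},k,-\frac12}+\frac12\jump{\bV}_{\bm{i},k,-\frac12}$ yields
\begin{align*}
\bV_{\bm{i}}^\mathrm{T}\big(\bm{d}_{\bm{i},k,+\frac12}-\bm{d}_{\bm{i},k,-\frac12}\big)
=&\ \Big(\mean{\bV}_{\bm{i},k,+\frac12}^\mathrm{T}\bm{d}_{\bm{i},k,+\frac12}-\mean{\bV}_{\bm{i},k,-\frac12}^\mathrm{T}\bm{d}_{\bm{i},k,-\frac12}\Big)\\
&-\tfrac12\jump{\bV}_{\bm{i},k,+\frac12}^\mathrm{T}\bm{d}_{\bm{i},k,+\frac12}-\tfrac12\jump{\bV}_{\bm{i},k,-\frac12}^\mathrm{T}\bm{d}_{\bm{i},k,-\frac12}.
\end{align*}
The bracketed difference is exactly the dissipation part of the claimed numerical entropy flux: using the structural identities below one gets $\mean{\bV}_{\bm{i},k,+\frac12}^\mathrm{T}\bm{d}_{\bm{i},k,+\frac12}=\frac12\widehat{\lambda}_{\bm{i},k,+\frac12}\mean{\widetilde{\bV}}_{\bm{i},k,+\frac12}^\mathrm{T}\bm{Y}_{\bm{i},k,+\frac12}\jumpangle{\widetilde{\bV}}_{\bm{i},k,+\frac12}^{\WENO}$, so $(\widetilde{\qcurv_k})^{\twop}-\mean{\bV}^\mathrm{T}\bm{d}$ reproduces \eqref{eq:NumEntropyFlux_O2p_ES} and the conservative part telescopes as $(\widehat{\qcurv_k})_{\bm{i},k,+\frac12}^{\wth}-(\widehat{\qcurv_k})_{\bm{i},k,-\frac12}^{\wth}$.

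The heart of the argument, and the step I expect to be the main obstacle, is the non-positivity of the two remainder terms. Here I would invoke the defining structure $\bm{D}=\widehat{\lambda}\bT^{-1}\bm{R}(\bT\bU)$ together with the orthogonality of the rotational matrix $\bT$ (so $\bT^{-1}=\bT^\mathrm{T}$) and the definition of the scaled entropy variables $\widetilde{\bV}=\bm{R}^\mathrm{T}(\bT\bU)\bT\bV$. For any vector $\bm{w}$ this gives $\bm{w}^\mathrm{T}\bm{D}=\widehat{\lambda}\,(\bm{R}^\mathrm{T}\bT\bm{w})^\mathrm{T}$; applied with $\bm{w}=\jump{\bV}_{\bm{i},k,+\frac12}$ (the matrices $\bm{R},\bT$ being frozen at the interface so that $\bm{R}^\mathrm{T}\bT\jump{\bV}=\jump{\widetilde{\bV}}$) it produces
\begin{equation*}
\jump{\bV}_{\bm{i},k,+\frac12}^\mathrm{T}\bm{d}_{\bm{i},k,+\frac12}
=\tfrac12\widehat{\lambda}_{\bm{i},k,+\frac12}\sum_{l}\big(\bm{Y}_{\bm{i},k,+\frac12}\big)_{l,l}\,\jump{\widetilde{\bV}_l}_{\bm{i},k,+\frac12}\,\jumpangle{\widetilde{\bV}_l}_{\bm{i},k,+\frac12}^{\WENO}.
\end{equation*}
Since $\widehat{\lambda}>0$ and, by construction of $\bm{Y}$, every surviving component obeys $\text{sign}(\jump{\widetilde{\bV}_l})=\text{sign}(\jumpangle{\widetilde{\bV}_l}^{\WENO})$ so that $\jump{\widetilde{\bV}_l}\,\jumpangle{\widetilde{\bV}_l}^{\WENO}\geqslant0$, this quantity is non-negative, and the same holds at the $-\frac12$ interface. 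Both remainders therefore enter with a minus sign, the right-hand side is $\leqslant0$, and the asserted entropy inequality follows. Finally I would verify consistency of $(\widehat{\qcurv_k})^{\wth}$: on a constant state the jumps $\jump{\widetilde{\bV}}$ and $\jumpangle{\widetilde{\bV}}^{\WENO}$ vanish, so the dissipation drops out and $(\widehat{\qcurv_k})^{\wth}$ reduces to the consistent EC flux $(\widetilde{\qcurv_k})^{\twop}$ already established in Theorem \ref{thm:ECScheme_HOEC}.
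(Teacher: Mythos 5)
Your proposal is correct and follows essentially the same route as the paper's own proof: dot the scheme with $\bV_{\bm{i}}$, invoke the EC entropy identity of Theorem \ref{thm:ECScheme_HOEC}, split $\bV_{\bm{i}}$ into interface means and jumps so the mean parts assemble the claimed entropy flux \eqref{eq:NumEntropyFlux_O2p_ES}, and bound the jump remainders via $\jump{\widetilde{\bV}}^{\mathrm{T}}\bm{Y}\jumpangle{\widetilde{\bV}}^{\WENO}\geqslant 0$ from the sign property. Your only additions — packaging the dissipation as a vector $\bm{d}$ and spelling out $\bm{w}^{\mathrm{T}}\bm{D}=\widehat{\lambda}\,(\bm{R}^{\mathrm{T}}\bT\bm{w})^{\mathrm{T}}$ using the orthogonality of $\bT$, which the paper leaves implicit in the step $\jump{\bV}^{\mathrm{T}}\bT^{-1}\bm{R}=\jump{\widetilde{\bV}}^{\mathrm{T}}$ — are cosmetic.
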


\begin{proof}
  Taking the dot product of $\bV_{\bm{i}}$ and \eqref{eq:RMHDSemiU_O2p_ES} gives
  \begin{align*}
    {\dfrac{\dd}{\dd t}(J_{\bm{i}}\eta_{\bm{i}})}=&
    -\sum_{k=1}^3\dfrac{1}{\Delta \xi_k}\left((\widetilde{\qcurv_k})_{\bm{i},k,+\frac12}^{\twop}
    -(\widetilde{\qcurv_k})_{\bm{i},k,-\frac12}^{\twop}\right)\\
    &+\sum_{k=1}^3\dfrac{1}{2\Delta \xi_k}\Big(\widehat{\lambda}_{\bm{i},k,+\frac12}\bV_i^\mathrm{T}
      \bT_{\bm{i},k,+\frac12}^{-1}\bm{R}_{\bm{i},k,+\frac12}(\bT\bU)\bm{Y}_{\bm{i},k,+\frac12}\jumpangle{\widetilde{\bV}}_{\bm{i},k,+\frac12}^{\WENO} \\
      &- \widehat{\lambda}_{\bm{i},k,-\frac12}\bV_i^\mathrm{T}
    \bT_{\bm{i},k,-\frac12}^{-1}\bm{R}_{\bm{i},k,-\frac12}(\bT\bU)\bm{Y}_{\bm{i},k,-\frac12}\jumpangle{\widetilde{\bV}}_{\bm{i},k,-\frac12}^{\WENO} \Big)\\
    =&-\sum_{k=1}^3\dfrac{1}{\Delta \xi_k}\left((\widehat{\qcurv_k})_{\bm{i},k,+\frac12}^{\wth}
    -(\widehat{\qcurv_k})_{\bm{i},k,-\frac12}^{\wth}\right) \\
    &-\sum_{k=1}^3\dfrac{1}{4\Delta \xi_k}\Bigg(\widehat{\lambda}_{\bm{i},k,+\frac12}\jump{\bV}_{\bm{i},k,+\frac12}^\mathrm{T}
      \bT_{\bm{i},k,+\frac12}^{-1}\bm{R}_{\bm{i},k,+\frac12}(\bT\bU)
      \bm{Y}_{\bm{i},k,+\frac12}\jumpangle{\widetilde{\bV}}_{\bm{i},k,+\frac12}^{\WENO} \\
      &+ \widehat{\lambda}_{\bm{i},k,-\frac12}\jump{\bV}_{\bm{i},k,-\frac12}^\mathrm{T}
      \bT_{\bm{i},k,-\frac12}^{-1}\bm{R}_{\bm{i},k,-\frac12}(\bT\bU)
    \bm{Y}_{\bm{i},k,-\frac12}\jumpangle{\widetilde{\bV}}_{\bm{i},k,-\frac12}^{\WENO} \Bigg)\\
    =&-\sum_{k=1}^3\dfrac{1}{\Delta \xi_k}\left((\widehat{\qcurv_k})_{\bm{i},k,+\frac12}^{\wth}
    -(\widehat{\qcurv_k})_{\bm{i},k,-\frac12}^{\wth}\right) \\
    &-\sum_{k=1}^3\dfrac{1}{4\Delta \xi_k}\Big(\widehat{\lambda}_{\bm{i},k,+\frac12}\jump{\widetilde{\bV}}^\mathrm{T}_{\bm{i},k,+\frac12}
      \bm{Y}_{\bm{i},k,+\frac12}\jumpangle{\widetilde{\bV}}_{\bm{i},k,+\frac12}^{\WENO}
      + \widehat{\lambda}_{\bm{i},k,-\frac12}\jump{\widetilde{\bV}}_{\bm{i},k,-\frac12}^\mathrm{T}
    \bm{Y}_{\bm{i},k,-\frac12}\jumpangle{\widetilde{\bV}}_{\bm{i},k,-\frac12}^{\WENO} \Big),
  \end{align*}
  where the 1st equality uses the entropy identity satisfied by the $2p$th-order EC scheme \eqref{eq:NumEntropyID_O2p},
  the 2nd equality uses \eqref{eq:NumEntropyFlux_O2p_ES}.
  From the definition of $\bm{Y}_{\bm{i},k,\pm\frac12}$, one can get
  \begin{equation*}
    \jump{\widetilde{\bV}}^\mathrm{T}_{\bm{i},k,\pm\frac12}
    \bm{Y}_{\bm{i},k,\pm\frac12}\jumpangle{\widetilde{\bV}}_{\bm{i},k,\pm\frac12}^{\WENO}\geqslant 0,
  \end{equation*}
  therefore, it holds
  \begin{equation*}
     \dfrac{\dd}{\dd t}J_{\bm{i}}\eta(\bU_{\bm{i}}(t))
    +\sum_{k=1}^3\dfrac{1}{\Delta \xi_k}\left((\widehat{\qcurv_k})_{\bm{i},k,+\frac12}^{\wth}
    -(\widehat{\qcurv_k})_{\bm{i},k,-\frac12}^{\wth}\right)\leqslant0. \qedhere
  \end{equation*}
\end{proof}

\begin{rmk}\rm
  When the solution is a constant state, the dissipation terms vanish, so that the ES schemes preserve the free-stream state.
\end{rmk}

\section{Adaptive moving mesh strategy}\label{section:MM}

This section  presents our adaptive moving mesh strategy at time $t=t^n$ for the completeness of the paper,
but focuses on the mesh iteration redistribution with the solution obtained by
the finite difference scheme. It is similar to that used in \cite{Duan2021RHDMM}, where the mesh iteration redistribution depends on the solution obtained by
the second-order accurate finite volume scheme.
Unless otherwise stated, the dependence of the variables on $t$ will be omitted.

Consider the   mesh adaption functional 
\begin{equation}\label{eq:mesh_func}
  \widetilde{E}(\bx)=\dfrac12\sum_{k=1}^3\int_{\Omega_l}\left(\nabla_{\bm{\xi}}x_k\right)^\mathrm{T}\bm{G}_k\left(\nabla_{\bm{\xi}}x_k\right)\dd\bm{\xi},
\end{equation}
where $\bm{G}_k$ is the given symmetric positive definite matrix, depending on the solution $\bU$.
Solving the Euler-Lagrange equations of \eqref{eq:mesh_func}
\begin{equation}\label{eq:mesh_EL}
  \nabla_{\bm{\xi}}\cdot\left(\bm{G}_k\nabla_{\bm{\xi}}x_k\right)=0,
  ~\bm{\xi}\in\Omega_c,~k=1,2,3,
\end{equation}
will give directly a coordinate transformation
$\bx=\bx(\bm{\xi})$ from the computational domain $\Omega_c$ to the physical domain $\Omega_p$.
%
The concentration of the mesh points is controlled by  $\bm{G}_k$,
which in general depends on the solutions or their derivatives of the
underlying governing equations and is one of the most important elements in the adaptive moving mesh method. Different problems may be equipped with different $\bm{G}_k$.
For example, the Winslow variable diffusion method \cite{Winslow1967Numerical} is considering the simplest choice of $\bm{G}_k$ defined by
\begin{equation*}
  \bm{G}_k=\omega\bm{I}_3,
\end{equation*}
where $\omega$ is a positive weight function, called the monitor function, and  may be taken as
\begin{align}\label{eq:monitor}
  \omega=\sqrt{1+\alpha{\abs{\nabla_{\bm{\xi}}\sigma}}/{\max\abs{\nabla_{\bm{\xi}}\sigma}}},
\end{align}
here $\sigma$ is some physical variable and $\alpha$ is a positive parameter.
There are several other
choices of the monitor functions, see
\cite{Cao1999A,Han2007An,He2012RHD,Tang2006A,Tang2003An}.

\begin{rmk}\rm
  The monitor function is computed from the solutions of the underlying physical equations \eqref{eq:RMHDCurv},
  thus is not smooth in general. To get a smoother (adaptive) mesh, the following low pass filter
  \begin{align*}
    \omega_{i_1,i_2,i_3}\leftarrow&\sum_{j_1,j_2,j_3=0,\pm 1}\left(\dfrac{1}{2}\right)^{\abs{j_1}+\abs{j_2}+\abs{j_3}+3}
    \omega_{i_1+j_1,i_2+j_2,i_3+j_3},
  \end{align*}
  is applied $3\sim 5$ times in this work.
\end{rmk}

The mesh equations \eqref{eq:mesh_EL} are approximated by the central difference scheme
on the computational mesh and then solved by using the Jacobi iteration method
\begin{equation*}
  \begin{aligned}
    \sum_{k=1}^3\left[\left(\omega_{\bm{i}}+\omega_{\bm{i},k,+1}\right)\left(\bx_{\bm{i},k,+1}^{[\nu]}-\bx_{\bm{i}}^{[\nu+1]}\right)
    -\left(\omega_{\bm{i}}+\omega_{\bm{i},k,-1}\right)\left(\bx_{\bm{i}}^{[\nu+1]}-\bx_{\bm{i},k,-1}^{[\nu]}\right)\right]=0, \ \nu=0,1,\cdots,\mu,
  \end{aligned}
\end{equation*}
in parallel, where ${\bx}^{[0]}_{\bm{i}}
:=\bx^n_{\bm{i}}$, and $\omega$ is computed by using the solution $\bU$ at $t^n$.
In our numerical tests, the total iteration number $\mu$ is taken as $10$, unless otherwise stated.

Once the  mesh $\{{\bx}^{[\mu]}_{\bm{i}}\}$ is obtained,
the final adaptive mesh  is given by
\begin{equation*}
  \bx^{n+1}_{\bm{i}}
  :=\bx^n_{\bm{i}}
  +
  {\Delta_\tau}   (\delta_\tau{\bx})^{n}_{\bm{i}},~
  (\delta_\tau {\bx})^{n}_{\bm{i}}:={\bx}^{[\mu]}_{\bm{i}}  -\bx^n_{\bm{i}},
\end{equation*}
where the parameter ${\Delta_\tau}$ is used  to limit the mesh point movement
\begin{equation*}
  {\Delta_\tau}\leqslant
  \begin{cases}
    -\frac{1}{2(\delta_\tau{x_k})_{\bm{i}}}\left[(x_k)^n_{\bm{i}}-(x_1)^n_{\bm{i},k,-1}\right], ~ (\delta_\tau{x_k})_{\bm{i}}<0, \\
    +\frac{1}{2(\delta_\tau{x_k})_{\bm{i}}}\left[(x_k)^n_{\bm{i},k,+1}-(x_1)^n_{\bm{i}}\right], ~ (\delta_\tau{x_k})_{\bm{i}}>0. \\
  \end{cases}
\end{equation*}
Finally, the mesh velocity in \eqref{eq:VCLCoeff} is defined by
$
\dot{\bx}^{n}_{\bm{i}}:=
{\Delta_\tau}   (\delta_\tau{\bx})^{n}_{\bm{i}}/\Delta t^n
$,
where the time step size $\Delta t^n$ is  determined by \eqref{eq:cfl}.

\section{Numerical results}\label{section:NumTests}
This section conducts several 2D and 3D numerical tests in the RHDs and RMHDs to validate the convergence orders of our sixth-order accurate EC schemes on moving meshes (denoted by {\tt MM-O6}),
and  the convergence orders and the shock-capturing ability of our fifth-order accurate ES schemes on moving meshes (denoted by {\tt MM-O5}).
The numerical results are also compared to those obtained by the fifth-order accurate ES schemes on the static uniform mesh (denoted by {\tt UM-O5}) \cite{Duan2020RHD},
and the second-order accurate ES adaptive moving mesh schemes (denoted by {\tt MM-O2}) \cite{Duan2021RHDMM}.
Our schemes are implemented in parallel based on the data structure of the PLUTO code \cite{Mignone2007PLUTO},
and all simulations are performed with the CPU nodes of the High-performance Computing Platform of Peking University
(Linux Red Hat environment, two Intel Xeon E5-2697A V4 (16 cores $\times2$) per node, and core frequency of 2.6GHz).
Unless otherwise stated, the adiabatic index $\Gamma$ is taken as $5/3$ and
the time step size $\Delta t^n$ is determined by the following CFL condition
\begin{equation}\label{eq:cfl}
  {\Delta t}^n= 
   \dfrac{\text{CFL}}{\sum\limits_{k=1}^d\max\limits_{\bm{i}}
  {\varrho_{k,\bm{i}}^n}/{\Delta \xi_k}},
\end{equation}
where $\varrho_{k,\bm{i}}^n$ is the spectral radius of $\partial{\Fcurv_k}/\partial{\Ucurv}+\Phi'(\bV)\partial{\Bcurv_k}/\partial{\Ucurv}$ evaluated at $\bm{i}$ and $t^n$,
and the CFL number is taken as 0.4 and 0.3
for the 2D and  3D tests, respectively.

\subsection{2D tests}
\begin{example}[2D RMHD isentropic vortex problem]\label{ex:RMHD_2DVortex}\rm
  It describes a 2D vortex moving with a constant speed $(-0.5,-0.5)$ and is solved to test the convergence orders and the change of the
  total entropy.
  Specifically, the physical domain $\Omega_p$ is taken as $[-R,R]\times [-R,R]$ with $R=5$ and periodic boundary conditions.
  The explicit analytical solutions at time $t$ and the spatial point $(x_1,x_2)$   given first in \cite{Duan2021Analytical}  are
  \begin{equation*}\label{eq:Vortex1}
    \begin{aligned}
      \rho&=(1-\sigma\exp(1-r^2))^{\frac{1}{\Gamma-1}},~ p=\rho^\Gamma,\\
      \bm{v}&=\frac{1}{4-2(\widetilde{v}_1+\widetilde{v}_2)}((2+\sqrt{2})\widetilde{v}_1+(2-\sqrt{2})\widetilde{v}_2-2,
      ~(2+\sqrt{2})\widetilde{v}_2+(2-\sqrt{2})\widetilde{v}_1-2,~0),\\
      \bm{B}&=\frac12\left((\sqrt{2}+1)\widetilde{B}_1 - (\sqrt{2}-1)\widetilde{B}_2,
      ~(\sqrt{2}+1)\widetilde{B}_2 - (\sqrt{2}-1)\widetilde{B}_1,~0\right),
    \end{aligned}
  \end{equation*}
  where
  \begin{equation*}\label{eq:Vortex2}
    \begin{aligned}
      &\Gamma=5/3,~\sigma=0.2,~B_0=0.05,~r=\sqrt{\widetilde{x}_1^2+\widetilde{x}_2^2},\\
      &\widetilde{x}_k=\widehat{x}_k+(\sqrt{2}-1)(\widehat{x}_1 + \widehat{x}_2)/2,~k=1,2,\\
      &(\widehat{x}_1,\widehat{x}_2)=(2k_1R + x_1 + t/2 - 1,~2k_2R + x_2 + t/2 - 1), ~(\widehat{x}_1,\widehat{x}_2)\in [-R,R]\times[-R,R], ~k_1,k_2\in\mathbb{Z},\\
      & (\widetilde{v}_1,\widetilde{v}_2)=(-\widetilde{x}_2,\widetilde{x}_1)f,~ f=\sqrt{\dfrac{\kappa \exp(1-r^2)}{\kappa r^2\exp(1-r^2) + (\Gamma-1)\rho + \Gamma p}},~
      \kappa = 2\Gamma \sigma\rho + (\Gamma-1)B_0^2(2-r^2),	\\
      & (\widetilde{B}_1,\widetilde{B}_2)=B_0\exp(1-r^2)(-\widetilde{x}_2,\widetilde{x}_1).
    \end{aligned}
  \end{equation*}
  The problem is solved with a series of $N\times N$ meshes until $t=4$.

First, we test the sixth-order EC scheme  on moving meshes ({\tt MM-O6})
 with the following  moving mesh strategy
\begin{equation}\label{eq:RMHD_2DVortex_MovingMesh}
  \begin{split}
    &(x_1)_{i_1,i_2}=\mathring{x}_1 + 0.2\cos(\pi t/4)\sin(3\pi \mathring{x}_2/R),~
    (x_2)_{i_1,i_2}=\mathring{x}_2 + 0.2\cos(\pi t/4)\sin(3\pi \mathring{x}_1/R),\\
    &\mathring{x}_1=2i_1R/(N-1),~\mathring{x}_2=2i_2R/(N-1),
    ~i_1,i_2=0,1,\cdots,N-1.
  \end{split}
\end{equation}
The time step size is chosen as $\Delta t^n=\text{CFL}\Delta\xi_1^2$ to make the spatial error dominant.
Figure \ref{fig:RMHD_2DVortex_EC6_mesh} gives the $10$ equally spaced contours of the rest-mass density and the moving meshes with $N=40$ at different times. One can see that
the shape of the vortex is preserved well.

Next, the  problem is resolved by using the fifth-order ES scheme with the adaptive moving mesh ({\tt MM-O5}) and the following monitor function
\begin{equation}\label{eq:RMHD_2DVortex_Monitor}
  \omega=\sqrt{1 + 20\abs{\nabla_{\bm{\xi}}\rho}/\max{\abs{\nabla_{\bm{\xi}}\rho}}
  +10\abs{\Delta_{\bm{\xi}}\rho}/\max{\abs{\Delta_{\bm{\xi}}\rho}}}.
\end{equation}
The time step size is chosen as $\Delta t^n=\text{CFL}\Delta\xi_1^{5/3}$ to make the spatial error dominant.
Figure \ref{fig:RMHD_2DVortex_ES5_mesh} plots the adaptive meshes of $N=40$ at different times, which show that
the concentration of the mesh points  follows the propagation of the vortex well.

Figure \ref{fig:RMHD_2DVortex_err} plots corresponding errors in the rest-mass density $\rho$ and  convergence orders of {\tt MM-O6} and {\tt MM-O5}. One can see that
{\tt MM-O6} and {\tt MM-O5} can achieve sixth- and fifth-order accuracies respectively.

Finally, we examine the EC and ES property of our schemes.
Figure \ref{fig:RMHD_2DVortex_TotalEntropy} presents the evolution of the discrete total entropy
$\sum_{i_1,i_2} J_{i_1,i_2} \eta(\bU_{i_1,i_2})/N^2$
with respect to time obtained by {\tt MM-O6} and {\tt MM-O5} with $N=160$.
We can see that the total entropy of the EC scheme
almost keeps unchanged, while the total entropy of the ES scheme decays as expected.
\end{example}

It should be noted that
{\tt MM-O5} with the moving mesh \eqref{eq:RMHD_2DVortex_MovingMesh}
and {\tt MM-O6} with the adaptive moving mesh and the monitor \eqref{eq:RMHD_2DVortex_Monitor}  can also respectively get fifth-order and sixth-order.
Their results are omitted here due to limited space.


\begin{figure}[!ht]
  \centering
  \begin{subfigure}[b]{0.3\textwidth}
    \centering
    \includegraphics[width=1.0\textwidth]{./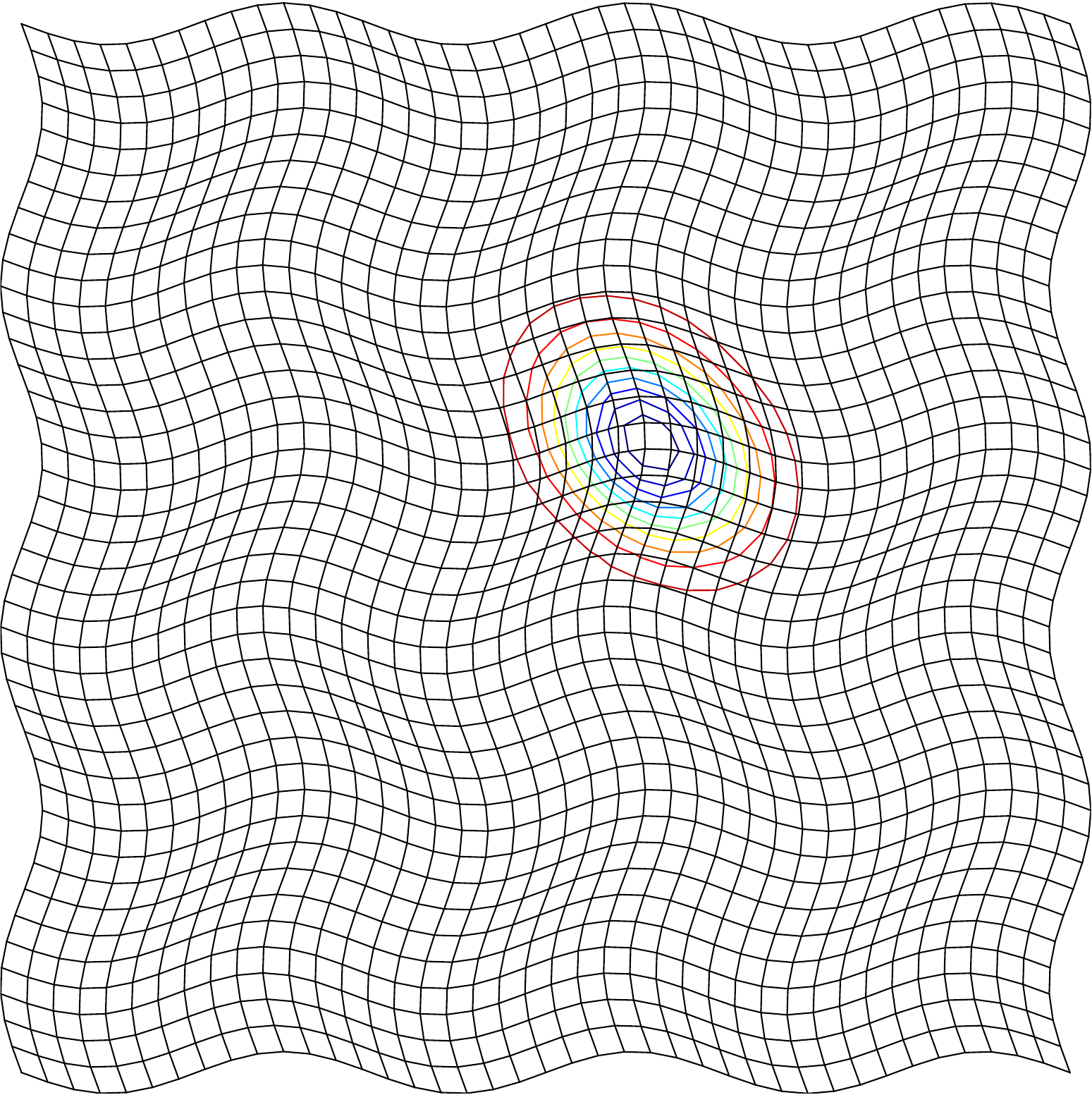}
    \caption{$t=0$}
  \end{subfigure}
  \begin{subfigure}[b]{0.3\textwidth}
    \centering
    \includegraphics[width=1.0\textwidth]{./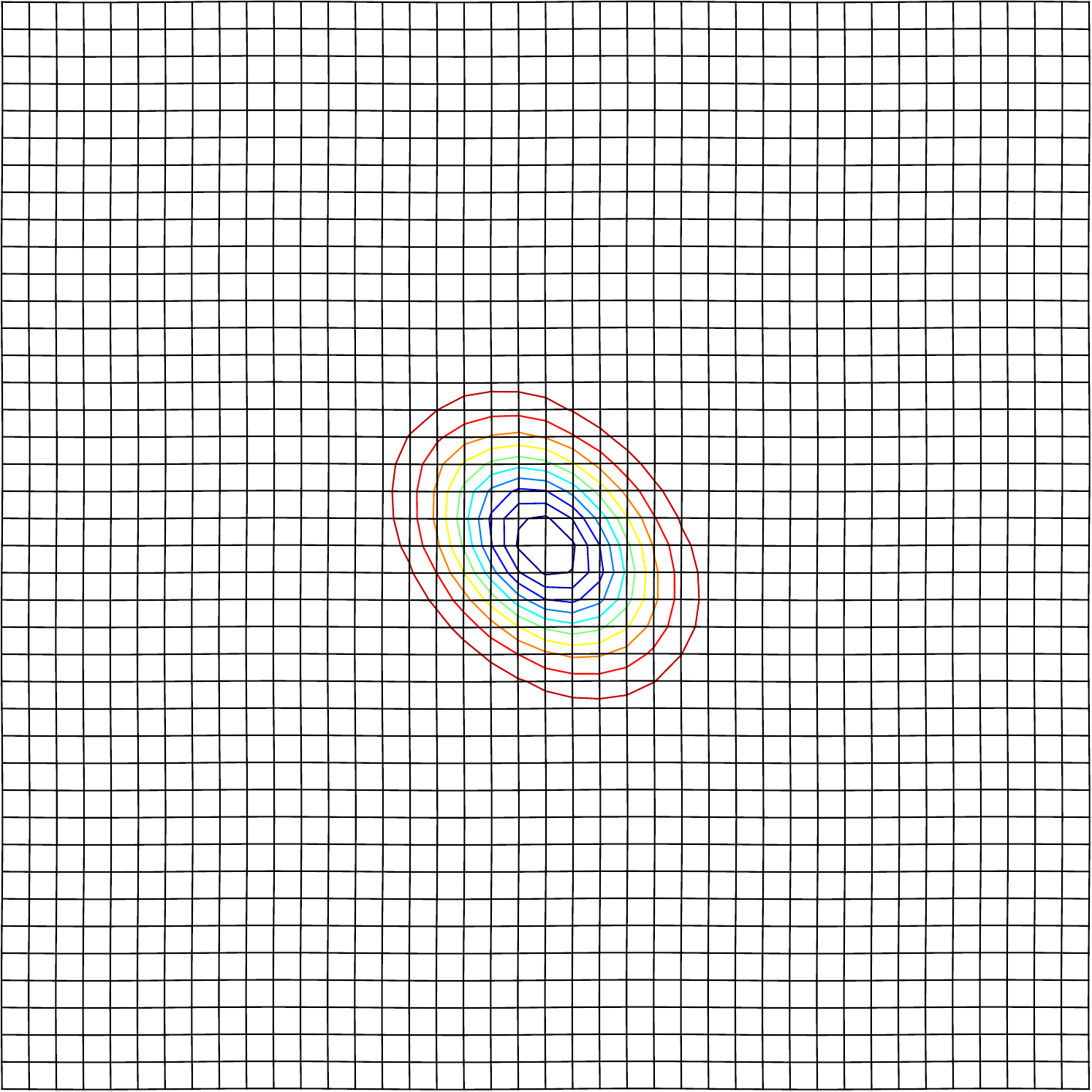}
    \caption{$t=2$}
  \end{subfigure}
  \begin{subfigure}[b]{0.3\textwidth}
    \centering
    \includegraphics[width=1.0\textwidth]{./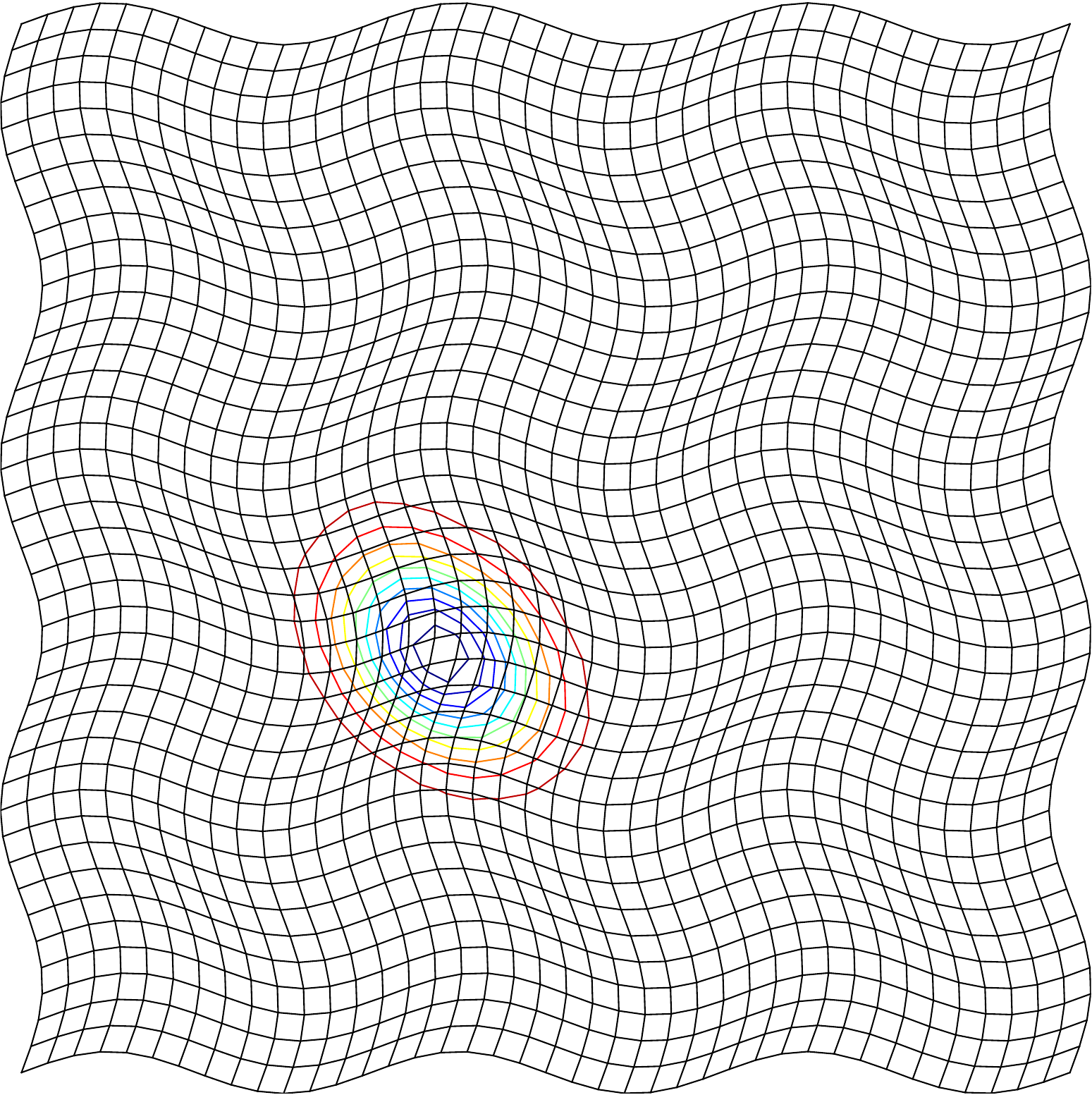}
    \caption{$t=4$}
  \end{subfigure}
  \caption{Example \ref{ex:RMHD_2DVortex}: Adaptive meshes and  rest-mass density contours at different times obtained by {\tt MM-O6} with the moving mesh \eqref{eq:RMHD_2DVortex_MovingMesh}. $N=40$ and $10$ equally spaced contour lines.}
  \label{fig:RMHD_2DVortex_EC6_mesh}
\end{figure}

\begin{figure}[!ht]
  \centering
  \begin{subfigure}[b]{0.3\textwidth}
    \centering
    \includegraphics[width=1.0\textwidth]{./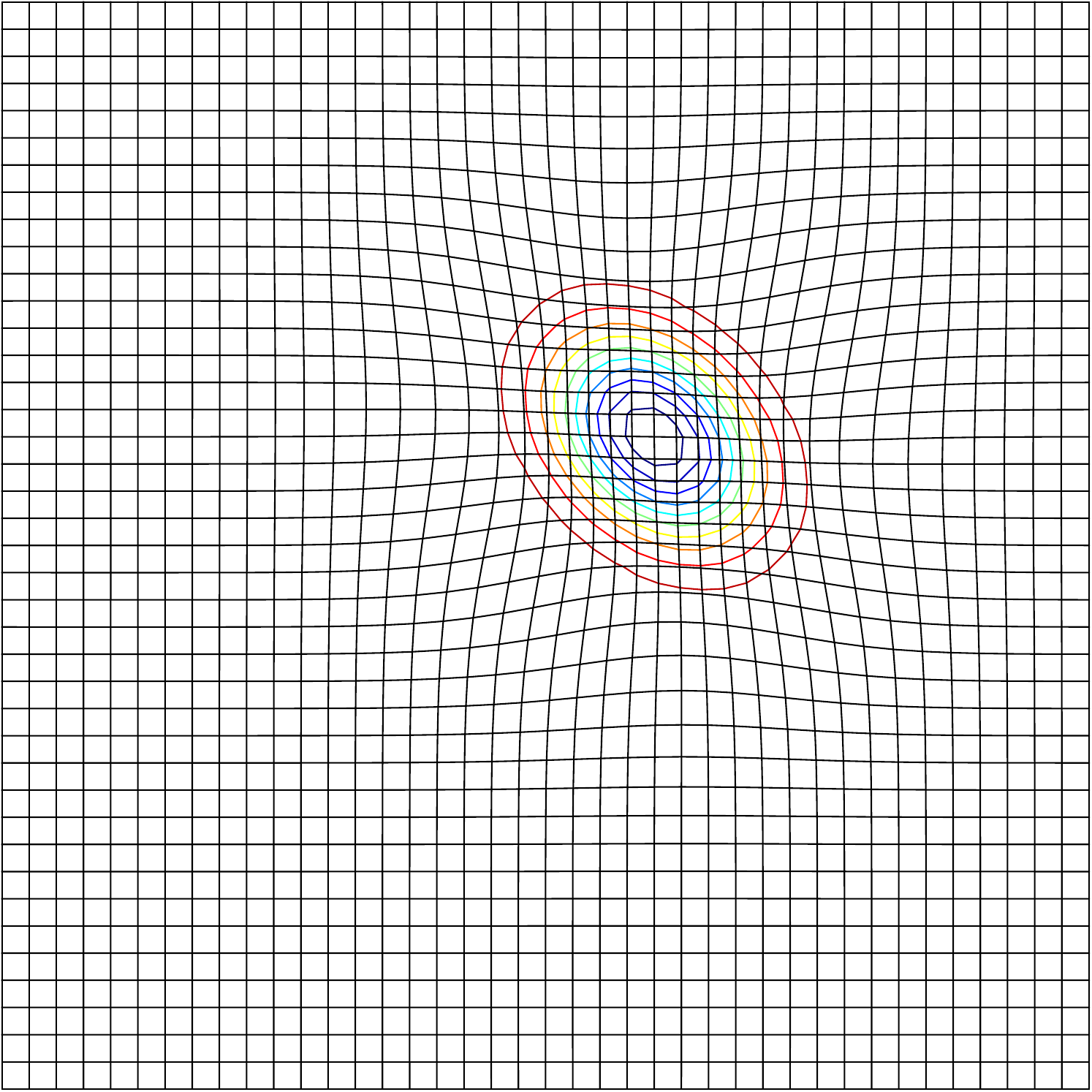}
    \caption{$t=0$}
  \end{subfigure}
  \begin{subfigure}[b]{0.3\textwidth}
    \centering
    \includegraphics[width=1.0\textwidth]{./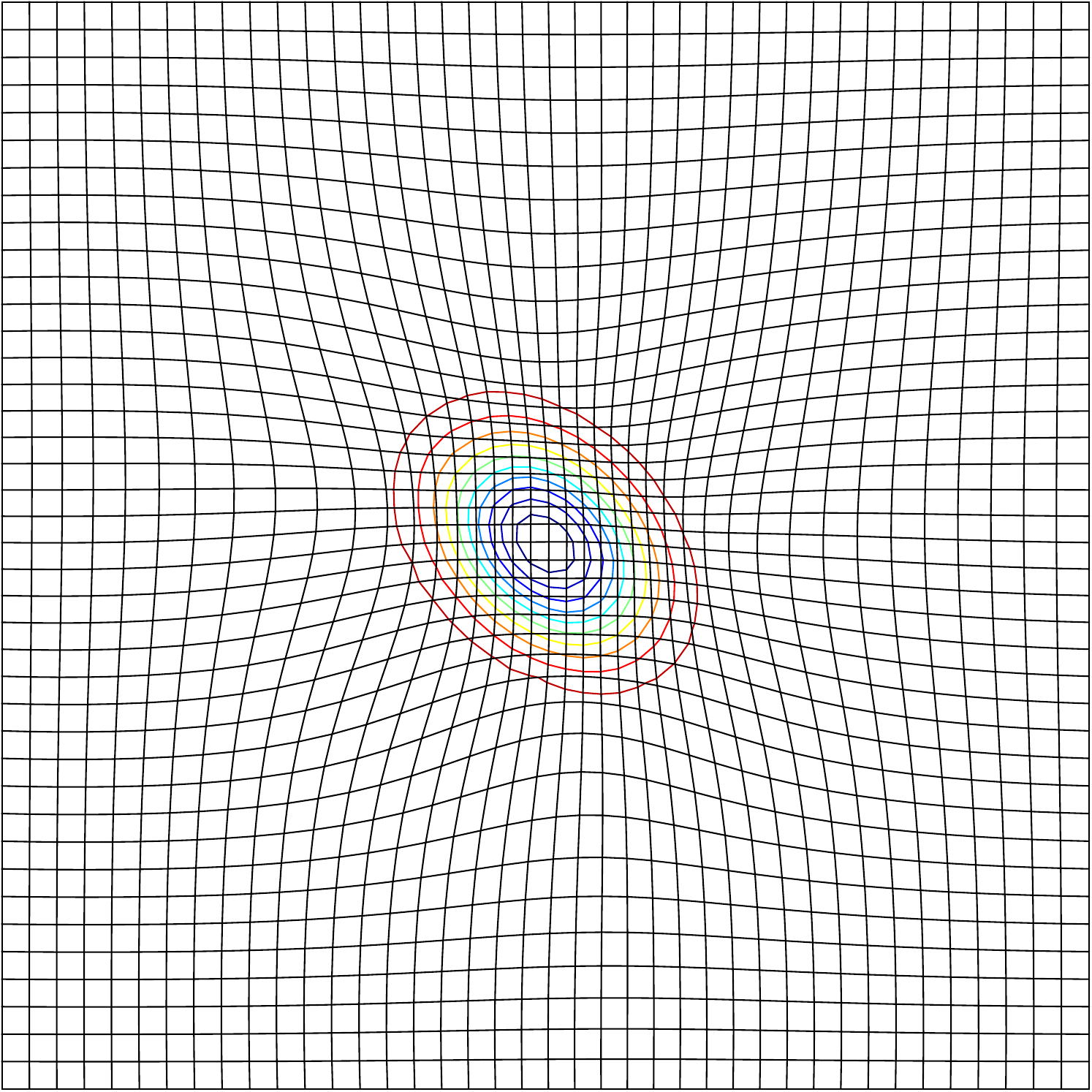}
    \caption{$t=2$}
  \end{subfigure}
  \begin{subfigure}[b]{0.3\textwidth}
    \centering
    \includegraphics[width=1.0\textwidth]{./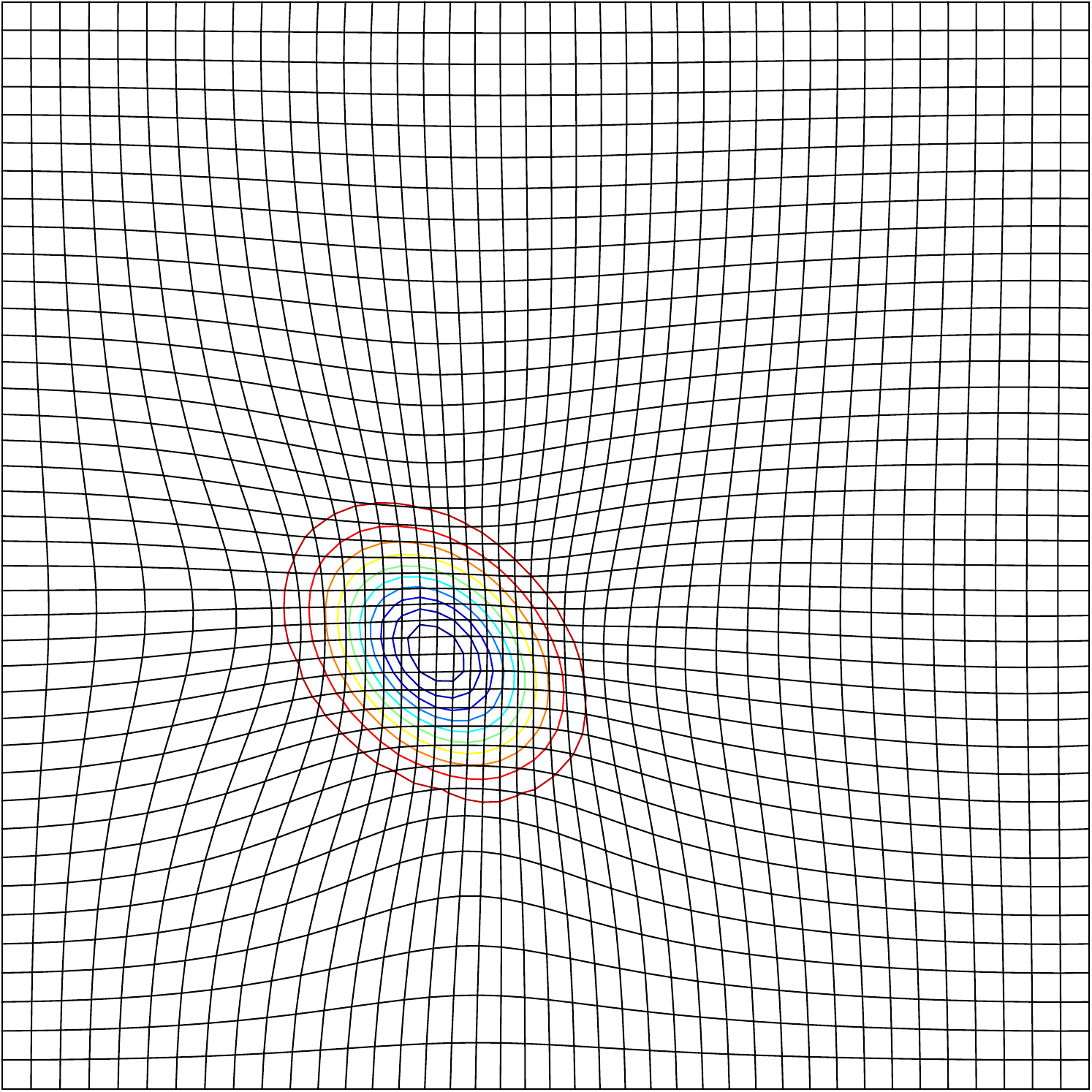}
    \caption{$t=4$}
  \end{subfigure}
  \caption{Example \ref{ex:RMHD_2DVortex}:  Adaptive meshes and  rest-mass density contours at different times obtained by {\tt MM-O5} with adaptive mesh velocity and the monitor \eqref{eq:RMHD_2DVortex_Monitor}. $N=40$ and $10$ equally spaced contour lines.}
  \label{fig:RMHD_2DVortex_ES5_mesh}
\end{figure}

\begin{figure}[!ht]
  \centering
  \begin{subfigure}[b]{0.48\textwidth}
    \centering
    \includegraphics[width=1.0\textwidth]{./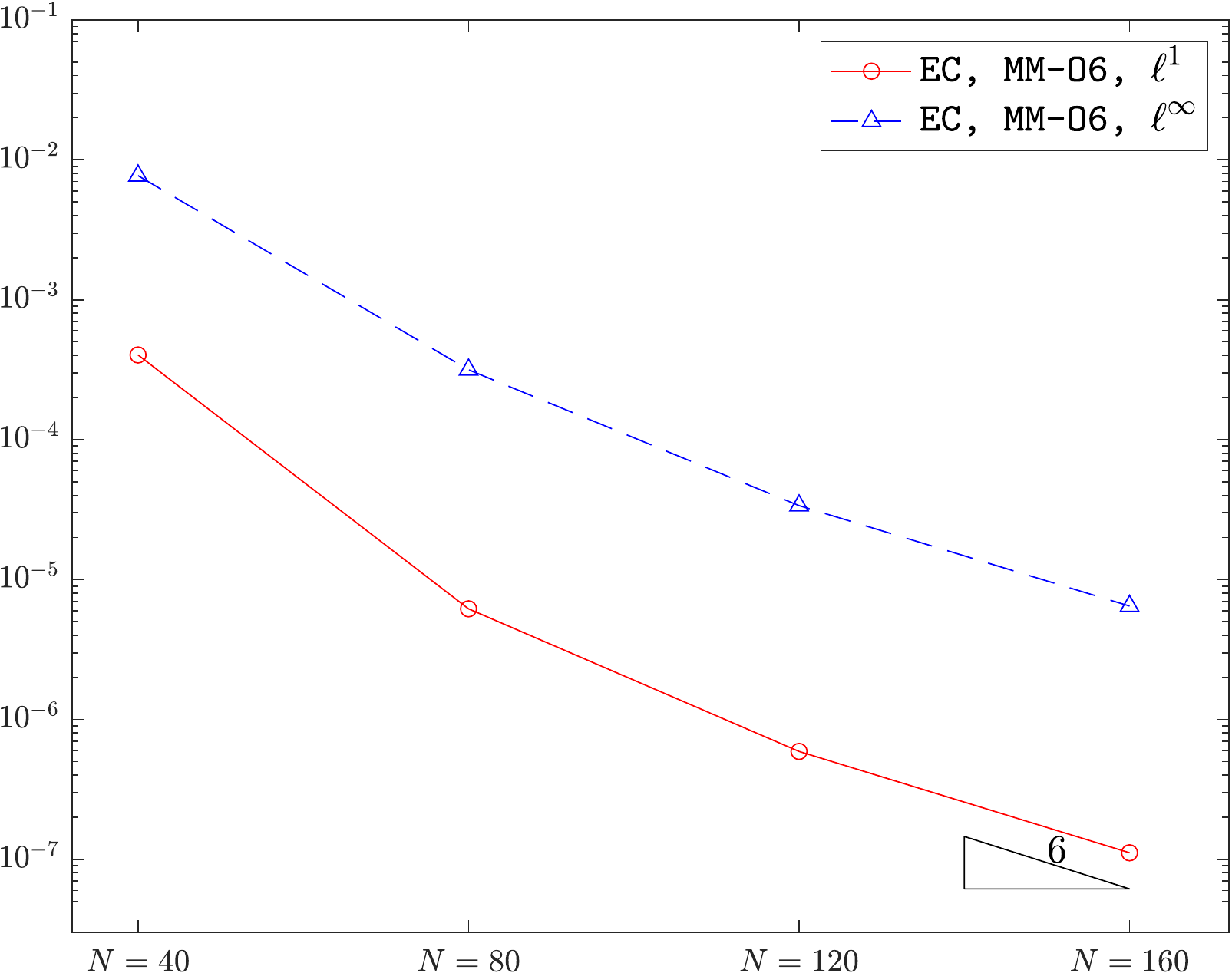}
    \caption{EC, \tt{MM-O6}}
    \label{fig:RMHD_2DVortex_err_EC}
  \end{subfigure}
  \begin{subfigure}[b]{0.48\textwidth}
    \centering
    \includegraphics[width=1.0\textwidth]{./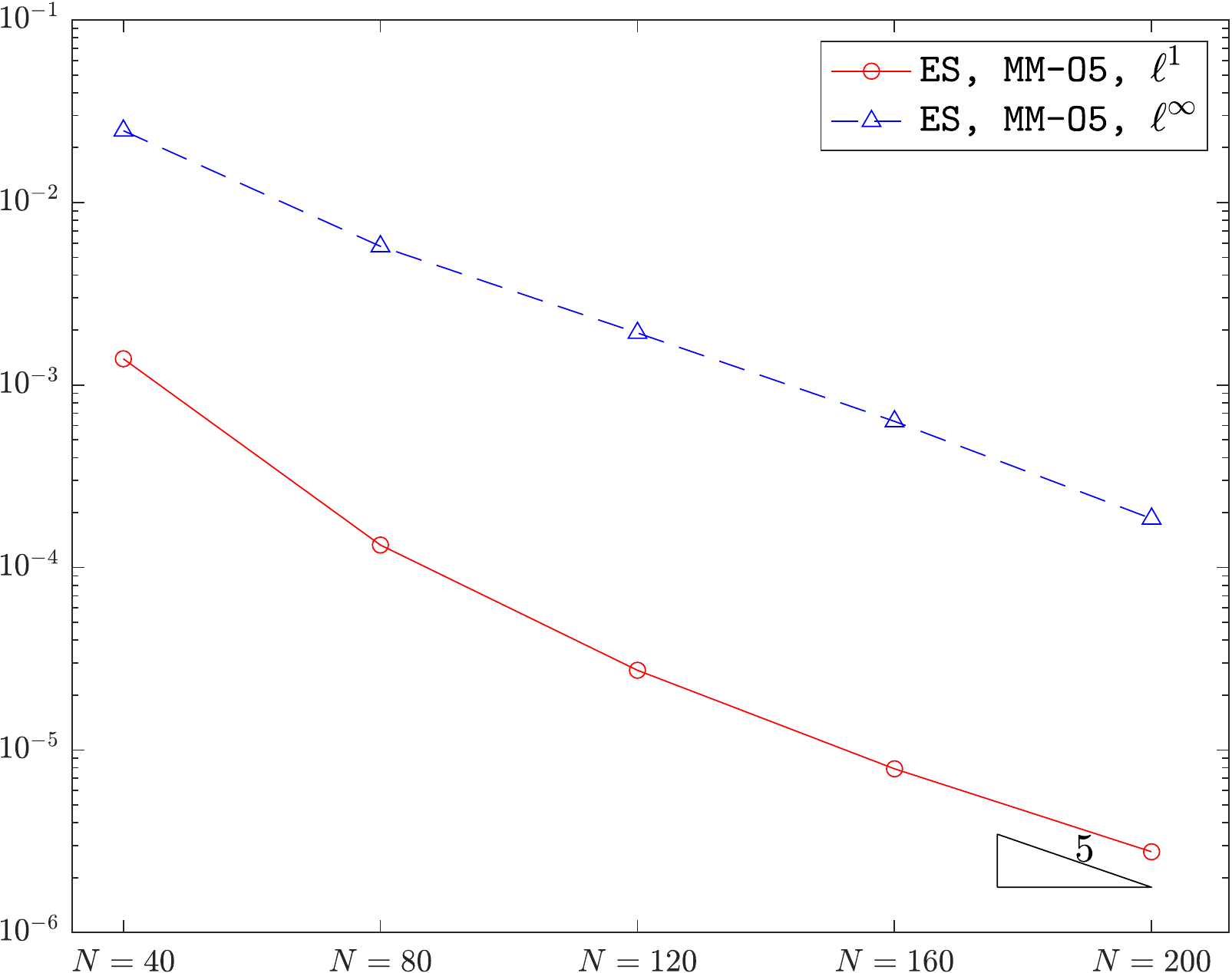}
    \caption{ES, \tt{MM-O5}}
    \label{fig:RMHD_2DVortex_err_ES}
  \end{subfigure}
  \caption{Example \ref{ex:RMHD_2DVortex}: The errors and convergence orders in $\rho$ at $t=4$.}
  \label{fig:RMHD_2DVortex_err}
\end{figure}

\begin{figure}[!ht]
  \centering
  \includegraphics[width=0.5\textwidth]{./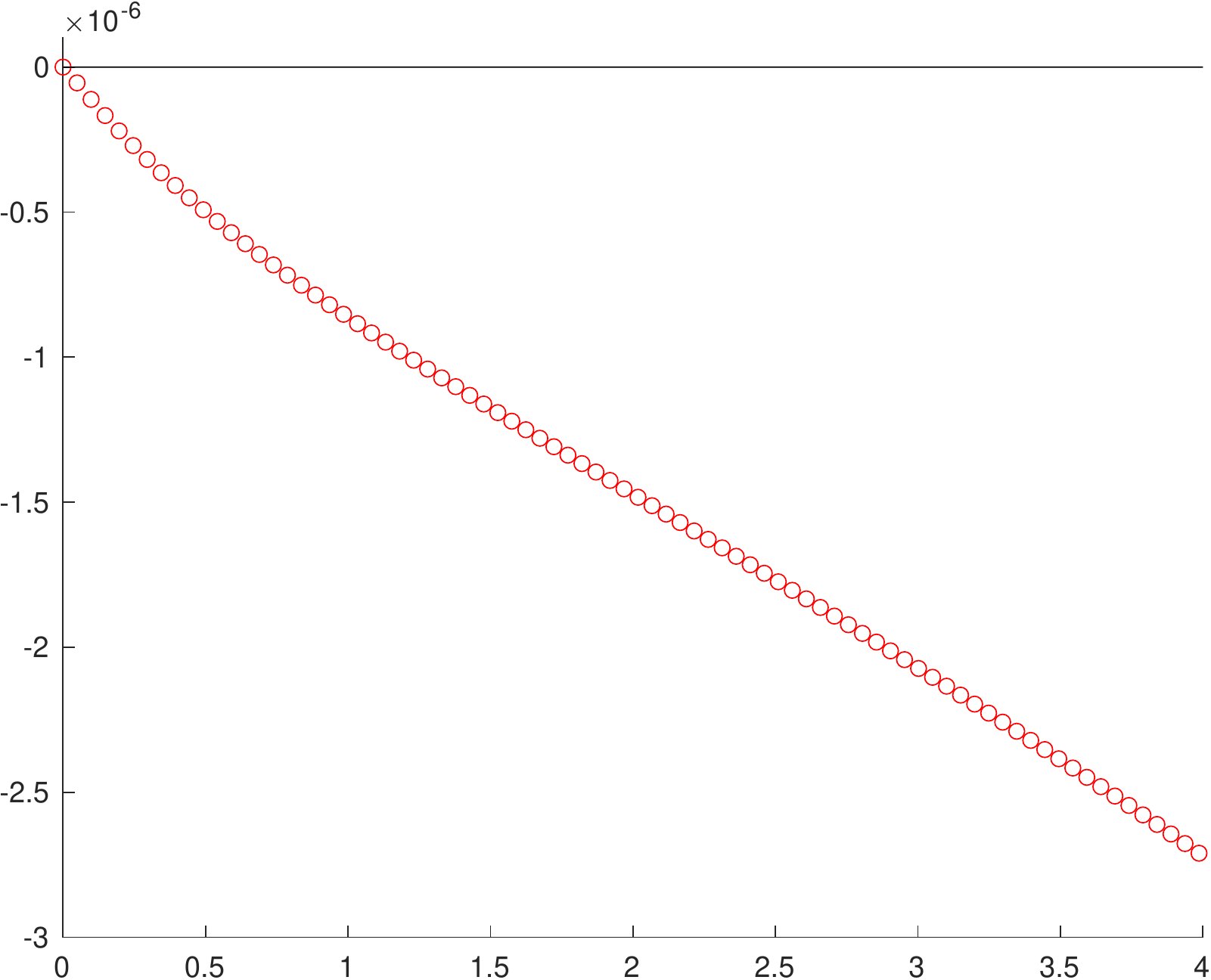}
  \caption{Example \ref{ex:RMHD_2DVortex}: The evolution of the discrete total entropy with $160\times 160$ meshes. The line and symbols aare obtained by using the EC scheme {\tt MM-O6}, and the ES scheme {\tt MM-O5}, respectively.}
  \label{fig:RMHD_2DVortex_TotalEntropy}
\end{figure}

\begin{example}[RHD Riemann problem \uppercase\expandafter{\romannumeral1}]\label{ex:2DRP1}\rm
  This example considers the 2D RHD Riemann problem with the initial data
  \begin{align*}
    (\rho,v_1,v_2,p)=\begin{cases}
      (0.5,~0.5,-0.5,~5), &\quad x_1>0.5,~x_2>0.5,\\
      (1,~0.5,~0.5,~5),   &\quad x_1<0.5,~x_2>0.5,\\
      (3,-0.5,~0.5,~5),   &\quad x_1<0.5,~x_2<0.5,\\
      (1.5,-0.5,-0.5,~5), &\quad x_1>0.5,~x_2<0.5.
    \end{cases}
  \end{align*}
  It describes the interaction of four contact discontinuities (vortex sheets) with the same sign (the negative sign).
\end{example}

The monitor function is chosen as \eqref{eq:monitor} with $\alpha=1200$ and $\sigma=\ln\rho$.
Figure \ref{fig:RHD_RP1} shows the adaptive mesh of {\tt MM-O5}, $40$ equally spaced contour lines of $\ln\rho$, and the cut lines of $\ln\rho$ along $x_2=x_1$ at $t=0.4$ obtained by using  our ES schemes  with $N\times N$ meshes.
As time increases, a spiral with the low rest-mass density around the point (0.5,0.5) emerges,
and the adaptive concentration of the mesh points  follows the spiral formation well, see Figure \ref{fig:RHD_RP1_mesh}, so that some important features are well-captured.
 Figure \ref{fig:RHD_RP1_cut} shows the solution of {\tt MM-O5} with $N=200$ is very close to that of {\tt UM-O5} with $N=500$, and {\tt MM-05} does not cause spurious oscillations near $(0.86,0.86)$, see the small box in the upper right corner in Figure \ref{fig:RHD_RP1_cut}.
The CPU times (see the  parentheses in the captions of Figures \ref{fig:RHD_RP1_MMO5_N200} and \ref{fig:RHD_RP1_UMO5_N500}) clearly highlight the efficiency of the adaptive moving mesh scheme, since it takes only $17.8\%$ CPU time of the latter.
Figures \ref{fig:RHD_RP1_MMO5_N150} and \ref{fig:RHD_RP1_MMO2_N200}
show that 
the fifth-order scheme {\tt MM-O5} gives better results with comparable CPU time than the second-order scheme {\tt MM-O2} \cite{Duan2021RHDMM},
thus {\tt MM-O5}  outperforms {\tt MM-O2}.


\begin{figure}[ht!]
  \centering
  \begin{subfigure}[b]{0.32\textwidth}
    \centering
    \includegraphics[width=\textwidth]{./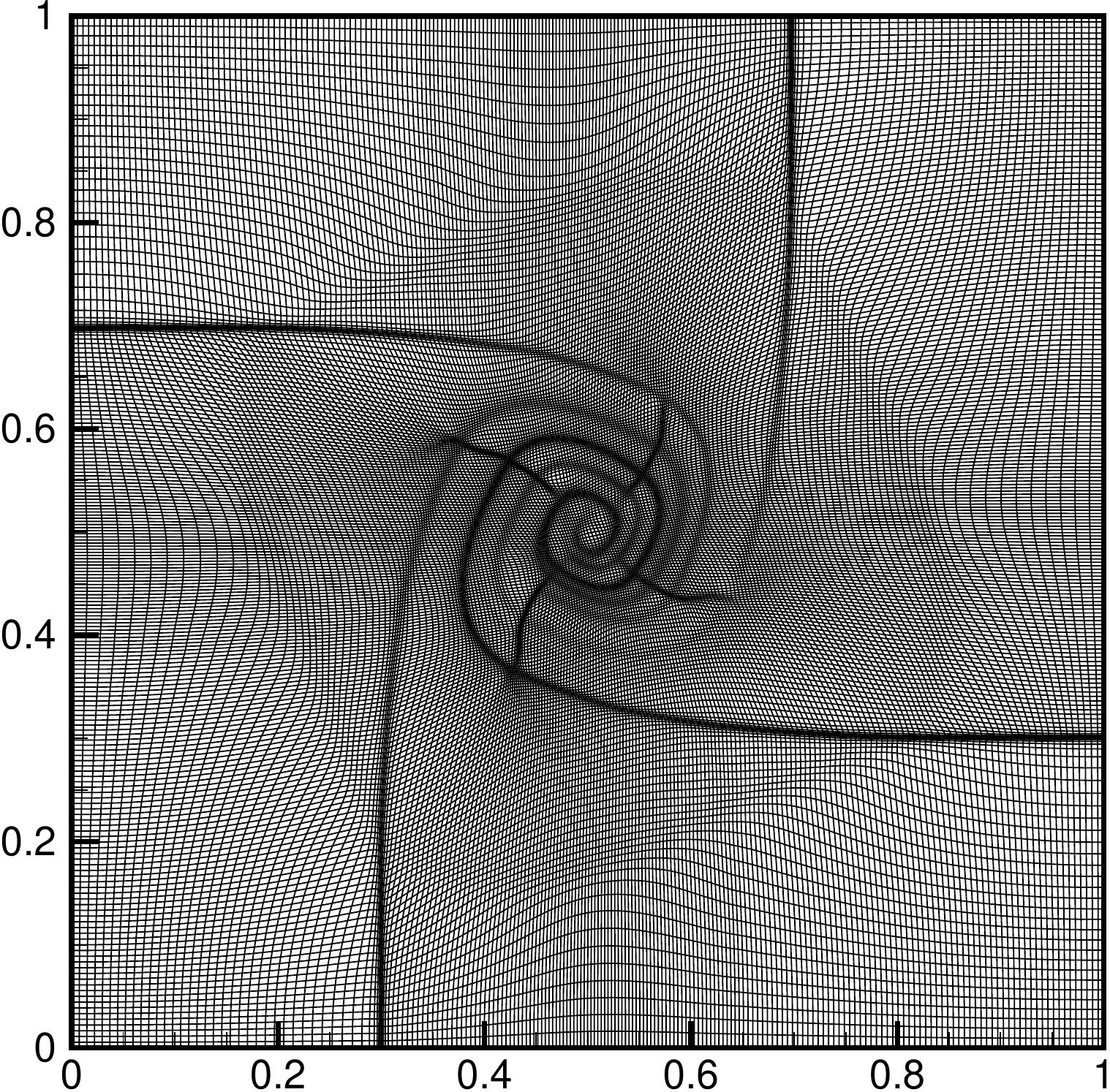}
    \caption{{\tt MM-O5}  with $N=200$}
    \label{fig:RHD_RP1_mesh}
  \end{subfigure}
  \begin{subfigure}[b]{0.32\textwidth}
    \centering
    \includegraphics[width=\textwidth]{./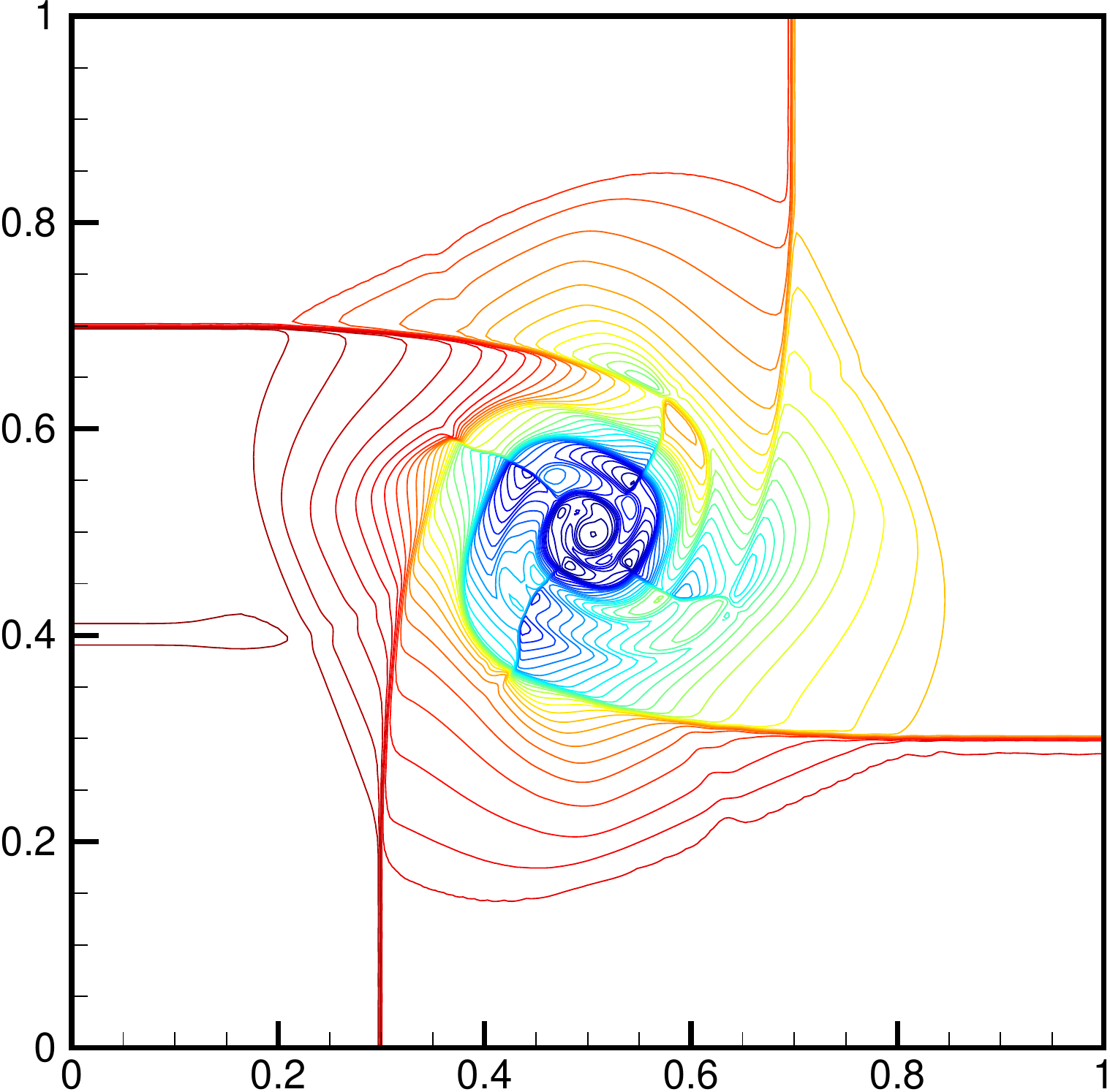}
    \caption{{\tt MM-O5} with $N=200$ (1m02s) }
    \label{fig:RHD_RP1_MMO5_N200}
  \end{subfigure}
  \begin{subfigure}[b]{0.32\textwidth}
    \centering
    \includegraphics[width=\textwidth]{./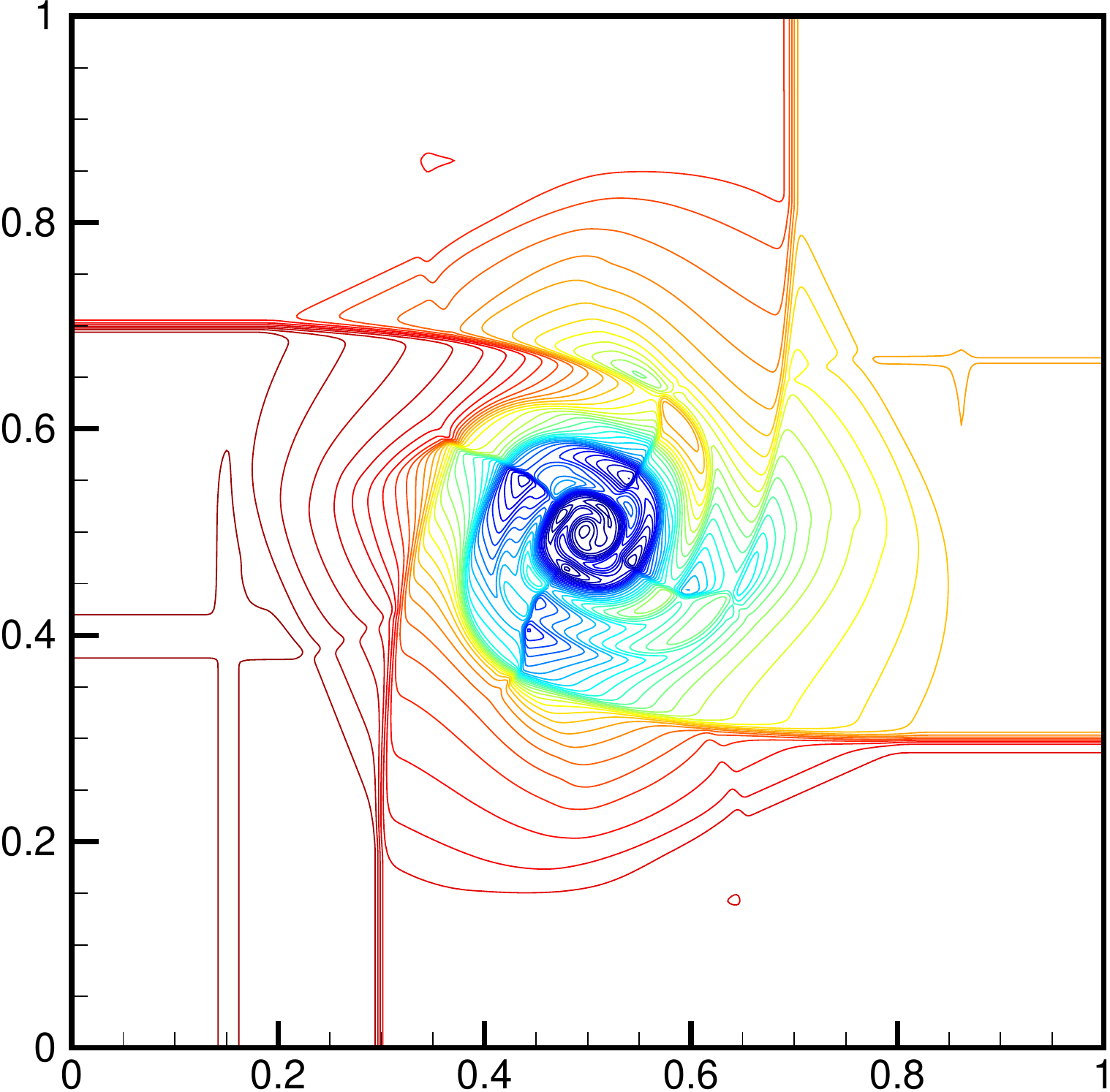}
    \caption{{\tt UM-O5} with $N=500$ (5m49s)}
    \label{fig:RHD_RP1_UMO5_N500}
  \end{subfigure}

  \begin{subfigure}[b]{0.32\textwidth}
    \centering
    \includegraphics[width=\textwidth]{./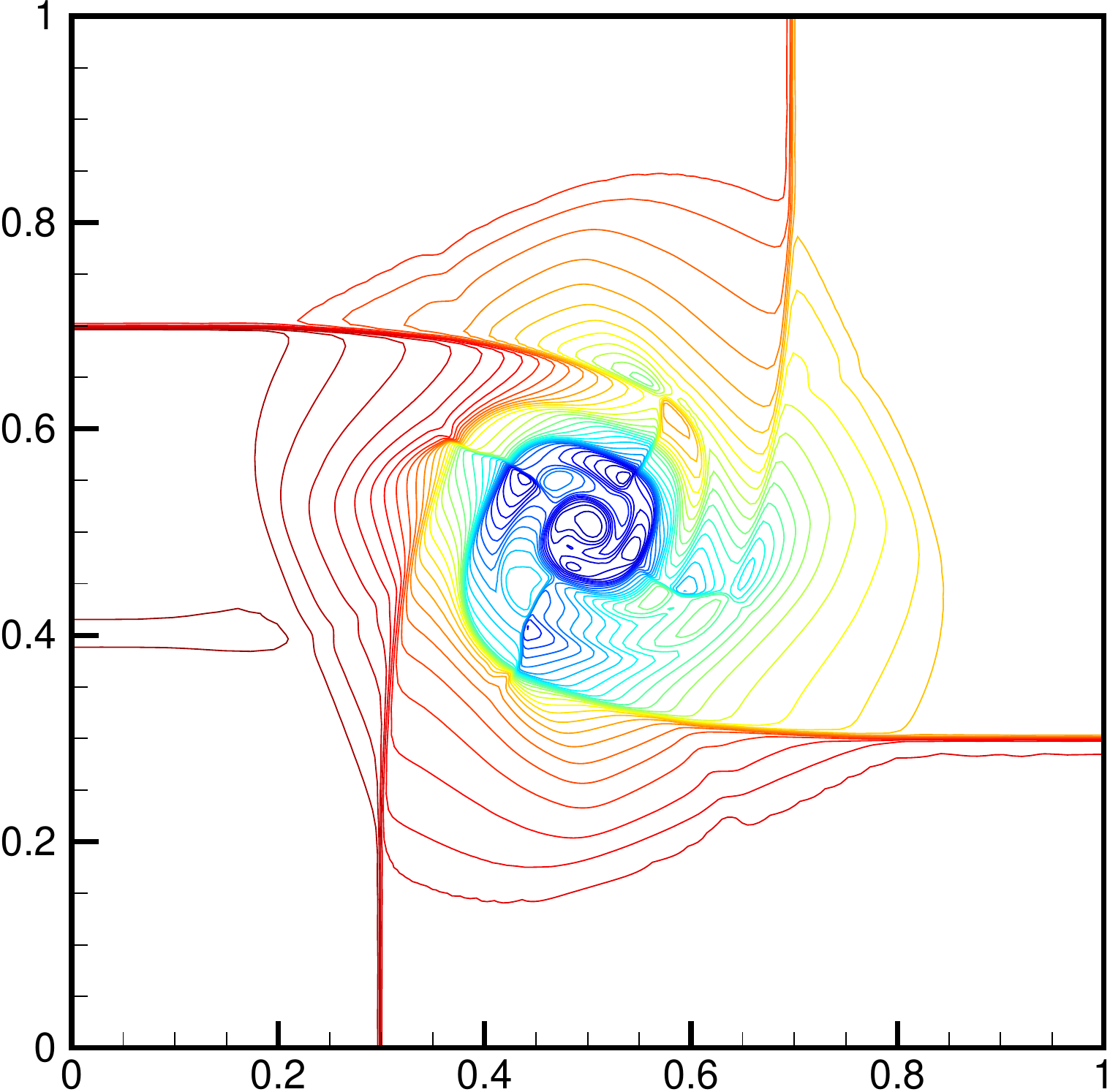}
    \caption{{\tt MM-O5} with $N=150$ (29s) }
    \label{fig:RHD_RP1_MMO5_N150}
  \end{subfigure}
  \begin{subfigure}[b]{0.32\textwidth}
    \centering
    \includegraphics[width=\textwidth]{./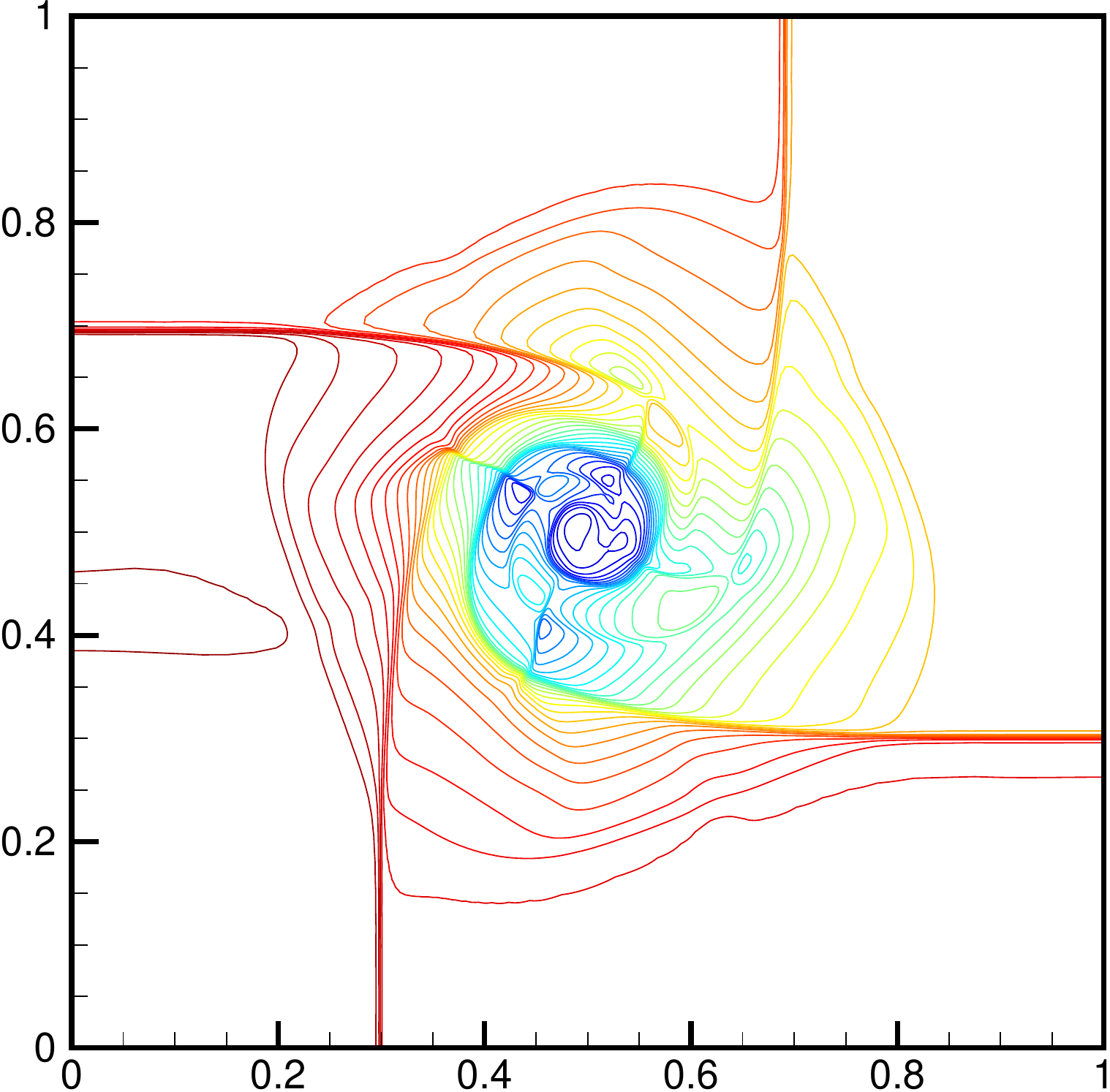}
    \caption{{\tt MM-O2} with $N=200$ (30s)}
    \label{fig:RHD_RP1_MMO2_N200}
  \end{subfigure}
  \begin{subfigure}[b]{0.32\textwidth}
    \centering
    \includegraphics[width=1.06\textwidth]{./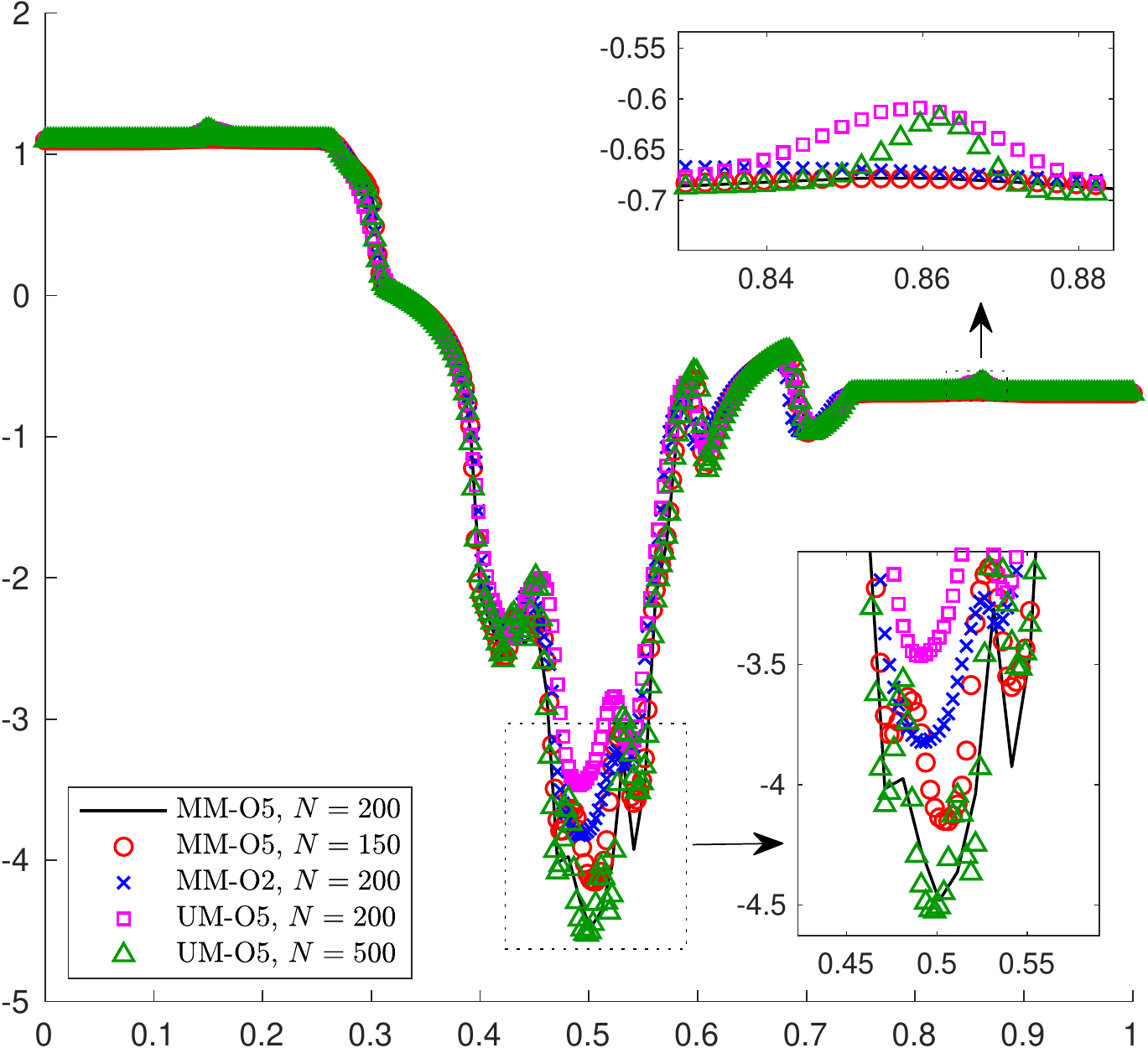}
    \caption{$\ln\rho$ along $x_2=x_1$}
    \label{fig:RHD_RP1_cut}
  \end{subfigure}
  \caption{Example \ref{ex:2DRP1}. Adaptive mesh of {\tt MM-O5} with $N=200$,    $40$ equally spaced contour lines of $\ln \rho$, and
  cut lines of $\ln\rho$ along $x_2=x_1$ obtained by using ES schemes.
  CPU times are listed in  parentheses.}
  \label{fig:RHD_RP1}
\end{figure}

\begin{example}[RHD Riemann problem \uppercase\expandafter{\romannumeral2}]\label{ex:2DRP2}\rm
  The initial data of this 2D RHD Riemann problem are
  \begin{align*}
    (\rho,v_1,v_2,p)=\begin{cases}
      (1,~0,~0,~1),             & \quad x_1>0.5,~x_2>0.5, \\
      (0.5771,-0.3529,~0,~0.4), & \quad x_1<0.5,~x_2>0.5, \\
      (1,-0.3529,-0.3529,~1),   & \quad x_1<0.5,~x_2<0.5, \\
      (0.5771,~0,-0.3529,~0.4), & \quad x_1>0.5,~x_2<0.5,
    \end{cases}
  \end{align*}
  which is about the interaction of four rarefaction waves.
\end{example}

The monitor function is the same as that in the last example.
Figure \ref{fig:RHD_RP2} presents the adaptive mesh of {\tt MM-O5}, the contours of the density logarithms $\ln\rho$
with $40$ equally spaced lines, and $\ln\rho$  along $x_2=x_1$ at $t=0.4$.
The results show that those four initial discontinuities first evolve as four rarefaction waves
and then interact each other and form two (almost parallel) curved shock waves perpendicular to the line $x_2=x_1$ as time increases.
It is seen that the adaptive moving mesh schemes capture the rarefaction waves and the shock waves well.
Figure \ref{fig:RHD_RP2_cut}  compares the results of  {\tt MM-O5} with $N=200$  to {\tt UM-O5} with $N=500$, which are very close to each other, but the former takes about $30.6\%$ CPU time.
One can also find from Figure \ref{fig:RHD_RP2_cut} that {\tt MM-O5} with $N=150$ gives  better results than {\tt MM-O2} with $N=200$ when using comparable CPU time.

\begin{figure}[ht!]
  \centering
  \begin{subfigure}[b]{0.32\textwidth}
    \centering
    \includegraphics[width=\textwidth]{./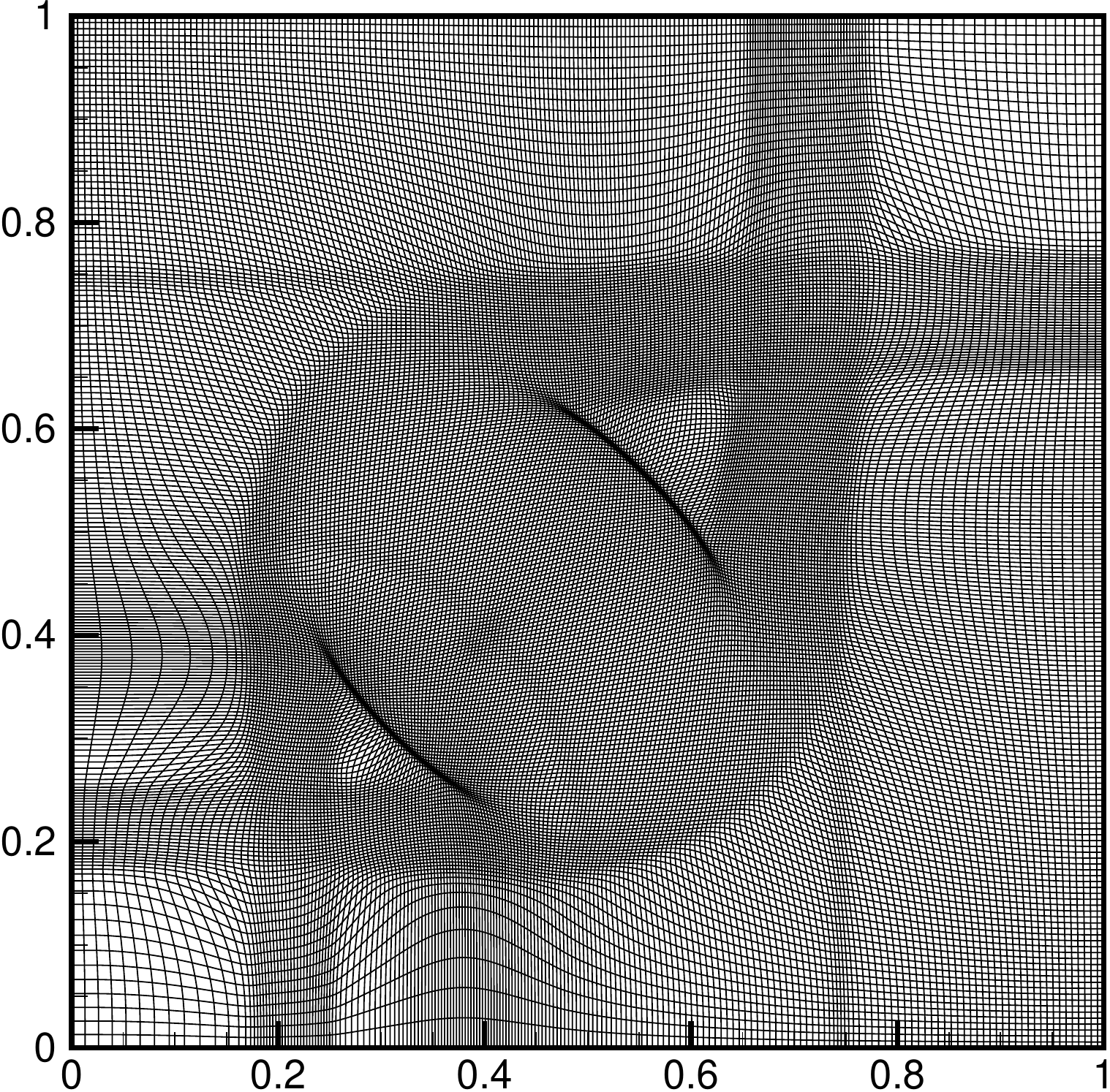}
    \caption{{\tt MM-O5}  with $N=200$}
    \label{fig:RHD_RP2_mesh}
  \end{subfigure}
  \begin{subfigure}[b]{0.32\textwidth}
    \centering
    \includegraphics[width=\textwidth]{./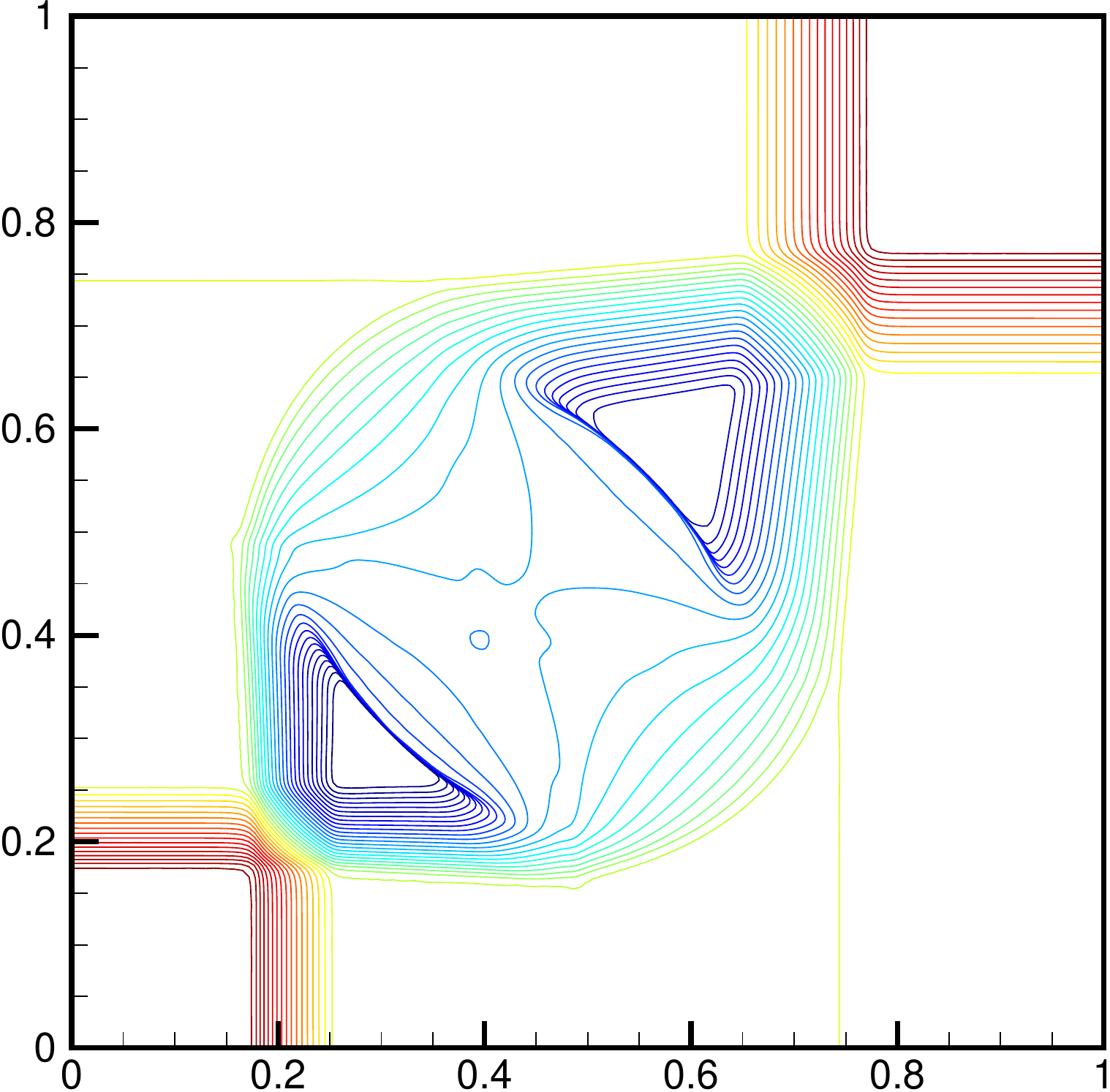}
    \caption{{\tt MM-O5} with $N=200$ (1m25s) }
    \label{fig:RHD_RP2_MMO5_N200}
  \end{subfigure}
  \begin{subfigure}[b]{0.32\textwidth}
    \centering
    \includegraphics[width=\textwidth]{./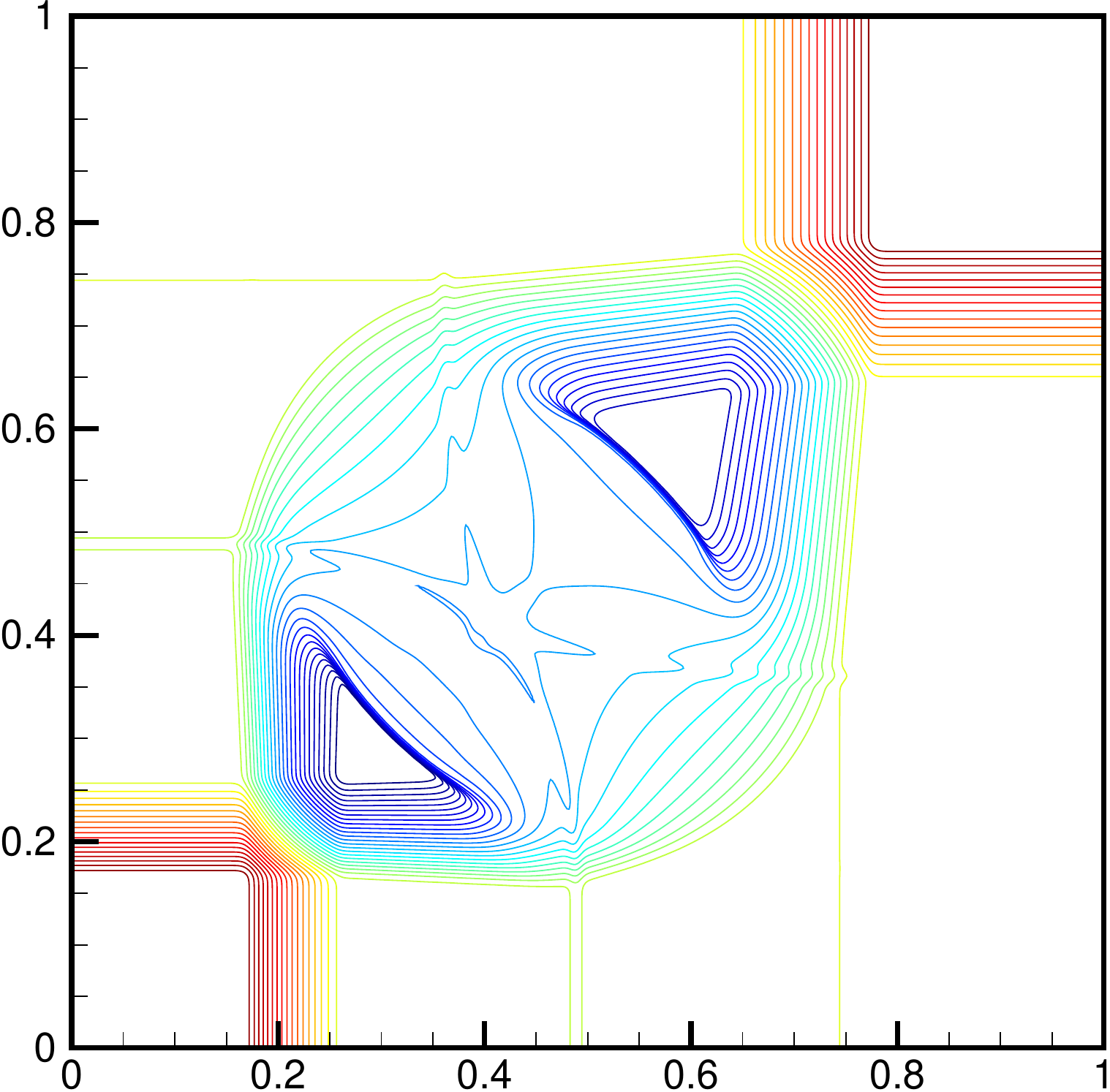}
    \caption{{\tt UM-O5} with $N=500$ (4m38s)}
    \label{fig:RHD_RP2_UMO5_N500}
  \end{subfigure}

  \begin{subfigure}[b]{0.32\textwidth}
    \centering
    \includegraphics[width=\textwidth]{./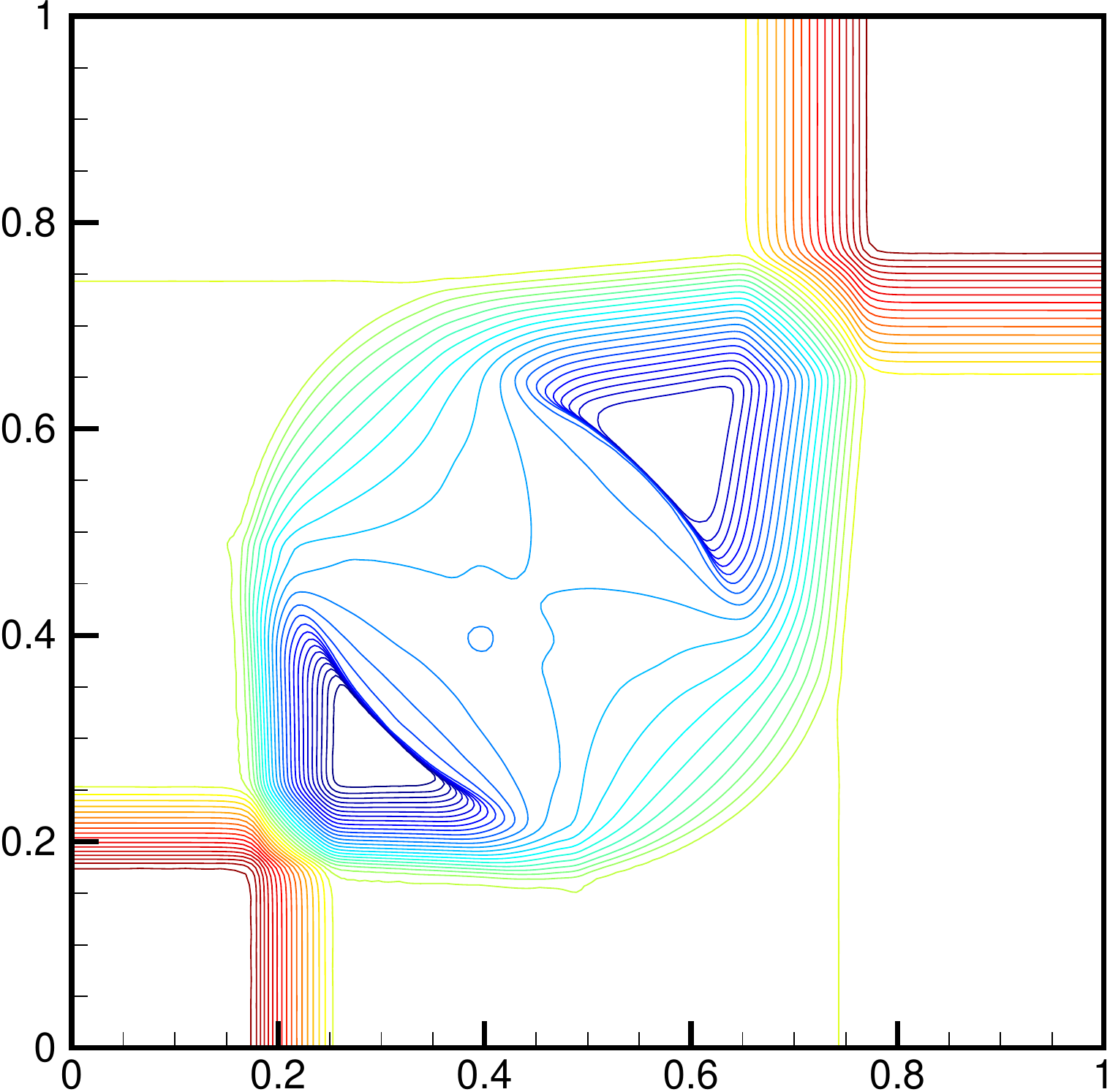}
    \caption{{\tt MM-O5} with $N=150$ (35s) }
    \label{fig:RHD_RP2_MMO5_N150}
  \end{subfigure}
  \begin{subfigure}[b]{0.32\textwidth}
    \centering
    \includegraphics[width=\textwidth]{./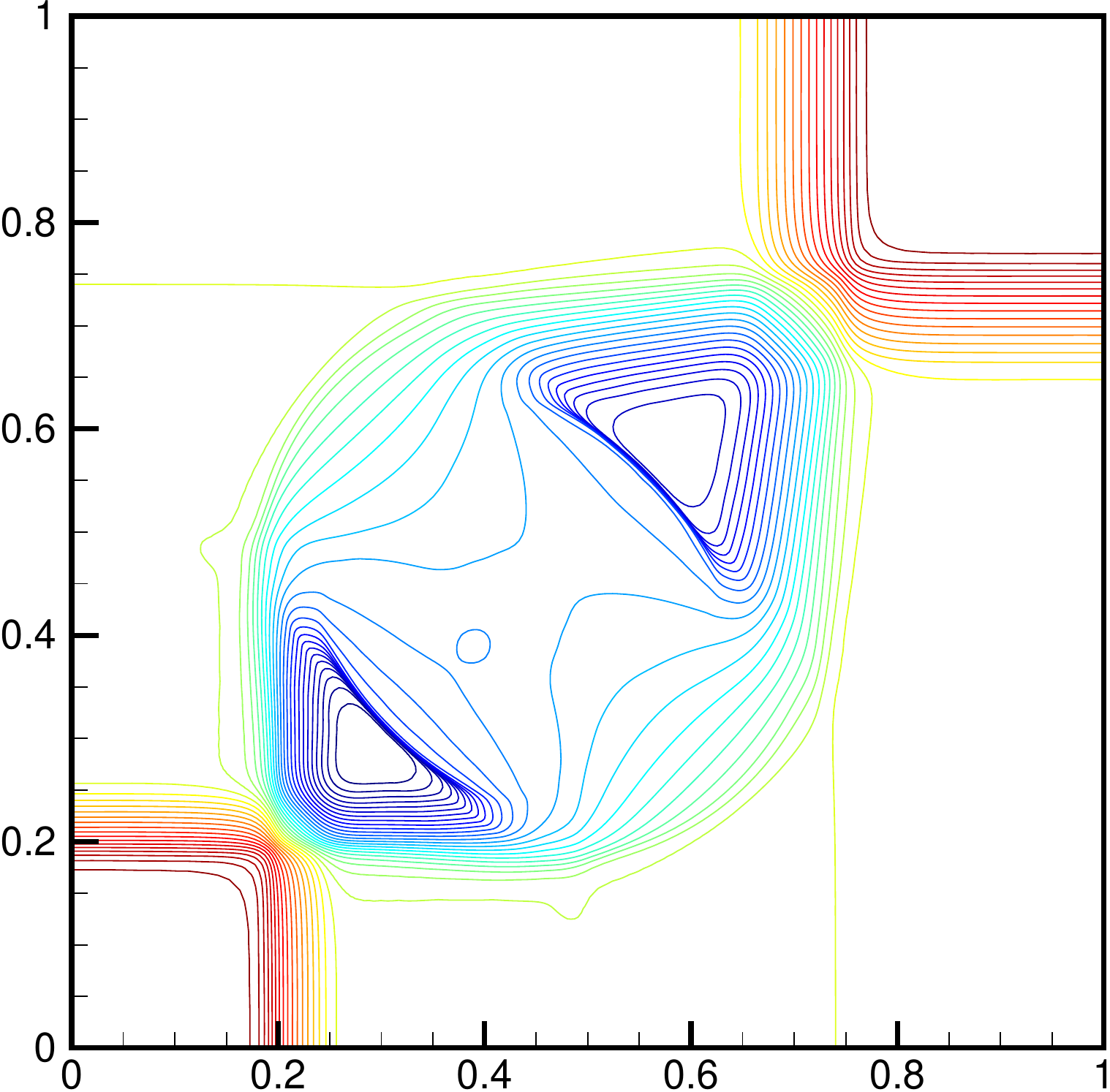}
    \caption{{\tt MM-O2} with $N=200$ (38s)}
    \label{fig:RHD_RP2_MMO2_N200}
  \end{subfigure}
  \begin{subfigure}[b]{0.32\textwidth}
    \centering
    \includegraphics[width=1.06\textwidth]{./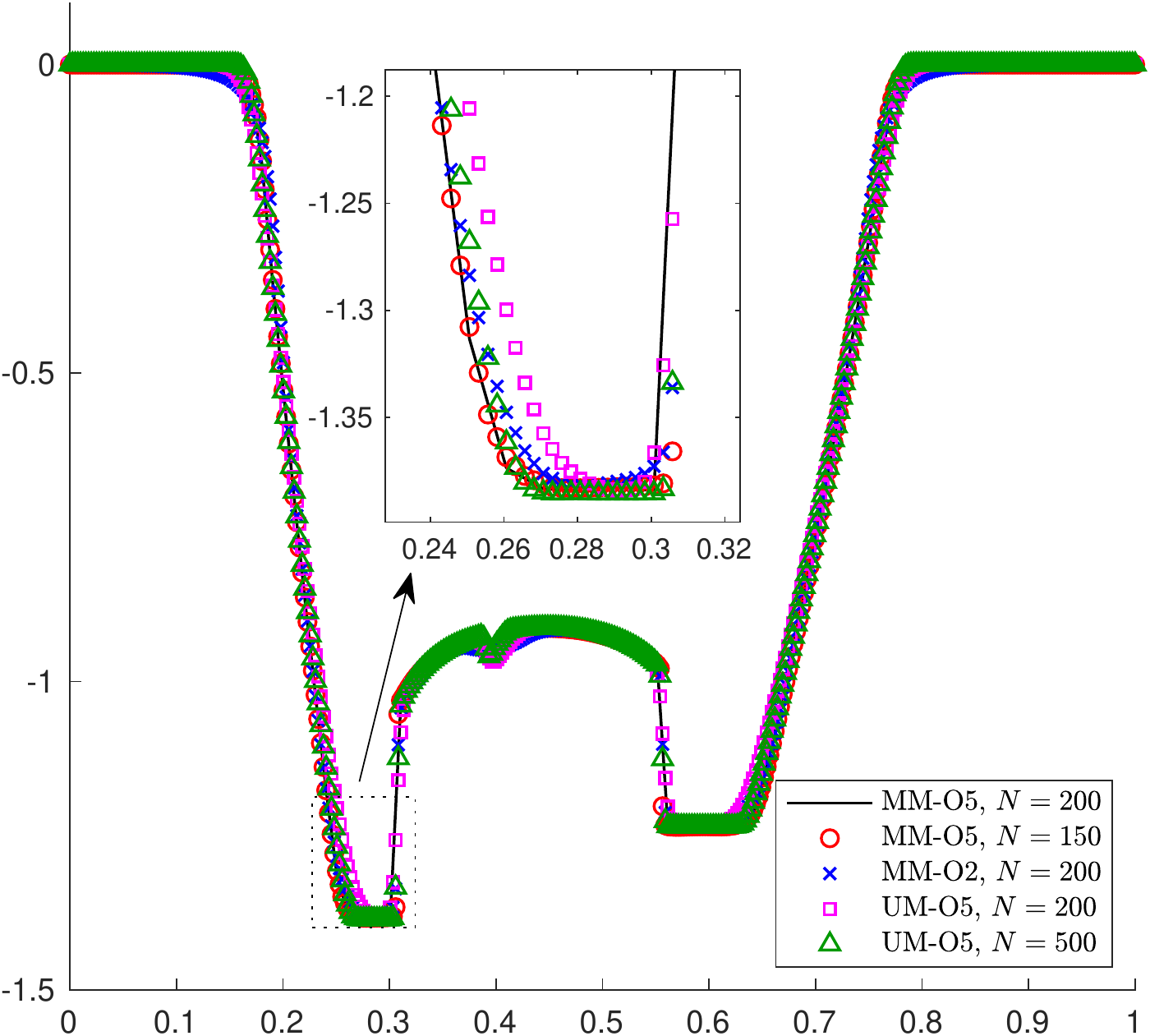}
    \caption{$\ln\rho$ along $x_2=x_1$}
    \label{fig:RHD_RP2_cut}
  \end{subfigure}
  \caption{Example \ref{ex:2DRP2}. Adaptive mesh of {\tt MM-O5} with $N=200$,
    $40$ equally spaced contour lines of $\ln \rho$, and the cut lines of $\ln\rho$ along $x_2=x_1$.
  CPU times are listed in  parentheses.}
  \label{fig:RHD_RP2}
\end{figure}

\begin{example}[RHD Riemann problem \uppercase\expandafter{\romannumeral3}]\label{ex:2DRP3}\rm
  The initial data of the third 2D RHD Riemann problem are
  \begin{align*}
    (\rho,v_1,v_2,p)=\begin{cases}
      (0.035145216124503,~0,~0,~0.162931056509027), & \quad x_1>0.5,~x_2>0.5, \\
      (0.1,~0.7,~0,~1),                             & \quad x_1<0.5,~x_2>0.5, \\
      (0.5,~0,~0,~1),                               & \quad x_1<0.5,~x_2<0.5, \\
      (0.1,~0,~0.7,~1),                             & \quad x_1>0.5,~x_2<0.5,
    \end{cases}
  \end{align*}
  where the left and bottom discontinuities are two contact discontinuities and
  the top and right are two shock waves.
\end{example}

The monitor function is the same as above.
The adaptive mesh of {\tt MM-O5} with $N=200$, the contours of the density logarithms $\ln\rho$
with $40$ equally spaced lines, and $\ln\rho$ cut along $x_2=x_1$ at $t=0.4$
are shown in Figure \ref{fig:RHD_RP3}.
Similar to the last two examples, from Figure \ref{fig:RHD_RP3_MMO5_N150} and \ref{fig:RHD_RP3_MMO2_N200},
one can see that {\tt MM-O5} gives better results than {\tt MM-O2} when using comparable CPU time, especially around the central ``mushroom cloud'', which forms after the interaction of the initial discontinuities.
The solution obtained by {\tt MM-O5} with $N=200$ is much better than {\tt UM-O5} with $N=200$, see Figure \ref{fig:RHD_RP3_cut}, and agrees well with that
of {\tt UM-O5} with $N=600$, while the adaptive moving mesh scheme only takes $13.7\%$ CPU time, verifying the high efficiency of our high-order accurate ES adaptive moving mesh schemes.

\begin{figure}[ht!]
  \centering
  \begin{subfigure}[b]{0.32\textwidth}
    \centering
    \includegraphics[width=\textwidth]{./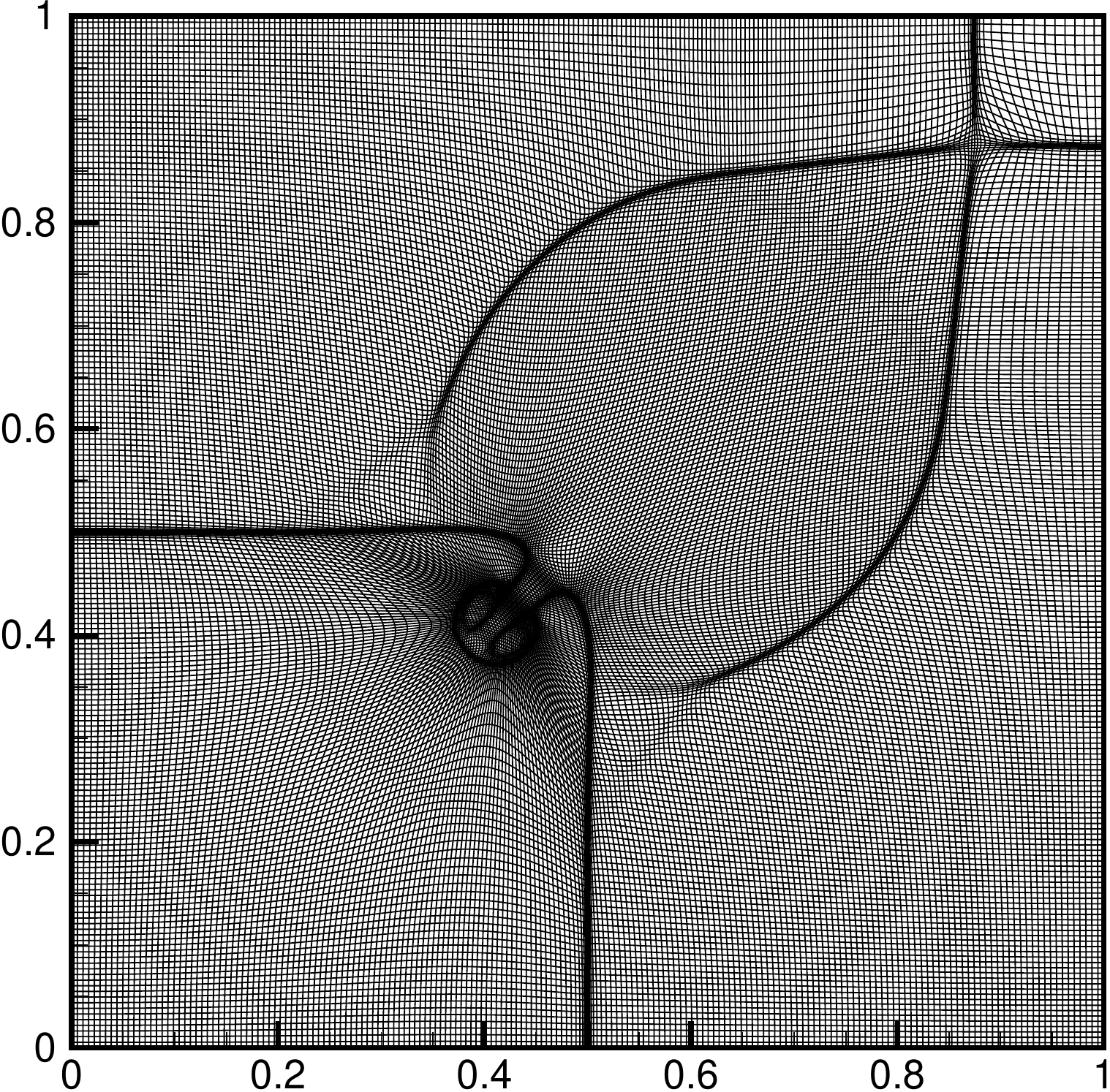}
    \caption{{\tt MM-O5}  with $N=200$}
    \label{fig:RHD_RP3_mesh}
  \end{subfigure}
  \begin{subfigure}[b]{0.32\textwidth}
    \centering
    \includegraphics[width=\textwidth]{./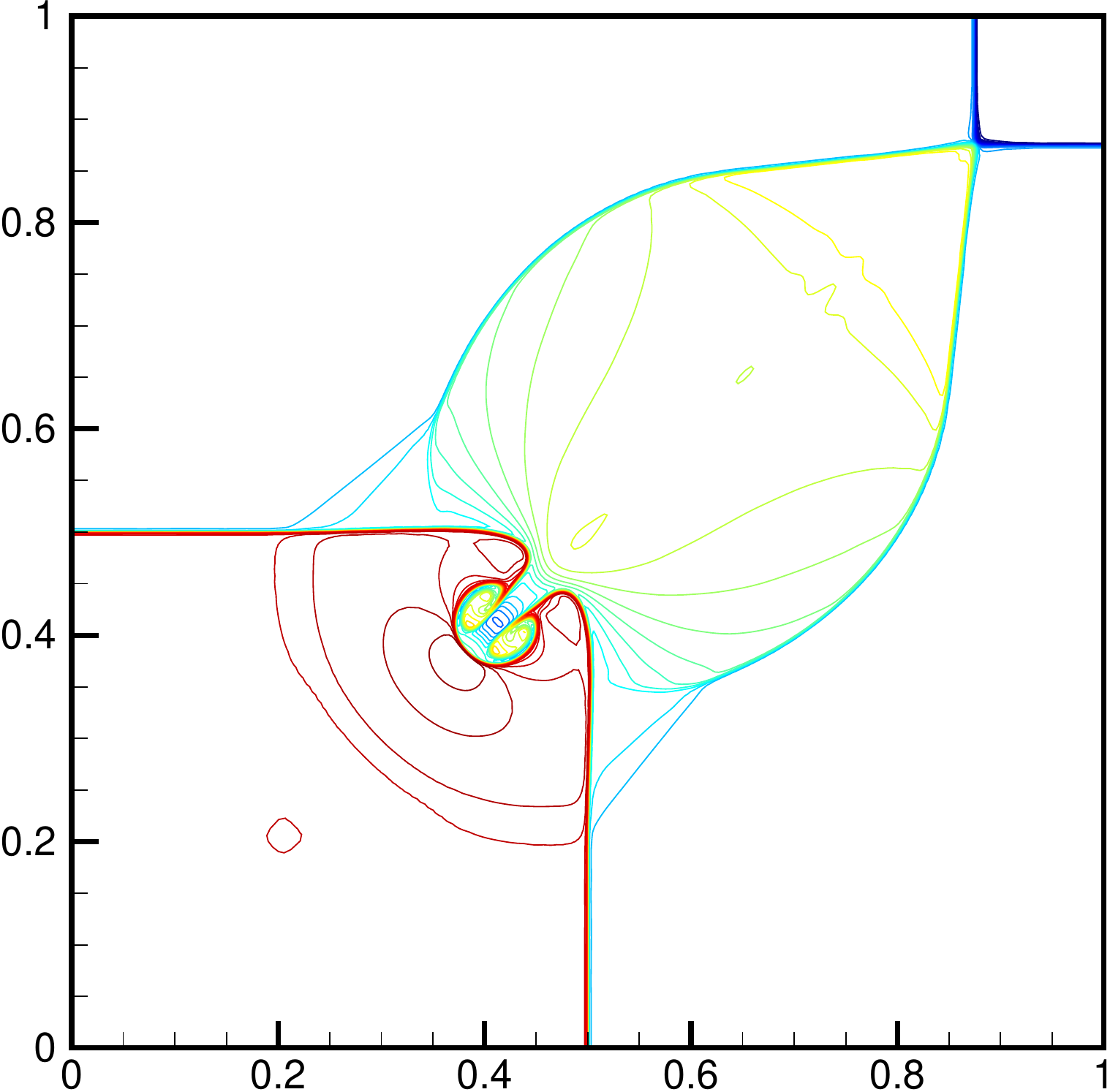}
    \caption{{\tt MM-O5} with $N=200$ (1m16s) }
    \label{fig:RHD_RP3_MMO5_N200}
  \end{subfigure}
  \begin{subfigure}[b]{0.32\textwidth}
    \centering
    \includegraphics[width=\textwidth]{./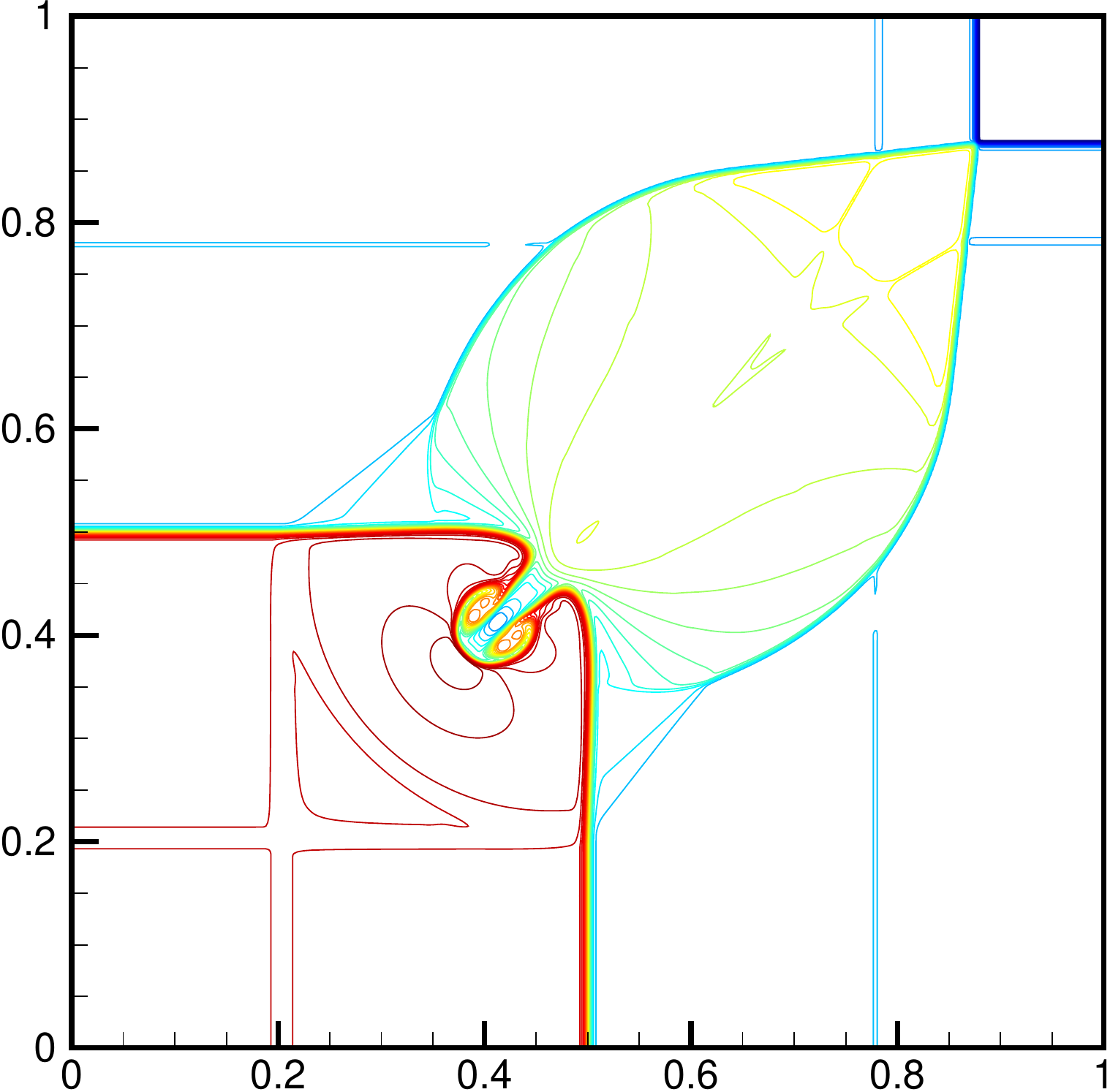}
    \caption{{\tt UM-O5} with $N=600$ (9m16s)}
    \label{fig:RHD_RP3_UMO5_N600}
  \end{subfigure}

  \begin{subfigure}[b]{0.32\textwidth}
    \centering
    \includegraphics[width=\textwidth]{./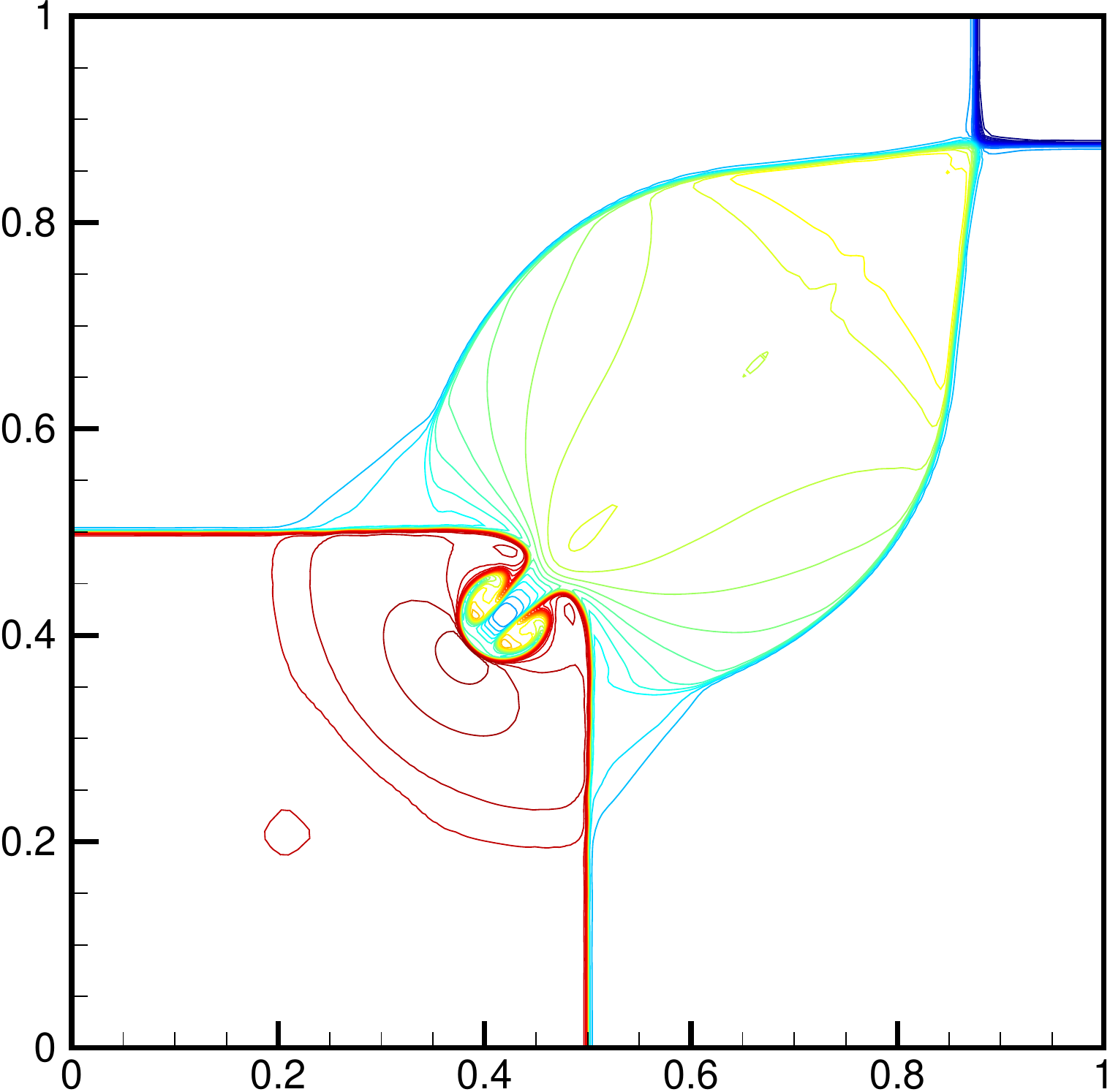}
    \caption{{\tt MM-O5} with $N=150$ (32s) }
    \label{fig:RHD_RP3_MMO5_N150}
  \end{subfigure}
  \begin{subfigure}[b]{0.32\textwidth}
    \centering
    \includegraphics[width=\textwidth]{./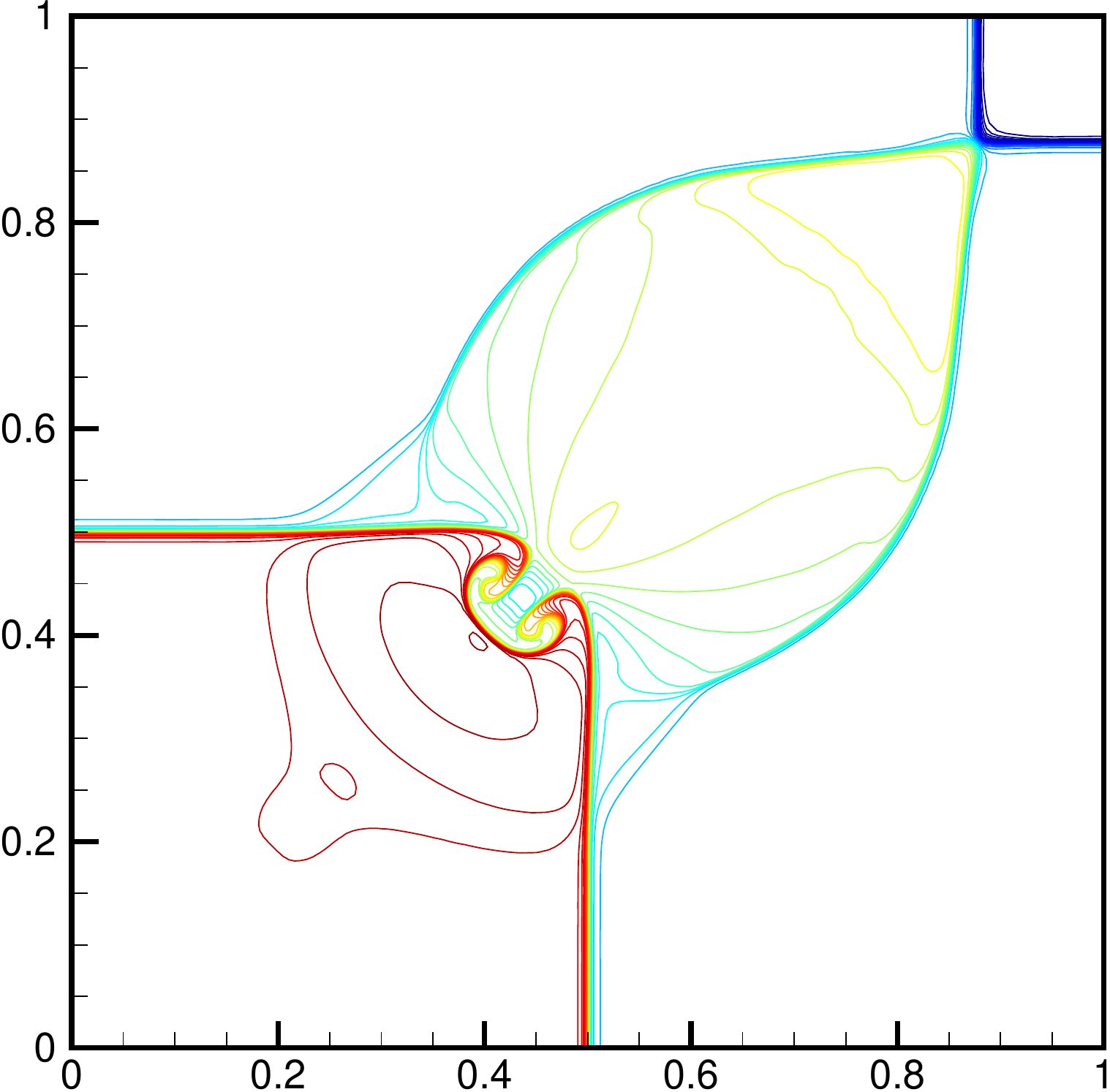}
    \caption{{\tt MM-O2} with $N=200$ (39s)}
    \label{fig:RHD_RP3_MMO2_N200}
  \end{subfigure}
  \begin{subfigure}[b]{0.32\textwidth}
    \centering
    \includegraphics[width=1.0\textwidth]{./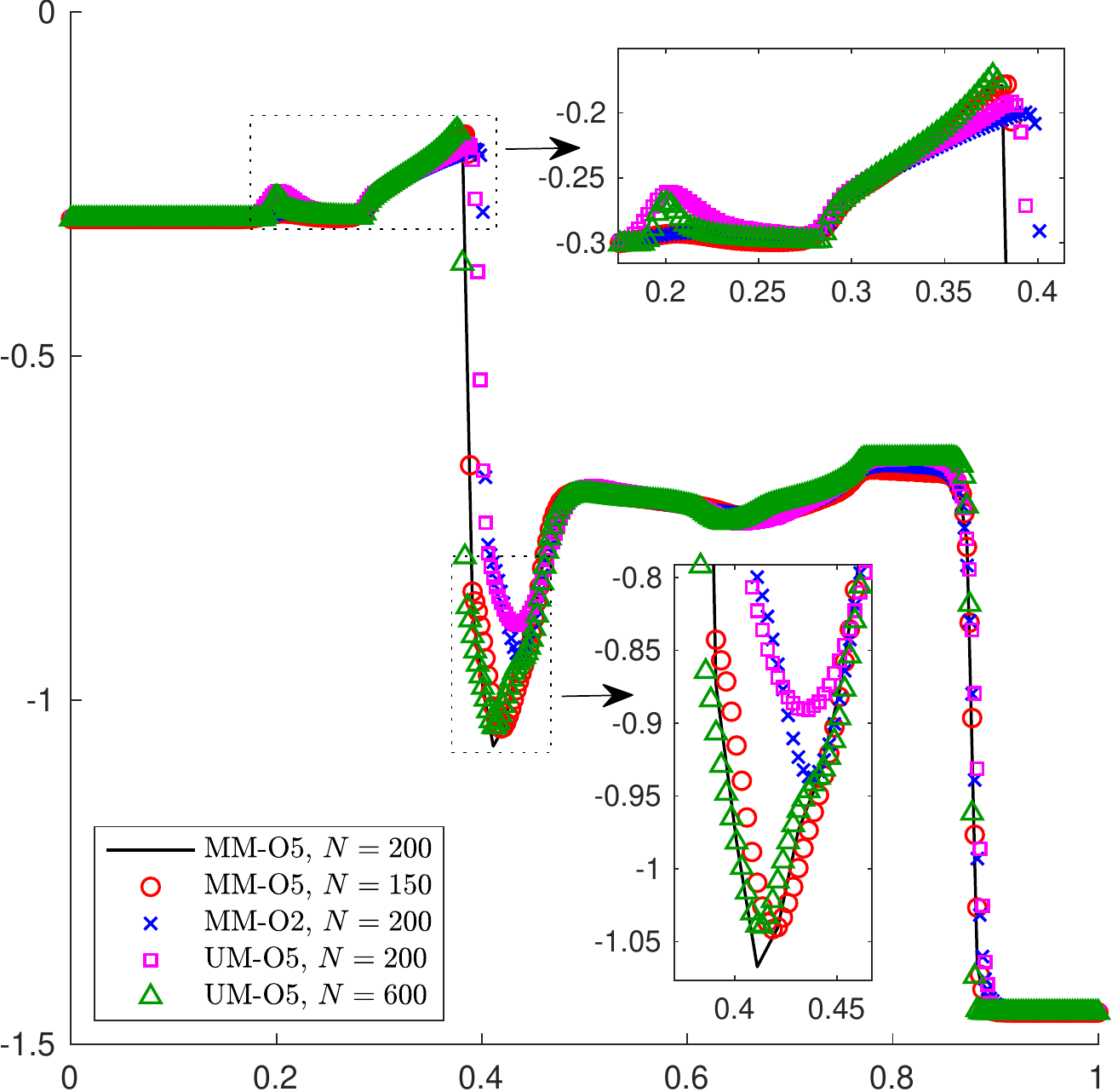}
    \caption{$\ln\rho$ along $x_2=x_1$}
    \label{fig:RHD_RP3_cut}
  \end{subfigure}
  \caption{Example \ref{ex:2DRP3}. Adaptive mesh of {\tt MM-O5} with $N=200$,
    $40$ equally spaced contour lines of $\ln \rho$, and  cut lines of $\ln\rho$ along $x_2=x_1$.
  CPU times are listed in   parentheses.}
  \label{fig:RHD_RP3}
\end{figure}

\begin{example}[2D RMHD blast problem]\label{ex:RMHD_2DBlast}\rm
  It is a benchmark test problem for the RMHD, and the initial setup in \cite{Balsara2016A,DelZanna2003An,Mignone2006An} is adopted.
  The physical domain is $[-6,6]^2$ with outflow boundary conditions, and divided into three parts at initial time.
  The inner part is the explosion zone with a radius of $0.8$, and $\rho=0.01,~p=1$;  and the outer part is the ambient medium with the radius larger than $1$, and $\rho=10^{-4},~p=5\times 10^{-4}$; while
  the intermediate part is a linear taper applied to the density and the pressure from the radius $0.8$ to $1$.
  The magnetic field is only initialized in the $x_1$-direction as
  $\Bx=0.1$ and the adiabatic index $\Gamma=4/3$.
  This problem is solved by using the fifth-order ES adaptive moving mesh scheme with $N\times N$ meshes until $t=4$.
\end{example}
The monitor is the same as that in the last example except for $\alpha=800$.
Figure \ref{fig:RMHD_2DBlast} shows the adaptive mesh and $40$ equally spaced contour lines obtained by using {\tt MM-O5} with $150\times 150$ mesh at $t=4$.
One can see that the mesh points adaptively concentrate near the large gradient of $\ln\rho$ due to the choice of the monitor function, and increase the resolution of the shock waves.
To compare the results of the fifth-order ES schemes on the adaptive moving mesh and the static uniform mesh, the cut lines of $p$ and $W$ are plotted in Figure \ref{fig:RMHD_2DBlast_cut}.
It is seen that the results obtained by using {\tt MM-O5} with $N=150$ are much better than those of {\tt UM-O5} with the same grid number, and comparable to those of {\tt UM-O5} with $N=600$.
From Table \ref{tab:RMHD_2D_CPU}, one can see that {\tt MM-O5} is more efficient than {\tt UM-O5}, since the former takes only $7.26\%$ CPU time of the latter, highlighting the high efficiency of our high-order accurate ES adaptive moving mesh schemes.

\begin{figure}[ht!]
  \centering
  \begin{subfigure}[b]{0.48\textwidth}
    \centering
    \includegraphics[width=\textwidth]{./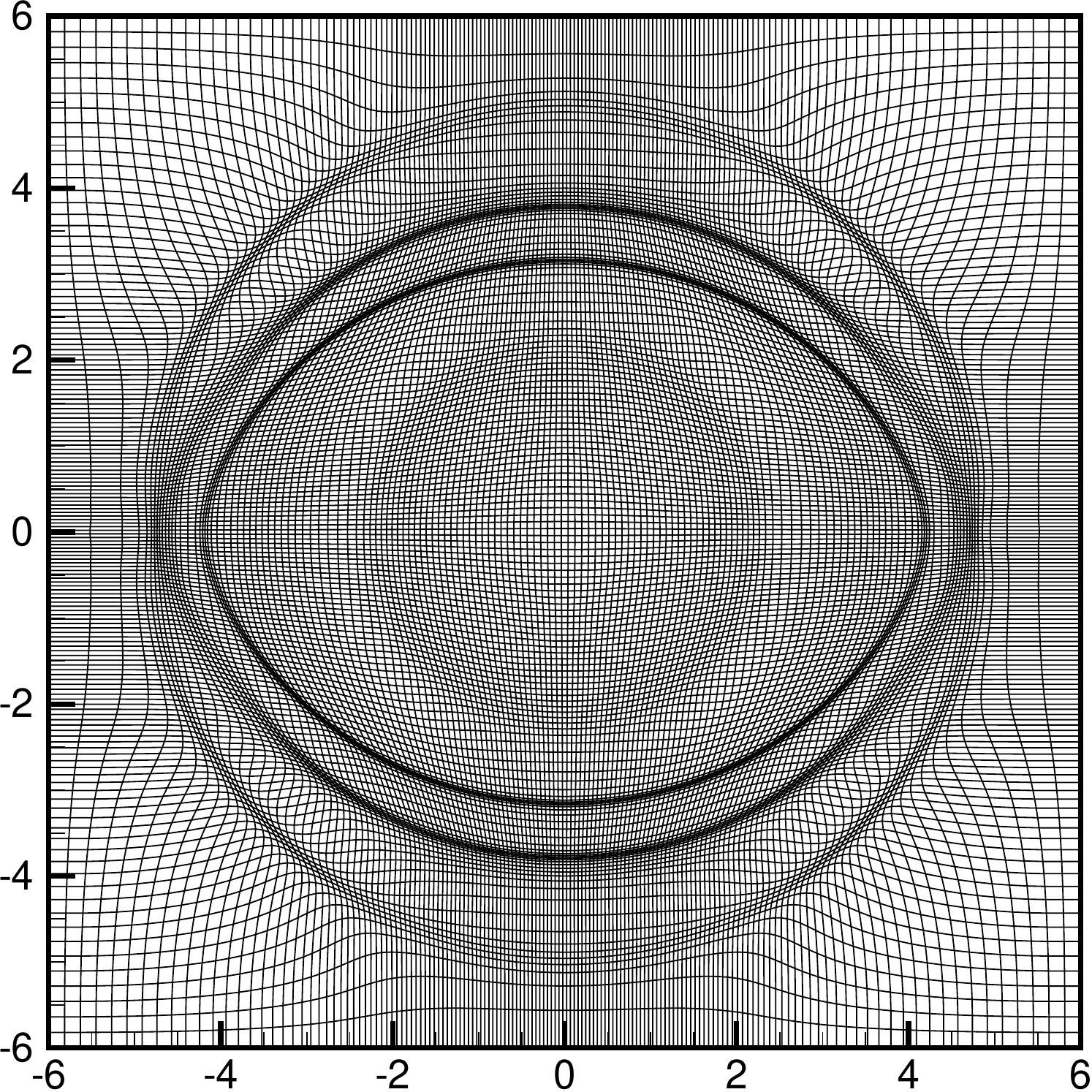}
    \caption{Adaptive mesh}
  \end{subfigure}
  \begin{subfigure}[b]{0.48\textwidth}
    \centering
    \includegraphics[width=\textwidth]{./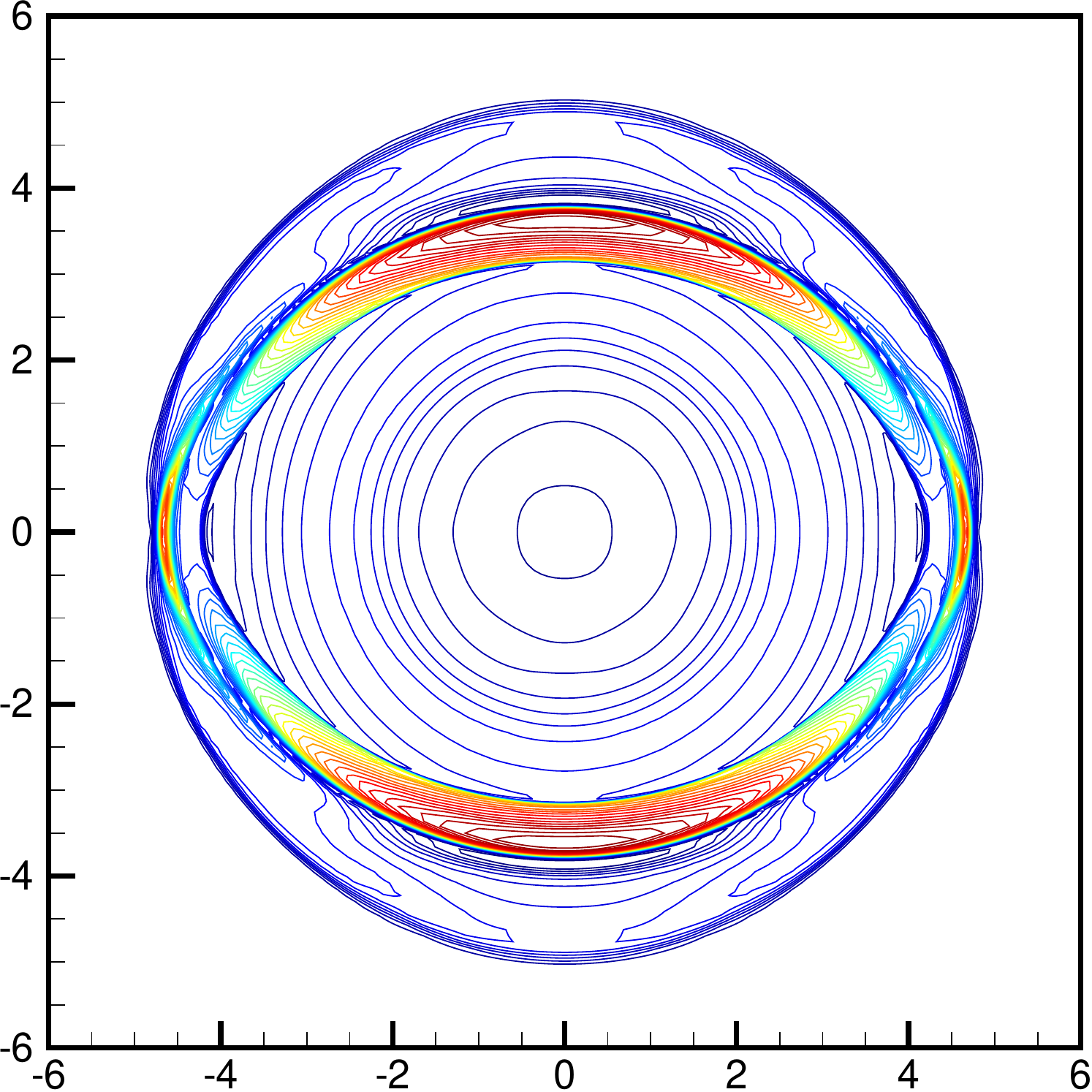}
    \caption{$\rho$}
  \end{subfigure}

  \begin{subfigure}[b]{0.48\textwidth}
    \centering
    \includegraphics[width=\textwidth]{./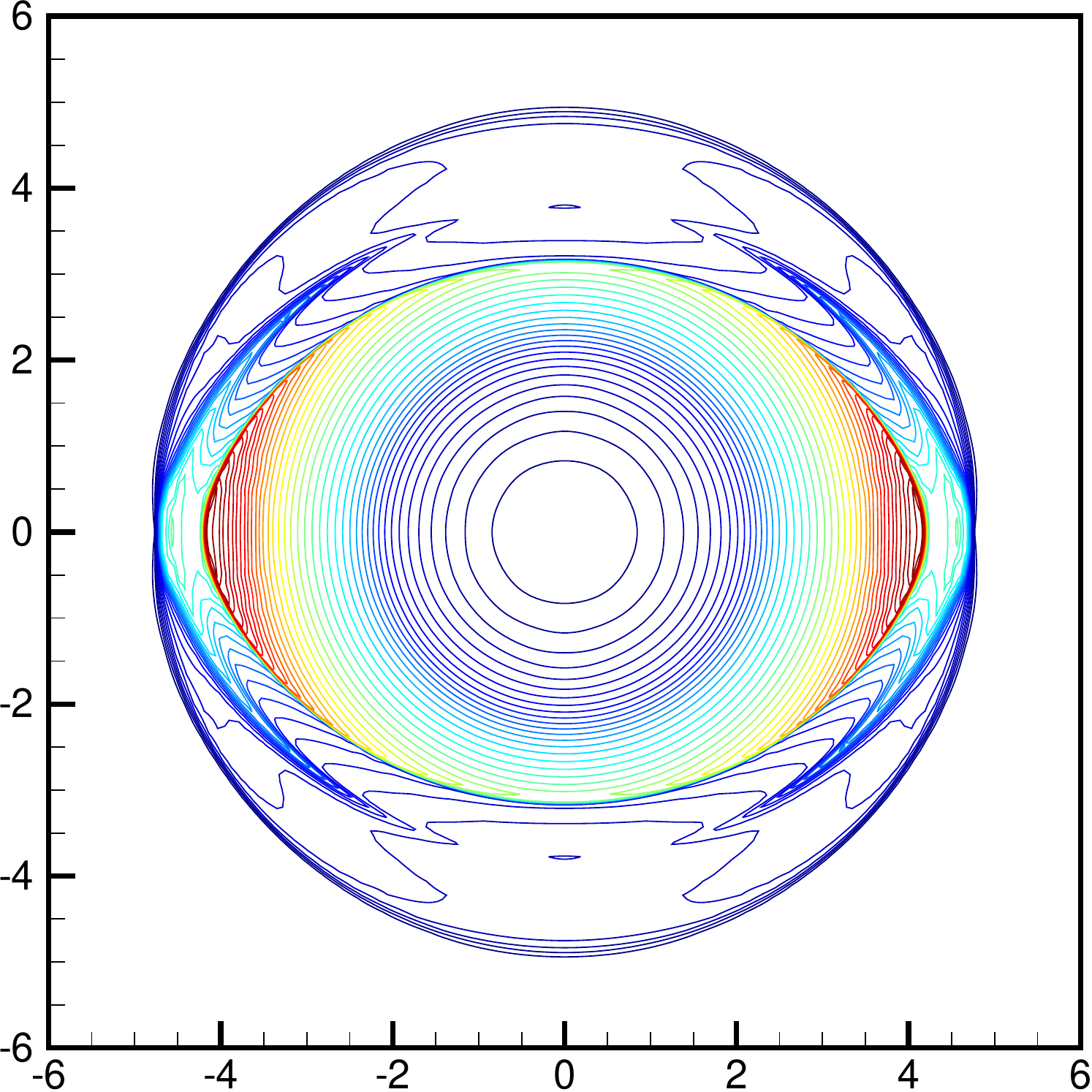}
    \caption{$W$}
  \end{subfigure}
  \begin{subfigure}[b]{0.48\textwidth}
    \centering
    \includegraphics[width=1.02\textwidth]{./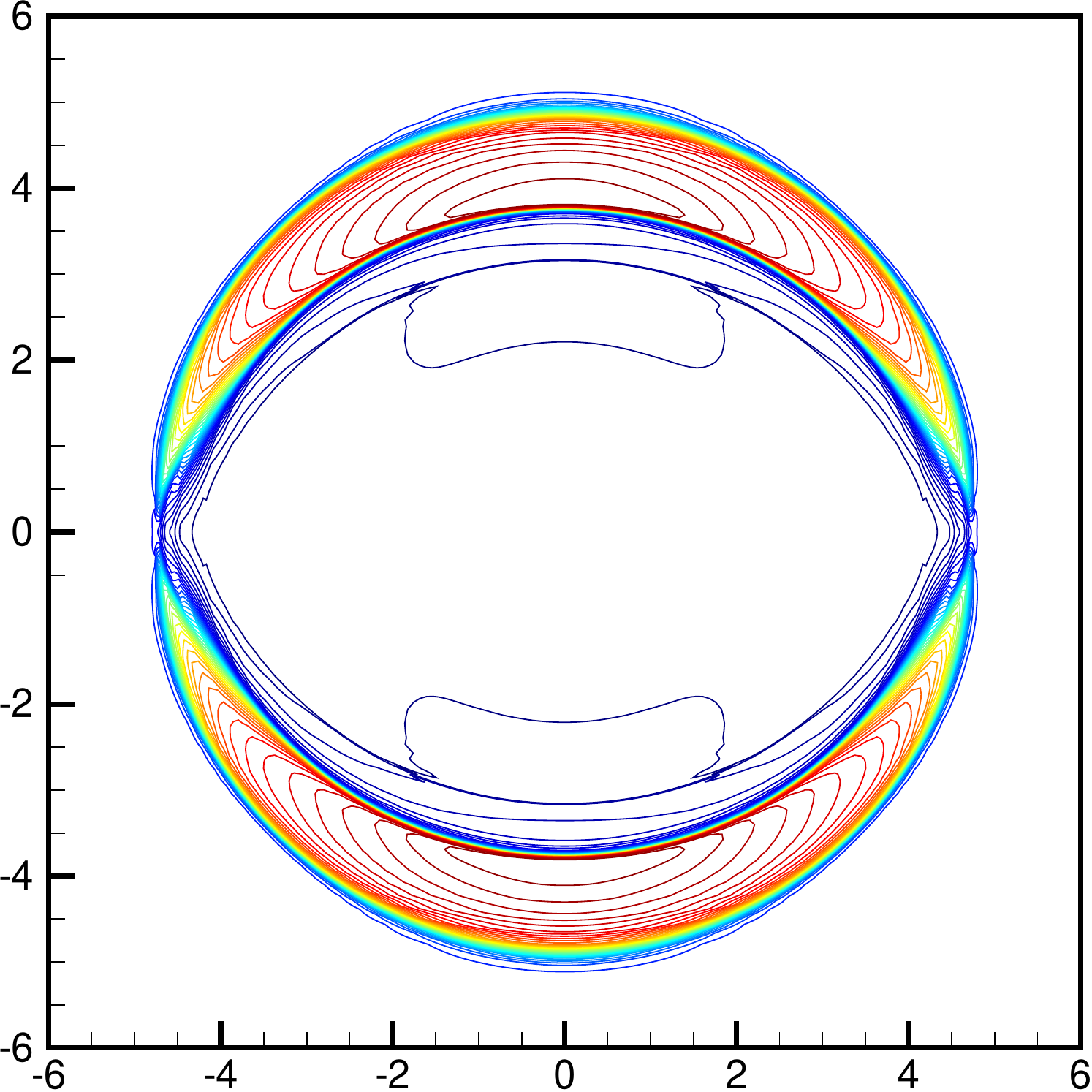}
    \caption{$\abs{\bm{B}}$}
  \end{subfigure}
  \caption{Example \ref{ex:RMHD_2DBlast}. Adaptive mesh and $40$ equally spaced contour lines obtained by {\tt MM-O5} with $150\times 150$ mesh.}
  \label{fig:RMHD_2DBlast}
\end{figure}

\begin{figure}[ht!]
  \centering
  \begin{subfigure}[b]{0.48\textwidth}
    \centering
    \includegraphics[width=\textwidth]{./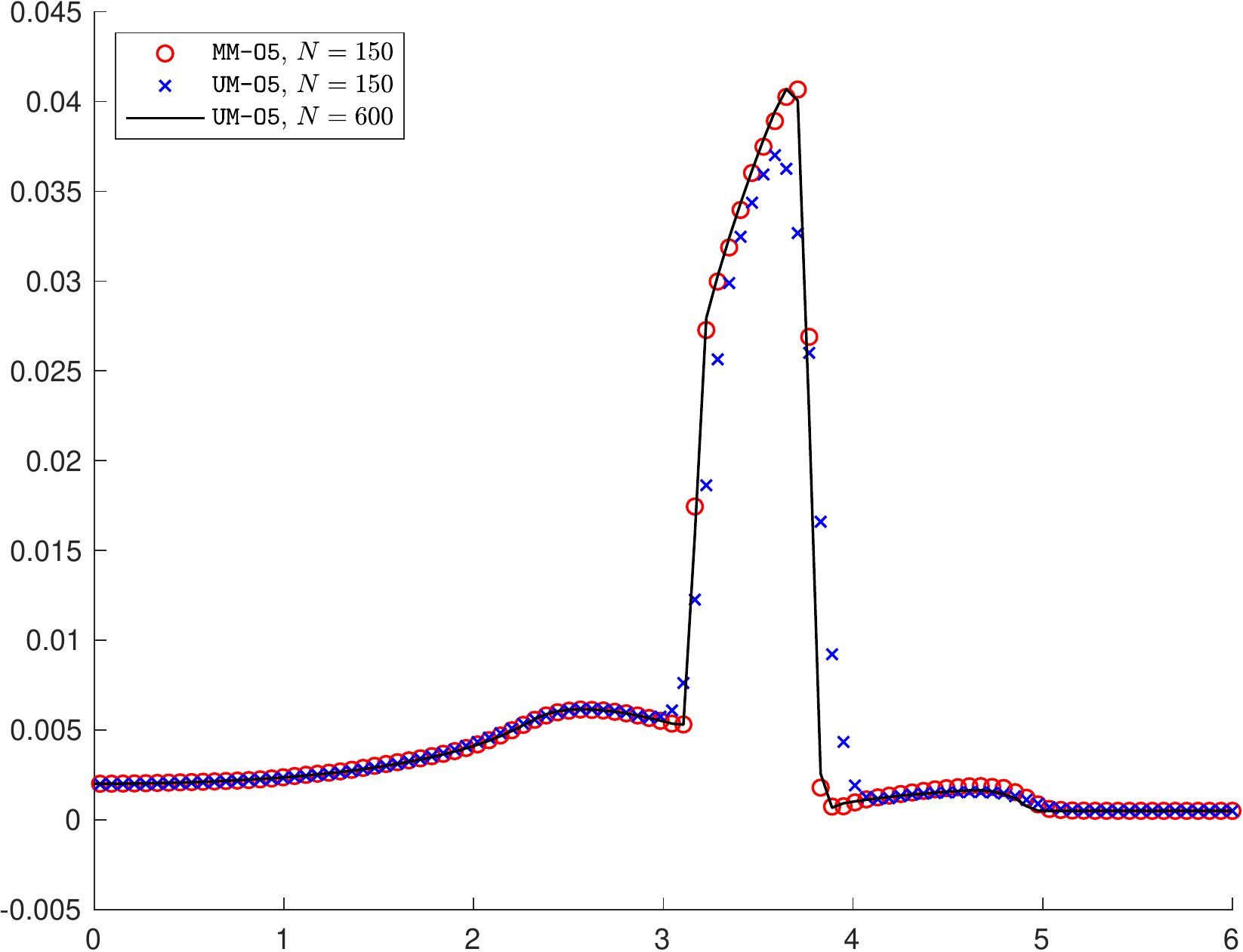}
    \caption{$p$}
  \end{subfigure}
  \begin{subfigure}[b]{0.48\textwidth}
    \centering
    \includegraphics[width=\textwidth]{./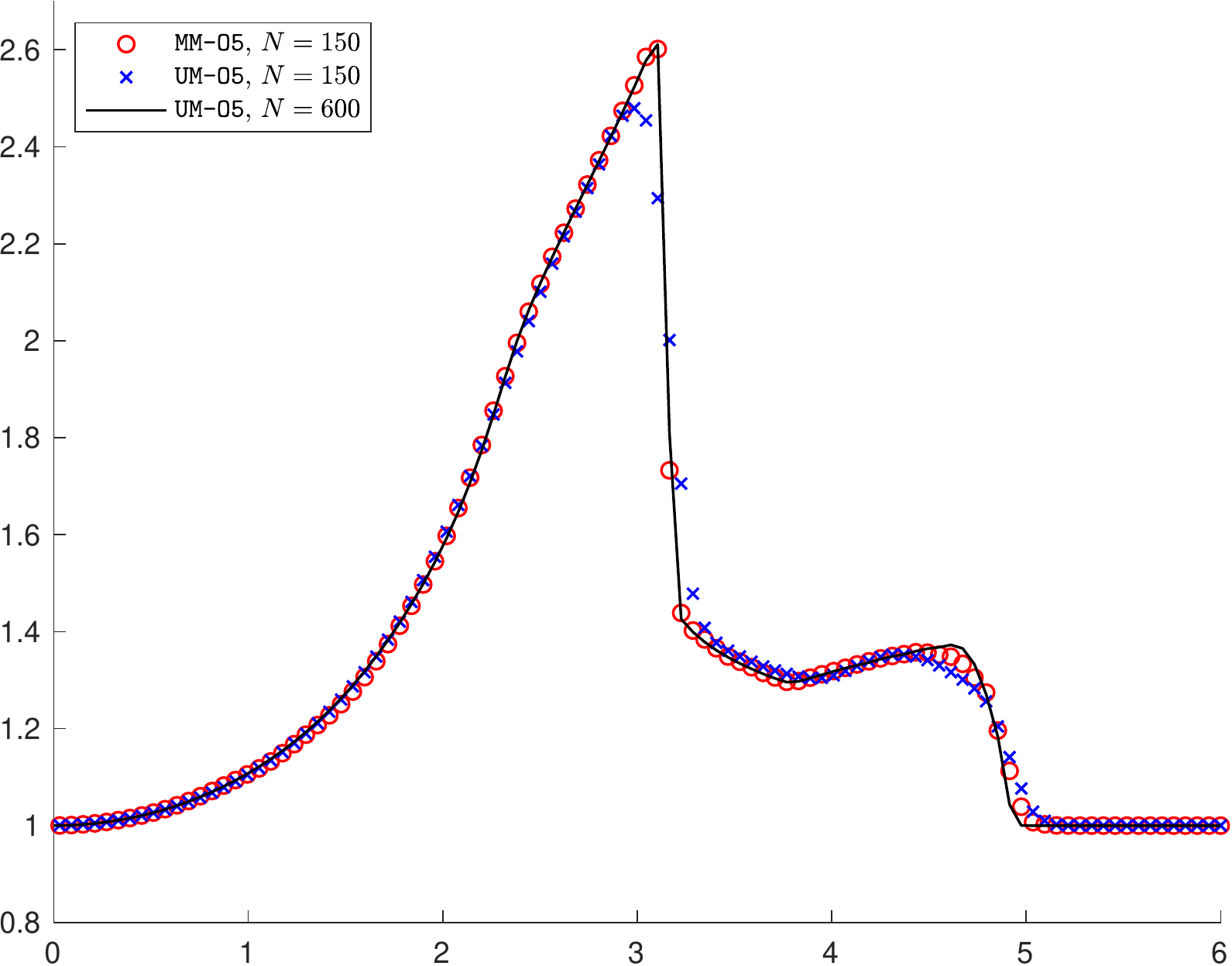}
    \caption{$W$}
  \end{subfigure}
  \caption{Example \ref{ex:RMHD_2DBlast}. Cut lines of  $p$ and $W$ along $x_1=0$ ($x_2\in[0,6]$).}
  \label{fig:RMHD_2DBlast_cut}
\end{figure}

\begin{example}[2D RMHD shock-cloud interaction]\label{ex:RMHD_2DSC}\rm
  It is about a strong shock wave interacts with a high density cloud \cite{He2012RMHD}.
  The physical domain is $[-0.2,1.2]\times [0,1]$	with the inflow boundary condition specified on the left boundary, and the outflow boundary conditions on the other  boundaries.
  A planar shock wave moves from $x_1=0.05$ to the right with the left and right states
  \begin{align*}
    (\rho, \bv, p, \bm{B})=\begin{cases}
      (3.86859, ~0.68, ~0, ~0, ~1.25115, ~0, ~0.84981,  -0.84981), ~& ~x_1<0.05, \\
      (1, ~0, ~0, ~0, ~0, ~0.16106, ~0.16106, ~0.05), ~&\text{otherwise}. \\
    \end{cases}
  \end{align*}
 The circular cloud of radius $0.15$ with a high density $\rho = 30$ is centered at $(0.25, 0.5)$.
  This problem is solved by using the fifth-order ES adaptive moving mesh scheme until $t=1.2$.
\end{example}
The monitor is the same as that in the last example.
Figure \ref{fig:RMHD_2DSC_mesh} shows the $210\times 150$ adaptive mesh obtain by {\tt MM-O5}, where the mesh points adaptively concentrate near the cloud.
To give comparable results presented in \cite{He2012RMHD},
the numerical schlieren images generated by using $\phi_1=\exp(-50\abs{\nabla\ln\rho}/\abs{\nabla\ln\rho}_{\text{max}})$ and
$\phi_2=\exp(-50\abs{\nabla\abs{\bm{B}}}/\abs{\nabla\abs{\bm{B}}}_{\text{max}})$
are presented in Figures \ref{fig:RMHD_2DSC_phi1}-\ref{fig:RMHD_2DSC_phi2}.
The results obtained by {\tt MM-O5} with $210\times 150$ mesh are shown in the upper half parts, while {\tt UM-O5} with $210\times 150$ and $560\times 400$ meshes are  respectively shown in the  lower half parts of the left and right plots, so that one can compare the results more clearly.
Similar to the last example, {\tt MM-O5} gives the comparable results to {\tt UM-O5} with a finer mesh, while takes only $10.5\%$ CPU time, see Table \ref{tab:RMHD_2D_CPU}.

\begin{figure}
  \centering
  \includegraphics[width=0.48\textwidth]{./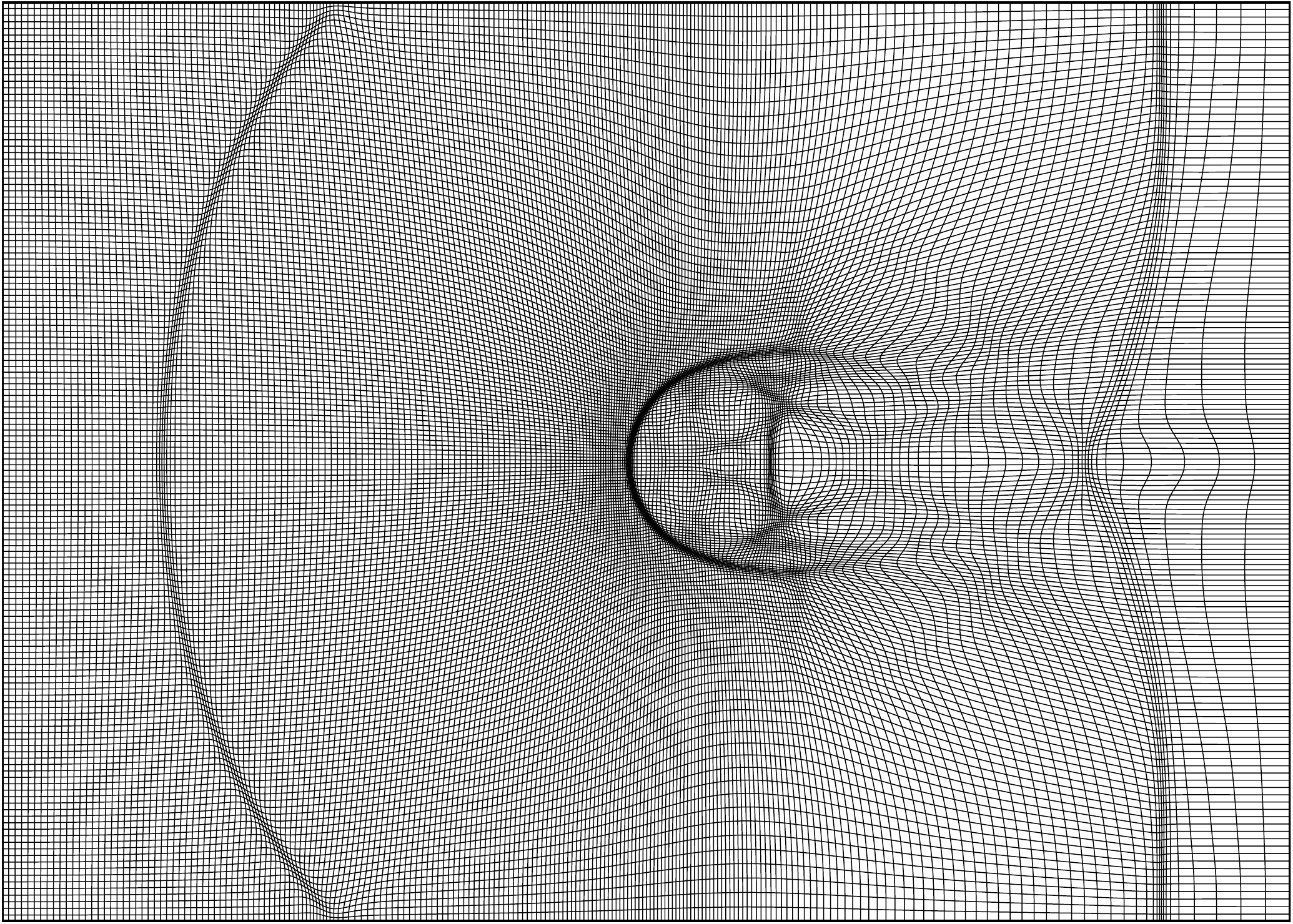}
  \caption{Example \ref{ex:RMHD_2DSC}. $210\times 150$ adaptive mesh obtained by {\tt  MM-O5} at $t=1.2$.}
  \label{fig:RMHD_2DSC_mesh}
\end{figure}

\begin{figure}
  \centering
  \includegraphics[width=0.48\textwidth, trim=2 2 2 2, clip]{./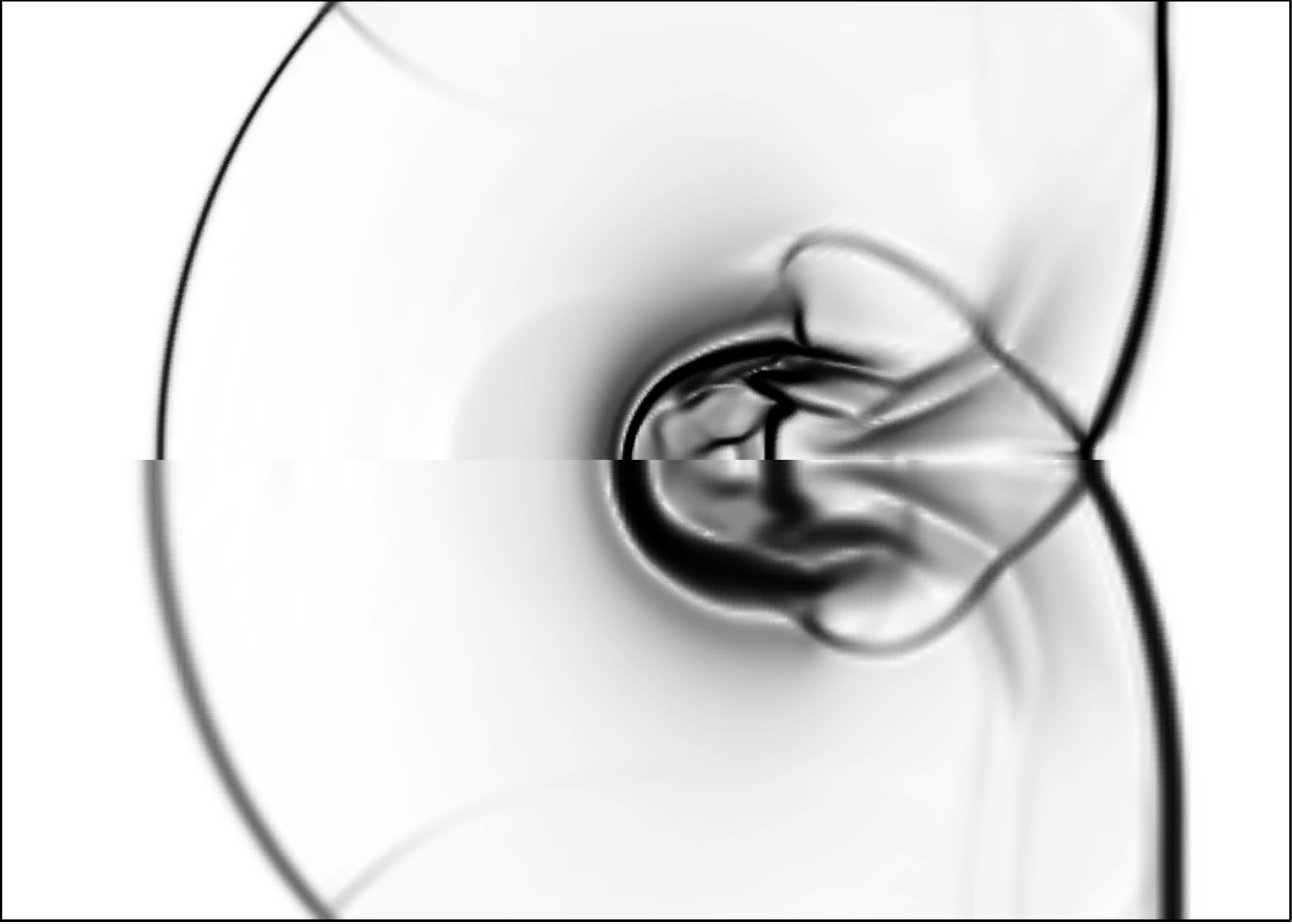}
  \includegraphics[width=0.48\textwidth, trim=2 2 2 2, clip]{./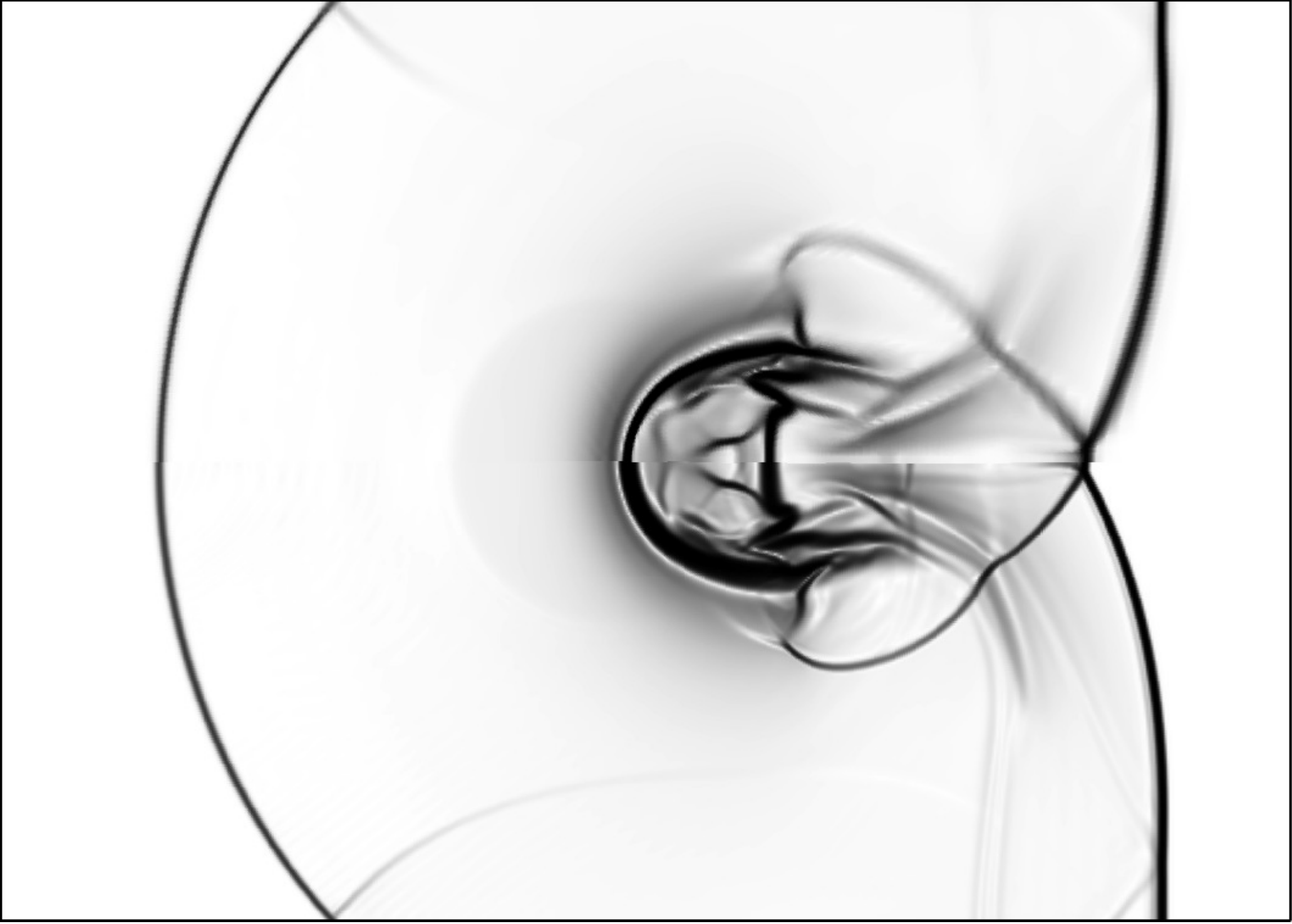}
  \caption{Example \ref{ex:RMHD_2DSC}. Numerical schlieren images of $\phi_1$ at $t=1.2$.
    Left: {\tt  MM-O5} with $210\times 150$ mesh (upper half) and {\tt UM-O5} with $210\times 150$ mesh (lower half).
    Right: {\tt MM-O5} with $210\times 150$ mesh (upper half) and {\tt UM-O5} with $560\times 400$ mesh (lower half).
  }
  \label{fig:RMHD_2DSC_phi1}
\end{figure}

\begin{figure}
  \centering
  \includegraphics[width=0.48\textwidth, trim=2 2 2 2, clip]{./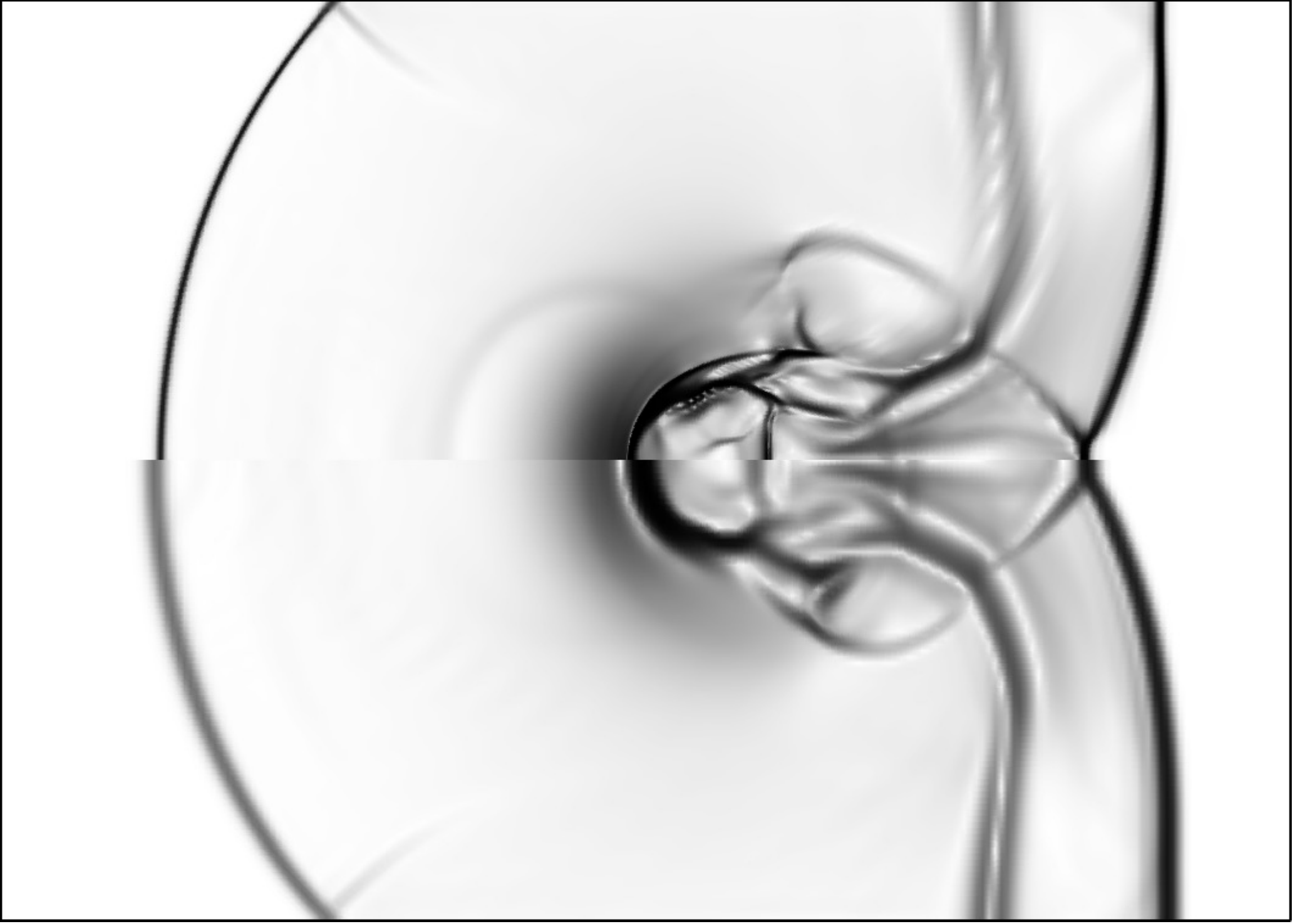}
  \includegraphics[width=0.48\textwidth, trim=2 2 2 2, clip]{./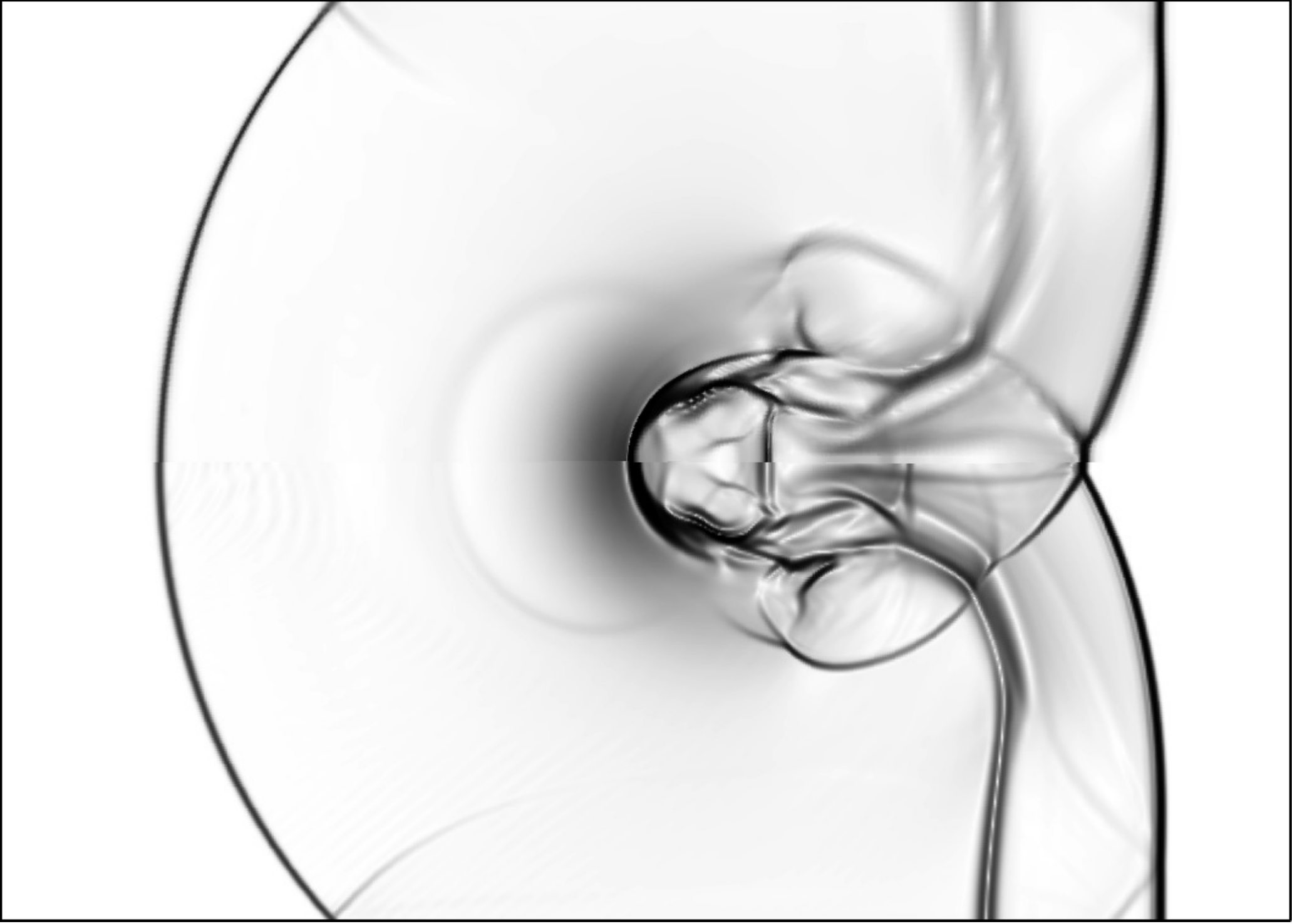}
  \caption{Same as Figure \ref{fig:RMHD_2DSC_phi1} except for $\phi_2$.}
  \label{fig:RMHD_2DSC_phi2}
\end{figure}

\begin{table}[!ht]
  \centering
  \begin{tabular}{c|c|c}
    \hline
    Scheme & Example \ref{ex:RMHD_2DBlast} & Example \ref{ex:RMHD_2DSC} \\ \hline
    {\tt MM-O5} & 2m03s ($150\times 150$) & 4m16s  ($210\times 150$) \\
    {\tt UM-O5} & 30s ($150\times 150$)  &  2m05s ($210\times 150$) \\
    {\tt UM-O5} & 28m14s   ($600\times 600$) & 40m28s  ($560\times 400$) \\
    \hline
  \end{tabular}
  \caption{CPU times of Examples \ref{ex:RMHD_2DBlast}-\ref{ex:RMHD_2DSC} (4 cores are used).}
  \label{tab:RMHD_2D_CPU}
\end{table}

\subsection{3D tests}
\begin{example}[3D RMHD  isentropic vortex problem]\label{ex:RMHD_3DVortex}\rm
  It is  given in \cite{Duan2021Analytical} and used  here to verify the accuracy of the 3D EC and ES moving mesh schemes.
  The analytical solutions at time $t$ and the spatial point $(x_1,x_2,x_3)$ in the physical domain $[-R,R]\times[-R,R]\times[-5R,5R]$ with $R=5$ and the periodic boundary conditions can be given by
  \begin{equation*}
    \begin{aligned}
      \rho&=(1-\sigma\exp(1-r^2))^{\frac{1}{\Gamma-1}},~ p=\rho^\Gamma,\\
      \bm{v}&=\frac{1}{6-3(\widetilde{v}_1+\widetilde{v}_2)}(4\widetilde{v}_1+\widetilde{v}_2-3,
      ~4\widetilde{v}_2+\widetilde{v}_1-3, ~\widetilde{v}_1+\widetilde{v}_2-3),\\
      \bm{B}&=\frac13\left(5\widetilde{B}_1 - \widetilde{B}_2,
        ~5\widetilde{B}_2 - \widetilde{B}_1,
      ~-\widetilde{B}_1 - \widetilde{B}_2\right),
    \end{aligned}
  \end{equation*}
  where
  \begin{equation*}
    \begin{aligned}
      &\Gamma=5/3,~\sigma=0.2,~B_0=0.05,~r=\sqrt{\widetilde{x}_1^2+\widetilde{x}_2^2},\\
      &(\widetilde{x}_1,\widetilde{x}_2)=(40/3k_1 + 10/3k_2 + \widehat{x}_1,~10/3k_1 + 40/3k_2 + \widehat{x}_2), ~(\widetilde{x}_1,\widetilde{x}_2)\in \Omega_0, ~k_1,k_2\in\mathbb{Z},\\
      &\widehat{x}_k=x_k+({x}_1 + {x}_2 + {x}_3)/3+t,~k=1,2,3,\\
      &(\widetilde{v}_1,\widetilde{v}_2)=(-\widetilde{x}_2,\widetilde{x}_1)f,~ f=\sqrt{\dfrac{\kappa \exp(1-r^2)}{\kappa r^2\exp(1-r^2) + (\Gamma-1)\rho + \Gamma p}},~
      \kappa = 2\Gamma \sigma\rho + (\Gamma-1)B_0^2(2-r^2),	\\
      & (\widetilde{B}_1,\widetilde{B}_2)=B_0\exp(1-r^2)(-\widetilde{x}_2,\widetilde{x}_1).
    \end{aligned}
  \end{equation*}
  The problem is solved until $t=0.1$ with a series of $N\times N\times 5N$ meshes.

Similar to the 2D isentropic vortex problem, two  mesh movements are used.
The first  is generated by using the adaptive moving mesh strategy in Section \ref{section:MM} based on the monitor being similar to the 2D case \eqref{eq:RMHD_2DVortex_Monitor}, while the second is given by the following expressions
\begin{equation}\label{eq:RMHD_3DVortex_MovingMesh}
  \begin{split}
    &(x_1)_{\bm{i}}=\mathring{x}_1 + 0.2\cos(\pi t/4)\sin(3\pi \mathring{x}_2/R)\sin(3\pi \mathring{x}_3/5R),\\
    &(x_2)_{\bm{i}}=\mathring{x}_2 + 0.2\cos(\pi t/4)\sin(3\pi \mathring{x}_3/5R)\sin(3\pi \mathring{x}_1/R),\\
    &(x_3)_{\bm{i}}=\mathring{x}_3 + 0.2\cos(\pi t/4)\sin(3\pi \mathring{x}_1/R)\sin(3\pi \mathring{x}_2/R),\\
    &\mathring{x}_1=2i_1R/(N-1),~\mathring{x}_2=2i_2R/(N-1),
    ~i_1,i_2=0,1,\cdots,N-1,\\
    &\mathring{x}_3=10i_3R/(5N-1),~i_3=0,1,\cdots,5N-1.
  \end{split}
\end{equation}
Figure \ref{fig:RMHD_3DVortex_err} plots the errors and convergence orders in $\rho$, from which one can see that
{\tt MM-O5} with the adaptive moving mesh gets fifth-order, while {\tt MM-O6} with the moving mesh \eqref{eq:RMHD_3DVortex_MovingMesh} achieves sixth-order accuracy.
Figure \ref{fig:RMHD_3DVortex_TotalEntropy} presents the time evolution of the discrete total entropy
$\sum_{\bm{i}} J_{\bm{i}} \eta(\bU_{\bm{i}})/5/N^3$
obtained by {\tt MM-O6} and {\tt MM-O5} with $N=160$,
verifying the EC and ES property of our schemes.
\end{example}


\begin{figure}[!ht]
  \centering
  \begin{subfigure}[b]{0.48\textwidth}
    \centering
    \includegraphics[width=1.0\textwidth]{./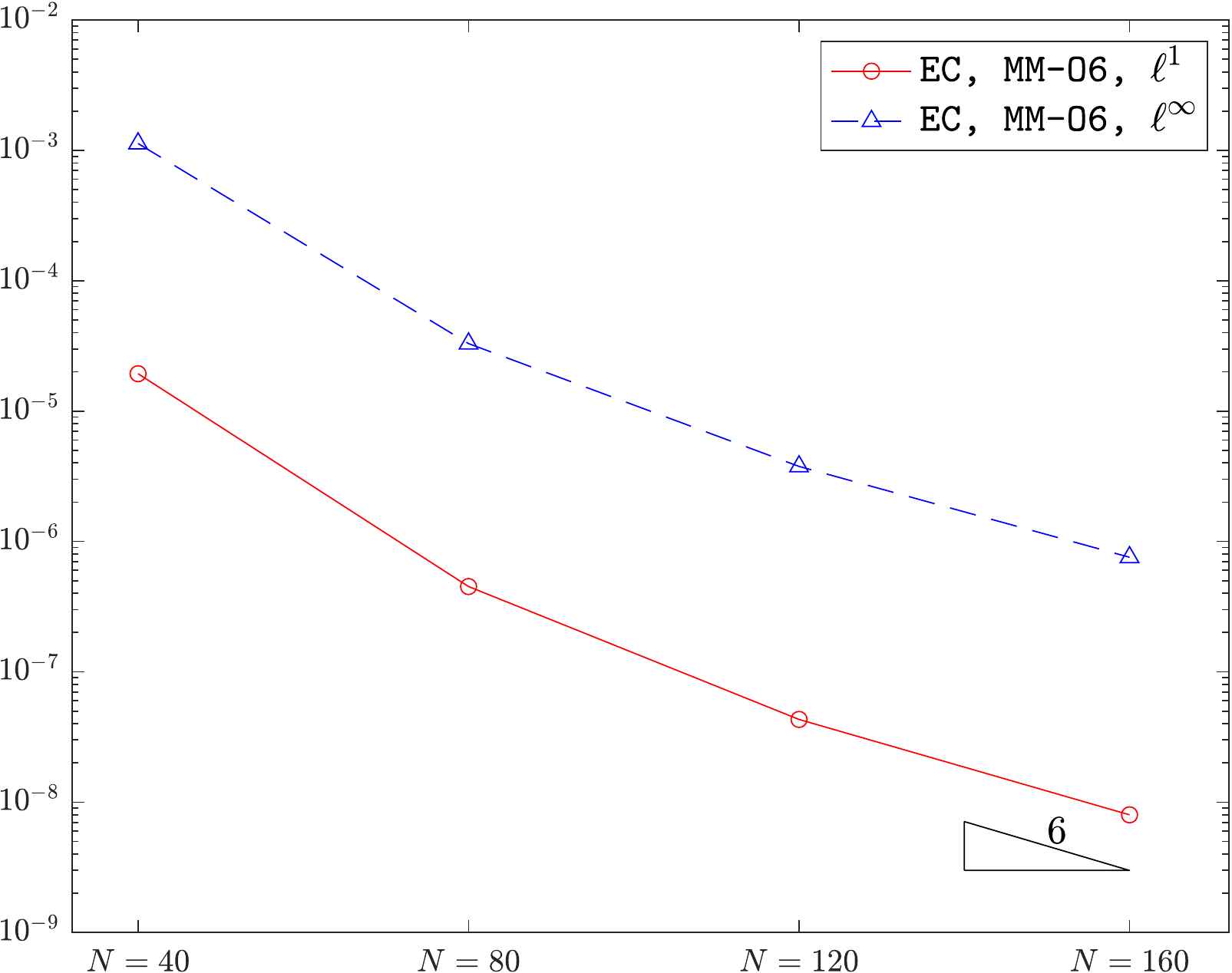}
  \end{subfigure}
  \begin{subfigure}[b]{0.48\textwidth}
    \centering
    \includegraphics[width=1.0\textwidth]{./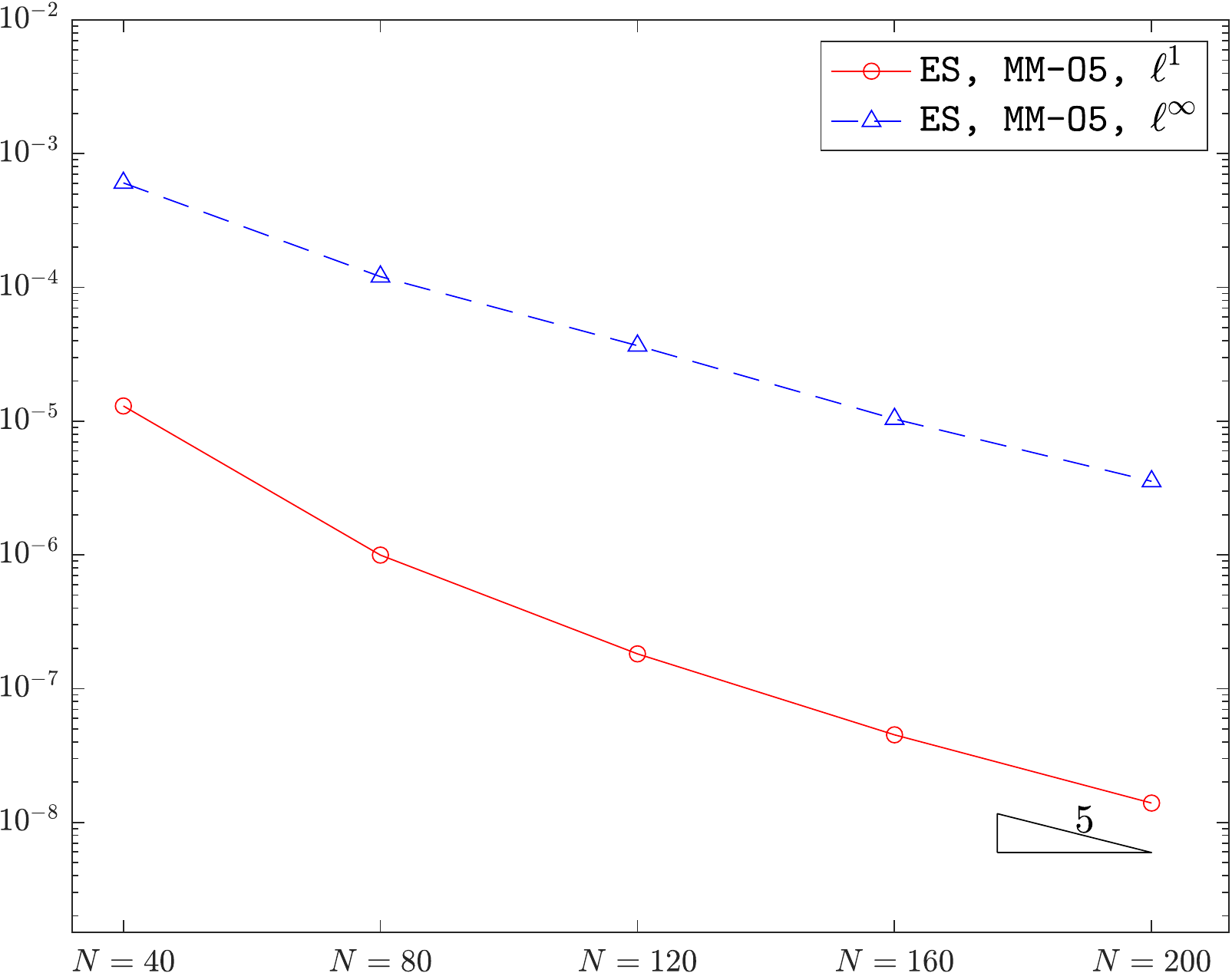}
  \end{subfigure}
  \caption{Example \ref{ex:RMHD_3DVortex}. Errors and convergence orders in $\rho$ at $t=0.1$.}
  \label{fig:RMHD_3DVortex_err}
\end{figure}

\begin{figure}[!ht]
  \centering
  \includegraphics[width=0.5\textwidth]{./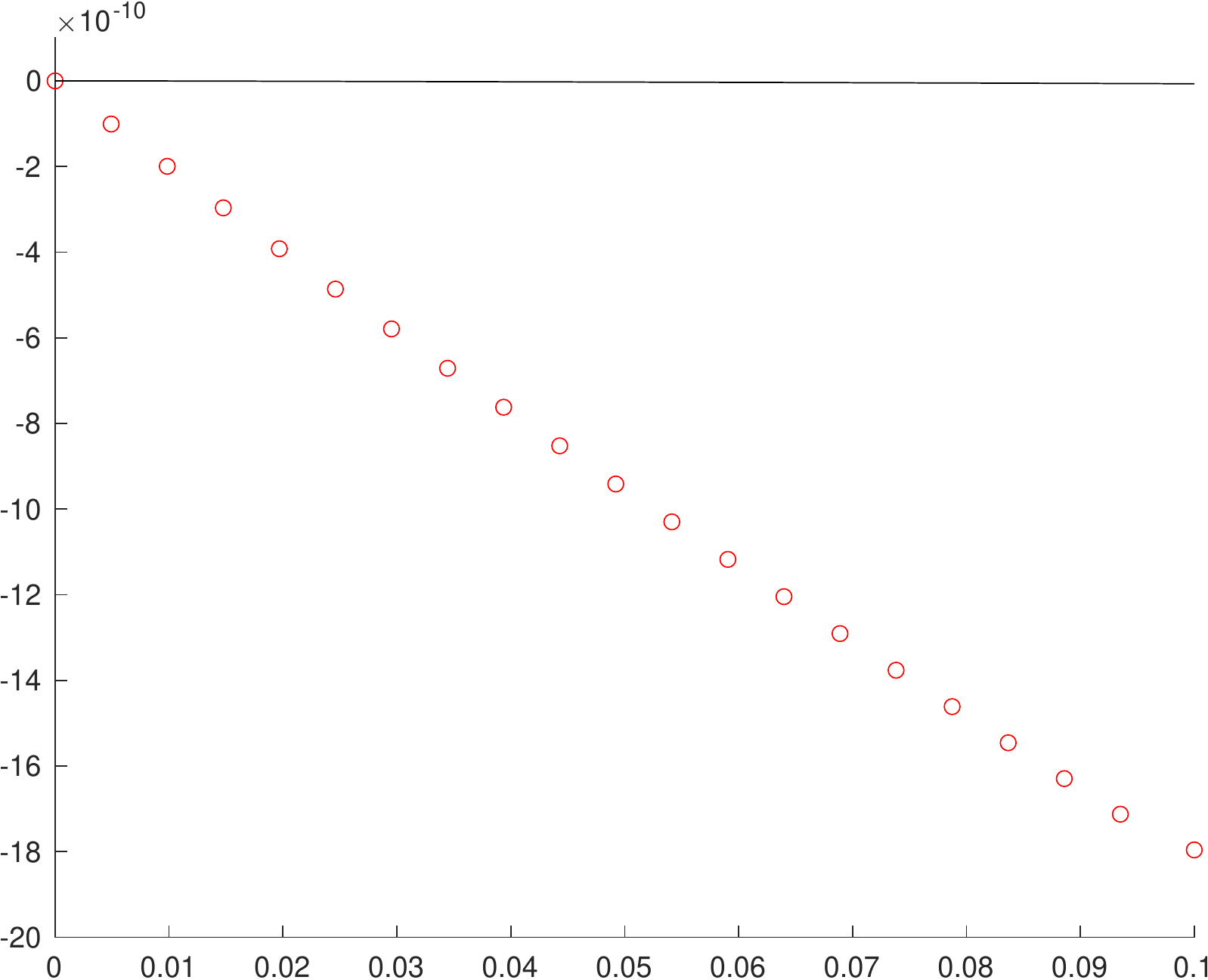}
  \caption{Example \ref{ex:RMHD_3DVortex}. Discrete total entropy obtained by EC and ES schemes with $N=160$.}
  \label{fig:RMHD_3DVortex_TotalEntropy}
\end{figure}

\begin{example}[3D RHD spherical symmetric Riemann problem]\label{ex:3DSymmRP}\rm
  This problem has a reference solution so that it is suitable to serve as the first example to verify our 3D high-order accurate ES adaptive moving mesh schemes.
  The reference solution is obtained by using a second-order TVD scheme to solve the RHD equations in the 1D spherical coordinates.
  The initial data are
  \begin{equation*}
    (\rho,\bv,p)=\begin{cases}
      (10,~0,~0,~0,40/3),     & ~ r = {\sqrt{x_1^2+x_2^2+x_3^2}} < 0.5, \\
      (1, ~0,~0,~0, 10^{-2}), & ~ \text{otherwise},
    \end{cases}
  \end{equation*}
  and $N\times N\times N$ meshes are used.
\end{example}
The monitor function is chosen as \eqref{eq:monitor} with $\alpha=800$ and $\sigma=\ln\rho$.
Figure \ref{fig:RHD_3DSymmRP} gives the $100\times 100\times 100$ adaptive mesh obtained by {\tt MM-O5},
and the comparison of $\rho$ along the volume diagonal connecting $(0,0,0)$ and $(1,1,1)$ at $t=0.4$.
Table \ref{tab:3D_CPU} lists the CPU times of different cases.
It is obvious that all the schemes give correct solutions, and the mesh points adaptively concentrate near where the large gradient in $\ln\rho$ occurs, increasing the discontinuity resolution.
{\tt MM-O5} gives better results than {\tt MM-O2} near the head and tail of the rarefaction wave, indicating that the present high-order accurate scheme outperforms the second-order scheme.
The results of {\tt MM-O5} with $N=100$ and {\tt UM-O5} with $N=200$ are comparable, while the former costs $13.8\%$ CPU time, verifying the efficiency of our high-order accurate ES adaptive moving mesh scheme.

\begin{figure}[!ht]
  \centering
  \begin{subfigure}[b]{0.48\textwidth}
    \centering
    \includegraphics[width=1.0\textwidth, trim=1 1 1 1, clip]{./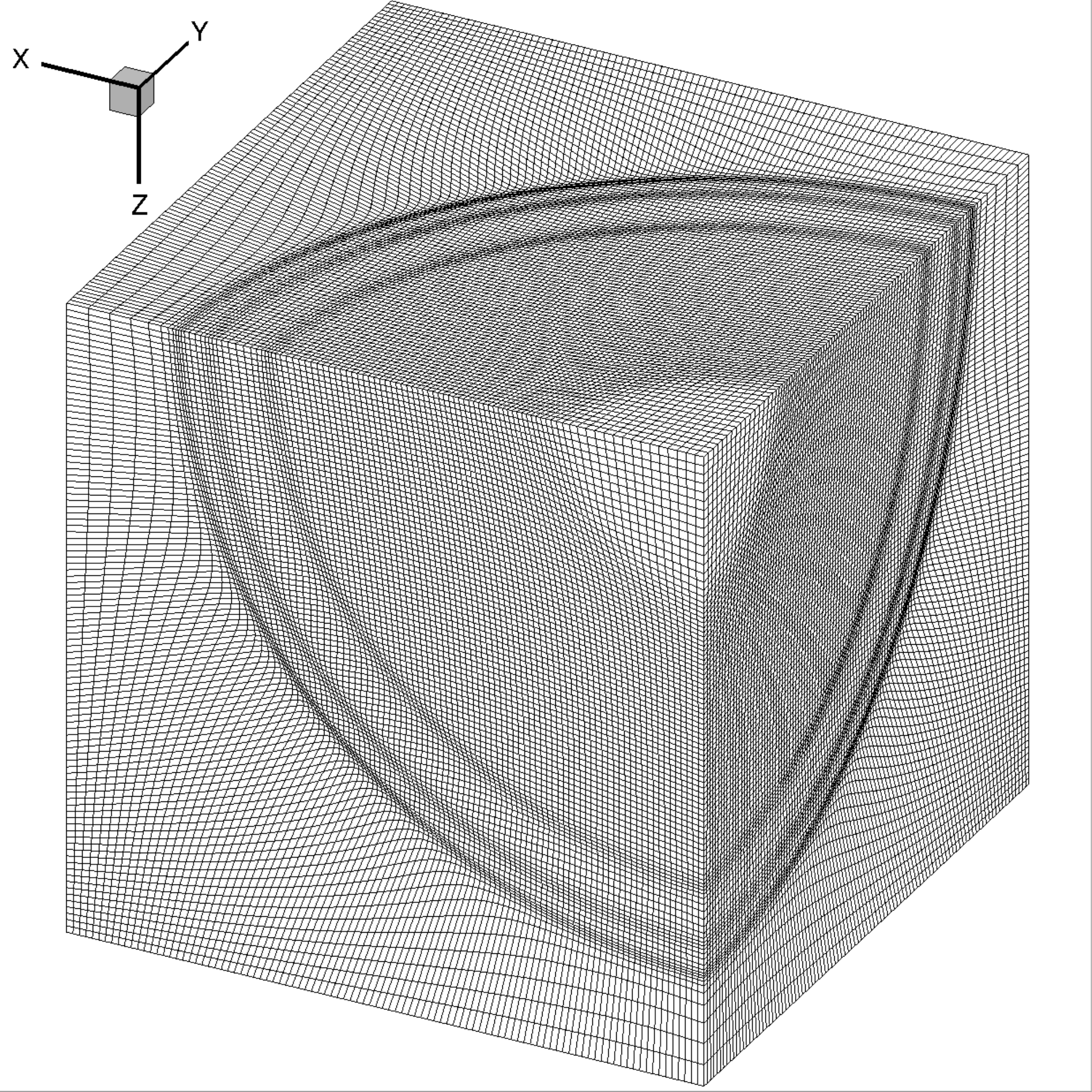}
    \caption{Adaptive mesh of {\tt  {\tt MM-O5}} with $N=100$}
  \end{subfigure}
  \begin{subfigure}[b]{0.48\textwidth}
    \centering
    \includegraphics[width=1.0\textwidth]{./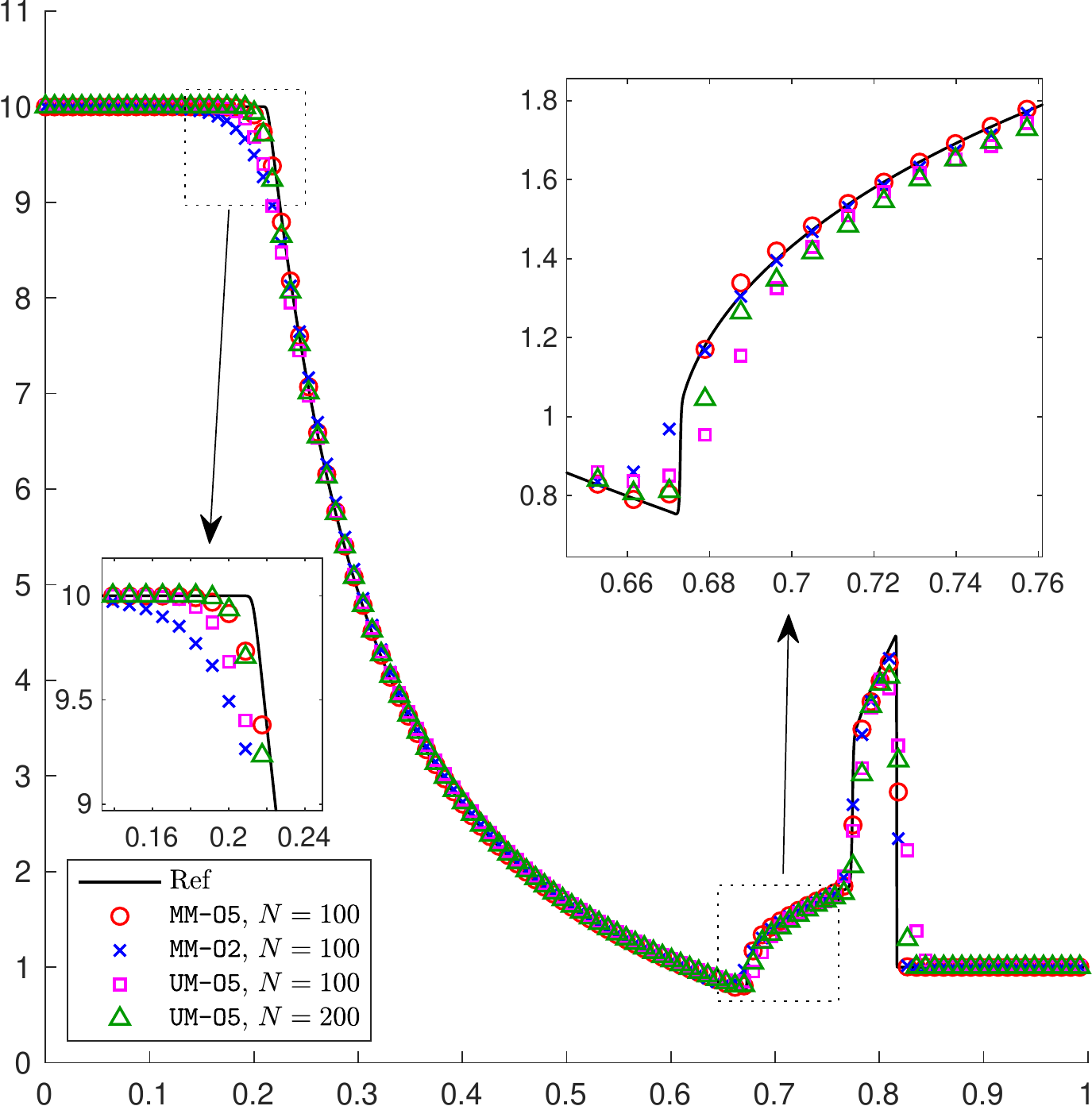}
    \caption{$\rho$ along the line connecting $(0, 0, 0)$ and $(1, 1, 1)$}
  \end{subfigure}
  \caption{Example \ref{ex:3DSymmRP}.  Adaptive mesh and cut lines of $\rho$ at $t=0.4$. }
  \label{fig:RHD_3DSymmRP}
\end{figure}

\begin{table}[!ht]
  \centering
  \begin{tabular}{c|c|c|c}
    \hline
    Scheme & Example \ref{ex:3DSymmRP} & Example \ref{ex:RHD_3DSBL} & Example \ref{ex:RMHD_3DSC} \\ \hline
    {\tt MM-O5} & 5m40s ($100\times 100\times 100$) & 2h14m44s  ($325\times 90\times 90$) & 3h9m57s ($210\times 150\times 150$) \\
    {\tt MM-O2} & 2m51s ($100\times 100\times 100$)  &  1h10m29s ($325\times 90\times 90$) & - \\
    {\tt UM-O5} & 3m08s    ($100\times 100\times 100$)  & 51m18s   ($325\times 90\times 90$) & 2h8m44s ($210\times 150\times 150$) \\
    {\tt UM-O5} & 41m08s   ($200\times 200\times 200$) & 12h34m43s  ($650\times 180\times 180$) & 34h46m49s ($420\times 300\times 300$) \\
    \hline
  \end{tabular}
  \caption{CPU times of Examples \ref{ex:3DSymmRP}-\ref{ex:RMHD_3DSC} (32 cores are used).}
  \label{tab:3D_CPU}
\end{table}

\begin{example}[3D RHD shock-bubble interaction]\label{ex:RHD_3DSBL}\rm
  This example considers a moving planar shock wave interacts with a light bubble within the physical domain
  $[0,325]\times[-45,45]\times[-45,45]$, which is extended from the 2D case \cite{He2012RHD}, and also used in \cite{Duan2021RHDMM}.
  The initial pre- and post-shock states are
  \begin{equation*}
    (\rho,\bv,p)=\begin{cases}
      (1,~0,~0,~0,~0.05),                                 & x_1<265, \\
      (1.865225080631180,-0.196781107378299,~0,~0,~0.15), & x_1>265,
    \end{cases}
  \end{equation*}
  and the state in the bubble is $$(\rho,\bv,p)=(0.1358,~0,~0,~0,~0.05),\quad
  \sqrt{(x_1-215)^2+x_2^2+x_3^2}\leqslant 25.$$
  The output times are $t=90,180,270,360,450$.
\end{example}
The monitor is the same as that in the last example.
Figure \ref{fig:RHD_3DSBL_mesh} presents the iso-surfaces of $\rho=0.7$,
the close-up of the adaptive mesh and two surface meshes near the bubble at $t=450$.
One can see that the mesh points concentrate near the shock wave  and the bubble according to the choice of the monitor function, which helps to obtain the sharp interfaces.
Figure \ref{fig:RHD_3DSBL_rho} gives the adaptive meshes and numerical schlieren images generated by $\phi=\exp(-10\abs{\nabla\rho}/\abs{\nabla\rho}_{\text{max}})$ on the slice $x_2=0$ at $t=90,180,270,360,450$ (from top to bottom).
The results obtained by {\tt MM-O5} with $325\times90\times90$ meshes are shown in the upper half parts in each row, while the adaptive meshes and numerical schlieren images obtained by {\tt MM-O2} with $325\times90\times90$ meshes are shown in the left and middle lower half parts in each row, respectively, and those obtained by {\tt UM-O5} with $650\times180\times180$ meshes are shown in the right lower half parts.
Those plots clearly show the dynamics of the interaction between the shock wave and the bubble, and our high-order accurate ES adaptive moving mesh schemes well capture the sharp interfaces of the bubble at different output times.
One can see that as time increases, the fifth-order scheme gives sharper interfaces than the second-order scheme, since the high-order accurate scheme has lower dissipation.
From the CPU times listed in Table \ref{tab:3D_CPU}, {\tt MM-O5} is more efficient than {\tt UM-O5}, because it takes only $17.8\%$ CPU time to give comparable results.

\begin{figure}[!ht]
  \centering
  \begin{subfigure}[b]{0.48\textwidth}
    \centering
    \includegraphics[width=1.0\textwidth, trim=1 1 1 1, clip]{./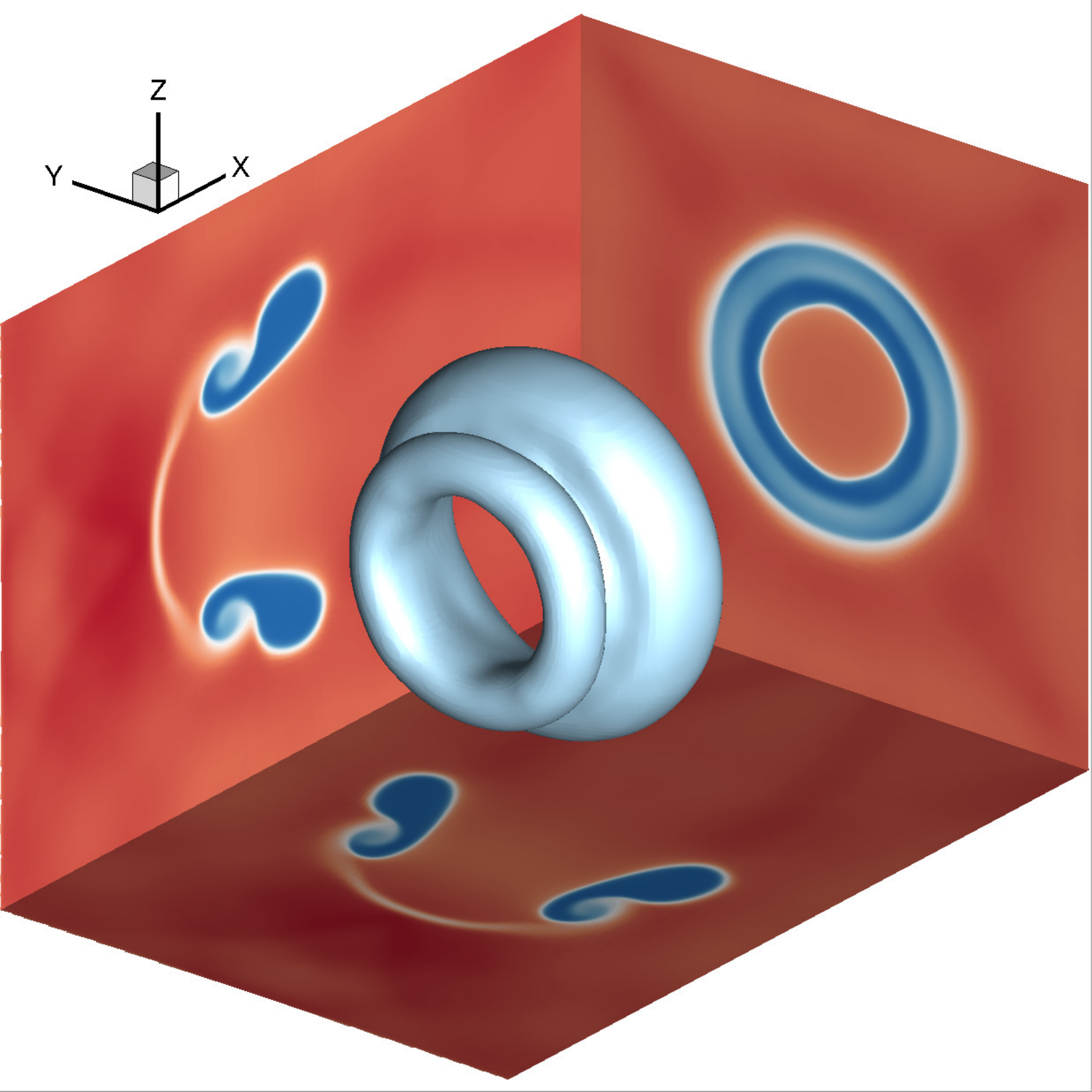}
    \caption{Iso-surface of $\rho=0.7$ and three offset 2D slices taken at $x_1=125,x_2=0,x_3=0$}
  \end{subfigure}
  \begin{subfigure}[b]{0.48\textwidth}
    \centering
    \includegraphics[width=1.0\textwidth, trim=1 1 1 1, clip]{./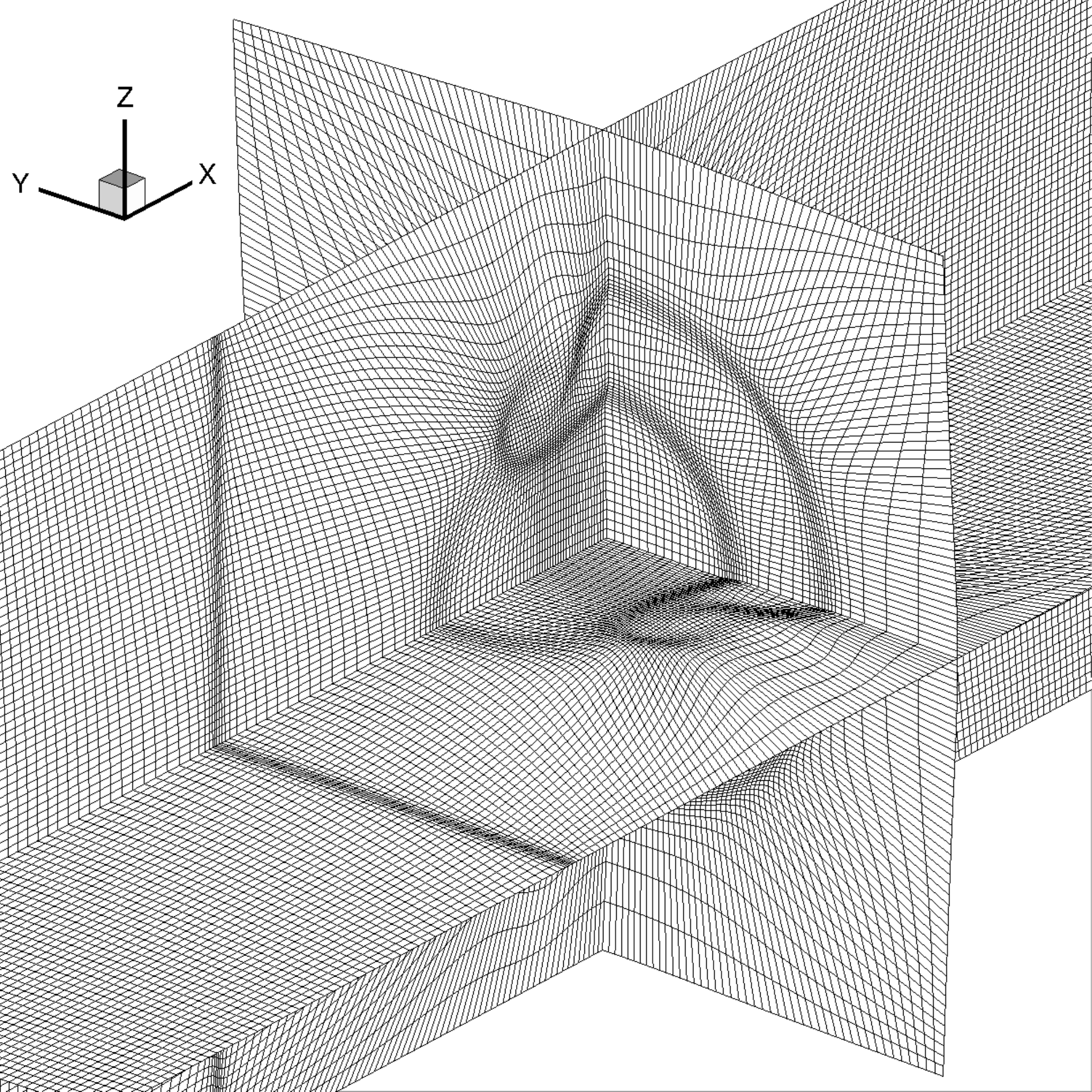}
    \caption{Adaptive meshes on three surfaces of $i_1=125,i_2=45,i_3=45$}
  \end{subfigure}

  \begin{subfigure}[b]{0.48\textwidth}
    \centering
    \includegraphics[width=1.0\textwidth]{./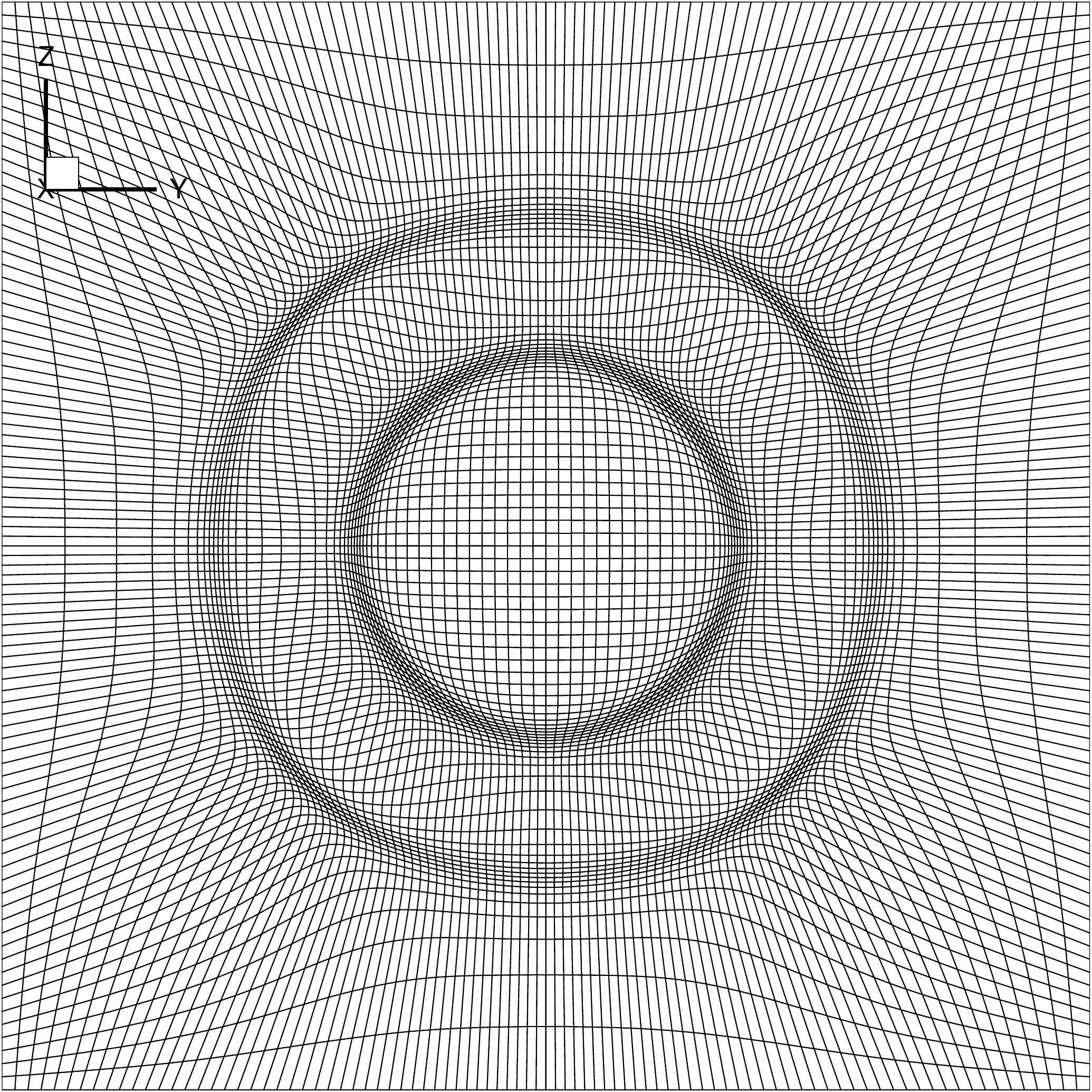}
    \caption{Close-up of adaptive mesh on  surface of $i_1=125$}
  \end{subfigure}
  \begin{subfigure}[b]{0.48\textwidth}
    \centering
    \includegraphics[width=1.0\textwidth]{./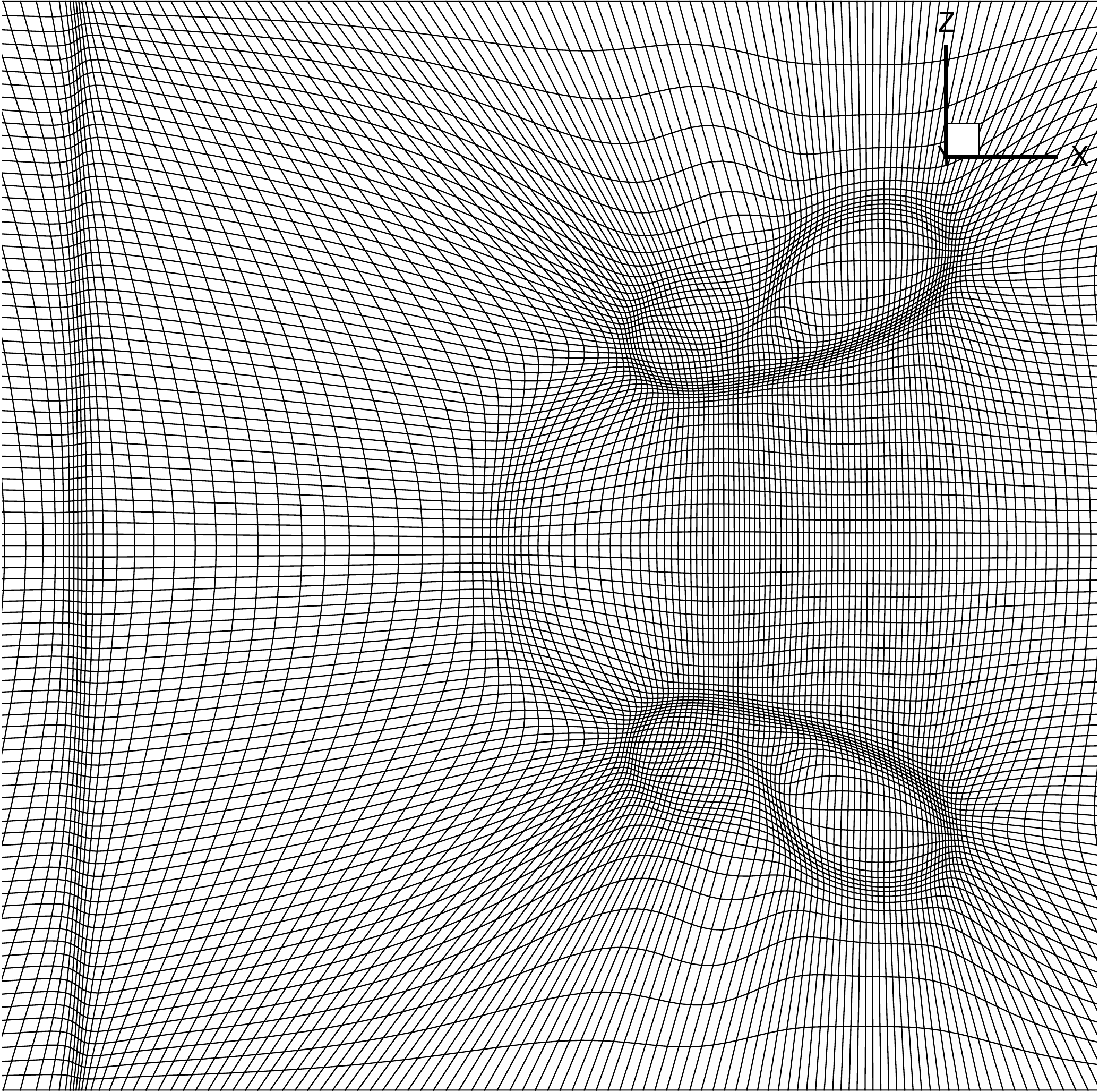}
    \caption{Close-up of  adaptive mesh on  surface of $i_2=45$}
  \end{subfigure}
  \caption{Example \ref{ex:RHD_3DSBL}. Adaptive meshes and $\rho$ at  $t=450$.}
  \label{fig:RHD_3DSBL_mesh}
\end{figure}

\begin{figure}
  \centering
  \includegraphics[width=0.25\textwidth, trim=1 1 1 1, clip]{./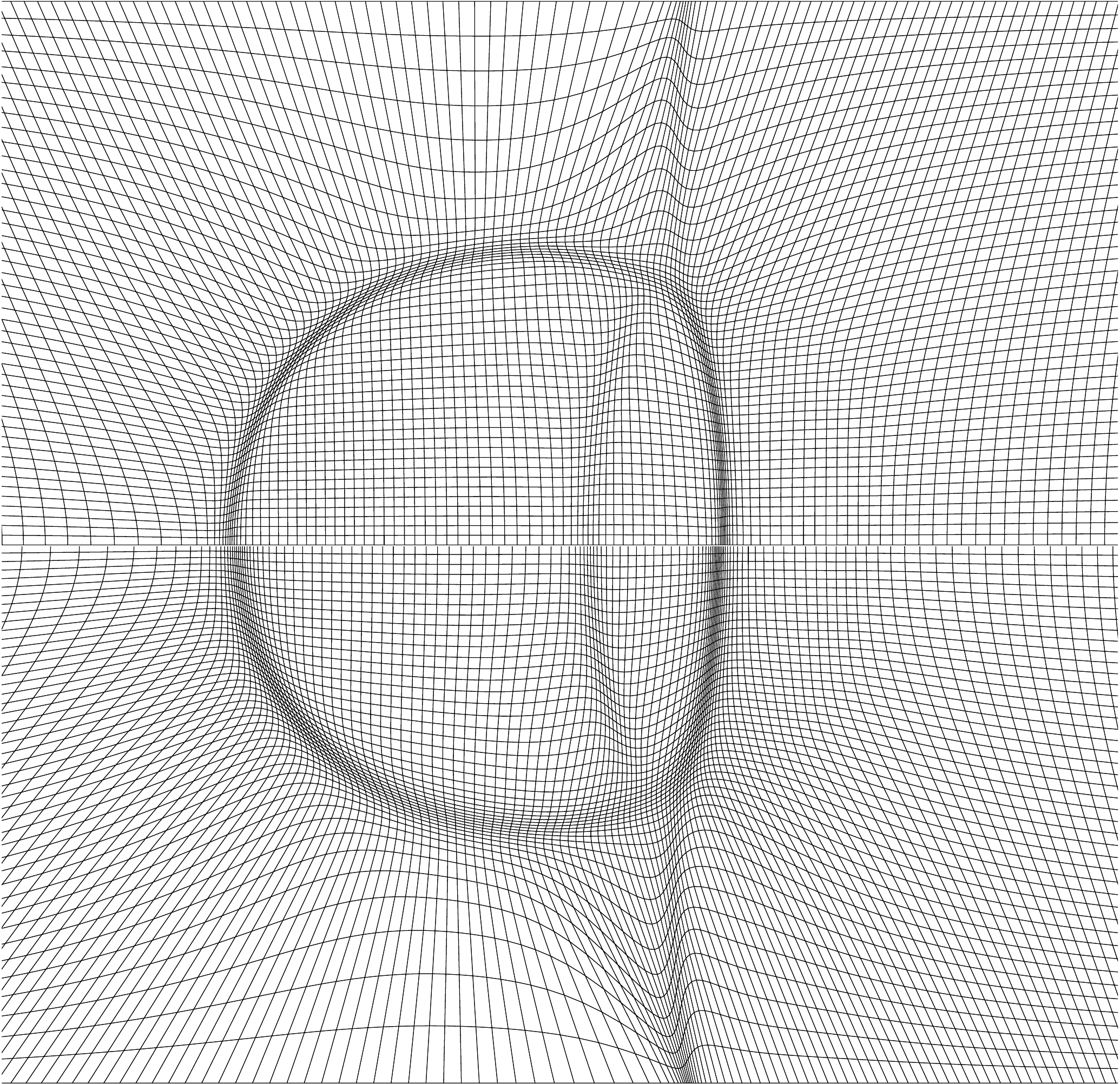}
  \includegraphics[width=0.25\textwidth, trim=1 1 1 1, clip]{./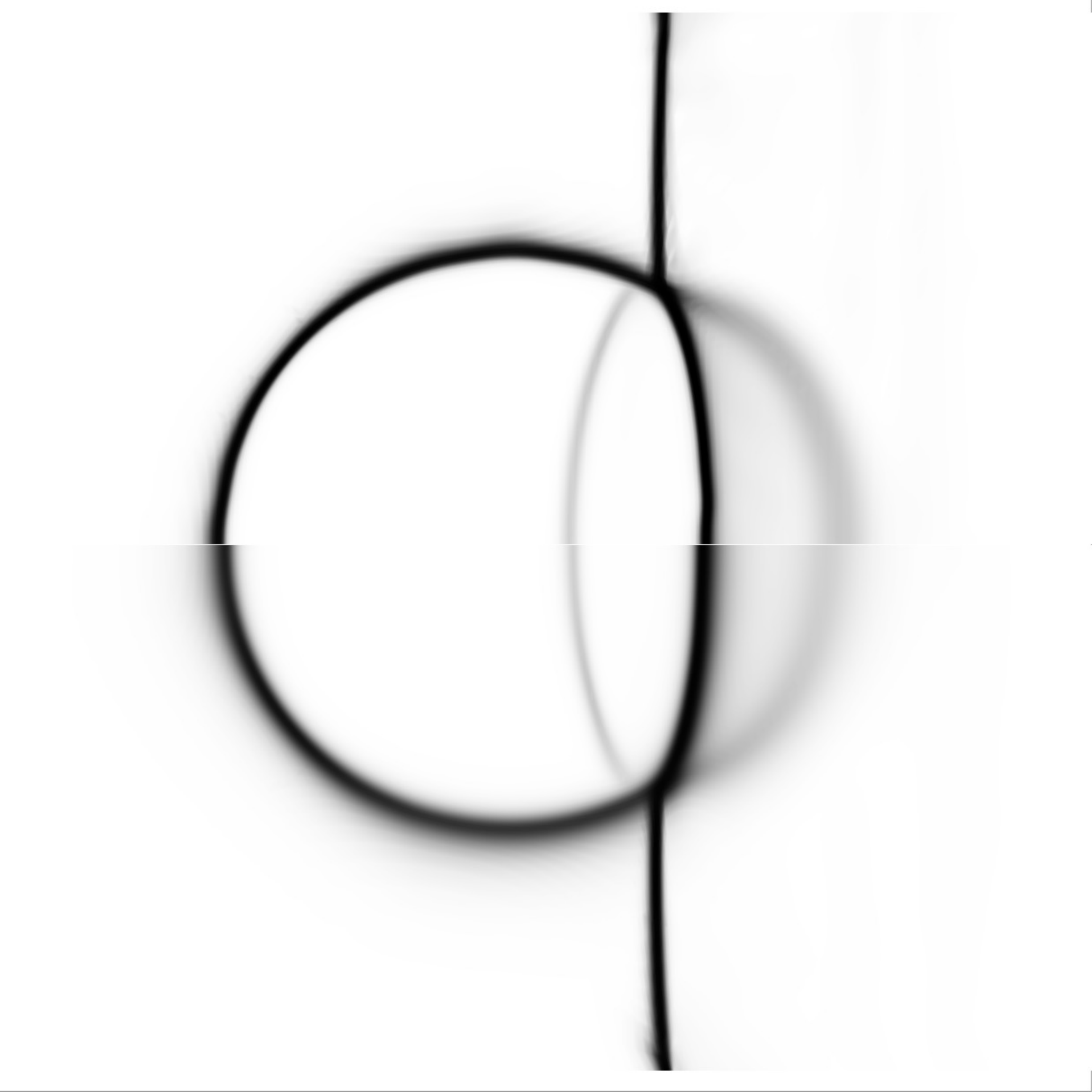}
  \includegraphics[width=0.25\textwidth, trim=1 1 1 1, clip]{./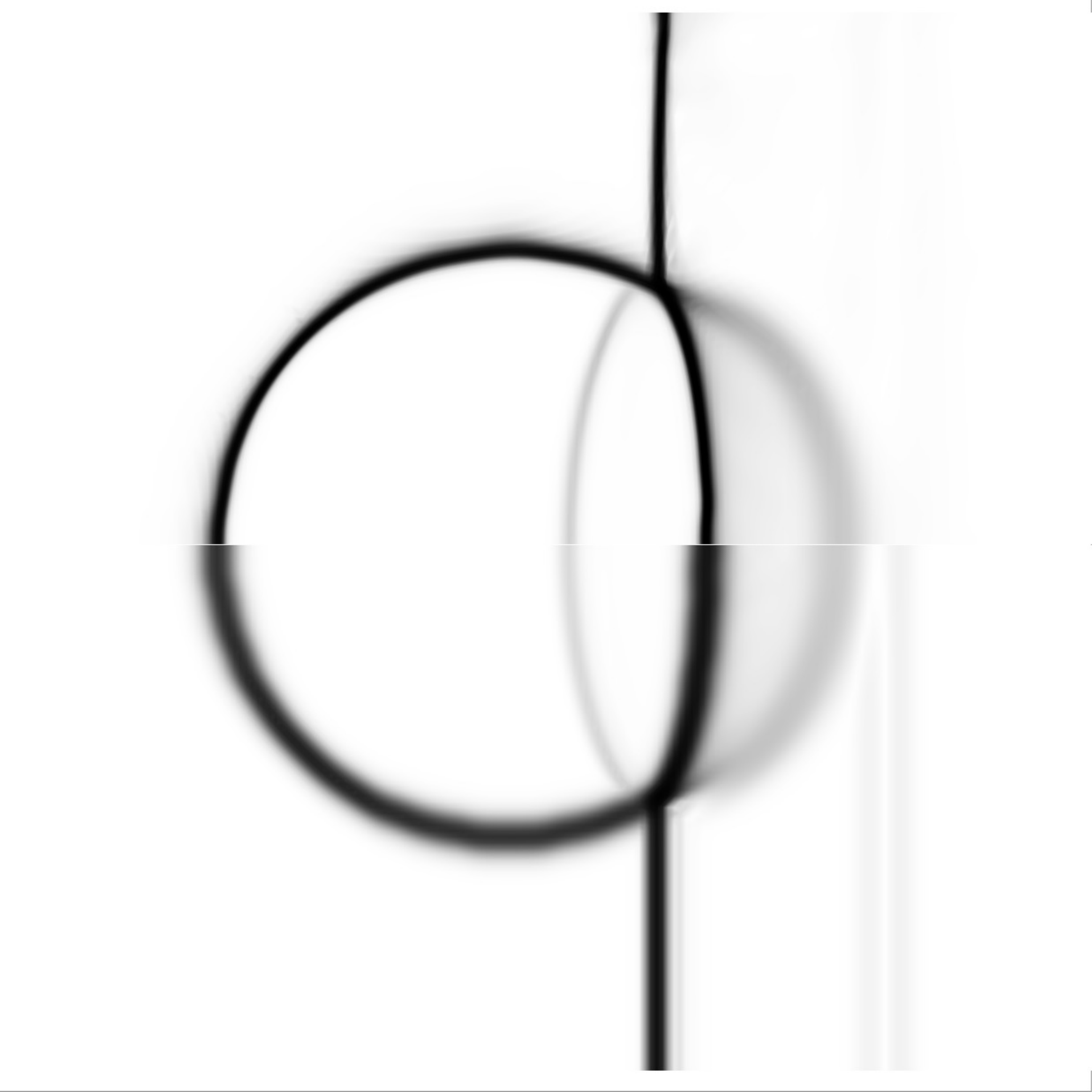}

  \includegraphics[width=0.25\textwidth, trim=1 1 1 1, clip]{./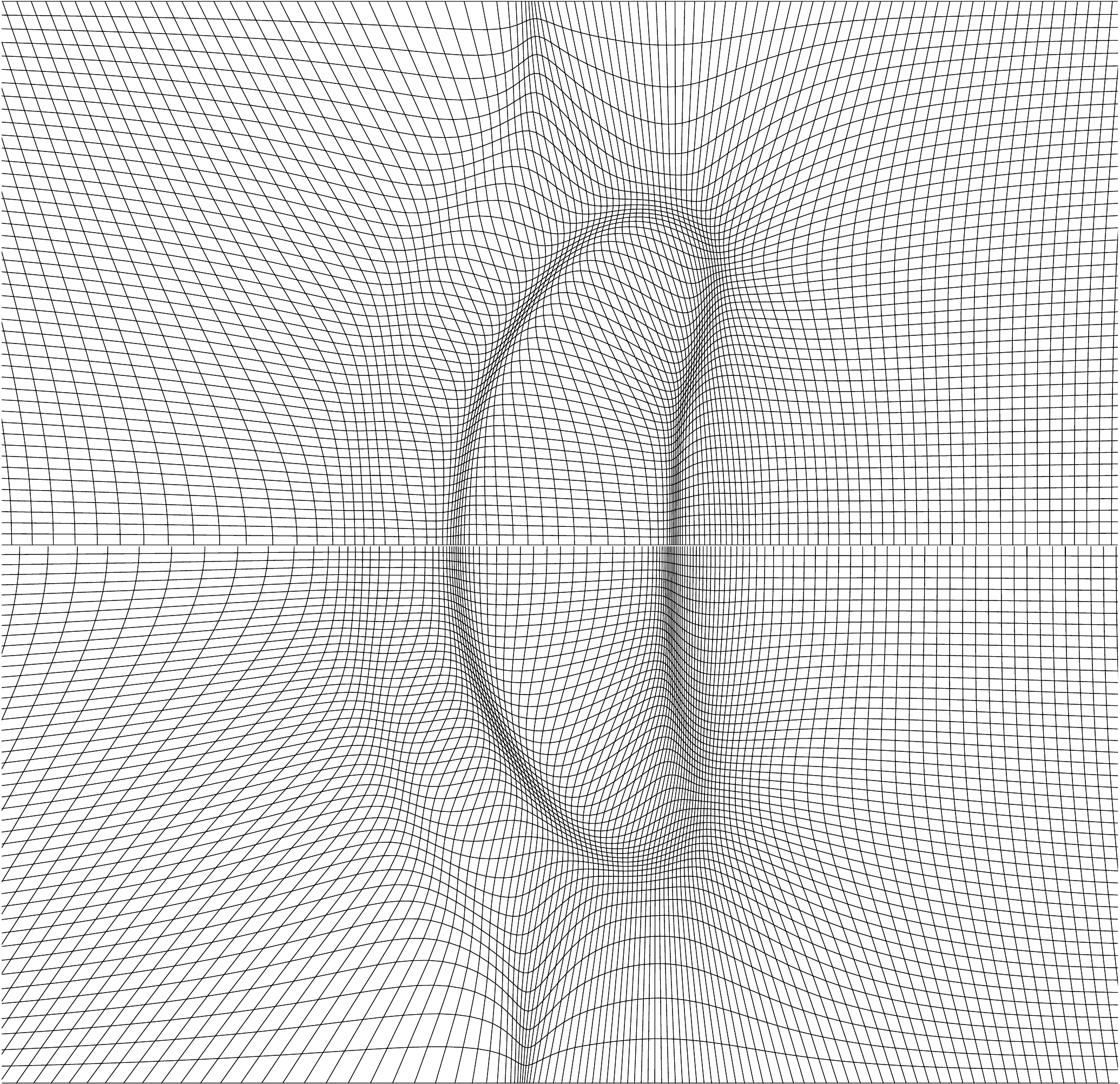}
  \includegraphics[width=0.25\textwidth, trim=1 1 1 1, clip]{./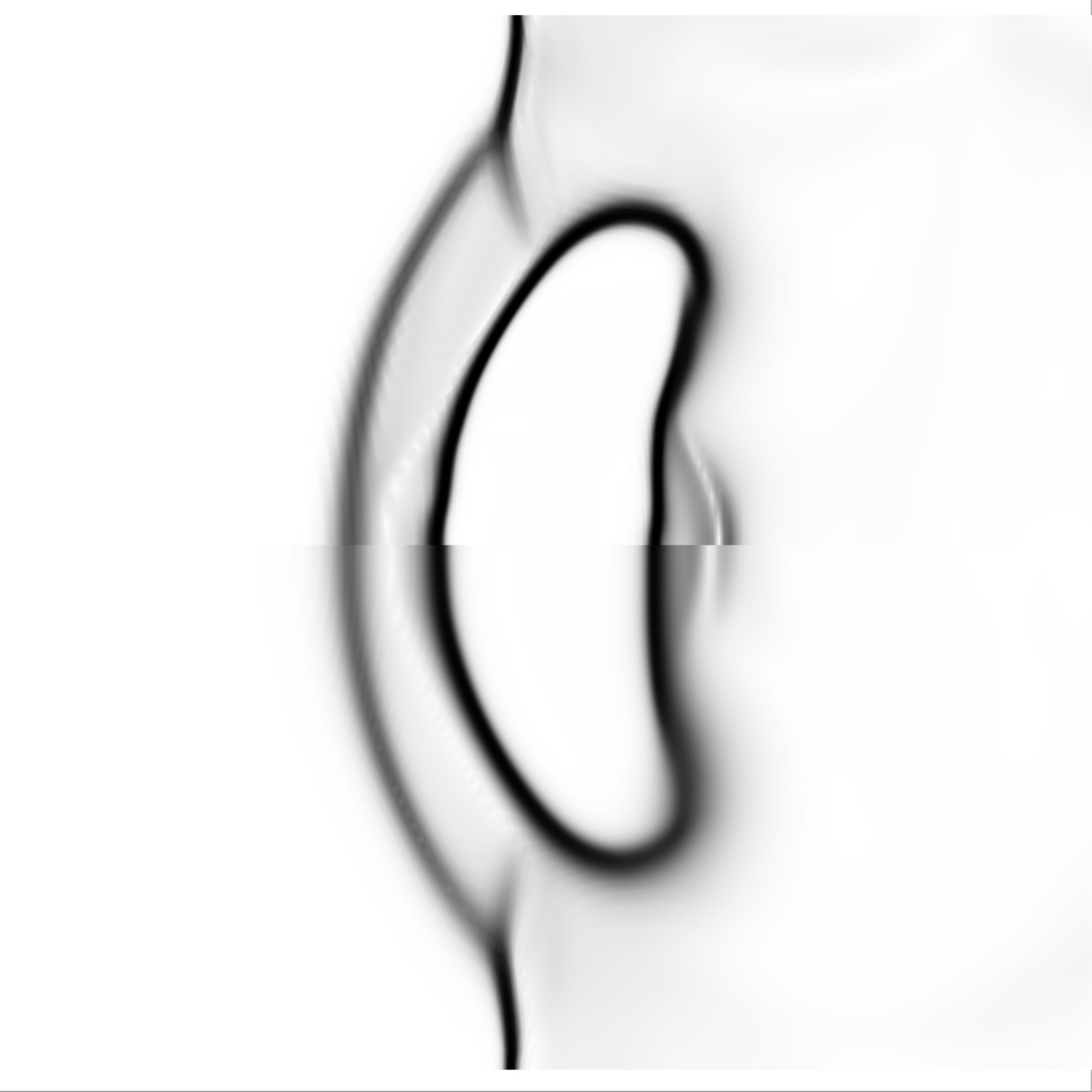}
  \includegraphics[width=0.25\textwidth, trim=1 1 1 1, clip]{./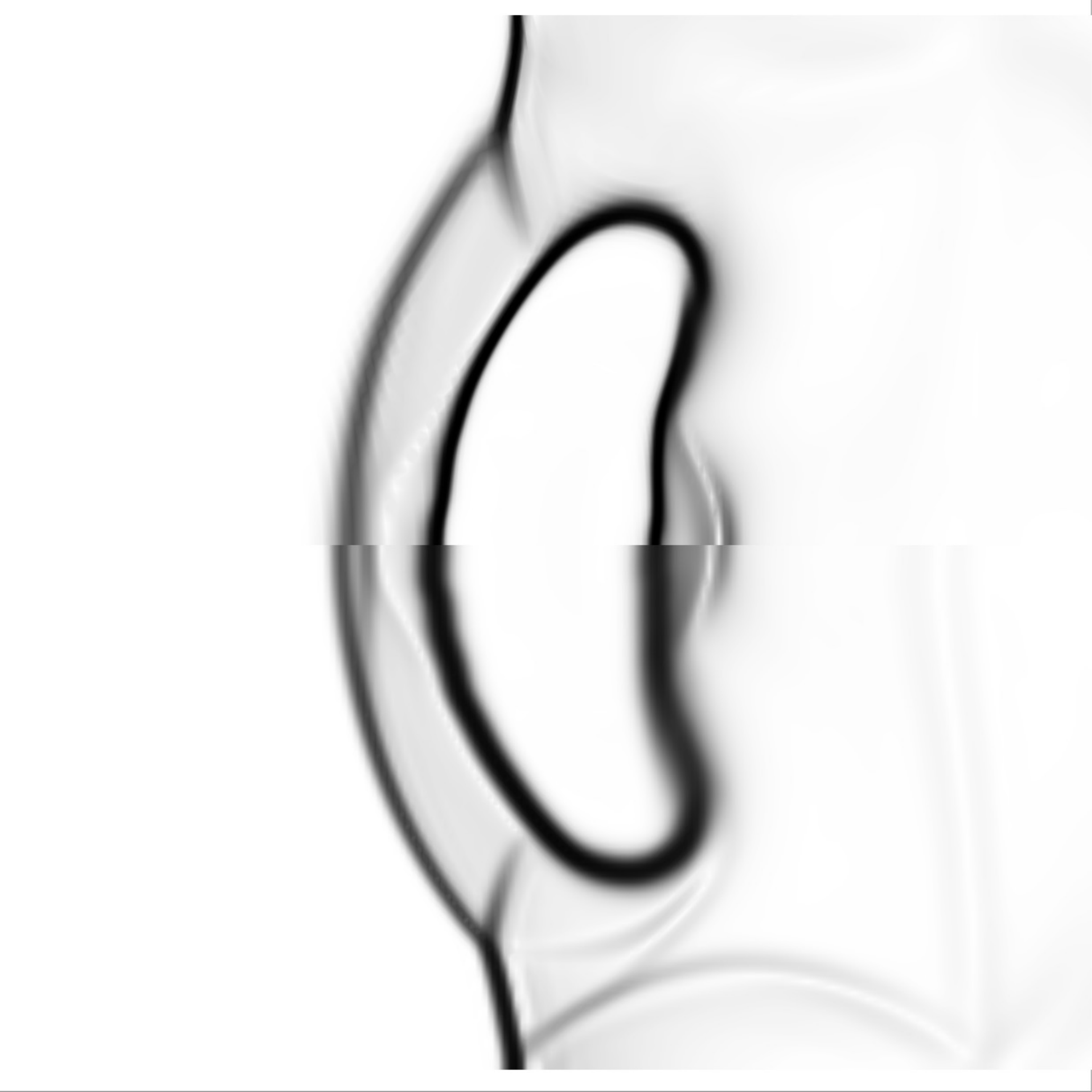}

  \includegraphics[width=0.25\textwidth, trim=1 1 1 1, clip]{./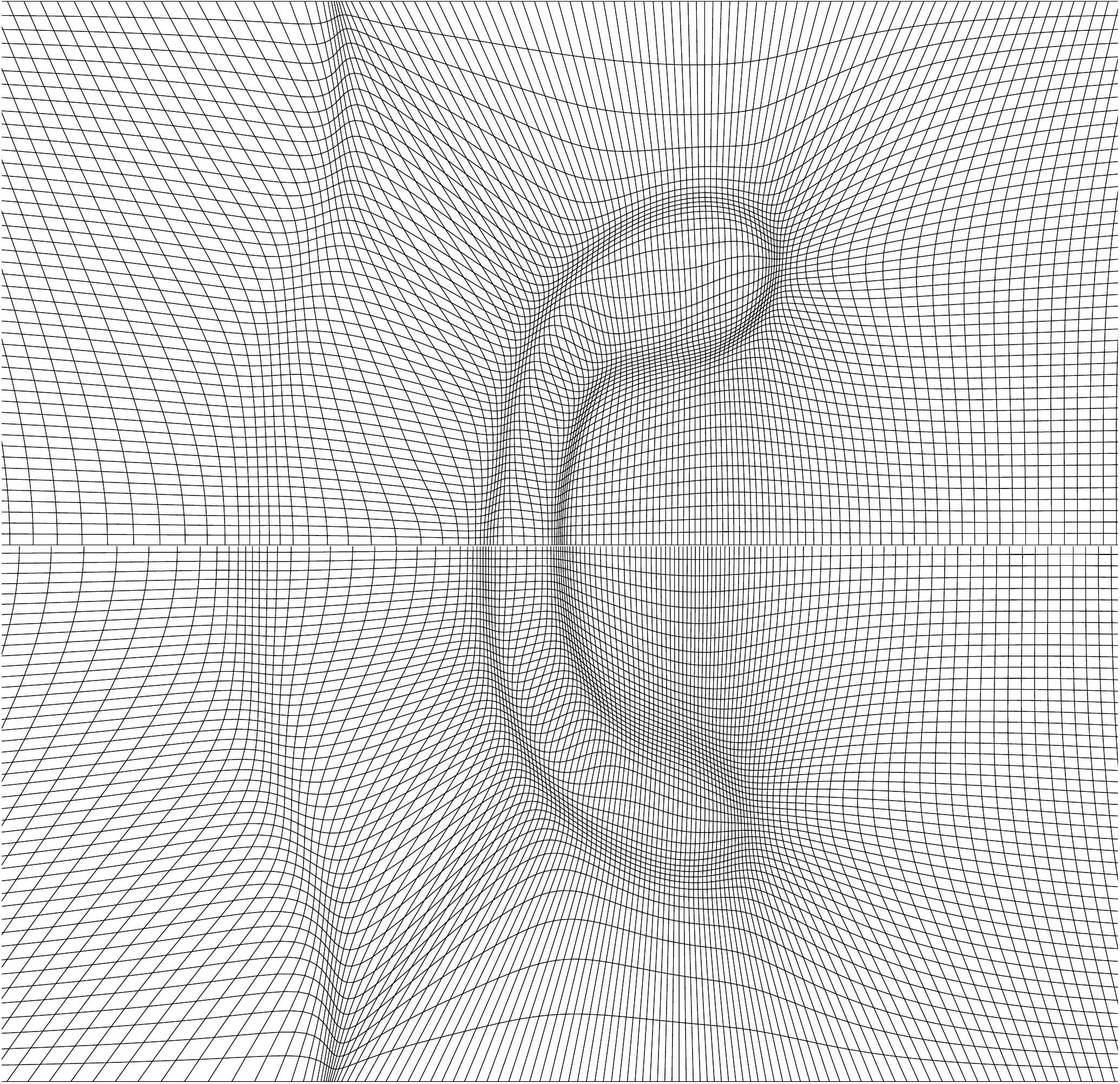}
  \includegraphics[width=0.25\textwidth, trim=1 1 1 1, clip]{./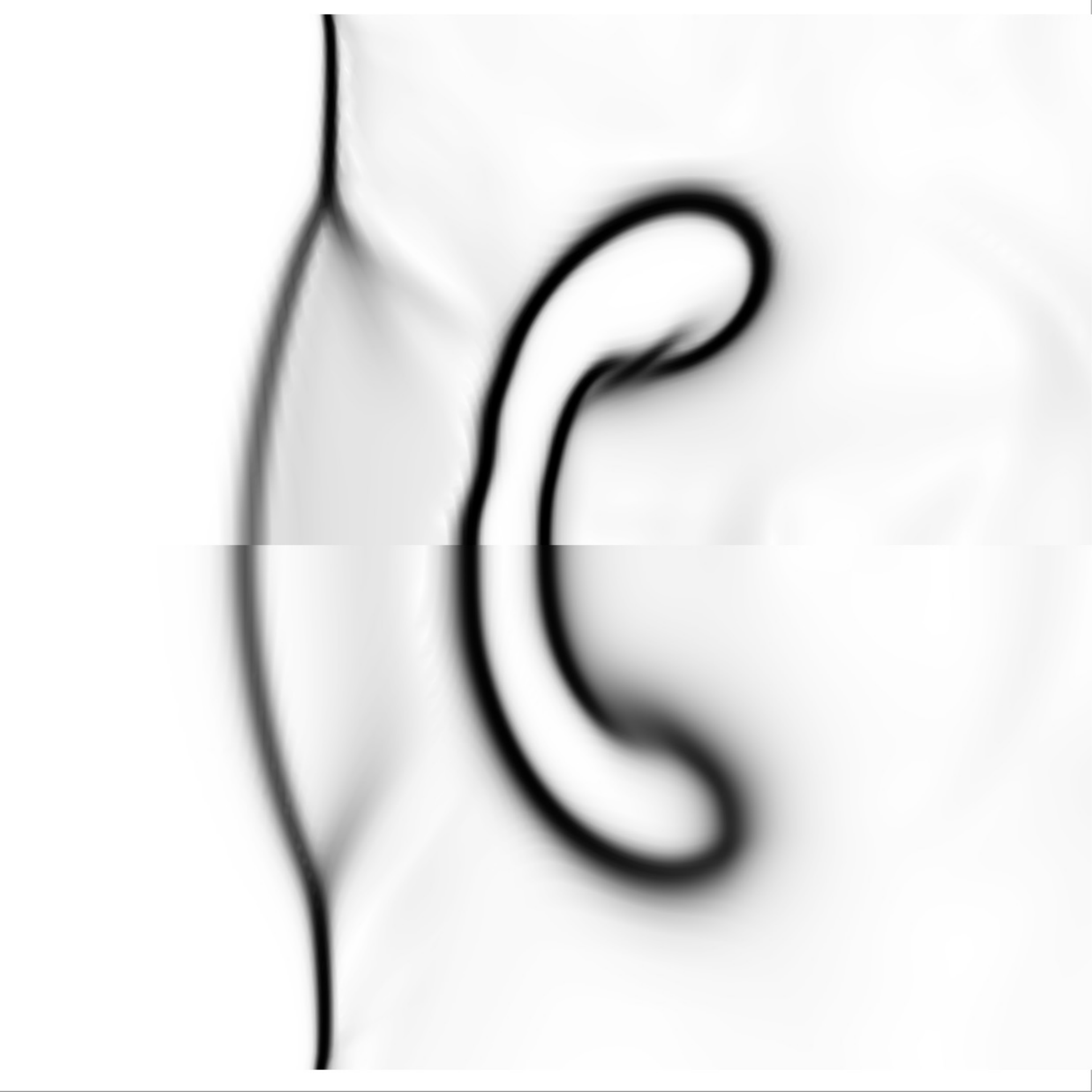}
  \includegraphics[width=0.25\textwidth, trim=1 1 1 1, clip]{./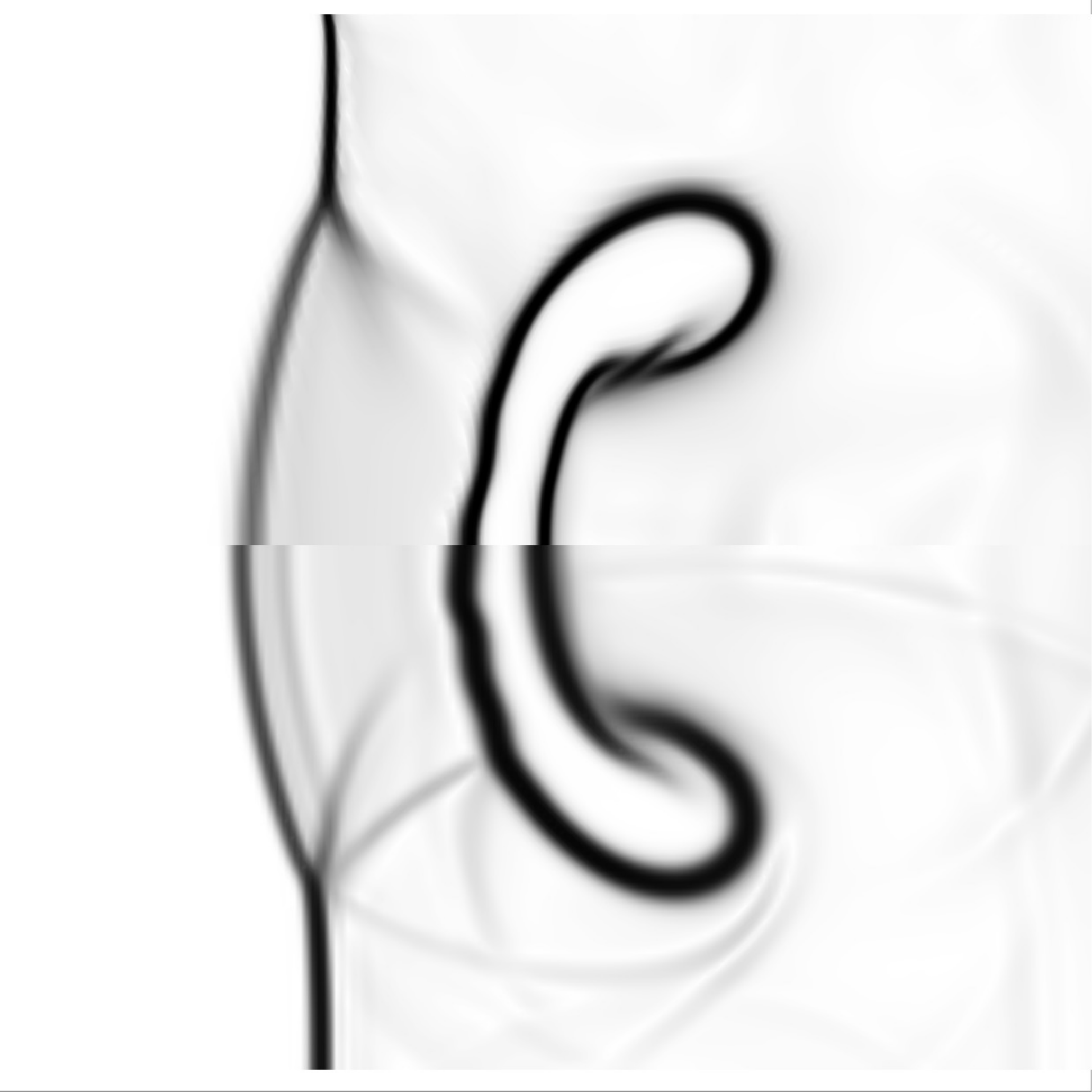}

  \includegraphics[width=0.25\textwidth, trim=1 1 1 1, clip]{./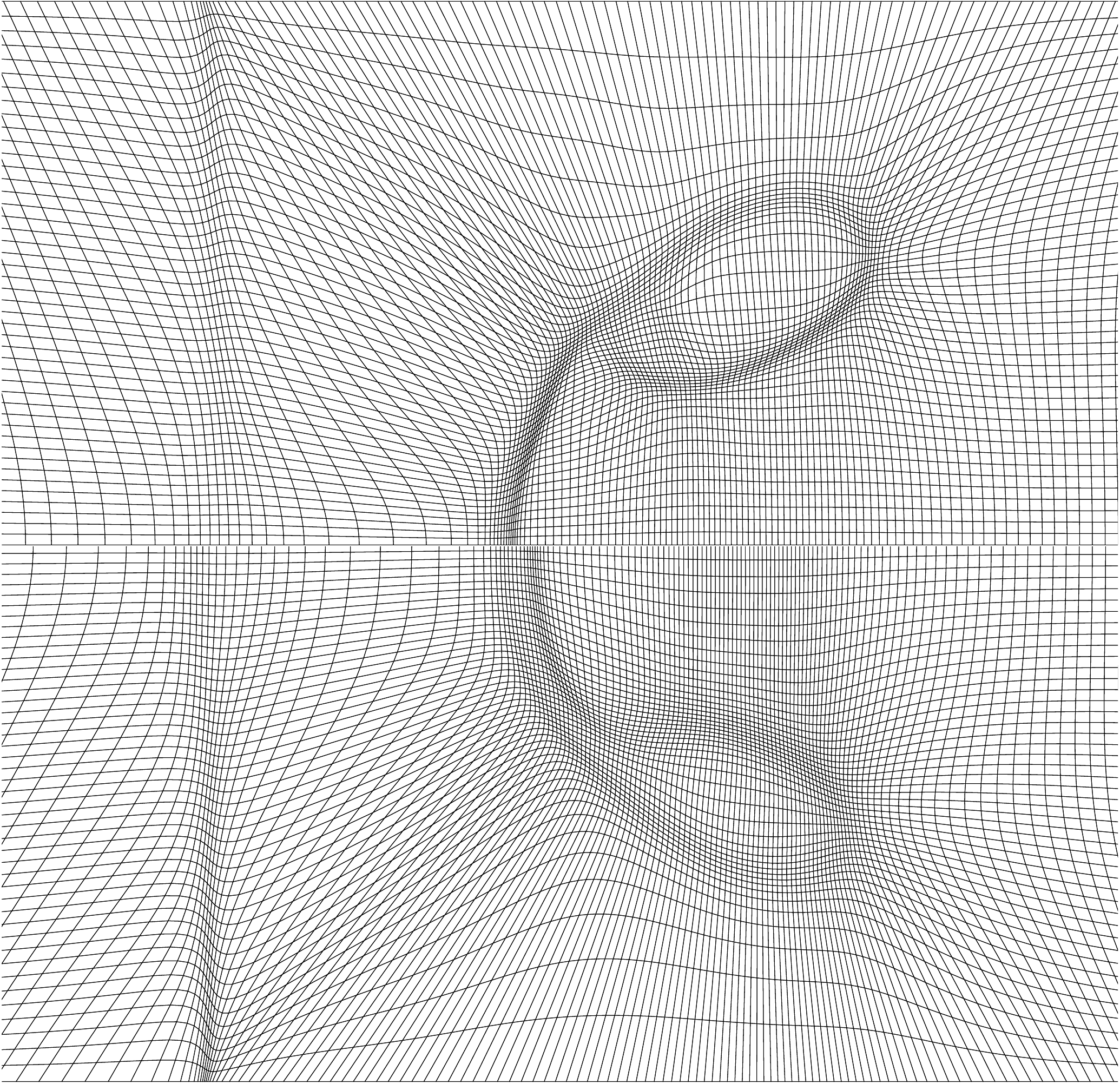}
  \includegraphics[width=0.25\textwidth, trim=1 1 1 1, clip]{./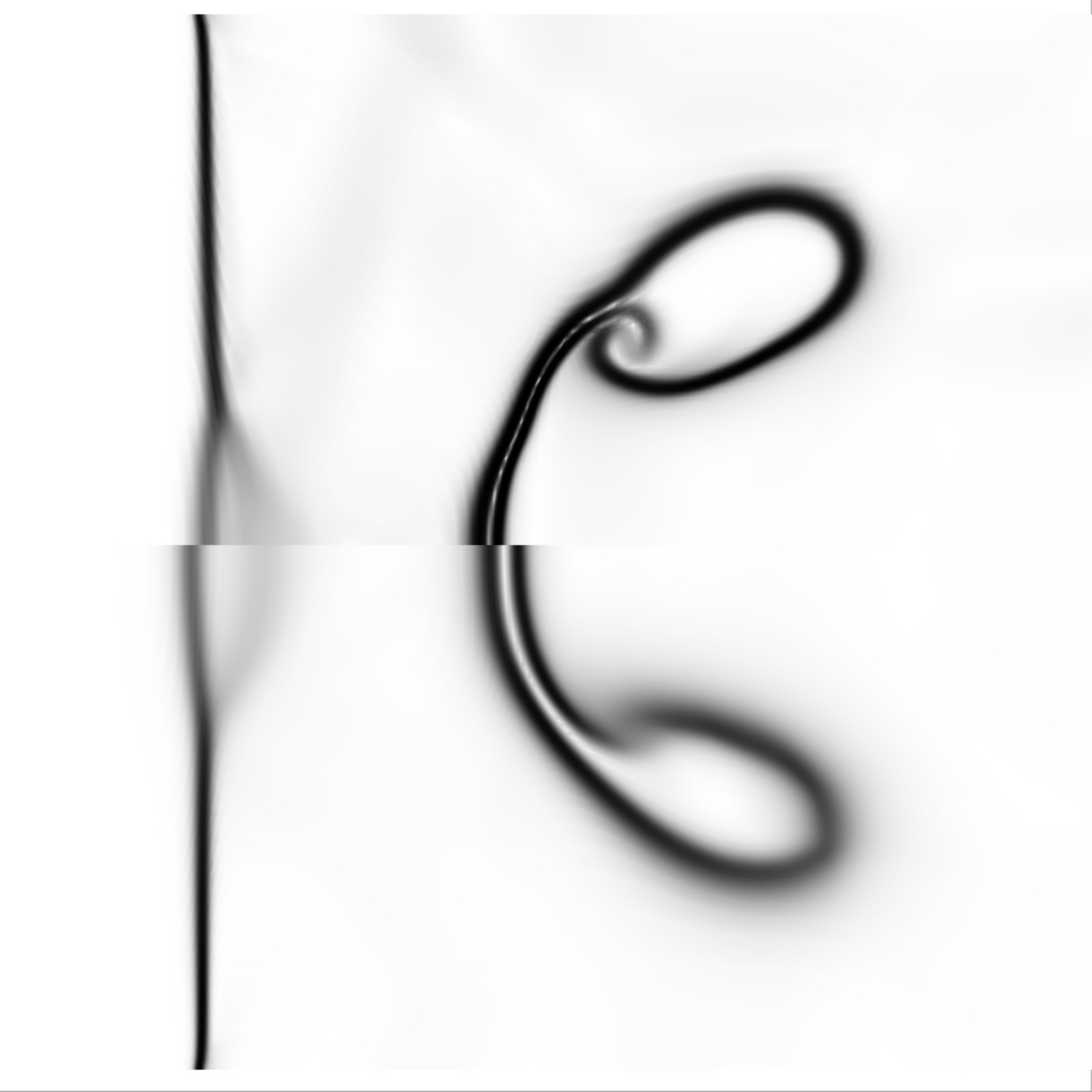}
  \includegraphics[width=0.25\textwidth, trim=1 1 1 1, clip]{./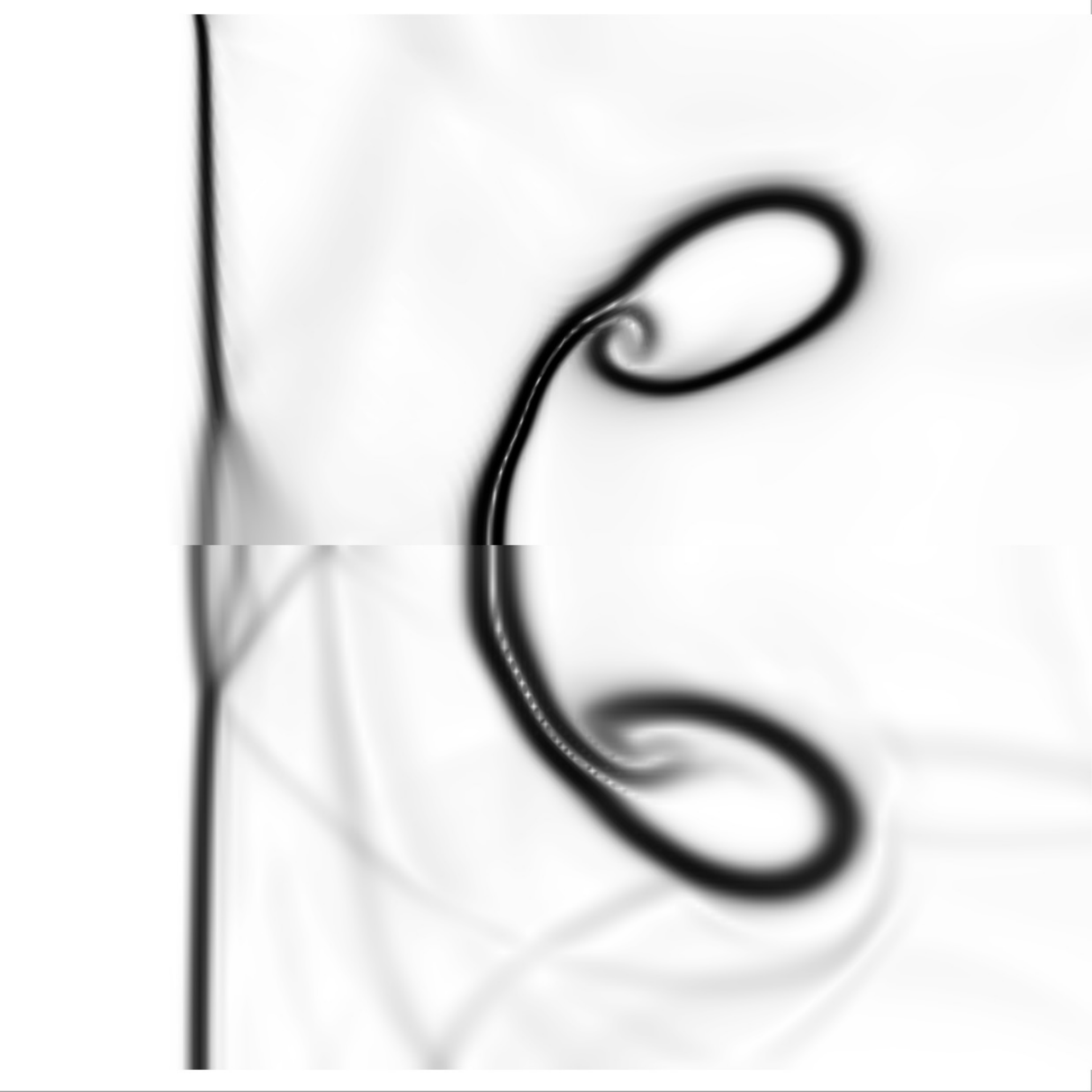}

  \includegraphics[width=0.25\textwidth, trim=1 1 1 1, clip]{./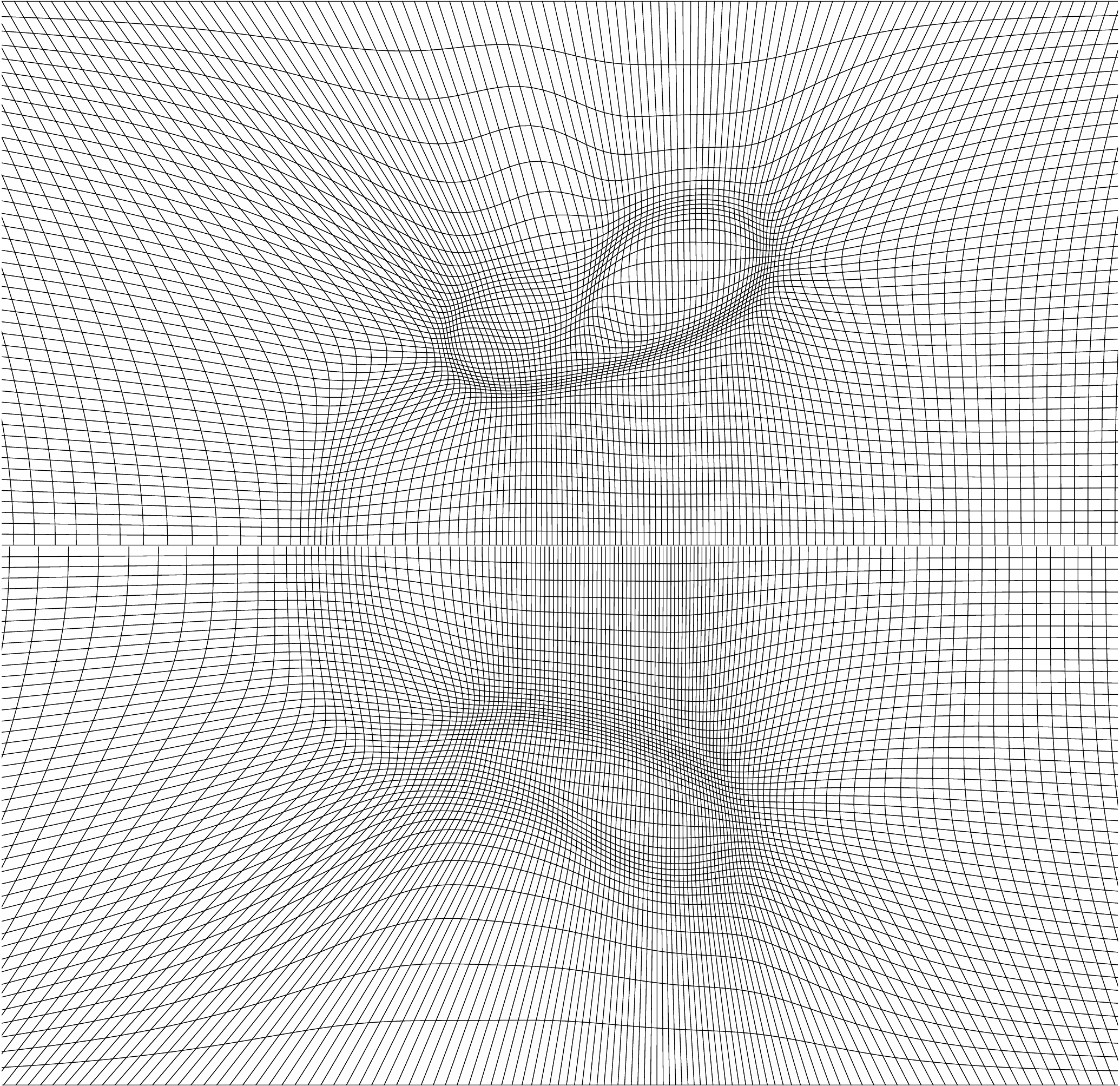}
  \includegraphics[width=0.25\textwidth, trim=1 1 1 1, clip]{./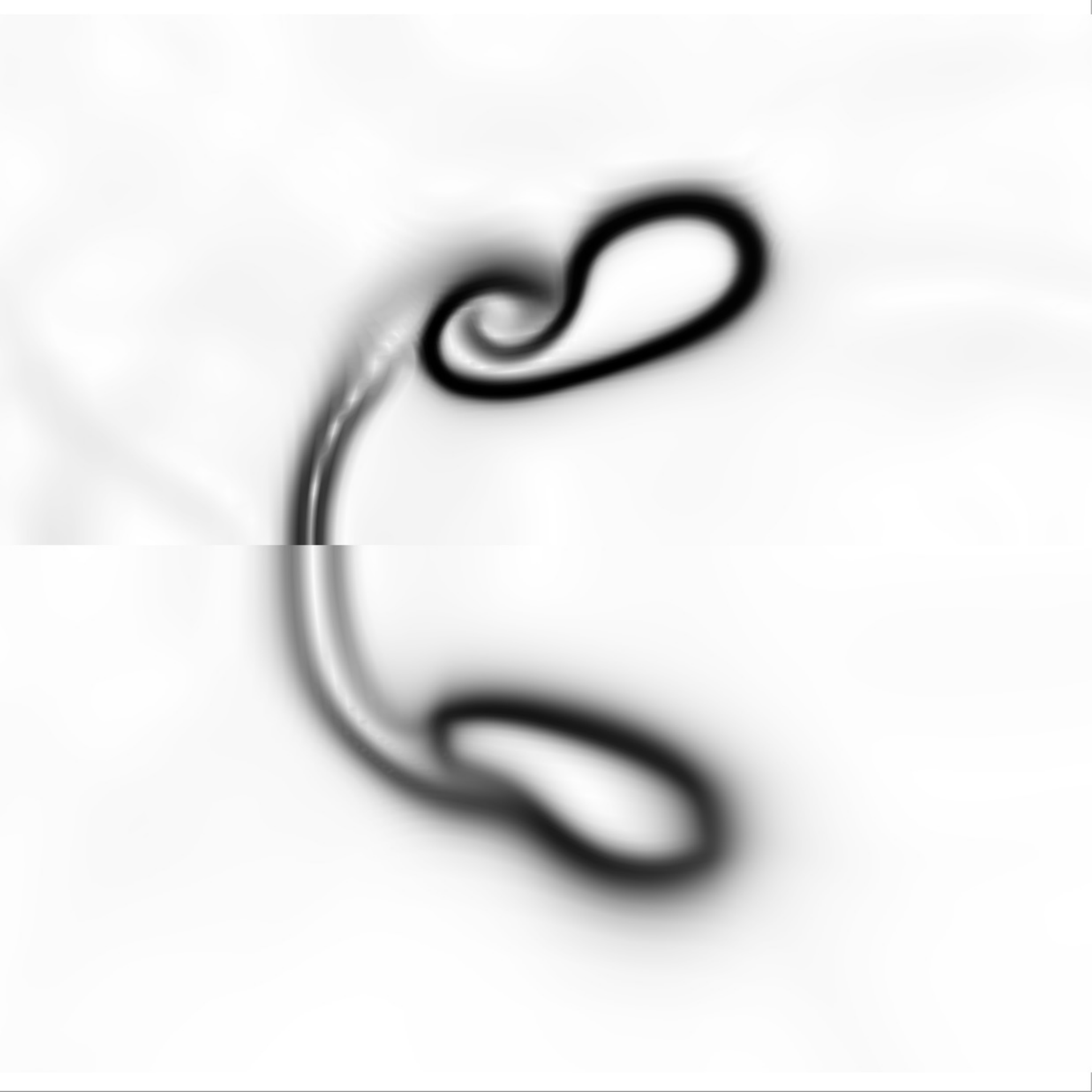}
  \includegraphics[width=0.25\textwidth, trim=1 1 1 1, clip]{./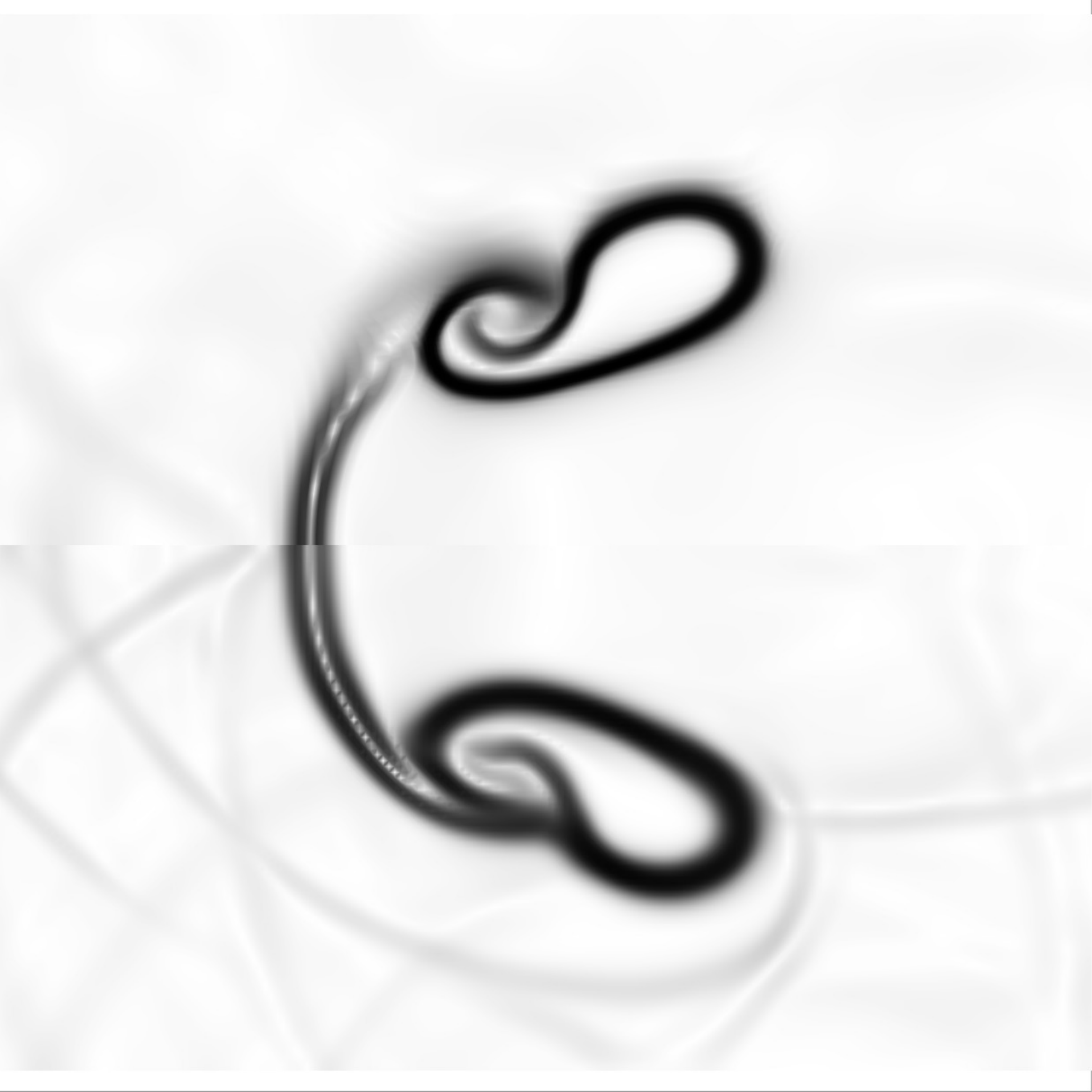}

  \caption{Example \ref{ex:RHD_3DSBL}.  From top to bottom: $t=90,180,270,360,450$.
    Left: adaptive meshes on  surface of $i_2=45$ obtained with {\tt MM-O5} (upper half) and {\tt MM-O2} (lower half) with $325\times 90\times 90$ mesh.
    Middle: numerical schlieren images of $\phi$ on  slice $x_2=0$ obtained with {\tt MM-O5} (upper half) and {\tt MM-O2} (lower half) with $325\times 90\times 90$ mesh.
    Right: numerical schlieren images of $\phi$ on the slice $x_2=0$, obtained with {\tt MM-O5} (upper half) with $325\times 90\times 90$ mesh and {\tt UM-O5} (lower half) with $650\times 180\times 180$ mesh.
  }
  \label{fig:RHD_3DSBL_rho}
\end{figure}

\begin{example}[3D RMHD shock-cloud interaction]\label{ex:RMHD_3DSC}\rm
  It is a 3D extension of Example \ref{ex:RMHD_2DSC}.
  The physical domain is $[-0.2,1.2]\times [0,1]\times [0,1]$,
  and the circular cloud is modified as a spherical cloud of radius $0.15$ centered at $(0.25, 0.5, 0.5)$ with invariant density.
  The initial data of the pre- and post-shock remain unchanged.
  This problem is solved by using the fifth-order ES adaptive moving mesh scheme until $t=1.2$.
\end{example}
The monitor is the same as the last example.
The iso-surfaces of $\rho=1.52$,
the close-up of the adaptive mesh and two surface meshes near the bubble at $t=1.2$ are given in Figure \ref{fig:RMHD_3DSC_mesh}.
The mesh points adaptively concentrate near the complicated structures formed after the interaction of the shock wave and the cloud, improving the nearby resolution.
Figures \ref{fig:RMHD_3DSC_phi1}-\ref{fig:RMHD_3DSC_phi2} show the numerical schlieren images of $\phi_1$ and $\phi_2$ defined in Example \ref{ex:RMHD_2DSC} on the slice $x_2=0$.
The results obtained by {\tt MM-O5} with $210\times150\times150$ meshes are plotted in the upper half parts, while those obtained by {\tt UM-O5} with $210\times150\times150$ and $420\times300\times300$ meshes are shown in the left and right lower half parts, respectively.
One can see that {\tt MM-O5} gives better results than {\tt UM-O5} with the same grid number,
and the former takes only $9.06\%$ CPU time to give comparable results when the latter uses finer mesh,
which again shows the high efficiency of our high-order accurate ES adaptive moving mesh schemes.

\begin{figure}[!ht]
  \centering
  \begin{subfigure}[b]{0.48\textwidth}
    \centering
    \includegraphics[width=1.0\textwidth, trim=1 1 1 1, clip]{./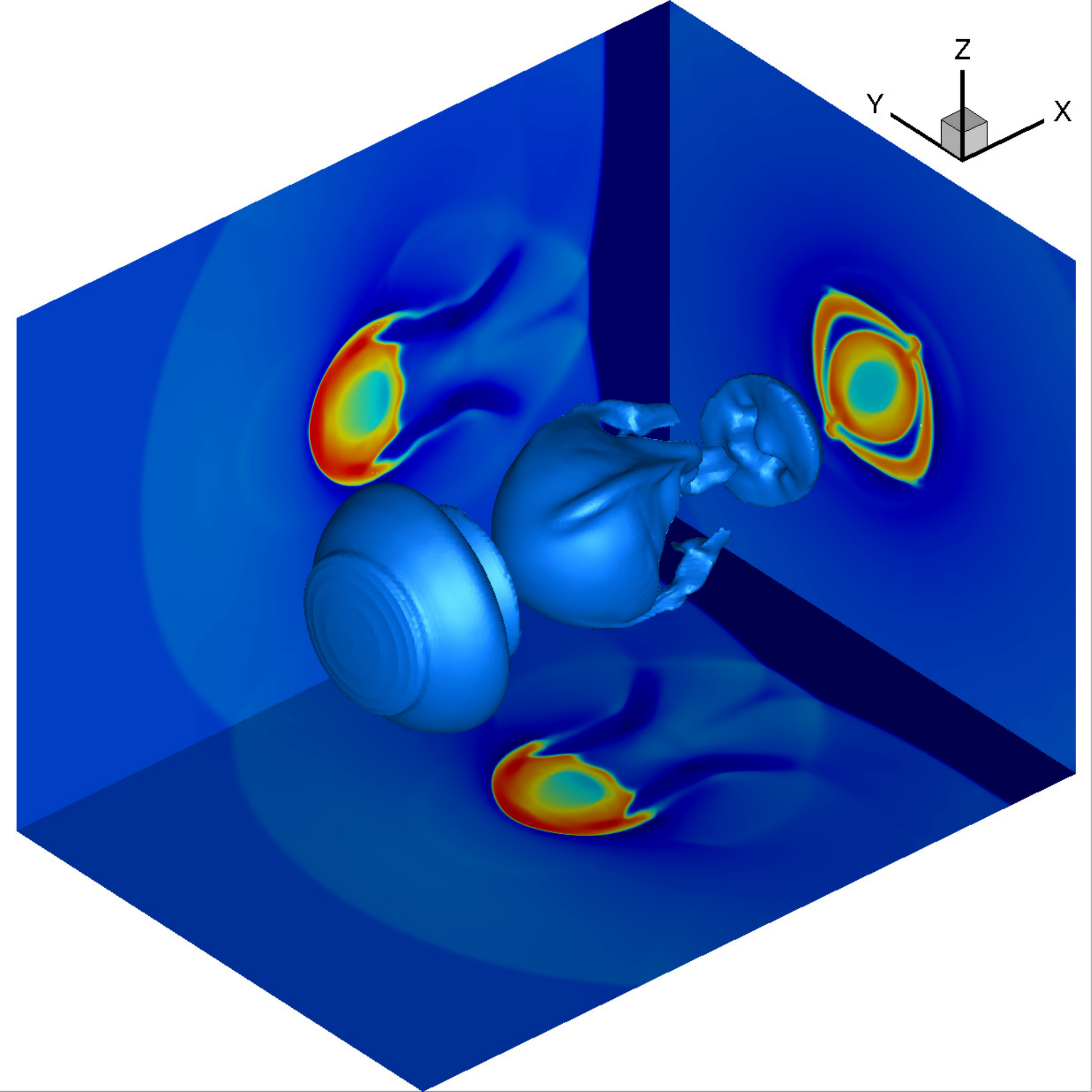}
    \caption{Iso-surface of $\ln\rho=1.52$ and three offset 2D slices taken at $x_1=0.58,x_2=0.5,x_3=0.5$}
  \end{subfigure}
  \begin{subfigure}[b]{0.48\textwidth}
    \centering
    \includegraphics[width=1.0\textwidth, trim=1 1 1 1, clip]{./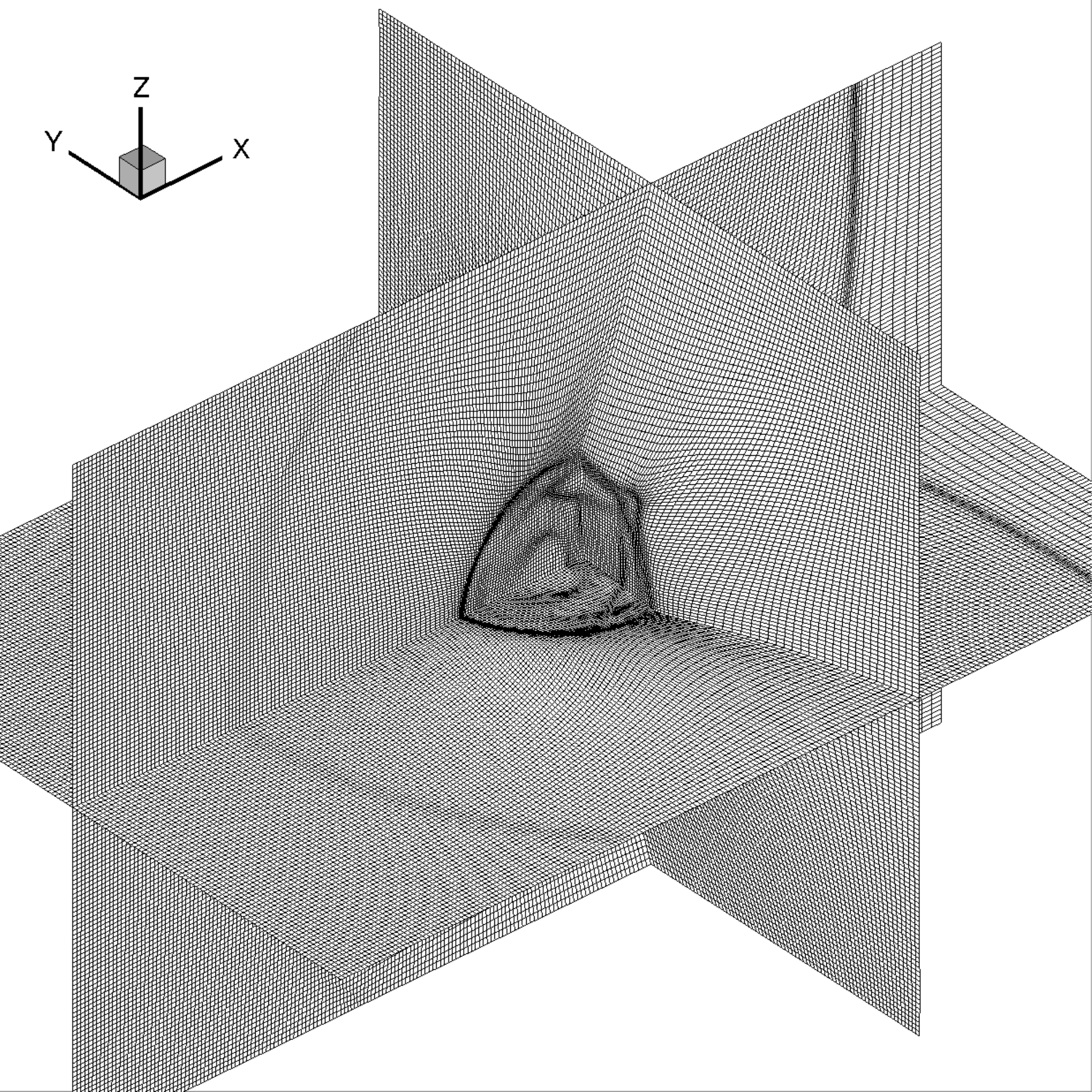}
    \caption{Adaptive meshes on three surfaces of $i_1=150,i_2=75,i_3=75$}
  \end{subfigure}

  \begin{subfigure}[b]{0.48\textwidth}
    \centering
    \includegraphics[width=1.0\textwidth]{./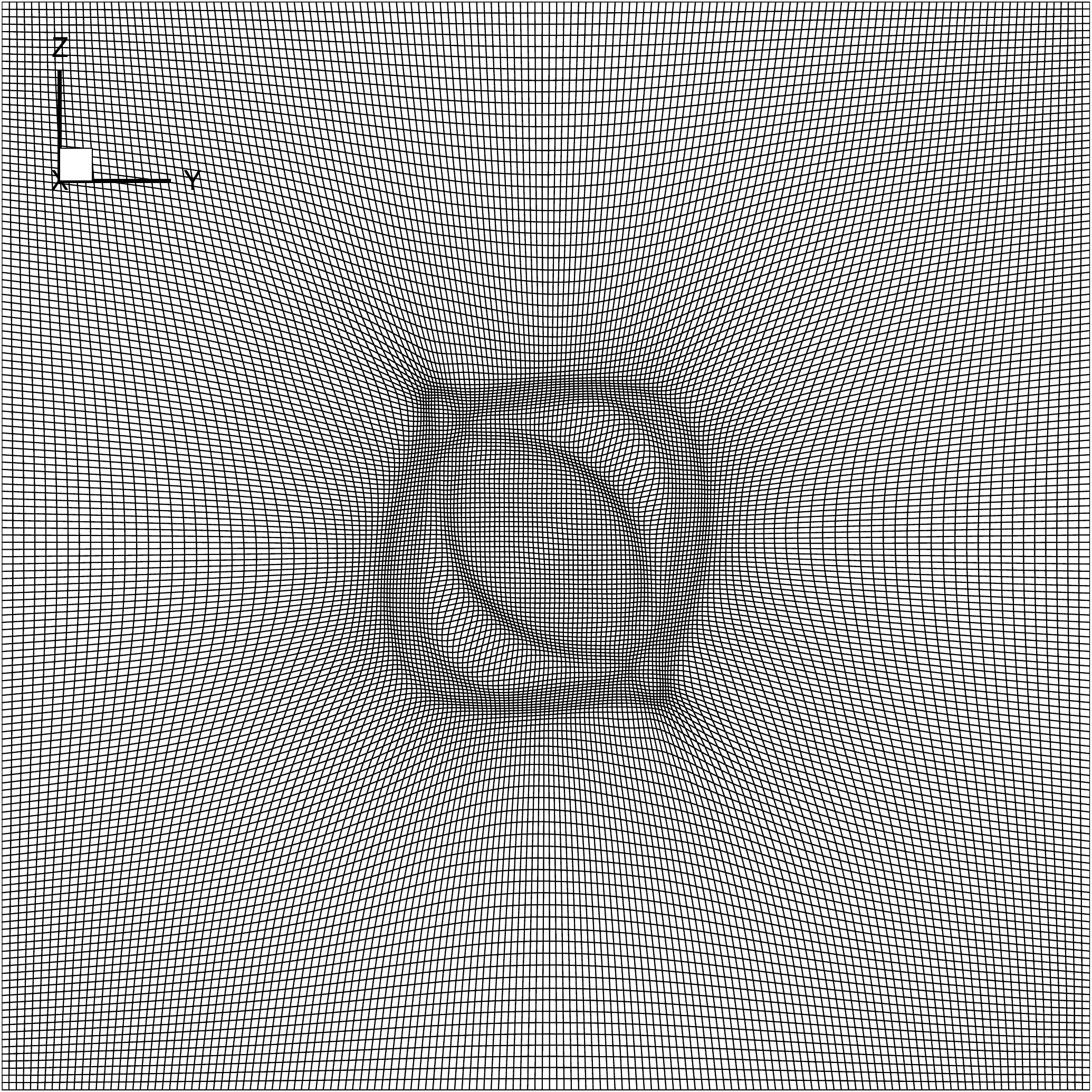}
    \caption{Close-up of  adaptive mesh on   surface of $i_1=150$}
  \end{subfigure}
  \begin{subfigure}[b]{0.48\textwidth}
    \centering
    \includegraphics[width=1.0\textwidth]{./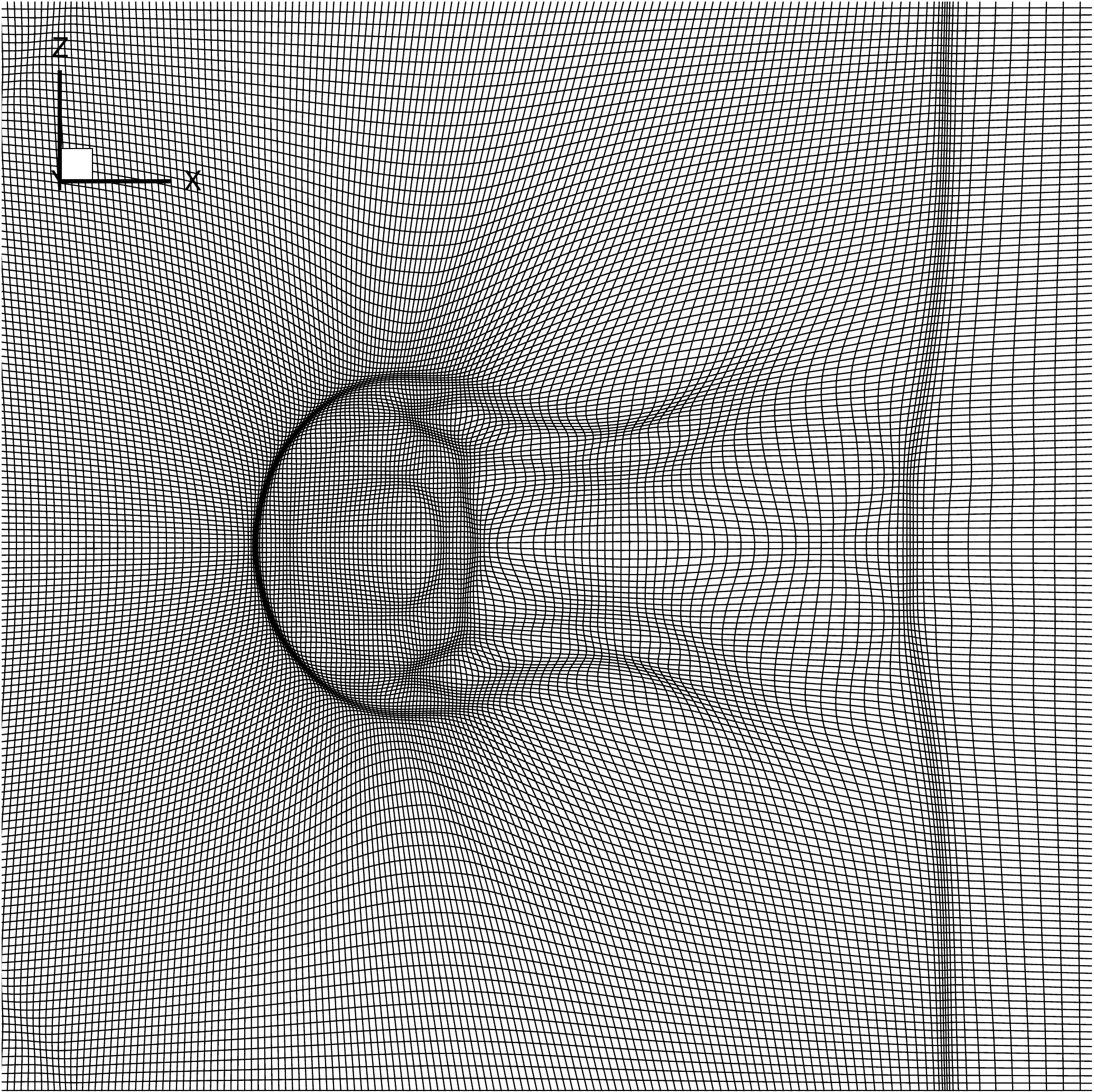}
    \caption{Close-up of  adaptive mesh on   surface of $i_2=75$}
  \end{subfigure}
  \caption{Example \ref{ex:RMHD_3DSC}. Adaptive meshes and $\ln\rho$ at $t=1.2$.}
  \label{fig:RMHD_3DSC_mesh}
\end{figure}

\begin{figure}
  \centering
  \includegraphics[width=0.48\textwidth, trim=2 2 2 2, clip]{./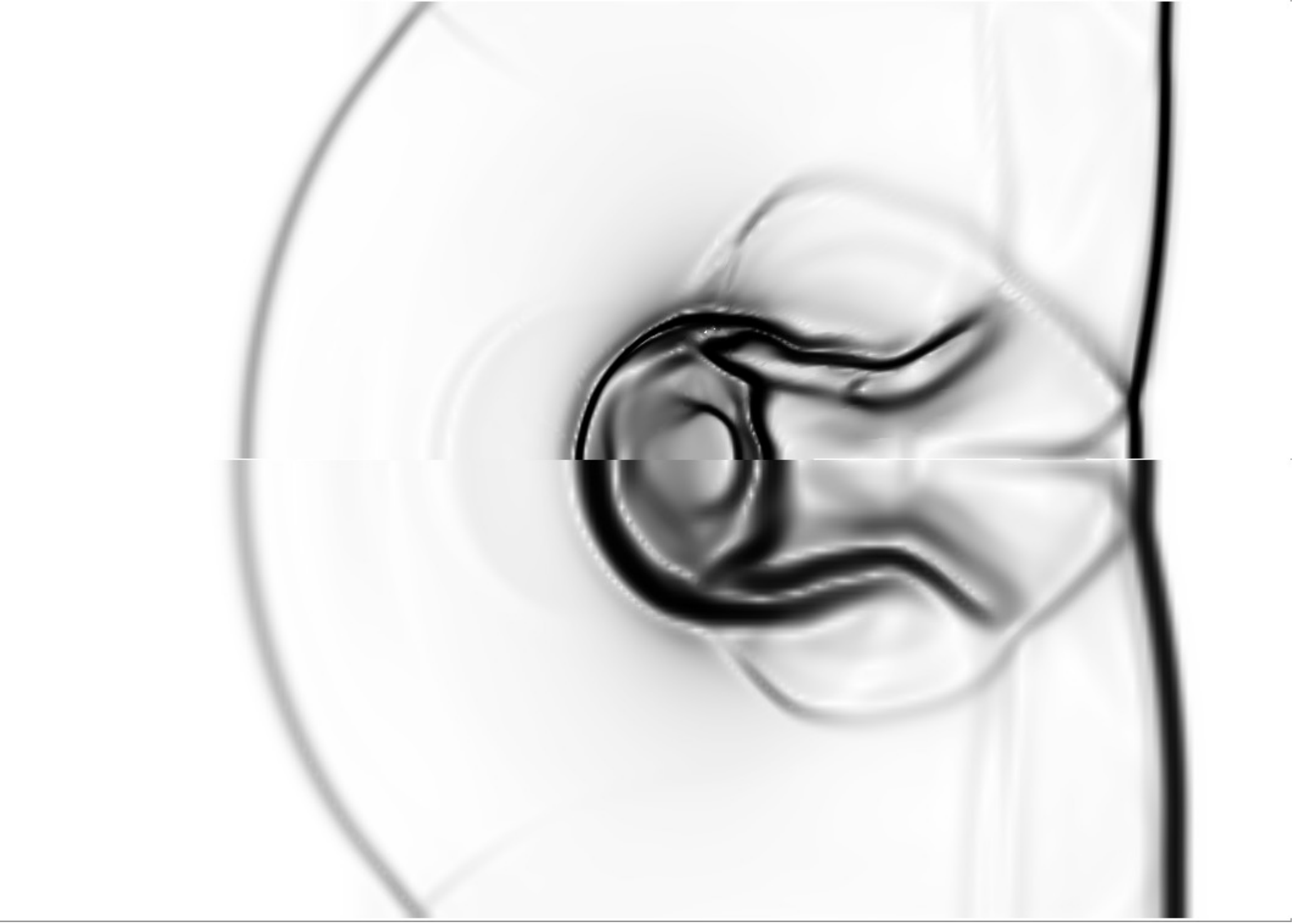}
  \includegraphics[width=0.48\textwidth, trim=2 2 2 2, clip]{./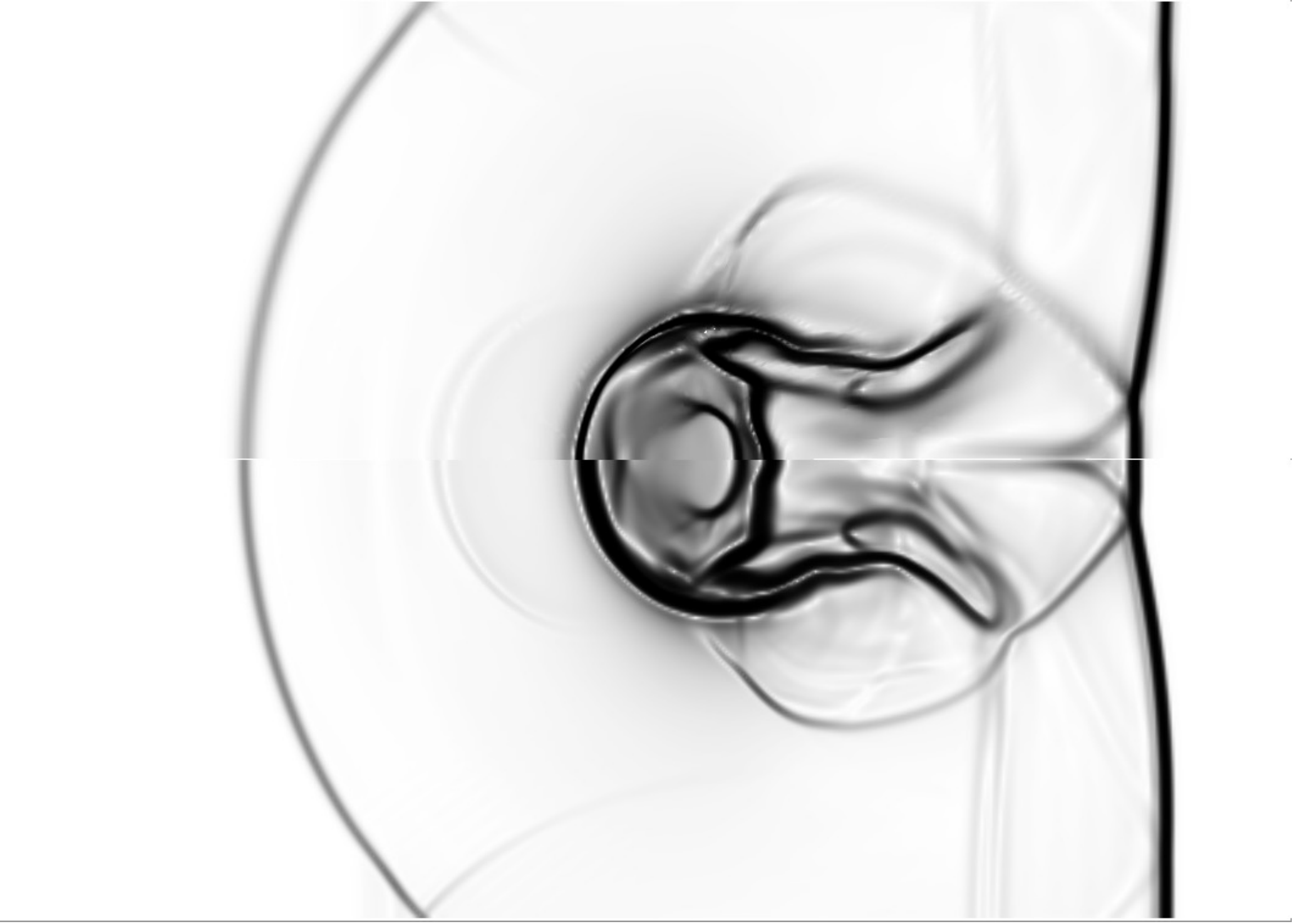}
  \caption{Example \ref{ex:RMHD_3DSC}. Numerical schlieren images of $\phi_1$ at $t=1.2$.
    Left: {\tt MM-O5} with $210\times 150\times 150$ mesh (upper half) and {\tt UM-O5} with $210\times 150\times 150$ mesh (lower half).
    Right: {\tt MM-O5} with $210\times 150\times 150$ mesh (upper half) and {\tt UM-O5} with $420\times 300\times 300$ mesh (lower half).
  }
  \label{fig:RMHD_3DSC_phi1}
\end{figure}

\begin{figure}
  \centering
  \includegraphics[width=0.48\textwidth, trim=2 2 2 2, clip]{./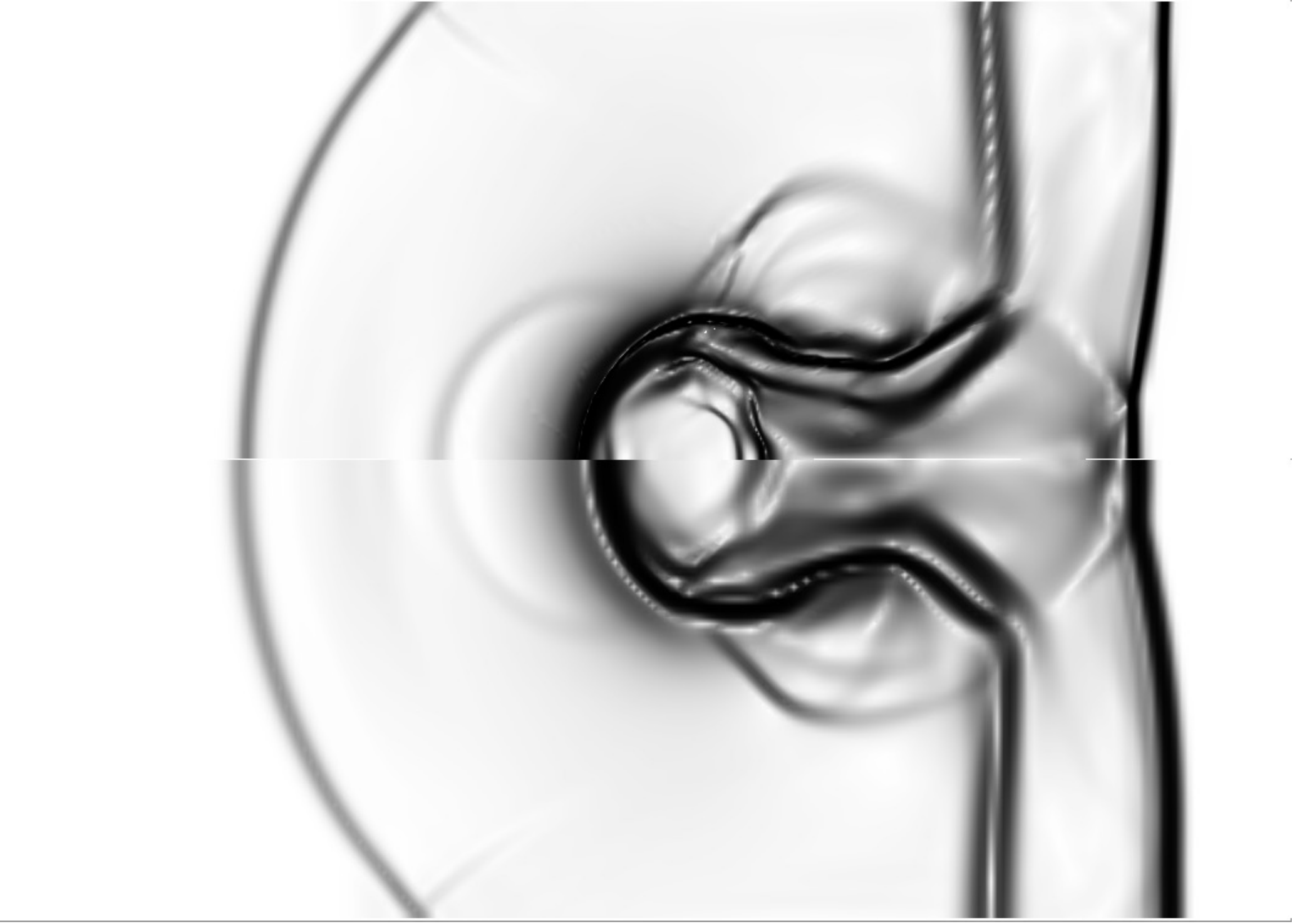}
  \includegraphics[width=0.48\textwidth, trim=2 2 2 2, clip]{./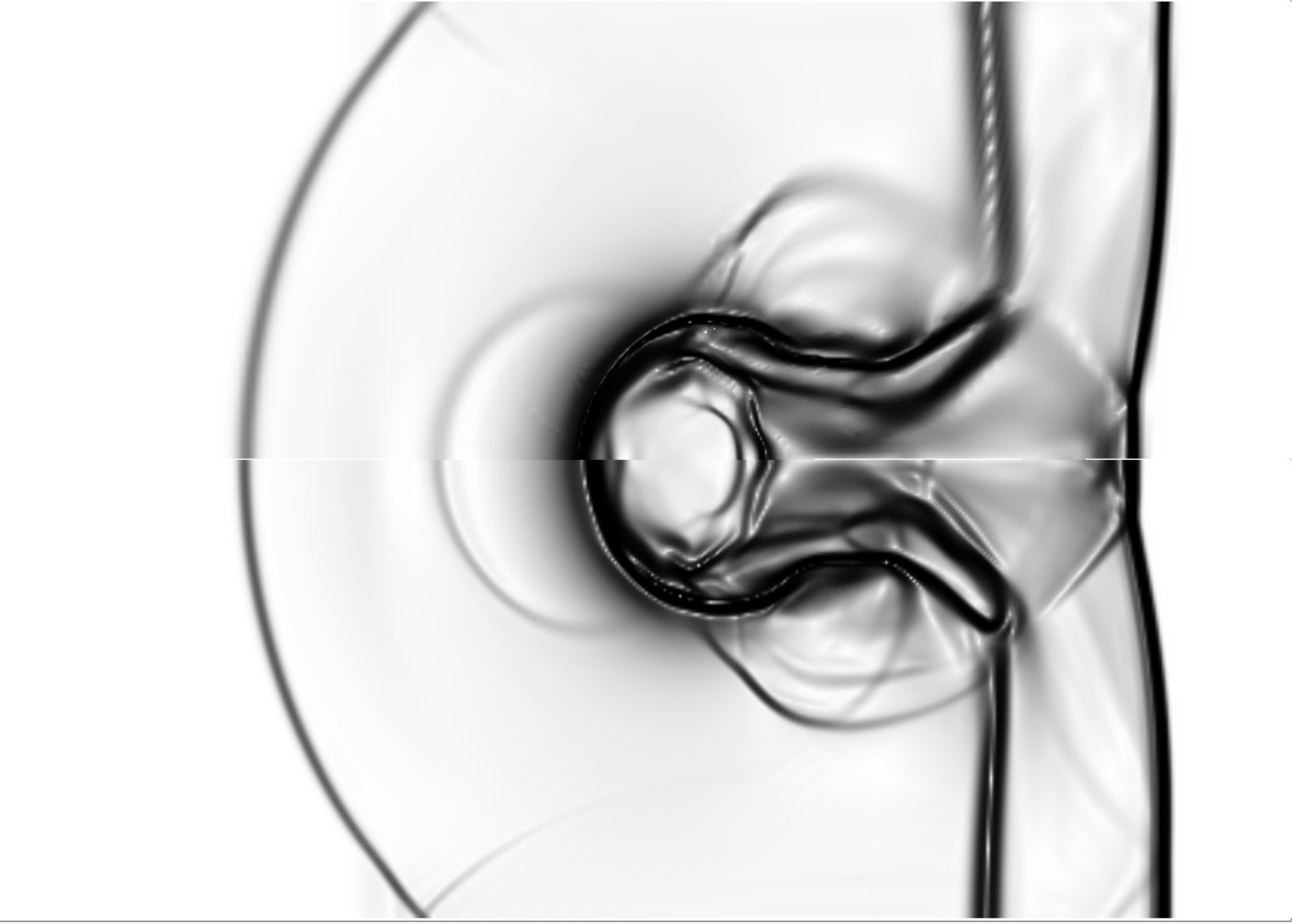}
  \caption{Same as Figure \ref{fig:RMHD_3DSC_phi1} except for $\phi_2$.}
  \label{fig:RMHD_3DSC_phi2}
\end{figure}

\section{Conclusions}\label{section:Conclusion}
This paper presented the high-order accurate ES adaptive moving mesh schemes for the 2D and 3D special RHD and RMHD equations.
Our schemes were built on the ES finite difference approximation in the curvilinear coordinates, the discrete GCLs, and the adaptive mesh redistribution  built on the minimization of the mesh adaption functional, and  consisted of the following main parts.
\begin{enumerate}
 \item The two-point EC flux $\widetilde{\Fcurv_k}$ for the modified RMHD equations (involving the RHD equations) in the curvilinear coordinates for the given entropy pair was first derived,  see \eqref{eq:ECFluxCurv}, and then
     the high-order EC flux $(\widetilde{\Fcurv_k})_{\bm{i},k,\pm\frac12}^{\twop}$ was proposed
 by using some linear combinations of the  two-point EC flux $\widetilde{\Fcurv_k}$,
 so that the approximation of the flux derivatives in space was $2p$th-order accurate, which was an extension of the high-order accurate EC schemes in the Cartesian coordinates \cite{Lefloch2002Fully} to the curvilinear coordinates.
	
\item  The $2p$th-order accurate approximations of the spatial derivatives in the source terms and the VCL were given by
designing $(\widetilde{\Bcurv_k})_{\bm{i},k,\pm\frac12}^{\twop}$ and	$\left(\widetilde{J\pd{\xi_k}{t}}\right)_{\bm{i},k,\pm\frac12}^{\twop}$ as the linear combination of corresponding 2nd-order case with the same coefficients as above. The discretization of the latter degenerated to the $2p$th-order accurate central difference.

	\item The spatial metrics $\left(\widetilde{J\pd{\xi_k}{x_j}}\right)_{\bm{i}}$ used in the above two parts were discretized by using the $2p$th-order central difference based on the conservative metrics method (CMM) \cite{Thomas1979}, such that the SCLs held in the discrete level.

\item The semi-discrete schemes
built on the above three parts, see \eqref{eq:RMHDSemiU_O2p}-\eqref{eq:RMHDSemiJ_O2p}, were proved to be $2p$th-order accurate in space and EC by   mimicking the derivation of the continuous entropy identity in the curvilinear coordinates.

\item Some suitable high-order dissipation term utilizing WENO reconstruction in the scaled entropy variables was added to the EC flux to get the high-order accurate ES schemes satisfying the semi-discrete entropy inequality, in order to avoid the numerical oscillation produced by the EC scheme around the discontinuities.

  \item The fully-discrete ES schemes were obtained by integrating
the above semi-discrete ES schemes in time by
using the third-order accurate explicit strong-stability preserving Runge-Kutta schemes,
and proved to be free-stream preserving.

\item The mesh points were adaptively redistributed by solving the Euler-Lagrange equation of the mesh adaption functional
on the computational mesh at each time step with the suitably chosen monitor functions.
\end{enumerate}

Several 2D and 3D numerical results showed that the high-order accurate ES adaptive moving mesh schemes effectively captured the localized structures, such as the sharp transitions or discontinuities,
and outperformed both their counterparts on the uniform mesh and the 2nd-order ES adaptive moving mesh schemes.

\section*{Acknowledgments}
The authors were partially supported by
the National Key R\&D Program of China, Project Number 2020YFA0712000,
Science Challenge Project (No. TZ2016002),
and High-performance Computing Platform of Peking University.

\appendix

\section{1D EC schemes}\label{app:1DEC}
This Appendix presents the semi-discrete 1D EC schemes.
	Consider the case of $d=1$ and omit the subscripts ``$1$"  denoting the $\xi_1$-direction.
	The system \eqref{eq:RMHDCurv} and the GCLs \eqref{eq:GCL} reduce to
	\begin{equation*}
\pd{\Ucurv}{\tau}+\pd{\Fcurv}{\xi}=-\Phi'(\bV)^\mathrm{T}\pd{B_1}{\xi},
	\end{equation*}
	and
	\begin{subequations}
		\begin{align*}
			&\text{VCL:}\quad \pd{J}{\tau}+\dfrac{\partial}{\partial\xi}{\left(\pd{x}{t}\right)}=0,\\
			&\text{SCL:}\quad \dfrac{\partial}{\partial\xi}{\left(J\pd{\xi}{x}\right)}\equiv0,
		\end{align*}
	\end{subequations}
	where
	\begin{equation*}
		J=\pd{x}{\xi},~
		\Ucurv=J\bU,~
		\Fcurv=\left(J\pd{\xi}{t}\bU\right)+\bF=\left(\pd{x}{t}\bU\right)+\bF.
	\end{equation*}
It is easy to see that the SCL holds automatically in this case.
	If replacing $\bm{i}$ with $i$, then the $2p$th-order EC schemes become
	\begin{align*}
		&\dfrac{\dd}{\dd t}\bm{\mathcal{U}}_{i}=
		-\dfrac{1}{\Delta \xi}\left(\widetilde{\Fcurv}_{i+\frac12}^{\twop}-\widetilde{\Fcurv}_{i-\frac12}^{\twop}\right)
		-\Phi'(\bV_{i})^\mathrm{T}\dfrac{1}{\Delta \xi}\left((\widetilde{B_1})_{i+\frac12}^{\twop}-(\widetilde{B_1})_{i-\frac12}^{\twop}\right),
		\\
		&\dfrac{\dd}{\dd t}J_{i}=
		-\dfrac{1}{\Delta \xi}\Bigg(\left(\widetilde{\pd{x}{t}}\right)_{i+\frac12}^{\twop}-\left(\widetilde{\pd{x}{t}}\right)_{i-\frac12}^{\twop}\Bigg),
	\end{align*}
	where
	\begin{align*}
		&\widetilde{\Fcurv}_{i+\frac12}^{\twop}=
		\sum_{n=1}^p\alpha_{p,n}\sum_{s=0}^{n-1}
		\left[\dfrac12\left(\left(\pd{x}{t}\right)_{i-s} + \left(\pd{x}{t}\right)_{i-s+n}\right)\widetilde{\bU}\left(\bU_{i-s}, \bU_{i-s+n}\right)
		+\widetilde{\bF}\left(\bU_{i-s}, \bU_{i-s+n}\right)\right],\\
		&(\widetilde{B_1})_{i+\frac12}^{\twop}=
		\sum_{n=1}^p\alpha_{p,n}\sum_{s=0}^{n-1}\dfrac12\left((B_1)_{i-s}+(B_1)_{i-s+n}\right), \\
		&\left(\widetilde{\pd{x}{t}}\right)_{i+\frac12}^{\twop}=
		\sum_{n=1}^p\alpha_{p,n}\sum_{s=0}^{n-1}
		\dfrac12\left(\left(\pd{x}{t}\right)_{i-s}+\left(\pd{x}{t}\right)_{i-s+n}\right),
	\end{align*}
	and $\left(\pd{x}{t}\right)\Big|_i$ is the mesh velocity at $\xi_i$.
	
\section{2D EC schemes}\label{app:2DEC}
This Appendix presents the semi-discrete 2D EC schemes.
Consider the  case of $d=2$, and
replace $(\xi_1,\xi_2)$ and $(x_1,x_2)$ with $(\xi,\eta)$ and $(x,y)$, respectively.
The system \eqref{eq:RMHDCurv} and the GCLs \eqref{eq:GCL} reduce to
\begin{equation*}
\pd{\Ucurv}{\tau}+\pd{\Fcurv_1}{\xi}+\pd{\Fcurv_2}{\eta}=-\Phi'(\bV)^\mathrm{T}\left(\pd{\Bcurv_1}{\xi} + \pd{\Bcurv_2}{\eta}\right),
\end{equation*}
and
\begin{subequations}
	\begin{align*}
		\text{VCL:}\quad &\pd{J}{\tau}+\dfrac{\partial}{\partial\xi}{\left(J\pd{\xi}{t}\right)}+\dfrac{\partial}{\partial\eta}{\left(J\pd{\eta}{t}\right)}=0,\\
		\text{SCLs:}\quad &\dfrac{\partial}{\partial\xi}{\left(J\pd{\xi}{x}\right)}
		+\dfrac{\partial}{\partial\eta}{\left(J\pd{\eta}{x}\right)}=0,\\
		&\dfrac{\partial}{\partial\xi}{\left(J\pd{\xi}{y}\right)}
		+\dfrac{\partial}{\partial\eta}{\left(J\pd{\eta}{y}\right)}=0,
	\end{align*}
\end{subequations}
where
\begin{align*}
	&J=\pd{x}{\xi}\pd{y}{\eta} - \pd{x}{\eta}\pd{y}{\xi},~
	\Ucurv=J\bU, \\
	&\Fcurv_1=\left(J\pd{\xi}{t}\bU\right)+\left(J\pd{\xi}{x}\right)\bF_1+\left(J\pd{\xi}{y}\right)\bF_2,
	~\Bcurv_1=\left(J\pd{\xi}{x}\right)B_1+\left(J\pd{\xi}{y}\right)B_2,\\
	&\Fcurv_2=\left(J\pd{\eta}{t}\bU\right)+\left(J\pd{\eta}{x}\right)\bF_1+\left(J\pd{\eta}{y}\right)\bF_2,
	~\Bcurv_2=\left(J\pd{\eta}{x}\right)B_1+\left(J\pd{\eta}{y}\right)B_2.
\end{align*}
If replacing $\bm{i}$ with $\{i,j\}$, then the $2p$th-order EC schemes become
\begin{align*}
	\dfrac{\dd}{\dd t}\bm{\mathcal{U}}_{i,j}=
	&-\dfrac{1}{\Delta \xi}\left((\widetilde{\Fcurv_1})_{i+\frac12,j}^{\twop}-(\widetilde{\Fcurv_1})_{i-\frac12,j}^{\twop}\right)
	-\dfrac{1}{\Delta \eta}\left((\widetilde{\Fcurv_2})_{i,j+\frac12}^{\twop}-(\widetilde{\Fcurv_2})_{i,j-\frac12}^{\twop}\right)\\
	&-\Phi'(\bV_{i,j})^\mathrm{T}\dfrac{1}{\Delta \xi}\left((\widetilde{\Bcurv_1})_{i+\frac12,j}^{\twop}-(\widetilde{\Bcurv_1})_{i-\frac12,j}^{\twop}\right)
	-\Phi'(\bV_{i,j})^\mathrm{T}\dfrac{1}{\Delta \eta}\left((\widetilde{\Bcurv_2})_{i,j+\frac12}^{\twop}-(\widetilde{\Bcurv_2})_{i,j-\frac12}^{\twop}\right),
	\\
	\dfrac{\dd}{\dd t}J_{i,j}=
	&-\dfrac{1}{\Delta \xi}\Bigg(\left(\widetilde{J\pd{\xi}{t}}\right)_{i+\frac12,j}^{\twop}-\left(\widetilde{J\pd{\xi}{t}}\right)_{i-\frac12,j}^{\twop}\Bigg)
	-\dfrac{1}{\Delta \eta}\Bigg(\left(\widetilde{J\pd{\eta}{t}}\right)_{i,j+\frac12}^{\twop}
-\left(\widetilde{J\pd{\eta}{t}}\right)_{i,j-\frac12}^{\twop}\Bigg),
\end{align*}
where
\begin{align*}
	(\widetilde{\Fcurv_1})_{i+\frac12,j}^{\twop}=
	\sum_{n=1}^p\alpha_{p,n}\sum_{s=0}^{n-1}
	\Bigg[&\dfrac12\left(\left(J\pd{\xi}{t}\right)_{i-s,j}
	+ \left(J\pd{\xi}{t}\right)_{i-s+n,j}\right)\widetilde{\bU}
	\left(\bU_{i-s,j}, \bU_{i-s+n,j}\right) \\
	+&\dfrac12\left(\left(J\pd{\xi}{x}\right)_{i-s,j}
	+ \left(J\pd{\xi}{x}\right)_{i-s+n,j}\right)
	\widetilde{\bF_1}\left(\bU_{i-s,j}, \bU_{i-s+n,j}\right)\\
	+&\dfrac12\left(\left(J\pd{\xi}{y}\right)_{i-s,j}
	+ \left(J\pd{\xi}{y}\right)_{i-s+n,j}\right)
	\widetilde{\bF_2}\left(\bU_{i-s,j}, \bU_{i-s+n,j}\right)\Bigg],\\
	(\widetilde{\Fcurv_2})_{i,j+\frac12}^{\twop}=
	\sum_{n=1}^p\alpha_{p,n}\sum_{s=0}^{n-1}
	\Bigg[&\dfrac12\left(\left(J\pd{\eta}{t}\right)_{i,j-s}
	+ \left(J\pd{\eta}{t}\right)_{i,j-s+n}\right)\widetilde{\bU}
	\left(\bU_{i,j-s}, \bU_{i,j-s+n}\right) \\
	+&\dfrac12\left(\left(J\pd{\eta}{x}\right)_{i,j-s}
	+ \left(J\pd{\eta}{x}\right)_{i,j-s+n}\right)
	\widetilde{\bF_1}\left(\bU_{i,j-s}, \bU_{i,j-s+n}\right)\\
	+&\dfrac12\left(\left(J\pd{\eta}{y}\right)_{i,j-s}
	+ \left(J\pd{\eta}{y}\right)_{i,j-s+n}\right)
	\widetilde{\bF_2}\left(\bU_{i,j-s}, \bU_{i,j-s+n}\right)\Bigg],\\
	(\widetilde{\Bcurv_1})_{i+\frac12,j}^{\twop}=
	\sum_{n=1}^p\alpha_{p,n}\sum_{s=0}^{n-1}
	&\Bigg[\dfrac14\left(\left(J\pd{\xi}{x}\right)_{i-s,j}
	+ \left(J\pd{\xi}{x}\right)_{i-s+n,j}\right)
	\left((B_1)_{i-s,j}+(B_1)_{i-s+n,j}\right) \\
	&+\dfrac14\left(\left(J\pd{\xi}{y}\right)_{i-s,j}
	+ \left(J\pd{\xi}{y}\right)_{i-s+n,j}\right)
	\left((B_2)_{i-s,j}+(B_2)_{i-s+n,j}\right)\Bigg], \\
	(\widetilde{\Bcurv_2})_{i,j+\frac12}^{\twop}=
	\sum_{n=1}^p\alpha_{p,n}\sum_{s=0}^{n-1}
	&\Bigg[\dfrac14\left(\left(J\pd{\eta}{x}\right)_{i,j-s}
	+ \left(J\pd{\eta}{x}\right)_{i,j-s+n}\right)
	\left((B_1)_{i,j-s}+(B_1)_{i,j-s+n}\right) \\
	&+\dfrac14\left(\left(J\pd{\eta}{y}\right)_{i,j-s}
	+ \left(J\pd{\eta}{y}\right)_{i,j-s+n}\right)
	\left((B_2)_{i,j-s}+(B_2)_{i,j-s+n}\right)\Bigg], \\
	\left(\widetilde{J\pd{\xi}{t}}\right)_{i+\frac12,j}^{\twop}=
	\sum_{n=1}^p\alpha_{p,n}\sum_{s=0}^{n-1}
	&\dfrac12\left(\left(J\pd{\xi}{t}\right)_{i-s,j}+\left(J\pd{\xi}{t}\right)_{i-s+n,j}\right),\\
	\left(\widetilde{J\pd{\eta}{t}}\right)_{i,j+\frac12}^{\twop}=
	\sum_{n=1}^p\alpha_{p,n}\sum_{s=0}^{n-1}
	&\dfrac12\left(\left(J\pd{\eta}{t}\right)_{i,j-s}+\left(J\pd{\eta}{t}\right)_{i,j-s+n}\right),\\
	\left(J\pd{\xi}{t}\right)_{i,j}=-(\dot{x})_{i,j}\left(J\pd{\xi}{x}\right)_{i,j}&
	-(\dot{y})_{i,j}\left(J\pd{\xi}{y}\right)_{i,j},\\
	\left(J\pd{\eta}{t}\right)_{i,j}=-(\dot{x})_{i,j}\left(J\pd{\eta}{x}\right)_{i,j}&
	-(\dot{y})_{i,j}\left(J\pd{\eta}{y}\right)_{i,j},\\
	\left(J\pd{\xi}{x}\right)_{i,j}=+\sum_{n=1}^p\dfrac{\alpha_{p,n}}{2}
	(y_{i,j+n}&-y_{i,j-n}),\\
	\left(J\pd{\xi}{y}\right)_{i,j}=-\sum_{n=1}^p\dfrac{\alpha_{p,n}}{2}
	(x_{i,j+n}&-x_{i,j-n}),\\
	\left(J\pd{\eta}{x}\right)_{i,j}=-\sum_{n=1}^p\dfrac{\alpha_{p,n}}{2}
	(y_{i+n,j}&-y_{i-n,j}),\\
	\left(J\pd{\eta}{y}\right)_{i,j}=+\sum_{n=1}^p\dfrac{\alpha_{p,n}}{2}
	(x_{i+n,j}&-x_{i-n,j}).
\end{align*}


\end{document}